\documentclass[11pt]{amsart}
\reversemarginpar  
\usepackage{eucal}
\usepackage{amsthm}
\usepackage{amsfonts}
\usepackage{amsmath}
\usepackage{amssymb}
\usepackage{mathrsfs}
\usepackage{epsfig}
\usepackage{cases}
\voffset = -32pt \hoffset = -64pt \textwidth = 17cm
\textheight = 23cm


\newtheorem{theorem}{\bf Theorem}[section]
\newtheorem{proposition}[theorem]{\bf Proposition}
\newtheorem{corollary}[theorem]{\bf Corollary}
\newtheorem{lemma}[theorem]{\bf Lemma}
\newtheorem{notation}[theorem]{\bf Notation}

\newtheorem{remark}[theorem]{\bf Remark}
\newtheorem{definition}[theorem]{\bf Definition}



\newcommand{\overbar}[1]{\mkern 1.5mu\overline{\mkern-1.5mu#1\mkern-1.5mu}\mkern 1.5mu}




\newcommand{\z}{\mathbb Z_+}

\newcommand{\hl }{\mathcal{H}}
\newcommand{{\ran}}{\mbox{\rm ran}~}

\newcommand{\hla}{(\mathcal{H}, K^\alpha)\otimes(\mathcal{H}, K^\beta)}

\newcommand{\pa}{\beta \bar{\partial}_iK^\alpha(\cdot,w)\otimes K^\beta(\cdot,w)-\alpha K^\alpha(\cdot,w)\otimes\bar{\partial}_iK^\beta(\cdot,w)}

\newcommand{\ktilab}{K^{\alpha+\beta}(z,w)\partial_i\bar{\partial}_j\log K(z,w)}



\newcommand {\D} {\mathbb{D}}
\newcommand\quotient[2]{
        \mathchoice
            {
                \text{\raise1ex\hbox{$#1$}\Big/\lower1ex\hbox{$#2$}}%
            }{
                #1\,/\,#2
            }
            {
                #1\,/\,#2
            }
            {
                #1\,/\,#2
            }
    }


\usepackage{mathrsfs}

\numberwithin{equation}{section}

\begin{document}

\title [Decomposition of the tensor product of two Hilbert modules ]{Decomposition of the tensor product of two Hilbert modules}
\author[S. Ghara ]{Soumitra Ghara}
\address[S. Ghara]{Department of Mathematics, Indian Institute of Science,
Bangalore 560012, India} \email{ghara90@gmail.com}

\author[G. Misra]{Gadadhar Misra}
\address[G. Misra]{Department of Mathematics, Indian Institute of Science,
Bangalore 560012, India} \email{gm@math.iisc.ac.in}

\thanks{The work of the first named author  was supported by CSIR SPM Fellowship  (Ref. No. SPM-07/079(0242)/2016-EMR-I). The work  of  the  second named  author was supported  by the J  C Bose Fellowship of the DST and CAS II of the UGC. Many of the results in  this paper are from the PhD  thesis of the first named author submitted  to the Indian Institute   of Science in the year 2018.}

\date{}
\begin{abstract}
Given a pair of positive real numbers $\alpha, \beta$ and a sesqui-analytic  function $K$ on a bounded domain $\Omega \subset \mathbb C^m$, in this paper, we investigate the properties of the sesqui-analytic function $\mathbb K^{(\alpha, \beta)}:= K^{\alpha+\beta}\big(\partial_i\bar{\partial}_j\log K\big )_{i,j=1}^ m,$
taking values in $m\times m$ matrices. One of the key findings is that 
$\mathbb K^{(\alpha, \beta)}$ is non-negative definite whenever $K^\alpha$ and $K^\beta$ are non-negative definite.
In this case, a realization of the Hilbert module determined by the kernel $\mathbb K^{(\alpha,\beta)}$  is obtained. 
Let $\mathcal M_i$, $i=1,2,$ be two Hilbert modules over the polynomial ring $\mathbb C[z_1, \ldots, z_m]$. Then $\mathbb C[z_1, \ldots, z_{2m}]$  acts naturally on the tensor product $\mathcal M_1\otimes \mathcal M_2$. The restriction of this action to the polynomial ring $\mathbb C[z_1, \ldots, z_m]$ 
obtained using the restriction map $p \mapsto p_{|\Delta}$ leads to a natural decomposition of the tensor product $\mathcal M_1\otimes \mathcal M_2$, which is investigated. 
Two of the initial pieces in this decomposition are identified. 
\end{abstract}

 \renewcommand{\thefootnote}{}
\footnote{2010 \emph{Mathematics Subject Classification}: 
47B32, 47B38}
%
%
%
\footnote{\emph{Key words and phrases}: Cowen-Douglas class, 
 Non-negative definite kernels, jet construction, tensor product,  Hilbert modules.}

 \maketitle

\section{Introduction}

\subsection{Hilbert Module} We will find it useful to state many of our results in the language of Hilbert modules. The notion of a Hilbert module was introduced by R. G. Douglas (cf. \cite{Douglasmodule}), which we recall below. We point out that in the original definition, the module multiplication was assumed to be continuous in both the variables. However, for our purposes, it would be convenient to assume that   
it is continuous only in the second variable. 

\begin{definition}[Hilbert module]
A Hilbert module $\mathcal M$ over a unital, complex algebra $\mathbb A$ consists of a complex Hilbert space $\mathcal M$ and 
a map $(a,h)\mapsto a \cdot h$, $a\in \mathbb A, h\in \mathcal M$, such that
\begin{itemize}
\item[\rm (i)]
$1\cdot h=h$
\item[\rm (ii)]
$(ab)\cdot h=a\cdot (b\cdot h)$
\item[\rm (iii)]
$(a+b)\cdot h=a\cdot h+b\cdot h$
\item [\rm(iv)]
for each $a$ in $\mathbb A$, the map $\mathbf m_{a}:\mathcal M \to \mathcal M$, defined by $\mathbf m_a(h)=a\cdot h,\;h\in \mathcal M,$
is a bounded linear operator on $\mathcal M$.
\end{itemize}
\end{definition}

A closed subspace $\mathcal S$ of $\mathcal M$ is said to be a submodule of $\mathcal M$ if $\mathbf m_a h\in \mathcal S$ for all $ h\in \mathcal S$ and $a\in \mathbb A$.  
The quotient module $\mathcal Q :=\quotient{\mathcal H}{\mathcal S}$ is  the Hilbert space $\mathcal S^\perp$, where the module multiplication is defined to be the compression of the module multiplication on $\mathcal H$ to the subspace $\mathcal S^\perp$, that is, the module action on $\mathcal Q$ is given by  $\mathbf m_a (h) = P_{\mathcal S^\perp} (\mathbf m_a h)$, $h\in \mathcal S^\perp$. Two Hilbert modules  $\mathcal M_1$ and $\mathcal M_2$ over $\mathbb A$ are said to be isomorphic if there exists a unitary operator $U:\mathcal M_1\to \mathcal M_2$ such that $U(a\cdot h)=a\cdot Uh$, $a\in \mathbb A$, $h\in \mathcal M_1$.

Let $K:\Omega\times \Omega\to \mathcal M_k(\mathbb C)$
be a ses-qui analytic (that is holomorphic in first $m$-variables and anti-holomorphic in the second set of $m$-variables) non-negative definite kernel on a bounded domain $\Omega\subset \mathbb C^m$. It uniquely determines a Hilbert space $(\mathcal H,K)$ consisting of holomorphic functions on $\Omega$ taking values in
$\mathbb C^k$ possessing the following properties. For $w \in \Omega$, 
\begin{itemize}
\item[$\rm (i)$]  the vector valued function 
 $K(\cdot,w)\zeta$, $\zeta \in \mathbb C^k$, belongs to the Hilbert space $\mathcal H$
\item[$\rm (ii)$] 
$\left\langle f, K(\cdot,w)\zeta \right\rangle_\mathcal H 
= \left\langle f(w), \zeta \right\rangle_{{\mathbb C}^k},$ $f\in (\mathcal H, K)$.
\end{itemize}
Assume that the  operator of multiplication $M_{z_i}$ by the $i$th coordinate function $z_i$ is bounded on the Hilbert space $(\mathcal H, K)$ for $i=1,\ldots,m$. Then $(\mathcal H, K)$ may be realized as a Hilbert module over the polynomial ring 
$\mathbb C[z_1,\ldots,z_m]$ with the module action given by the point-wise multiplication:
$$\mathbf m_p(h) = p h,\; h\in (\mathcal H, K),~ p\in \mathbb C[z_1,\ldots,z_m].$$

 Let $K_1$ and $K_2$ be two scalar valued non-negative definite kernels defined on $\Omega\times\Omega$. 
It turns out that $(\hl, K_1)\otimes (\hl, K_2)$ is  the reproducing kernel Hilbert space with the reproducing kernel $K_1\otimes K_2$, where $K_1\otimes K_2:(\Omega\times \Omega)\times(\Omega\times \Omega)\to \mathbb C$ is given by 
$$(K_1\otimes K_2)(z,\zeta;w,\rho)=K_1(z,w)K_2(\zeta,\rho),\; \; z,\zeta,w,\rho\in \Omega.$$  
Assume that the multiplication operators 
$M_{z_i}$, $i=1,\ldots,m$, are bounded on $(\mathcal H, K_1)$ as well as on $(\mathcal H, K_2).$ 
Then $(\mathcal H, K_1) \otimes (\mathcal H, K_2)$ 
may be realized as a Hilbert module over $\mathbb C[z_1,\ldots,z_{2m}]$ with the module action
defined by  
$$\mathbf m_p(h) = p h,\: h\in (\mathcal H, K_1) \otimes (\mathcal H, K_2),\: p\in \mathbb C[z_1,\ldots,z_{2m}].$$
The module $(\mathcal H, K_1) \otimes (\mathcal H, K_2)$ admits a natural direct sum decomposition as follows. 

For a non-negative integer $k$, 
let $\mathcal A_k$ be the subspace of 
$(\mathcal H ,K_1)\otimes (\mathcal H, K_2)$ defined by
\begin{equation}\label{eqA_k}
\mathcal A_k:=\big\{f\in (\mathcal H ,K_1)\otimes (\mathcal H ,K_2)
:\big(\big(\tfrac{\partial}{\partial \zeta}\big)^{\boldsymbol i} f(z,\zeta)\big)_{|\Delta}=0,
\;|\boldsymbol i|\leq k\big\},
\end{equation}
where $\boldsymbol i \in {\mathbb Z}_+^m$, $|\boldsymbol i|=i_1+\cdots+i_m$, 
$\big(\tfrac{\partial}{\partial \zeta}\big)^{\boldsymbol i}=\frac{\partial^{|\boldsymbol i|}}{\partial \zeta_1^{i_1}\cdots\partial \zeta_m^{i_m}}$, and 
$\big(\big(\tfrac{\partial}{\partial \zeta}\big)^{\boldsymbol i} f(z,\zeta)\big)_{|\Delta}$ is 
the restriction of $\big(\tfrac{\partial}{\partial \zeta}\big)^{\boldsymbol i} f(z,\zeta)$
to the diagonal set $\Delta:=\{(z,z):z\in \Omega\}$.
It is easily verified that each of the subspaces $\mathcal A_k$ is closed and invariant under multiplication by any polynomial in $\mathbb C[z_1,\ldots, z_{2m}]$ and therefore they are sub-modules of $(\mathcal H ,K_1)\otimes (\mathcal H, K_2)$.
Setting $\mathcal S_0= \mathcal A_0^\perp$, $\mathcal S_k:= \mathcal A_{k-1} \ominus \mathcal A_{k}$, $k = 1, 2, \ldots$, we obtain  a direct sum decomposition of the Hilbert space 
$$
(\mathcal H ,K_1)\otimes (\mathcal H, K_2) = \bigoplus _{k=0}^\infty \mathcal S_k.$$
In this decomposition, the subspaces $\mathcal S_k \subseteq (\mathcal H ,K_1)\otimes (\mathcal H, K_2)$ are not necessarily sub-modules. Indeed, one may say they are semi-invariant modules following the  terminology commonly used in Sz.-Nagy--Foias model theory for contractions. We  study the compression of the module action to these subspaces analogous to the ones studied in operator theory.
Also, such a decomposition is similar to the Clebsch-Gordan formula, which describes the decomposition of the tensor product of two irreducible representations, say $\varrho_1$ and $\varrho_2$ of a group $G$ when restricted to the diagonal subgroup in $G\times G$:
$$\varrho_1(g) \otimes \varrho_2(g) = \bigoplus_k d_k\pi_k(g),$$
where $\pi_k,$ $k\in \z,$ are irreducible representation of the group $G$ and $d_k$, $k\in \z$, are natural numbers. However, the decomposition of the tensor product of two Hilbert modules cannot be expressed as the direct sum of submodules. 
Noting that $\mathcal S_0$ is a quotient module, describing all the semi-invariant modules $\mathcal S_k$, $k \geq 1$, would appear to be a natural question.  
To describe the equivalence classes of $\mathcal S_0$, $\mathcal S_1,\ldots$ etc., it would be useful to recall the notion of the push-forward of a module. 

Let  $\iota: \Omega \to \Omega\times \Omega$ be the map $\iota(z) = (z,z)$,  $z\in\Omega$. Any Hilbert module $\mathcal  M$ over the polynomial ring $\mathbb C[z_1,\ldots,z_{m}]$ may be thought of as a module $\iota_\star\mathcal M$ over the ring $\mathbb C[z_1,\ldots,z_{2m}]$ by re-defining the multiplication: $\mathbf m_p(h) = (p \circ \iota) h$, $h\in \mathcal M$ and $p \in  \mathbb C[z_1,\ldots,z_{2m}]$.
The module $\iota_\star\mathcal M$ over $\mathbb C[z_1,\ldots,z_{2m}]$ is defined to be the push-forward of the module $\mathcal M$ over $\mathbb C[z_1,\ldots,z_m]$  under the inclusion map $\iota$.

In \cite{Aro}, Aronszajn proved that the Hilbert space $(\mathcal H, K_1 K_2)$ corresponding to the point-wise product $K_1 K_2$ of two non-negative definite kernels $K_1$ and $K_2$ is obtained by the restriction of the functions in the tensor product $(\mathcal H, K_1) \otimes (\mathcal H, K_2)$ to the diagonal set $\Delta$. Building on his work, it was shown in \cite{D-M-V} that the restriction map is isometric  on the subspace $\mathcal S_0$ onto $(\mathcal H, K_1 K_2)$ intertwining the module actions on $\iota_\star(\mathcal H, K_1 K_2)$ and $\mathcal S_0$.  However, using the jet construction given below, it is possible to describe the quotient modules $\mathcal A_k^\perp$, $k\geq 0$. We reiterate that one of the main questions we address is  that of of describing the semi-invariant modules, namely, $ \mathcal S_1, \mathcal S_2, \ldots$. We have  succeed in describing only $\mathcal S_1$ only after assuming that the  pair of kernels is of the form $K^\alpha$, $K^\beta$, $\alpha, \beta   >0$, where the real power of a non-negative definite kernel is defined below. 

Let $\Omega\subset \mathbb C^m$ be a bounded domain and  $K:\Omega\times \Omega\to \mathbb C$ be a non-zero sesqui-analytic function. Let $t$ be a  real number. 
The function $K^t$ is defined in the usual manner,
namely $K^t(z,w) = \exp(t \log K(z,w))$, $z,w\in \Omega$, assuming that a continuous branch of the logarithm of $K$ exists on $\Omega \times \Omega$.
Clearly, $K^t$ is also sesqui-analytic. However, if $K$ is non-negative definite, then $K^t$ need not be non-negative definite unless $t$ is a natural number.  
A direct computation, assuming the existence of a continuous branch of  logarithm of $K$ on $\Omega\times \Omega$, shows that for $1\leq i, j\leq m$,
$$\partial_i \bar{\partial}_j \log K(z,w)=\frac{K(z,w)\partial_i 
\bar{\partial}_j K(z,w)-\partial_i K(z,w)\bar{\partial}_jK(z,w)} {K(z,w)^2},\;\;z,w\in \Omega, $$
where $\partial_i$ and $\bar{\partial}_j$ denote $\frac{\partial}{\partial z_i}$ and $\frac{\partial}{\partial \bar{w}_j}$, 
respectively.


For a sesqui-analytic function $K:\Omega\times\Omega\to \mathbb C$
satisfying $K(z,z)>0$, an alternative interpretation of $K(z,w)^t$ (resp. $\log K(z,w)$) is possible using the notion of polarization. The real analytic function  $K(z,z)^t$ (resp. $\log K(z,z)$) defined  on $\Omega$ extends to a unique sesqui-analytic function in some neighbourhood $U$ of the diagonal set $\{(z,z):z\in \Omega\}$ in $\Omega\times \Omega$.  If the principal branch of logarithm of $K$ exists on $\Omega\times \Omega$, then it is easy to verify that these two definitions of $K(z,w)^t$ (resp. $\log K(z,w)$) agree on the open set $U$.

In the particular case, when $K_1={(1-z\bar{w})^{-\alpha}}$ and $K_2={(1-z\bar{w})^{-\beta}}$, $\alpha,\beta>0$, the description of the semi-invariant modules $\mathcal S_k$, $k\geq 0$, is obtained from somewhat more general results  of  Ferguson and Rochberg.
\begin{theorem}[Ferguson-Rochberg,\cite{Ferguson-Rochberg}]
If $K_1(z,w)=\frac{1}{(1-z\bar{w})^\alpha}$ and $K_2(z,w)=\frac{1}{(1-z\bar{w})^{\beta}}$ on $\mathbb D\times\mathbb D$ for some $\alpha,\beta>0$, then the Hilbert modules $\mathcal S_k$ and $\iota_\star (\hl, {(1-z\bar{w})^{-(\alpha+\beta+2k)}})$ are isomorphic.
\end{theorem}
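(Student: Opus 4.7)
The plan is to construct a unitary intertwiner between $\mathcal S_k$ and $\iota_\star(\hl,(1-z\bar w)^{-(\alpha+\beta+2k)})$ via the ``$k$-th restricted $\zeta$-jet''
\[
R_k f(z) := \tfrac{1}{k!}\bigl(\tfrac{\partial}{\partial\zeta}\bigr)^{\!k} f(z,z),\qquad f\in(\hl,K_1)\otimes(\hl,K_2).
\]
By definition $\ker R_k\supseteq \mathcal A_k$, and restricted to $\mathcal A_{k-1}$ the kernel is exactly $\mathcal A_k$, so $R_k$ descends to an injection $\tilde R_k:\mathcal S_k\hookrightarrow \operatorname{Hol}(\mathbb D)$. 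For $k=0$ this is the restriction to the diagonal and the theorem is the cited result of Douglas--Misra--Varughese. For $k\ge 1$ the proof reduces to (i) identifying the image of $\tilde R_k$, endowed with the transported inner product, as a rescaled copy of $(\hl,(1-z\bar w)^{-(\alpha+\beta+2k)})$, and (ii) checking that the $\iota_\star$-module action matches the compressed action on $\mathcal S_k$.

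Both kernels are rotationally invariant, so the diagonal $\mathbb T$-action $(z,\zeta)\mapsto(e^{i\theta}z,e^{i\theta}\zeta)$ preserves every $\mathcal A_\ell$ and decomposes the problem by homogeneous total degree. Set $V_n:=\operatorname{span}\{z^p\zeta^{n-p}:0\le p\le n\}$, so $\|z^p\zeta^{n-p}\|^2=p!(n-p)!/[(\alpha)_p(\beta)_{n-p}]$. Each condition $\partial_\zeta^i f|_\Delta=0$ is a single linear equation on $V_n$ (it picks off the coefficient of $z^{n-i}$ in the homogeneous polynomial $\partial_\zeta^i f|_\Delta$), hence $\dim(V_n\cap\mathcal A_{k-1})=n+1-k$, $\dim(V_n\cap\mathcal A_k)=n-k$, and $V_n\cap\mathcal S_k$ is one-dimensional for $n\ge k$, spanned by a vector $\phi_{n,k}$. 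Since $\tilde R_k\phi_{n,k}=c_{n,k}z^{n-k}$ for a scalar $c_{n,k}$ and $\|\tilde R_k\phi_{n,k}\|^2_{\mathrm{image}}=\|\phi_{n,k}\|^2$, identifying the image boils down to the single norm identity
\[
\frac{\|\phi_{n,k}\|^2}{|c_{n,k}|^2}\;=\;C\cdot\frac{(n-k)!}{(\alpha+\beta+2k)_{n-k}}\qquad (n\ge k),
\]
for some $C>0$ independent of $n$; the right-hand side equals, up to this constant, the squared norm of $z^{n-k}$ in $(\hl,(1-z\bar w)^{-(\alpha+\beta+2k)})$.

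This norm identity is the main obstacle. Writing $\phi_{n,k}=\sum_{p=0}^n a_p z^p\zeta^{n-p}$, the two requirements $\phi_{n,k}\in V_n\cap\mathcal A_{k-1}$ and $\phi_{n,k}\perp V_n\cap\mathcal A_k$ give a linear system whose (one-dimensional) solution space determines the $a_p$ in closed form, essentially as the coefficients of a shifted Jacobi polynomial, equivalently values of a terminating ${}_2F_1$. Both $|c_{n,k}|^2=|\sum_p a_p (n-p)!/(n-p-k)!|^2$ and $\|\phi_{n,k}\|^2=\sum_p|a_p|^2 p!(n-p)!/[(\alpha)_p(\beta)_{n-p}]$ then reduce to balanced ${}_3F_2(1)$-series whose quotient is summed by the Pfaff--Saalsch\"utz formula. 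Carrying out this hypergeometric calculation is the crux of the argument and is precisely where the specific structure of $(1-z\bar w)^{-\alpha}$, as opposed to a general sesqui-analytic kernel, is essential.

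Finally, the module intertwining is clean. The Leibniz identity $\partial_\zeta^k(\zeta f)=\zeta\partial_\zeta^k f+k\partial_\zeta^{k-1}f$ restricted to $\Delta$ gives $R_k(\zeta f)=zR_k f+R_{k-1}f$, and $R_{k-1}f$ vanishes for $f\in\mathcal S_k\subset\mathcal A_{k-1}$; similarly $R_k(zf)=zR_k f$. More generally, iterating this argument yields $R_k(z^a\zeta^b f)=z^{a+b}R_k f$ for all $f\in\mathcal A_{k-1}$, since only the $j=0$ term survives in the Leibniz expansion $R_k(z^a\zeta^b f)=\sum_{j=0}^{\min(k,b)}\frac{1}{j!}\frac{b!}{(b-j)!}z^{a+b-j}R_{k-j}f$. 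Because $\mathcal A_{k-1}$ is a submodule, $z^a\zeta^b f\in\mathcal A_{k-1}=\mathcal A_k\oplus\mathcal S_k$, and applying $R_k$ (which kills $\mathcal A_k$) gives $R_k(P_{\mathcal S_k}(z^a\zeta^b f))=z^{a+b}R_k f$. This matches the $\iota_\star$-action (where $z_1^a z_2^b$ acts as multiplication by $z^{a+b}$) and completes the isomorphism of Hilbert modules.
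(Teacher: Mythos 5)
The paper never proves this theorem: it is quoted from Ferguson--Rochberg, and the authors' own contribution in this direction is the case $k=1$ for general kernels $K^\alpha,K^\beta$ (Theorem \ref{module on 2nd box}), established by a quite different, coordinate-free route: the Gram matrix of the vectors $\phi(w)=\beta\,\bar\partial K^\alpha(\cdot,w)\otimes K^\beta(\cdot,w)-\alpha\, K^\alpha(\cdot,w)\otimes\bar\partial K^\beta(\cdot,w)$ is computed in closed form in Proposition \ref{k^2curv} and identified with $\alpha\beta(\alpha+\beta)K^{\alpha+\beta}\partial\bar\partial\log K$, which for $K=(1-z\bar w)^{-1}$ is a constant multiple of $(1-z\bar w)^{-(\alpha+\beta+2)}$. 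Your route --- the jet map $R_k$, reduction by rotation invariance to the one-dimensional spaces $V_n\cap\mathcal S_k$, and matching the norms of monomials against $(n-k)!/(\alpha+\beta+2k)_{n-k}$ --- is the classical $SU(1,1)$ Clebsch--Gordan argument; it reaches all $k$ (which the paper's method does not), at the price of working only for these specific diagonal kernels. Your treatment of the module action via the Leibniz rule is complete and correct, and your normalization checks out in low cases: for $k=1$, $n=2$ one finds $\phi_{2,1}=(\alpha+1)z^2+(\beta-\alpha)z\zeta-(\beta+1)\zeta^2$, $c_{2,1}=-(\alpha+\beta+2)$, $\|\phi_{2,1}\|^2=(\alpha+\beta)(\alpha+\beta+2)/(\alpha\beta)$, consistent with $C=(\alpha+\beta)/(\alpha\beta)$.

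The one genuine gap is that the crux --- the identity $\|\phi_{n,k}\|^2/|c_{n,k}|^2=C\,(n-k)!/(\alpha+\beta+2k)_{n-k}$ with $C$ independent of $n$ --- is asserted but not carried out, and this is precisely the step that produces the exponent $\alpha+\beta+2k$. As written, the proposal reduces the theorem to a terminating hypergeometric evaluation rather than proving it. To close it you must either exhibit the coefficients $a_p$ of $\phi_{n,k}$ explicitly, write out the resulting ${}_3F_2$ for $\|\phi_{n,k}\|^2$ and for $c_{n,k}$, verify the Saalsch\"utzian balance condition, and apply Pfaff--Saalsch\"utz; or sidestep the basis computation by evaluating the reproducing kernel of the image of $R_k|_{\mathcal S_k}$ directly, i.e.\ show $\sum_{n\ge k}|c_{n,k}|^2\|\phi_{n,k}\|^{-2}(z\bar w)^{n-k}$ is a constant multiple of $(1-z\bar w)^{-(\alpha+\beta+2k)}$. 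Two smaller points should also be recorded: the $k$ linear functionals cutting out $\mathcal A_{k-1}\cap V_n$ must be shown independent for $n\ge k$ (they are, since the matrix of falling factorials $\big((n-p)!/(n-p-i)!\big)$ has full rank), and the density of $\bigoplus_{n\ge k}(V_n\cap\mathcal S_k)$ in $\mathcal S_k$ should be stated; both are routine for these kernels.
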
 

In this paper, first we show that if $K^{\alpha}$ and $K^{\beta}$, $\alpha, \beta >0$, are two non-negative definite kernels on $\Omega$, then  function $\mathbb K^{(\alpha, \beta)}:\Omega \times \Omega \to \mathcal M_m(\mathbb C)$ defined by  
$$\mathbb K^{(\alpha, \beta)}(z,w)=K^{\alpha+\beta}(z,w)\Big(\,\big(\partial_i\bar{\partial}_j\log K\big)(z,w) \,\Big)_{i,j=1}^ m, \,\, z,w \in \Omega,$$ 	
is also a non-negative definite kernel. In this case, a description of the Hilbert module $\mathcal S_1$ is obtained. Indeed, it is shown that 
the Hilbert modules $\mathcal S_1$ and $\iota_\star \big(\mathcal H, \mathbb{K}^{(\alpha,\beta)}\big)$ are isomorphic.

\subsection{The jet construction}
For a bounded domain $\Omega\subset \mathbb C^m$, let $K_1$ and $K_2$ be two scalar valued non-negative kernels defined on $\Omega\times\Omega$. Assume that the multiplication operators $M_{z_i},$ $i=1,\ldots,m,$ are bounded on $(\mathcal H, K_1)$ as well as on $(\mathcal H, K_2).$ For a non-negative integer $k$, let $\mathcal A_k$ be the subspace defined in \eqref{eqA_k}.

Let $d$ be the cardinality of the set $\{\boldsymbol i\in \mathbb Z_+^m,|\boldsymbol i|\leq k\}$, which is $\binom{m+k}{m}$. Define the linear map 
$ J_k:(\mathcal H ,K_1)\otimes (\mathcal H, K_2)\to 
\rm Hol(\Omega\times\Omega, \mathbb{C}^d)$ by 
\begin{equation}\label{eqnjetm_ap}
(J_kf)(z,\zeta)=\sum_{|\boldsymbol i|\leq k}\big(\tfrac{\partial}{\partial \zeta}\big)^{\boldsymbol i} f(z,\zeta)\otimes e_{\boldsymbol i},~f\in (\mathcal H ,K_1)\otimes (\mathcal H, K_2),
\end{equation}
where $\big \{e_{\boldsymbol i}\big \}_{{\boldsymbol i} \in \mathbb Z_+^m,|\boldsymbol i|\leq k}$
is the standard orthonormal basis of $\mathbb C^d$. 
Let $R:\ran J_k\to \rm Hol(\Omega, \mathbb{C}^d)$ be the restriction map, that is, $R(\mathbf{h})=\mathbf{h}_{|\Delta},~\mathbf {h}\in \ran J_k$. Clearly, $\ker R J_k=\mathcal A_k$. Hence the map $R J_k: \mathcal A_k^\perp \to \rm Hol(\Omega, \mathbb{C}^d)$ is one to one. Therefore we can give a natural inner product on $ \ran {R J_k} $, namely, 
$$\langle RJ_k(f),R J_k(g)\rangle=\langle P_{\mathcal A_k^\perp}f,P_{\mathcal A_k^\perp}g\rangle,~f,g\in (\hl,K_1)\otimes (\hl,K_2).$$
In what follows, we think of $\ran {R J_k}$ as a Hilbert space equipped with this inner product.  The theorem stated below is a straightforward generalization of one of the main results from \cite{D-M-V}. 
\begin{theorem}\rm({\cite [Proposition 2.3]{D-M-V}})\label{thmjetcons}
Let $K_1,K_2:\Omega\times\Omega\to \mathbb C$ be two non-negative definite kernels. Then $\ran RJ_k$ is a reproducing kernel Hilbert space and its  reproducing kernel $J_k(K_1,K_2)_{|\rm res \, \Delta}$ is given by the formula 
$$J_k(K_1,K_2)_{|\rm res \, \Delta}(z,w):=\big(K_1(z,w)\partial^{\boldsymbol i}\bar{\partial}^{\boldsymbol j} K_2(z,w)\big)_{|\boldsymbol i|,|\boldsymbol j|=0}^k,\;\;z,w\in \Omega.$$
\end{theorem}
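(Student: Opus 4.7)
The plan is to exhibit explicit reproducing vectors in $\ran RJ_k$ at each point $w\in\Omega$ and then verify the reproducing identity directly from the prescribed inner product. First, since $\ker RJ_k=\mathcal A_k$, the map $RJ_k\colon\mathcal A_k^\perp\to\ran RJ_k$ is a bijection and the inner product on $\ran RJ_k$ is precisely the pullback of the one on $\mathcal A_k^\perp$; in particular $\ran RJ_k$ is a genuine Hilbert space. The natural candidates for reproducing vectors come from $\bar w$-derivatives of $K_2(\cdot,w)$: for $w\in\Omega$ and $|\boldsymbol j|\le k$, set
$$F_{w,\boldsymbol j}(z,\zeta):=K_1(z,w)\,\bar\partial^{\boldsymbol j}_w K_2(\zeta,w)\in (\mathcal H,K_1)\otimes(\mathcal H,K_2).$$
A direct computation from the definition of $J_k$ yields
$$(RJ_k F_{w,\boldsymbol j})(z)=\sum_{|\boldsymbol i|\le k}K_1(z,w)\,\partial^{\boldsymbol i}\bar\partial^{\boldsymbol j} K_2(z,w)\otimes e_{\boldsymbol i},$$
which is exactly the $\boldsymbol j$-th column of $J_k(K_1,K_2)_{|\rm res\,\Delta}(\cdot,w)$. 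Hence every column, and so every vector $J_k(K_1,K_2)_{|\rm res\,\Delta}(\cdot,w)\eta$ with $\eta\in\mathbb C^d$, already lies in $\ran RJ_k$.

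It remains to check the reproducing identity. For arbitrary $h=RJ_kf\in\ran RJ_k$ with $f\in\mathcal A_k^\perp$, the definition of the inner product on $\ran RJ_k$ gives
$$\langle h,\,RJ_k F_{w,\boldsymbol j}\rangle=\langle P_{\mathcal A_k^\perp}f,\,P_{\mathcal A_k^\perp}F_{w,\boldsymbol j}\rangle=\langle f,\,F_{w,\boldsymbol j}\rangle,$$
the last equality holding because $f$ is orthogonal to the $\mathcal A_k$-component of $F_{w,\boldsymbol j}$. Evaluating this slot-by-slot via the reproducing properties $\langle g,K_1(\cdot,w)\rangle=g(w)$ and $\langle g,\bar\partial^{\boldsymbol j}_w K_2(\cdot,w)\rangle=(\partial^{\boldsymbol j}g)(w)$ in the two tensor factors, I expect the pairing to collapse to $(\partial^{\boldsymbol j}_\zeta f)(w,w)$, which is precisely the $\boldsymbol j$-th coordinate of $(RJ_kf)(w)=h(w)$ in the basis $\{e_{\boldsymbol i}\}$. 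By $\mathbb C^d$-linearity in $\boldsymbol j$, this promotes to the matrix reproducing identity for the full candidate kernel, completing the proof. The only genuine obstacle is bookkeeping: one has to keep derivatives in the three variables $z$, $\zeta$, and $w$ strictly separate and remember that although $F_{w,\boldsymbol j}$ need not itself lie in $\mathcal A_k^\perp$, its $\mathcal A_k$-component is invisible to the pairing against any element of $\mathcal A_k^\perp$.
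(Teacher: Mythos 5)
Your argument is correct and is essentially the proof of the cited result in \cite{D-M-V}: one exhibits the vectors $K_1(\cdot,w)\otimes\bar{\partial}^{\boldsymbol j}K_2(\cdot,w)$, observes that $RJ_k$ sends them to the columns of the candidate kernel, and verifies the reproducing identity through the pulled-back inner product together with Proposition \ref{derivativeofK} applied to the kernel $K_1\otimes K_2$. One small simplification: since $\langle g, F_{w,\boldsymbol j}\rangle=(\partial_\zeta^{\boldsymbol j}g)(w,w)=0$ for every $g\in\mathcal A_k$ and $|\boldsymbol j|\le k$, the vector $F_{w,\boldsymbol j}$ in fact already lies in $\mathcal A_k^\perp$, so your final caveat about its $\mathcal A_k$-component is unnecessary.
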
 
Now for any polynomial $p$ in $z,\zeta$, define the operator $\mathcal T_p$ on $\ran RJ_k$ as 
$$(\mathcal Tp)(RJ_kf) = \sum_{|\boldsymbol l|\leq k}
\Big(\sum_{\boldsymbol q\leq \boldsymbol l}{\tbinom{\boldsymbol l}{\boldsymbol q}}\Big(\big(\tfrac{\partial}{\partial \zeta}\big)^{\boldsymbol q} p(z,\zeta)\Big)_{|{\Delta}}
\Big(\big(\tfrac{\partial}{\partial \zeta}\big)^{\boldsymbol l-\boldsymbol q} 
f(z,\zeta)\Big)_{|{\Delta}}\Big)\otimes e_{\boldsymbol l}, f\in (\hl, K_1)\otimes (\hl, K_2),$$
where $\boldsymbol l=(l_1,\ldots,l_m),
\boldsymbol q=(q_1,\ldots,q_m)\in \mathbb Z^m_+$, and $\boldsymbol q\leq \boldsymbol l$ means $q_i\leq l_i,~i=1,\ldots,m$ and $\binom{\boldsymbol l}{\boldsymbol q}=\binom{l_1}{q_1}\cdots\binom{l_m}{q_m}$.  The proof of the Proposition below follows from a straightforward computation using the Leibniz rule, the details are on page 378 - 379 of \cite{D-M-V}.    
\begin{proposition}\label{prop:jetq}
For any polynomial $p$ in $\mathbb C[z_1,\ldots,z_{2m}]$, the operator $P_{\mathcal A_k^\perp}{M_p}_{|{\mathcal A_k^\perp}}$ is unitarily equivalent to the operator $\mathcal T_p$
on $(\ran RJ_k)$.
\end{proposition}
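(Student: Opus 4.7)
The plan is to exhibit the unitary equivalence through the explicit map $U := RJ_k \big|_{\mathcal{A}_k^\perp} : \mathcal{A}_k^\perp \to \mathrm{ran}\, RJ_k$, and then verify the intertwining relation by a direct Leibniz-rule computation. The map $U$ is a bijection because $\ker RJ_k = \mathcal{A}_k$ guarantees injectivity on $\mathcal{A}_k^\perp$, and it is isometric by the very definition of the inner product on $\mathrm{ran}\, RJ_k$. So $U$ is a unitary, and the task reduces to showing $U \, (P_{\mathcal{A}_k^\perp} M_p|_{\mathcal{A}_k^\perp}) = \mathcal{T}_p \, U$.

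First I would reduce the compression to an unrestricted assertion. Since $\mathcal{A}_k$ is a submodule of $(\mathcal{H}, K_1)\otimes (\mathcal{H}, K_2)$ over $\mathbb{C}[z_1,\ldots,z_{2m}]$, it is invariant under $M_p$, so for any $f \in \mathcal{A}_k^\perp$ the element $M_p f - P_{\mathcal{A}_k^\perp} M_p f$ lies in $\mathcal{A}_k = \ker RJ_k$. Consequently
$$RJ_k(M_p f) \;=\; RJ_k(P_{\mathcal{A}_k^\perp} M_p f) \;=\; U\bigl(P_{\mathcal{A}_k^\perp} M_p\big|_{\mathcal{A}_k^\perp} f\bigr),$$
so it suffices to prove the identity $RJ_k(M_p f) = \mathcal{T}_p(Uf)$ for every $f$.

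Next I would carry out the Leibniz-rule step. By the multi-variable Leibniz rule,
$$\Bigl(\tfrac{\partial}{\partial \zeta}\Bigr)^{\boldsymbol{i}}\bigl(p(z,\zeta) f(z,\zeta)\bigr) \;=\; \sum_{\boldsymbol{q}\leq \boldsymbol{i}} \binom{\boldsymbol{i}}{\boldsymbol{q}} \Bigl(\tfrac{\partial}{\partial \zeta}\Bigr)^{\boldsymbol{q}} p(z,\zeta)\; \Bigl(\tfrac{\partial}{\partial \zeta}\Bigr)^{\boldsymbol{i}-\boldsymbol{q}} f(z,\zeta).$$
Summing over $|\boldsymbol{i}|\leq k$, restricting to the diagonal $\Delta$, and tensoring with $e_{\boldsymbol{i}}$ gives precisely the expression defining $\mathcal{T}_p(Uf)$ in the statement. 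This completes the intertwining identity and hence the unitary equivalence.

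The computation itself is routine; the main obstacle is just multi-index bookkeeping and making sure the restriction-to-$\Delta$ commutes with the finite sums. A small point worth noting in passing is that the very identity $RJ_k \circ M_p = \mathcal{T}_p \circ RJ_k$ also justifies \emph{a posteriori} that $\mathcal{T}_p$ is a well-defined operator on $\mathrm{ran}\, RJ_k$: the left-hand side depends only on $f$ modulo $\ker RJ_k = \mathcal{A}_k$ (by the invariance of $\mathcal{A}_k$ under $M_p$), which is what one needs.
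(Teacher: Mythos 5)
Your proposal is correct and follows essentially the same route as the paper, which simply cites the Leibniz-rule computation from \cite{D-M-V}: the unitary is $RJ_k$ restricted to $\mathcal A_k^\perp$ (unitary by construction of the inner product on $\ran RJ_k$), and the intertwining relation reduces, via $\ker RJ_k=\mathcal A_k$, to the pointwise Leibniz identity for $\big(\tfrac{\partial}{\partial\zeta}\big)^{\boldsymbol i}(pf)$ restricted to $\Delta$. Your closing observation that the same identity, together with the $M_p$-invariance of $\mathcal A_k$, shows $\mathcal T_p$ is well defined on $\ran RJ_k$ is a worthwhile point that the paper leaves implicit.
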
 
In section 4, we prove a  generalization of the theorem of Salinas for  all kernels of the form $J_k(K_1,K_2)_{|\rm res \, \Delta}$. In particular, we show that if $K_1,K_2:\Omega\times \Omega\to
\mathbb C$ are two sharp kernels (resp. generalized Bergman kernels), then so is the kernel 
$J_k(K_1,K_2)_{|\rm res \, \Delta}.$

In Section 5, we introduce the notion of a generalized Wallach set for an arbitrary non-negative definite kernel $K$ defined on a bounded domain $\Omega\subset \mathbb C^m$. Recall that the ordinary Wallach set associated with the Bergman kernel $B_{\Omega}$ of a bounded symmetric domain $\Omega$ is the set $\{ t > 0 : B_\Omega^t \mbox{\rm \: is non-negative definite}\}$. 
Replacing the Bergman kernel in the definition of the Wallach set by an arbitrary non-negative definite kernel $K$,  we define the ordinary Wallach set $\mathcal W(K)$.   More importantly, we introduce the generalized Wallach set   
$G\mathcal W(K)$ associated to the kernel $K$ to be the set $\{t \in \mathbb R :\; K^{t} \big(\partial_i \bar{\partial}_j \log K\big)_{i,j=1}^m \mbox{\rm\: is non-negative definite} \}$,
where we have assumed that $K^t$ is well defined for all $t\in \mathbb R$. 
In the particular  case of the Euclidean unit ball $\mathbb B_m$ in $\mathbb C^m$ and the Bergman kernel, the generalized Wallach set $G\mathcal W(B_{\mathbb B_m})$, $m >1$, is shown to be the set $\{t\in \mathbb R\,:\, t \geq 0\}$. If $m=1$, then it is the set  $\{t\in \mathbb R\,:\, t \geq -1\}$.

In Section 6, we  study quasi-invariant kernels. Let $J:{\rm Aut(\Omega)}\times \Omega\to  GL_k(\mathbb C)$ be a function such that $J(\varphi,\cdot)$ is holomorphic for each $\varphi$ in $\rm Aut(\Omega)$, where ${\rm Aut(\Omega)}$ is the group of all biholomorphic automorphisms of $\Omega$. A non-negative definite kernel $K:\Omega\times\Omega\to\mathcal M_k(\mathbb C)$ is said to be quasi-invariant with respect to $J$ if $K$ satisfies the following transformation rule:
\begin{equation*}
J(\varphi,z)K(\varphi(z),\varphi(w)){J(\varphi,w)}^*=K(z,w),~z,w\in \Omega,\; \varphi \in \rm Aut(\Omega).
\end{equation*}
It is shown that if $K:\Omega\times\Omega\to \mathbb C$ is a quasi-invariant kernel with respect to $J:
{\rm Aut(\Omega)}\times \Omega\to \mathbb C\setminus \{0\}$, then the kernel $K^{t} \big(\partial_i \bar{\partial}_j \log K\big)_{i,j=1}^m$ is also quasi-invariant with respect to
$\mathbb J$ whenever $t\in G\mathcal W(K)$, where $ \mathbb J(\varphi,z)=J(\varphi,z)^{t} D\varphi(z)^{\rm tr}$, $\varphi\in \rm Aut(\Omega),~ z\in \Omega.$  In particular, taking $\Omega\subset \mathbb C^m$ to be a bounded symmetric domain and setting $K$ to be the Bergman kernel $B_\Omega$,  in the language of \cite{Misra-Sastry}, we conclude that the multiplication tuple $\boldsymbol {M}_z$ on $(\hl, \boldsymbol B_\Omega^{(t)})$, where 
$\boldsymbol B_\Omega^{(t)}(z,w):= ( B_{\Omega}^t\partial_i\bar{\partial}_j\log B_{\Omega} )_{i,j=1}^m,$ is homogeneous with respect to the group ${\rm Aut}(\Omega)$ for $t$ in $G\mathcal W(B_\Omega)$.

\section{A new non-negative definite kernel}
The scalar version of the following lemma is well-known. However, the easy modifications necessary to  prove it in the case of $k\times k$ matrices are omitted. 
\begin{lemma}[Kolmogorov] \label{grahmnnd}
Let $\Omega\subset \mathbb C^m$ be a bounded domain, and let $\mathcal{H}$ be a Hilbert space. 
If $\phi_1,\phi_2,\ldots,\phi_k$
are  anti-holomorphic functions from $\Omega $ into $\mathcal{H}$, then  $K:\Omega\times \Omega\rightarrow \mathcal M_{k}(\mathbb{C})$
defined by $K(z,w)=\big( \left\langle \phi_j(w),\phi_i(z)\right\rangle_{\hl}\big)_{i,j=1}^k$, $z,w\in \Omega$, is a sesqui-analytic 
non-negative definite kernel. 
\end{lemma}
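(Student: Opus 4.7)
The proof should be a direct adaptation of the standard (scalar) Kolmogorov construction, with matrix indices folded into the finite index set of points. The plan is to verify the two conclusions separately: sesqui-analyticity, which reduces to an entry-wise check, and non-negative definiteness, which reduces to exhibiting a Gram-matrix identity.

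For sesqui-analyticity, fix $i,j\in\{1,\dots,k\}$ and consider the $(i,j)$ entry $K(z,w)_{ij}=\langle \phi_j(w),\phi_i(z)\rangle_{\mathcal H}$. Since $\phi_j$ is anti-holomorphic in $w$ and the inner product is linear in its first slot, $w\mapsto \langle \phi_j(w),\phi_i(z)\rangle_{\mathcal H}$ is anti-holomorphic. Since $\phi_i$ is anti-holomorphic in $z$ and the inner product is conjugate-linear in its second slot, $z\mapsto \langle \phi_j(w),\phi_i(z)\rangle_{\mathcal H}$ is holomorphic. Hence each entry is sesqui-analytic, and so is the matrix-valued function $K$.

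For non-negative definiteness, I would pick an arbitrary finite set of points $w_1,\dots,w_n\in\Omega$ and vectors $\xi_1,\dots,\xi_n\in\mathbb C^k$, write $\xi_\nu=(\xi_\nu^1,\dots,\xi_\nu^k)^{\rm tr}$, and expand
\begin{align*}
\sum_{\mu,\nu=1}^{n}\langle K(w_\mu,w_\nu)\xi_\nu,\xi_\mu\rangle_{\mathbb C^k}
&=\sum_{\mu,\nu}\sum_{i,j}\xi_\nu^{j}\,\overline{\xi_\mu^{i}}\,\langle \phi_j(w_\nu),\phi_i(w_\mu)\rangle_{\mathcal H}\\
&=\Big\langle \sum_{\nu,j}\xi_\nu^{j}\,\phi_j(w_\nu),\;\sum_{\mu,i}\xi_\mu^{i}\,\phi_i(w_\mu)\Big\rangle_{\mathcal H}
=\Big\|\sum_{\nu,j}\xi_\nu^{j}\,\phi_j(w_\nu)\Big\|_{\mathcal H}^{2}\ge 0.
\end{align*}
The only point requiring a moment of care is the bookkeeping of the conjugates: $\xi_\nu^j$ is a scalar, so it passes freely into the first slot of the inner product, while $\overline{\xi_\mu^i}$ pulls into the second slot as $\xi_\mu^i$, giving a single vector on each side of the inner product and collapsing the double sum into a genuine squared norm.

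There is no real obstacle here; the only thing to be mindful of is the convention that a matrix-valued kernel $K$ is non-negative definite precisely when the scalar form $\sum_{\mu,\nu}\langle K(w_\mu,w_\nu)\xi_\nu,\xi_\mu\rangle_{\mathbb C^k}$ is non-negative, which is what the Gram identity above yields. Thus the scalar proof goes through verbatim once one indexes the ``points'' by pairs $(\nu,j)$ and the ``vectors'' by the scalars $\xi_\nu^{j}$.
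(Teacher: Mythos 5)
Your proof is correct and is precisely the standard argument the paper has in mind: the paper omits the proof, remarking only that the scalar Kolmogorov lemma extends to the $k\times k$ matrix case by easy modifications, and your Gram-matrix identity (re-indexing points by pairs $(\nu,j)$) together with the entrywise sesqui-analyticity check is exactly that modification.
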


For any reproducing kernel Hilbert space $ (\hl, K)$, the following proposition, which is Lemma 4.1 of  \cite{Curtosalinas} is a basic tool in what follows. 
\begin{proposition}\label{derivativeofK}
Let $ K :\Omega\times \Omega\to \mathcal M_{k}(\mathbb{C})$ be a non-negative definite kernel. For every $\boldsymbol{i}\in \mathbb Z_{+}^m$, $\eta \in \mathbb{C}^k $ and $ w\in\Omega,$ we have  

\begin{itemize}
\item[\rm (i)] ${\bar{\partial}}^{\boldsymbol{i}}K(\cdot,w)\eta$ is in $(\hl, K),$ 
\item[\rm (ii)]$
\left \langle f,\bar{\partial}^{\boldsymbol{i}} K(\cdot,w)\eta \right \rangle _{(\mathcal{H}, K)}=\left \langle (\partial^{\boldsymbol i}f)(w),\eta\right \rangle_{\mathbb C^k}, 
f\in (\hl, K).$
\end{itemize}
\end{proposition}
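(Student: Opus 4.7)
The plan is to prove the proposition by induction on $|\boldsymbol i|$. The base case $|\boldsymbol i|=0$ is precisely the defining reproducing property of $(\mathcal H, K)$ recalled in the paper's introduction: $K(\cdot,w)\eta$ lies in $(\mathcal H, K)$ and pairs with $f$ to give $\langle f(w),\eta\rangle$. For the inductive step, suppose (i) and (ii) hold for every multi-index of size at most $n$, and let $\boldsymbol j = \boldsymbol i + e_\ell$ with $|\boldsymbol i|=n$ and $\ell\in\{1,\ldots,m\}$. Fix $w\in \Omega$ and, for small nonzero $h\in \mathbb C$ with $w+he_\ell\in\Omega$, form the difference quotient
\[
g_h := \frac{\bar\partial^{\boldsymbol i} K(\cdot,w+he_\ell)\eta - \bar\partial^{\boldsymbol i} K(\cdot,w)\eta}{\bar h},
\]
which lies in $(\mathcal H, K)$ by the inductive hypothesis.

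Applying the inductive formula in (ii) to each summand, one gets, for every $f\in (\mathcal H, K)$,
\[
\langle f, g_h\rangle = \frac{\langle (\partial^{\boldsymbol i} f)(w+he_\ell),\eta\rangle - \langle (\partial^{\boldsymbol i} f)(w),\eta\rangle}{h}.
\]
Since $\partial^{\boldsymbol i} f$ is holomorphic on $\Omega$, this converges as $h\to 0$ to $\langle (\partial^{\boldsymbol j} f)(w),\eta\rangle$. To promote this pointwise identification to an element of $(\mathcal H, K)$, I would show $\|g_h\|$ remains bounded as $h\to 0$. Setting $F(z,w):=\partial^{\boldsymbol i}_z\bar\partial^{\boldsymbol i}_w K(z,w)$ and invoking the inductive formula a second time to pair the kernel derivatives against each other, a direct computation yields
\[
\|g_h\|^2 = \frac{\big\langle\bigl[F(w+he_\ell,w+he_\ell) - F(w,w+he_\ell) - F(w+he_\ell,w) + F(w,w)\bigr]\eta,\eta\big\rangle}{|h|^2}.
\]
Because $F$ is sesqui-analytic, Taylor-expanding in the independent variables $h$ and $\bar h$ collapses the bracketed expression to $|h|^2\,\partial_\ell\bar\partial_\ell F(w,w) + O(|h|^3)$, so $\|g_h\|^2$ converges to the finite quantity $\langle\partial^{\boldsymbol j}_z\bar\partial^{\boldsymbol j}_w K(w,w)\eta,\eta\rangle$.

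With the norm bound in hand, the linear functional $f\mapsto \langle(\partial^{\boldsymbol j}f)(w),\eta\rangle$ — being the pointwise limit of the uniformly bounded functionals $\langle\,\cdot\,,g_h\rangle$ — is itself bounded on $(\mathcal H, K)$; its Riesz representative is, by definition, $\bar\partial^{\boldsymbol j} K(\cdot,w)\eta$, and this establishes (i) and (ii) at level $\boldsymbol j$, closing the induction. The one nonroutine step is the uniform bound on $\|g_h\|$: it is the single place where sesqui-analyticity is used essentially, through the mixed second-order difference calculation, and it is what upgrades the pointwise identity for inner products to an actual vector in $(\mathcal H, K)$. Everything else — the reproducing property, the holomorphic difference quotient, and Riesz representation — is standard.
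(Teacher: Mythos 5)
Your argument is correct. Note that the paper itself gives no proof of this proposition: it is quoted as Lemma 4.1 of Curto--Salinas, so there is nothing in the text to compare against; your difference-quotient induction is essentially the standard argument from that source. The two computations you single out are indeed the crux: the inductive pairing formula turns $\|g_h\|^2$ into the second-order mixed difference of $F(z,w)=\partial_z^{\boldsymbol i}\bar\partial_w^{\boldsymbol i}K(z,w)$, and sesqui-analyticity (which the paper assumes of all its kernels, even though the bare statement says only ``non-negative definite'') makes that difference $h\bar h\,\partial_\ell\bar\partial_\ell F(w,w)+O(|h|^3)$, giving the uniform bound. The only step worth spelling out is the final identification ``its Riesz representative is, by definition, $\bar\partial^{\boldsymbol j}K(\cdot,w)\eta$'': the Riesz representative $g$ is a priori just some vector satisfying $\langle f,g\rangle=\langle(\partial^{\boldsymbol j}f)(w),\eta\rangle$, whereas part (i) asserts that the concrete function $z\mapsto\bar\partial^{\boldsymbol j}K(z,w)\eta$ lies in $(\mathcal H,K)$. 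This is closed either by testing $g$ against $K(\cdot,z)\xi$ and using the Hermitian symmetry $K(z,w)^*=K(w,z)$, or more simply by observing that your norm bound makes $g_h\to g$ weakly, that weak convergence in a reproducing kernel Hilbert space implies pointwise convergence, and that $g_h(z)$ converges pointwise to $\bar\partial^{\boldsymbol j}K(z,w)\eta$. With that sentence added the proof is complete.
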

Here and throughout  this paper, 
for any non-negative definite kernel $K:\Omega\times \Omega \to \mathcal M_k(\mathbb C)$ and $\eta \in \mathbb{C}^k$,  let 
$\;{\bar{\partial}}^{\boldsymbol{i}}K(\cdot,w)\eta$ denote the function $\big(\tfrac{\partial}{\partial\overbar{w}_1}\big)^{i_1}\cdots \big(\tfrac{\partial}{\partial \overbar{w}_m}\big)^{i_m}K(\cdot,w)\eta$ 
and $({{\partial}}^{\boldsymbol i}f)(z)$ be the function $\big(\tfrac{\partial}{\partial {z_1}}\big)^{i_1}\cdots \big(\tfrac{\partial}{\partial {z_m}}\big)^{i_m} f (z), \boldsymbol{i}=(i_1,\ldots,i_m)\in \mathbb Z_{+}^m$.

\begin{proposition}\label{k^2curv}
Let $\Omega$ be a bounded domain in $\mathbb{C}^m$ and  
$K:\Omega \times \Omega \to \mathbb C $ be a sesqui-analytic function. 
Suppose that  $K^\alpha$ and $K^\beta$, defined on $\Omega\times \Omega,$ are non-negative definite for some $\alpha,\, \beta >0$. 
Then the function 
$$K^{\alpha+\beta}(z,w)\Big(\,\big(\partial_i\bar{\partial}_j\log K\big)(z,w) \,\Big)_{i,j=1}^ m, \,\, z,w \in \Omega,$$ 	
is a non-negative definite kernel on $\Omega\times\Omega$ taking values in $\mathcal M_m(\mathbb C)$.
\end{proposition}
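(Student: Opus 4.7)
The plan is to realize the matrix entries of $\mathbb{K}^{(\alpha,\beta)}$, up to a positive scalar, as a Gram matrix of vectors in the tensor product Hilbert space $(\mathcal H, K^\alpha)\otimes(\mathcal H, K^\beta)$, and then invoke Kolmogorov's lemma (Lemma 2.1). Concretely, for each $w\in\Omega$ and each $i\in\{1,\ldots,m\}$, I would introduce the anti-holomorphic vector
$$
\phi_i(w) \;:=\; \beta\,\bar\partial_i K^\alpha(\cdot,w)\otimes K^\beta(\cdot,w)\;-\;\alpha\,K^\alpha(\cdot,w)\otimes \bar\partial_i K^\beta(\cdot,w),
$$
which lies in $(\mathcal H, K^\alpha)\otimes(\mathcal H, K^\beta)$ by Proposition 2.2. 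Antiholomorphicity of $w\mapsto\phi_i(w)$ is immediate because both $K^\alpha(\cdot,w)$ and $\bar\partial_i K^\alpha(\cdot,w)$ depend antiholomorphically on $w$, and similarly for $K^\beta$.

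Next I would compute the Gram matrix $\bigl(\langle \phi_j(\rho),\phi_i(w)\rangle\bigr)_{i,j=1}^m$ using the reproduction formulas
$$
\langle K^\alpha(\cdot,\rho),K^\alpha(\cdot,w)\rangle=K^\alpha(w,\rho),\qquad \langle \bar\partial_j K^\alpha(\cdot,\rho),\bar\partial_i K^\alpha(\cdot,w)\rangle=\partial_i\bar\partial_j K^\alpha(w,\rho),
$$
from Proposition 2.2, together with the analogous identities for $K^\beta$. After expanding the four terms, I would rewrite all derivatives of $K^\alpha$ and $K^\beta$ in terms of $K^{\alpha+\beta}$ and derivatives of $\log K$ using the chain rule
$$
\partial_i K^\alpha=\alpha K^\alpha\,\partial_i\log K,\qquad \partial_i\bar\partial_j K^\alpha=\alpha K^\alpha\,\partial_i\bar\partial_j\log K+\alpha^2 K^\alpha\,\partial_i\log K\,\bar\partial_j\log K,
$$
and similarly for $K^\beta$. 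The three terms containing $\partial_i\log K\cdot\bar\partial_j\log K$ must cancel, leaving only the coefficients of $\partial_i\bar\partial_j\log K$; the identity I expect to verify is
$$
\langle \phi_j(\rho),\phi_i(w)\rangle \;=\; \alpha\beta(\alpha+\beta)\,K^{\alpha+\beta}(w,\rho)\,\partial_i\bar\partial_j\log K(w,\rho).
$$

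Once this identity is in hand, Lemma 2.1 applied to the antiholomorphic functions $\phi_1,\ldots,\phi_m\colon\Omega\to(\mathcal H,K^\alpha)\otimes(\mathcal H,K^\beta)$ shows that the matrix kernel $(w,\rho)\mapsto\bigl(\langle\phi_j(\rho),\phi_i(w)\rangle\bigr)_{i,j=1}^m$ is non-negative definite and takes values in $\mathcal M_m(\mathbb C)$. Dividing by the strictly positive constant $\alpha\beta(\alpha+\beta)$ yields the non-negative definiteness of $\mathbb K^{(\alpha,\beta)}$.

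The only real obstacle is the bookkeeping in the Gram matrix calculation: four terms must be expanded, each producing two summands via the logarithmic-derivative formula, and one must check that six of the eight resulting summands combine to zero while the remaining two collapse to $\alpha\beta(\alpha+\beta)\,K^{\alpha+\beta}\partial_i\bar\partial_j\log K$. The coefficients $\alpha$ and $\beta$ in the definition of $\phi_i(w)$ are chosen precisely so that this cancellation happens; picking them is the one non-routine design choice in the argument.
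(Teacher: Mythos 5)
Your proposal is correct and follows essentially the same route as the paper: the same vectors $\phi_i(w)=\beta\,\bar\partial_i K^\alpha(\cdot,w)\otimes K^\beta(\cdot,w)-\alpha\,K^\alpha(\cdot,w)\otimes\bar\partial_i K^\beta(\cdot,w)$, the same Gram-matrix identity $\langle\phi_j(w),\phi_i(z)\rangle=\alpha\beta(\alpha+\beta)K^{\alpha+\beta}(z,w)\partial_i\bar\partial_j\log K(z,w)$, and the same concluding appeal to the Kolmogorov lemma. The only cosmetic difference is that the paper carries out the cancellation using derivatives of $K^\alpha$ and $K^\beta$ directly rather than via logarithmic derivatives.
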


\begin{proof} For ${1\leq i\leq m}$, set $\phi_i(z)= 
\beta \bar{\partial}_iK^\alpha(\cdot,z)\otimes K^\beta(\cdot,z)-
\alpha K^\alpha(\cdot,z)\otimes\bar{\partial}_i K^\beta(\cdot,z).$    
From Proposition $\ref{derivativeofK}$, it follows that each 
$\phi_i$ is a function from $\Omega$ into the Hilbert space $(\mathcal{H}, K^\alpha)\otimes( \mathcal{H}, K^\beta)$. 
Then we have
\begin{align*}
\left\langle \phi_j(w), \phi_i(z)\right\rangle
& =\beta^2\partial_i\bar{\partial}_jK^{\alpha}(z,w)K^{\beta}(z,w)+
\alpha^2K^{\alpha}(z,w)\partial_i\bar{\partial}_j K^{\beta}(z,w)\\
&\quad-\alpha\beta\big(\partial_iK^{\alpha}(z,w)
\bar{\partial}_j K^{\beta}(z,w)+\bar{\partial}_j K^{\alpha}(z,w)\partial_iK^{\beta}(z,w)\big)\\
&=\beta^2\big(\alpha(\alpha-1)K^{\alpha+\beta-2}(z,w)\partial_i K(z,w)\bar{\partial}_j K(z,w)
+\alpha K^{\alpha+\beta-1}(z,w)\partial_i\bar{\partial}_jK(z,w)\big)\\
& \quad +\alpha^2\big(\beta(\beta-1)K^{\alpha+\beta-2}(z,w)\partial_iK(z,w)\bar{\partial}_j K(z,w)
+\beta K^{\alpha+\beta-1}(z,w)\partial_i\bar{\partial}_j K(z,w)\big)\\
& \quad\quad \quad\quad  -2\alpha^2\beta^2K^{\alpha+\beta-2}(z,w)\partial_i K(z,w)\bar{\partial}_j K(z,w)\\
& = (\alpha^2\beta+\alpha\beta^2)K^{\alpha+\beta-2}(z,w)\big(K(z,w)\partial_i\bar{\partial}_j K(z,w)
-\partial_iK(z,w)\bar{\partial}_j K(z,w)\big)\\
&=\alpha\beta(\alpha+\beta) \ktilab.
\end{align*}
An application of  Lemma $\ref{grahmnnd}$ now completes the proof. 
\end{proof} 
The particular case, when $\alpha=1=\beta$ occurs repeatedly in the following. We therefore record it separately as a corollary.  
\begin{corollary}\label{cork^2curv}
Let $\Omega$ be a bounded domain in $\mathbb{C}^m$. 
If $K:\Omega\times\Omega\to\mathbb C$ is a non-negative definite 
kernel, then 
$$K^2(z,w)\big (\,\big(\partial_i \bar{\partial}_j \log K\big) (z,w)\,\big )_{i,j=1}^m$$
is also a non-negative definite kernel, defined on $\Omega\times \Omega$, taking values in $\mathcal M_m(\mathbb C)$. 
\end{corollary}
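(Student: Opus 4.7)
The plan is to obtain this corollary as a direct specialization of Proposition \ref{k^2curv} to the case $\alpha = \beta = 1$. With that choice, the two hypotheses of the proposition, namely that $K^\alpha$ and $K^\beta$ are non-negative definite, both reduce to the single hypothesis of the corollary that $K = K^1$ is non-negative definite. The conclusion of the proposition then reads $K^{\alpha+\beta}(z,w)\big(\partial_i\bar{\partial}_j\log K\big)(z,w)\big)_{i,j=1}^m = K^2(z,w)\big(\partial_i\bar{\partial}_j\log K\big)(z,w)\big)_{i,j=1}^m$ is non-negative definite with values in $\mathcal{M}_m(\mathbb{C})$, which is exactly the corollary. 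So strictly speaking there is nothing to prove beyond invoking Proposition \ref{k^2curv}.

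If one prefers a self-contained proof not appealing to the general statement, the same strategy specializes cleanly. For $1 \le i \le m$, define $\phi_i(z) = \bar{\partial}_i K(\cdot,z) \otimes K(\cdot,z) - K(\cdot,z) \otimes \bar{\partial}_i K(\cdot,z)$, regarded by Proposition \ref{derivativeofK} as an anti-holomorphic function from $\Omega$ into $(\mathcal{H},K) \otimes (\mathcal{H},K)$. A direct computation of $\langle \phi_j(w), \phi_i(z)\rangle$ using the reproducing property yields $2K(z,w)\partial_i\bar{\partial}_j K(z,w) - 2\partial_i K(z,w)\bar{\partial}_j K(z,w) = 2K^2(z,w)(\partial_i\bar{\partial}_j\log K)(z,w)$ upon applying the formula for $\partial_i\bar{\partial}_j\log K$ recorded earlier. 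Dividing by the scalar factor $2$ and applying Kolmogorov's Lemma \ref{grahmnnd} to the family $\{\phi_i/\sqrt{2}\}_{i=1}^m$ produces the required non-negative definite $m \times m$ matrix-valued kernel.

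There is no real obstacle here: the corollary is flagged in the text as the most frequently occurring specialization of Proposition \ref{k^2curv}, and the only point worth noting is that the case $\alpha = \beta = 1$ is self-contained in the sense that the non-negative definiteness of $K^\alpha$ and $K^\beta$ collapses to the non-negative definiteness of $K$ itself, with no need to invoke the theory of fractional powers or the Wallach set.
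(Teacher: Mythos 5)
Your proposal is correct and matches the paper exactly: the corollary is recorded there as the special case $\alpha=\beta=1$ of Proposition \ref{k^2curv}, with no separate argument given. Your self-contained version (the Gram computation with $\phi_i(z)=\bar{\partial}_iK(\cdot,z)\otimes K(\cdot,z)-K(\cdot,z)\otimes\bar{\partial}_iK(\cdot,z)$ yielding $2K^2\partial_i\bar{\partial}_j\log K$) is also the correct specialization of the proposition's proof, including the factor of $2$.
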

A more substantial corollary is the following, which is taken from \cite{{infinitelydivisiblemetric}}.  Here we provide a slightly different proof. Recall that a non-negative definite kernel $K:\Omega\times \Omega\to \mathbb C$ is said to be \emph{infinitely divisible} if for all $t>0$,  $K^t$ is also non-negative definite. 
\begin{corollary}
Let $\Omega$ be a bounded domain in $\mathbb{C}^m$.
Suppose that $ K:\Omega\times\Omega \to \mathbb C $ is an infinitely divisible kernel. Then the function $\big (\;\big(\partial_i\bar{\partial}_j \log K \big)(z,w)\;\big )_{i,j=1}^m $
is a non-negative definite kernel taking values in $\mathcal{M}_m(\mathbb{C})$. 
\end{corollary}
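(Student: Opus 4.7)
The plan is to deduce the corollary from Proposition \ref{k^2curv} by a limiting argument. Since $K$ is infinitely divisible, $K^{t}$ is non-negative definite for every $t>0$. Hence, for each $t>0$, the hypotheses of Proposition \ref{k^2curv} are satisfied with $\alpha=\beta=t$, yielding that
$$\mathbb{K}_t(z,w):=K^{2t}(z,w)\,\big((\partial_i\bar{\partial}_j\log K)(z,w)\big)_{i,j=1}^m$$
is an $\mathcal{M}_m(\mathbb{C})$-valued non-negative definite kernel on $\Omega\times\Omega$ for every $t>0$.

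Next, I would let $t\to 0^+$. Because a continuous branch of $\log K$ is presumed to exist (this is built into the definition of $K^t$ for arbitrary real $t$), one has $K^{2t}(z,w)=\exp(2t\log K(z,w))\to 1$ pointwise on $\Omega\times\Omega$ as $t\to 0^+$. Consequently $\mathbb{K}_t(z,w)$ converges, entrywise and pointwise, to $\big((\partial_i\bar{\partial}_j\log K)(z,w)\big)_{i,j=1}^m$. The final step is to invoke the standard observation that pointwise limits of non-negative definite kernels remain non-negative definite: for any finite collection $w_1,\dots,w_n\in\Omega$ and $\eta_1,\dots,\eta_n\in\mathbb{C}^m$, the scalar inequality $\sum_{k,l}\langle \mathbb{K}_t(w_k,w_l)\eta_l,\eta_k\rangle\ge 0$ survives in the limit, giving the desired conclusion.

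There is no substantial obstacle here: the hard work is already packaged inside Proposition \ref{k^2curv}, and the remaining ingredient is merely that non-negative definiteness is closed under pointwise limits. The only point requiring a moment of care is the pointwise convergence $K^{2t}\to 1$, for which the existence of a continuous logarithm of $K$ (and thus a well-defined $K^t=\exp(t\log K)$ for all $t\in\mathbb{R}$) is needed; this is precisely what infinite divisibility of $K$ supplies.
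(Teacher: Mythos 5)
Your proposal is correct and follows essentially the same route as the paper: the paper applies Corollary \ref{cork^2curv} to the kernel $K^t$ (yielding $tK^{2t}\partial_i\bar\partial_j\log K$ non-negative definite), which is the same as your invocation of Proposition \ref{k^2curv} with $\alpha=\beta=t$, and then both arguments let $t\to 0$ and use that pointwise limits of non-negative definite kernels are non-negative definite.
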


\begin{proof} For $t>0,$    $K^t(z,w)$ is non-negative definite by hypothesis. Then it follows, from Corollary $\ref{cork^2curv}$,  that 
$\big(\,K^{2t}\partial_i\bar{\partial}_j\log K^t(z,w)\, \big)_{i,j=1}^m$ is non-negative definite. Hence   $ \big(\,K^{2t}\partial_i\bar{\partial}_j\log K(z,w)\,\big)_{i,j=1}^m$ is non-negative definite for all $t>0$.
Taking the limit as $t\to 0,$ we conclude that $\big(\,\partial_i\bar{\partial}_j\log K(z,w)\,\big)_{i,j=1}^m $ is non-negative definite. 
\end{proof}

\begin{remark}
It is known that even if $K$ is a positive definite kernel,  $\big(\;\big(\partial_i\bar{\partial}_j\log K\big)(z,w)\;\big)_{i,j=1}^ m$ need not be a non-negative definite kernel. In fact, $ \big(\,\big(\partial_i\bar{\partial}_j\log K\big)(z,w)\,\big)_{i,j=1}^ m $ 
is non-negative definite if and only if $ K $ is infinitely divisible (see  \cite[Theorem 3.3]{infinitelydivisiblemetric}).

Let $K:\mathbb D\times \mathbb D \to \mathbb C$ be the positive definite kernel given by $ K(z,w)= 1+\sum_{i=1}^\infty a_i z^i\bar{w}^i,$  $z,w\in \mathbb D$, $a_i>0$.  
For any $ t>0$,  a direct computation gives  
\begin{align*}
\big(K^t \partial \bar{\partial} \log K\big)(z,w)&
= \big(1+\sum_{i=1}^\infty a_i z^i\bar{w}^i\big)^t\;\partial \bar{\partial}\big(\sum_{i=1}^\infty a_i z^i\bar{w}^i-\frac{(\sum_{i=1}^\infty a_i z^i\bar{w}^i)^2}{2}+\cdots\big)\\
&=(1+ta_1z\bar{w}+\cdots)
(a_1+2(2a_2-a_1^2)z\bar{w}+\cdots)\\
&=a_1+(4a_2+(t-2)a_1^2)z\bar{w}+\cdots.
\end{align*}
Thus, if $~ t < 2 $, one may choose $ a_1, a_2>0 $ such that
$~ 4a_2+(t-2)a_1^2 < 0$. Hence $\big(K^t\partial \bar{\partial} \log K\big)(z,w)$ 
cannot be a non-negative definite kernel. Therefore, in general,  for $ \big(\;\big(K^t\partial_i\bar\partial_j\log K\big)(z,w)\;\big )_{i,j=1}^ m $ 
to be non-negative definite, it is necessary that $t\geq 2$. 
\end{remark}

\subsection[Multiplication operator on $\big (\mathcal H, \mathbb K \big )$]{ Boundedness of the  multiplication operator on {${\big(\mathcal H, \mathbb K \big )}$}}
For  $\alpha, \beta >0$,  let $\mathbb K^{(\alpha, \beta)}$ denote the kernel $ K^{\alpha+\beta}(z,w)\Big (\,\, \big (\partial_i \bar{\partial}_j\log K\big) (z,w)\,\, \Big )_{i,j=1}^m$. 
If $\alpha=1=\beta$, then we write $\mathbb K$ instead of $\mathbb K^{(1, 1)}$. 
For a holomorphic function $f:\Omega\to \mathbb C,$ the operator $M_f$ of multiplication by $f$ on the linear space ${\rm Hol}(\Omega, \mathbb C^k)$
is defined by the rule $M_f h =f \, h,$ $h\in {\rm Hol}(\Omega, \mathbb C^k),$ where $(f\,h) (z) = f(z) h(z)$, $z\in \Omega$. The  boundedness criterion for the multiplication operator $M_f$ restricted to the Hilbert space $(\mathcal H, K)$ is well-known for the case of positive definite kernels. In what follows, often  we have to work with a kernel which is merely non-negative definite. A precise statement is given below. The first part is from  \cite{PaulsenRaghupati} and the second part follows from the observation that the boundedness of the operator $\sum_{i=1}^n M_iM_i^*$ is equivalent to the non-negative definiteness of the kernel  $ (c^2 - \langle z,w\rangle)K(z,w) $ for some positive constant $c$. 
\begin{lemma}\label{lembounded}
Let $\Omega\subset\mathbb C^m$ be a bounded domain and $K:\Omega \times 
\Omega \to \mathcal M_k(\mathbb C)$ be a non-negative 
definite kernel. 
\begin{itemize}
\item [\rm (i)] For any holomorphic function $f:\Omega \to \mathbb C$, the operator $M_f$ of multiplication by $f$ is bounded on $(\mathcal H, K)$ if and only if 
 there exists 
a constant $c >0$ such that $\big (c^2 -f(z)\overbar{f(w)} \big ) K(z,w)$ is non-negative definite on $\Omega\times \Omega$. In case $M_f$ is bounded, $\|M_f\|$ is the infimum of all
$c>0$ such that $\big (c^2 -f(z)\overbar{f(w)} \big ) K(z,w)$ is non-negative definite.
\item [\rm (ii)] The operator $M_{z_i}$ of multiplication by the $i$th coordinate function $z_i$ is bounded on $(\hl, K)$  for $i=1,\ldots,m$, if and only if there exists a constant $c>0$ such that $ \big(c^2 - \langle z,w\rangle\big)K(z,w) $ is non-negative definite.
\end{itemize}
\end{lemma}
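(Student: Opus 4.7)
My plan for part (i) is to exploit the standard reproducing-kernel adjoint identity. Because $K$ is a reproducing kernel (even when only non-negative definite, the linear span of the sections $\{K(\cdot,w)\eta : w\in\Omega, \eta\in\mathbb C^k\}$ is dense in $(\mathcal H, K)$), the reproducing property in Proposition \ref{derivativeofK} (the $\boldsymbol i = 0$ case) gives $M_f^*(K(\cdot,w)\eta) = \overline{f(w)}\,K(\cdot,w)\eta$ whenever $M_f$ is bounded. Now $\|M_f\|\le c$ is equivalent to $c^2 I - M_f M_f^* \ge 0$, and by density this operator inequality can be tested on arbitrary finite linear combinations $\sum_j K(\cdot, w_j)\eta_j$. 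A direct calculation using the adjoint formula yields
\begin{equation*}
\Bigl\langle (c^2 I - M_f M_f^*)\sum_j K(\cdot,w_j)\eta_j,\;\sum_i K(\cdot,w_i)\eta_i\Bigr\rangle
= \sum_{i,j} \bigl\langle \bigl(c^2 - f(w_i)\overline{f(w_j)}\bigr) K(w_i,w_j)\eta_j,\eta_i\bigr\rangle_{\mathbb C^k}.
\end{equation*}
Thus $c^2 I - M_f M_f^* \ge 0$ is equivalent to non-negative definiteness of $(c^2 - f(z)\overline{f(w)})K(z,w)$. For the converse direction one must argue that the non-negative definiteness produces a bounded $M_f$ in the first place: define $M_f^*$ on kernel sections by the formula $\overline{f(w)}K(\cdot,w)\eta$ and use the same quadratic-form computation to show this extends to a bounded operator of norm at most $c$; its adjoint is then easily identified with multiplication by $f$.

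The norm statement is then immediate: the set of admissible $c$ is $[\|M_f\|,\infty)$, so $\|M_f\|$ is its infimum.

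For part (ii), I would follow the authors' hint. Boundedness of the tuple $(M_{z_1},\dots,M_{z_m})$ is equivalent to boundedness of the positive operator $T := \sum_{i=1}^m M_{z_i}M_{z_i}^*$: one direction is the triangle inequality, and the other follows from $M_{z_i}M_{z_i}^* \le T$ in the operator order, which forces each $M_{z_i}$ to be bounded. Repeating the adjoint computation from part (i) with $f = z_i$ gives $M_{z_i}^*(K(\cdot,w)\eta) = \overline{w_i}K(\cdot,w)\eta$, and summing over $i$ shows that $c^2 I - T \ge 0$ tested against $\sum_j K(\cdot,w_j)\eta_j$ produces exactly the quadratic form associated with $(c^2 - \langle z, w\rangle)K(z,w)$. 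Therefore $T$ is bounded (by $c^2$) iff this kernel is non-negative definite.

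There is no real obstacle here; the only point requiring mild care is that $K$ is merely non-negative definite, so kernel sections may be linearly dependent, but their span is by construction dense in $(\mathcal H,K)$, which is all the density argument requires. The formula $M_f^*K(\cdot,w)\eta = \overline{f(w)}K(\cdot,w)\eta$ and its multivariable analogue $M_{z_i}^*K(\cdot,w)\eta = \overline{w_i}K(\cdot,w)\eta$ are the only inputs beyond elementary operator-theoretic facts.
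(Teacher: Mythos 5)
Your argument is correct and is exactly the standard one the paper has in mind: the paper offers no proof of its own, citing \cite{PaulsenRaghupati} for part (i) and recording part (ii) as the one-line observation about $\sum_i M_{z_i}M_{z_i}^*$ that you reproduce. The only point to phrase with care is the converse direction of (ii): you cannot literally form $T=\sum_i M_{z_i}M_{z_i}^*$ before knowing each $M_{z_i}$ is bounded, so either run the dense-span construction from your part (i) on the candidate adjoints $K(\cdot,w)\eta\mapsto \overline{w_i}\,K(\cdot,w)\eta$ simultaneously, or simply note that $\big(c^2-z_i\overbar{w_i}\big)K = \big(c^2-\langle z,w\rangle\big)K+\sum_{j\neq i} z_j\overbar{w_j}\,K$ is a sum of non-negative definite kernels and apply part (i) coordinatewise.
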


As we have pointed out, the distinction between the non-negative definite kernels and the positive definite ones is very significant. Indeed, as shown in \cite[Lemma 3.6]{Curtosalinas}, it is interesting that if the operator $\boldsymbol M_z:=(M_{z_1},\ldots,M_{z_m})$ is bounded on $(\mathcal H, K)$ for some non-negative definite kernel $K$ such that $K(z,z)$, $z\in \Omega$, is invertible, then $K$ is positive definite.  A direct proof of this statement, different from the inductive proof of Curto and Salinas  is in the PhD thesis of the first named author  \cite{SGhara}.

It is natural to ask if the operator $M_f$ is bounded on $(\mathcal H, K)$, then if it remains bounded on the Hilbert space $(\mathcal H, \mathbb K)$. From the Theorem stated below, in particular,  it follows that the operator $M_f$ is bounded on $(\mathcal H, \mathbb K)$ whenever it is bounded on $(\mathcal H, K)$.

\begin{theorem}\label{boundedness}
Let $\Omega\subset\mathbb C^m$ be a bounded domain and $K:\Omega \times \Omega \to \mathbb C$ be a non-negative definite kernel. Let $f:\Omega \to \mathbb C$ be an arbitrary holomorphic function. Suppose that there exists a constant $c>0$ such that $\big(c^2-f(z)\overbar{f(w)}\big)K(z,w)$ is 
non-negative definite on $\Omega\times\Omega$. Then the function 
$\big( c^2-f(z)\overbar{f(w)}\big)^2
\mathbb K (z,w)$
is non-negative definite on $\Omega\times \Omega$.
\end{theorem}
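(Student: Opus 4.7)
The plan is to reduce the statement to Corollary~\ref{cork^2curv} applied to an auxiliary kernel. Set $g(z,w):=c^2-f(z)\overline{f(w)}$ and
\[
\tilde K(z,w):=g(z,w)K(z,w)=\big(c^2-f(z)\overline{f(w)}\big)K(z,w),
\]
which is non-negative definite by hypothesis. Applying Corollary~\ref{cork^2curv} to $\tilde K$ immediately gives that the $m\times m$ matrix-valued kernel $\tilde K^2(z,w)\big(\partial_i\bar\partial_j\log\tilde K(z,w)\big)_{i,j=1}^m$ is non-negative definite. I interpret every such expression through the identity $K^2\partial_i\bar\partial_j\log K=K\partial_i\bar\partial_j K-\partial_i K\,\bar\partial_j K$, which is globally defined on $\Omega\times\Omega$ regardless of the existence of a branch of $\log K$.

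Next I would expand $\tilde K^2\partial_i\bar\partial_j\log\tilde K$ using $\tilde K=gK$. A direct application of the Leibniz rule, in which the two cross terms $gK(\partial_i g)(\bar\partial_j K)$ and $gK(\bar\partial_j g)(\partial_i K)$ coming from $\tilde K\partial_i\bar\partial_j\tilde K$ cancel against their counterparts in $\partial_i\tilde K\,\bar\partial_j\tilde K$, yields the product formula
\[
\tilde K\partial_i\bar\partial_j\tilde K-\partial_i\tilde K\,\bar\partial_j\tilde K = g^2\big(K\partial_i\bar\partial_j K-\partial_i K\,\bar\partial_j K\big)+K^2\big(g\partial_i\bar\partial_j g-\partial_i g\,\bar\partial_j g\big).
\]
In the notation of the theorem this reads $\tilde K^2\partial_i\bar\partial_j\log\tilde K=g^2\,K^2\partial_i\bar\partial_j\log K+K^2 g^2\partial_i\bar\partial_j\log g$.

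The decisive algebraic step is the computation of $g^2\partial_i\bar\partial_j\log g$. Using $\partial_i g=-(\partial_i f)(z)\overline{f(w)}$, $\bar\partial_j g=-f(z)\overline{(\partial_j f)(w)}$, and $\partial_i\bar\partial_j g=-(\partial_i f)(z)\overline{(\partial_j f)(w)}$, I obtain
\[
g\partial_i\bar\partial_j g-\partial_i g\,\bar\partial_j g=-(\partial_i f)(z)\overline{(\partial_j f)(w)}\Big[\big(c^2-f(z)\overline{f(w)}\big)+f(z)\overline{f(w)}\Big]=-c^2(\partial_i f)(z)\overline{(\partial_j f)(w)}.
\]
The cancellation of the $f(z)\overline{f(w)}$ terms is the small miracle that makes the proof work. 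Substituting back and rearranging gives the key identity
\[
\big(c^2-f(z)\overline{f(w)}\big)^2\mathbb K(z,w)=\tilde K^2(z,w)\big(\partial_i\bar\partial_j\log\tilde K(z,w)\big)_{i,j=1}^m+c^2\,K(z,w)^2\big((\partial_i f)(z)\overline{(\partial_j f)(w)}\big)_{i,j=1}^m.
\]

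It now suffices to observe that both summands on the right are non-negative definite matrix kernels. The first is such by Corollary~\ref{cork^2curv} applied to $\tilde K$. For the second, $K^2$ is non-negative definite as the Schur product of $K$ with itself, while the rank-one matrix kernel $\big((\partial_i f)(z)\overline{(\partial_j f)(w)}\big)_{i,j}$ is non-negative definite by Lemma~\ref{grahmnnd} with the (scalar) anti-holomorphic choice $\phi_i(z)=\overline{(\partial_i f)(z)}$. The Schur product theorem for a scalar non-negative definite kernel against a matrix-valued non-negative definite one then finishes the argument. The only real obstacle is bookkeeping in the Leibniz expansion and spotting the crucial cancellation $(c^2-f\bar f)+f\bar f=c^2$; once these are performed, the result is a clean consequence of tools already developed in the paper.
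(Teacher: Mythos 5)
Your proof is correct, and it arrives at the same structural decomposition as the paper: both set $G=\tilde K=(c^2-f\bar f)K$ and split $\big(c^2-f(z)\overline{f(w)}\big)^2\,\mathbb K(z,w)$ into $\tilde K^2\big(\partial_i\bar\partial_j\log\tilde K\big)$ plus a positive multiple of $K^2\big((\partial_i f)(z)\overline{(\partial_j f)(w)}\big)$, then invoke Corollary \ref{cork^2curv} for the first summand and Lemma \ref{grahmnnd} together with the Schur product for the second. Where you differ is in how the identity is obtained. The paper first proves, via the open mapping theorem, that $|f|<c$ on $\Omega$, so that $c^2-f(z)\overline{f(w)}$ is non-vanishing and $\big(c^2-f(z)\overline{f(w)}\big)^{-1}$ is a non-negative definite kernel given by a convergent geometric series; it then takes logarithms of the factorization $K=(c^2-f\bar f)^{-1}G$. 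You instead work throughout with the polarized expression $K\partial_i\bar\partial_j K-\partial_i K\,\bar\partial_j K$ and derive the decomposition as a purely algebraic Leibniz identity, which needs neither the non-vanishing of $c^2-f(z)\overline{f(w)}$ nor a branch of the logarithm; this is a genuine, if modest, simplification, since the preliminary claim and the power-series step become unnecessary (though the paper reuses the non-negative definiteness of $\big(c^2-f(z)\overline{f(w)}\big)^{-1}$ later, in the proof of Corollary \ref{thmboundedness}). Incidentally, your computation $g\,\partial_i\bar\partial_j g-\partial_i g\,\bar\partial_j g=-c^2(\partial_i f)(z)\overline{(\partial_j f)(w)}$ is the correct one: the paper's displayed formula for $\partial_i\bar\partial_j\log\big(c^2-f(z)\overline{f(w)}\big)^{-1}$ omits a factor of $c^2$ in the numerator, which is harmless for non-negative definiteness but explains why your final identity carries an extra $c^2$ on the second summand relative to the paper's equation \eqref{eqncurvineq}.
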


\begin{proof}
Without loss of generality, we assume that $f$ is non-constant
and $K$ is non-zero. The function $G(z,w):=\big(c^2-f(z)\overbar{f(w)}\big)K(z,w)$ 
is non-negative definite on $\Omega\times \Omega$ by hypothesis.  We claim that $|f(z)|< c$ for all $z$ in $\Omega$.
If not, then by the open mapping theorem, there exists an open set $\Omega_0\subset \Omega$ such that $|f(z)|>c$, $z\in \Omega_0$. Since $\big(c^2-|f(z)|^2\big)K(z,z)\geq 0$, it follows that $K(z,z)=0$ for all $z\in \Omega_0$. Now, let $h$ be an arbitrary vector in $(\hl, K)$. Clearly, 
$|h(z)|= |\left\langle h, K(\cdot,z)\right \rangle|\leq \|h\|\|K(\cdot,z)\|=
\|h\|{K(z,z)}^{\frac{1}{2}}=0$ for all $z\in \Omega_0$. 
Consequently, $h(z)=0$ on $\Omega_0$.
Since $\Omega$ is connected and $h$ is holomorphic, it follows that $h=0$. This contradicts the assumption that $K$ is non-zero verifying the validity of our claim. 

From the claim, we have that the function $c^2-f(z)\overbar{f(w)}$ is non-vanishing on $\Omega\times \Omega.$ 
Therefore, the kernel $K$ can be written as the product
$$K(z,w)=\frac{1}{\big(c^2-f(z)\overbar{f(w)}\big)}G(z,w),~z,w\in \Omega.$$
Since $|f(z)|<c$ on $\Omega$, the function $\frac{1}{\big(c^2-f(z)\overbar{f(w)}\big)}$ has a convergent power series expansion, namely, 
$$\frac{1}{\big(c^2-f(z)\overbar{f(w)}\big)} =\sum_{n=0}^\infty 
\frac{1}{c^{2(n+1)}} f(z)^n\overbar{f(w)}^n ,~~z,w\in \Omega.$$
Therefore it defines a non-negative definite kernel on $\Omega\times \Omega$. Note that
\begin{align*}
& \big(K(z,w)^2\partial_i\bar{\partial}_j\log K(z,w) \big)_{i,j=1}^m\\
&\quad \quad = \,\Big(K(z,w)^2  \partial_i\bar{\partial}_j\log \frac{1}{\big(c^2-f(z)\overbar{f(w)}\big)}\,
\Big)_{i,j=1}^m + \Big(\,K(z,w)^2\partial_i\bar{\partial}_j\log G(z,w)\,\Big)_{i,j=1}^m\\
&\quad \quad = \frac{1}{\big(c^2-f(z)\overbar{f(w)}\big)^2}\left(\,K(z,w)^2\Big(\partial_i f(z)\overbar{\partial_j f(w)}\,\Big)_{i,j=1}^m + G(z,w)^2\Big(\,\partial_i\bar{\partial}_j\log G(z,w)\, \Big)_{i,j=1}^m\right),
\end{align*}
where for the second equality, we have used that
$$\partial_i\bar{\partial}_j\log \frac{1}{\big(c^2-f(z)\overbar{f(w)}\big)}=\frac{\partial_i f(z)\overbar{\partial_j f(w)}}{\big(c^2-f(z)\overbar{f(w)}\big)^2},\;z,w\in\Omega,\;1\leq i,j\leq m.$$
Thus
\begin{align}\label{eqncurvineq} 
\begin{split}
&\big(c^2-f(z)\overbar{f(w)}\big)^2\mathbb K(z,w)\\
&\quad\quad \quad  = K(z,w)^2\Big(\, \partial_i f(z)\overbar{\partial_j f(w)}\,\Big)_{i,j=1}^m + \Big(\, G(z,w)^2\partial_i\bar{\partial}_j\log G(z,w)\, \Big)_{i,j=1}^m.
\end{split}
\end{align}
By Lemma $\ref{grahmnnd},\;$ the function $\big(\,\partial_i f(z)\overbar{\partial_j f(w)}\,\big)_{i,j=1}^m$ 
is non-negative definite on $\Omega\times\Omega$. Thus the product
$K(z,w)^2\big(\,\partial_i f(z)\overbar{\partial_jf(w)}\,\big)_{i,j=1}^m$ is also non-negative definite on $\Omega\times\Omega$.
Since $G$ is non-negative definite on $\Omega\times\Omega$, by Corollary $\ref{cork^2curv}$, the function $\big(\,G(z,w)^2\partial_i\bar{\partial}_j\log G(z,w)\,\big)_{i,j=1}^m$ 
is also non-negative definite on $\Omega\times\Omega$. The proof is now complete since the sum of two non-negative definite kernels remains non-negative definite.
\end{proof}
A sufficient condition for  the boundedness of the multiplication operator on the Hilbert space $\big(\mathcal H, \mathbb K\big)$
is an immediate Corollary.

\begin{corollary}\label{thmboundedness}
Let $\Omega\subset\mathbb C^m$ be a bounded domain and $K:\Omega \times \Omega \to \mathbb C$ 
be a non-negative definite kernel. Let $f:\Omega \to \mathbb C$ be a holomorphic function. Suppose that the multiplication operator $M_f$ on $(\mathcal{H}, K)$ is bounded. Then the multiplication operator $M_f$ is also bounded on $(\mathcal H, \mathbb K)$. 
\end{corollary}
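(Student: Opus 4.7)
The plan is to reduce to Theorem \ref{boundedness} by invoking Lemma \ref{lembounded}(i) at both ends. Applying (i) to the boundedness hypothesis produces a constant $c > 0$ such that the scalar kernel $(c^2 - f(z)\overbar{f(w)})K(z,w)$ is non-negative definite on $\Omega \times \Omega$, and Theorem \ref{boundedness} then delivers non-negative definiteness of the matrix-valued kernel $(c^2 - f(z)\overbar{f(w)})^2\,\mathbb{K}(z,w)$. What the matrix-valued form of Lemma \ref{lembounded}(i) demands, however, in order to conclude that $M_f$ is bounded on $(\mathcal H, \mathbb K)$, is the existence of some $c' > 0$ for which $(c'^2 - f(z)\overbar{f(w)})\,\mathbb{K}(z,w)$ is non-negative definite. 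The entire task thus reduces to lowering the exponent from two back down to one.

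The main obstacle is precisely this exponent gap, and I would close it by a Schur-product argument based on the strict inequality already isolated inside the proof of Theorem \ref{boundedness}. Assuming $K \not\equiv 0$, that proof establishes $|f(z)| < c$ for every $z \in \Omega$, hence $|f(z)\overbar{f(w)}| < c^2$ throughout $\Omega \times \Omega$. Consequently the geometric series
\begin{equation*}
\frac{1}{c^2 - f(z)\overbar{f(w)}} \;=\; \sum_{n=0}^{\infty} \frac{f(z)^n\,\overbar{f(w)}^n}{c^{2(n+1)}}
\end{equation*}
converges on $\Omega \times \Omega$ and, being a sum of rank-one non-negative definite scalar kernels, is itself a non-negative definite scalar-valued kernel.

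The proof is then completed by the factorization
\begin{equation*}
\bigl(c^2 - f(z)\overbar{f(w)}\bigr)\,\mathbb{K}(z,w) \;=\; \frac{1}{c^2 - f(z)\overbar{f(w)}}\;\bigl(c^2 - f(z)\overbar{f(w)}\bigr)^2\,\mathbb{K}(z,w).
\end{equation*}
The right-hand side is the entrywise Schur product of the scalar non-negative definite kernel constructed in the previous paragraph with the matrix-valued non-negative definite kernel supplied by Theorem \ref{boundedness}. Since the Schur product of a scalar non-negative definite kernel with a matrix-valued non-negative definite kernel is again non-negative definite (an immediate check on rank-one decompositions of the scalar factor), the left-hand side is non-negative definite on $\Omega \times \Omega$. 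Applying Lemma \ref{lembounded}(i) to the matrix-valued kernel $\mathbb{K}$ then yields boundedness of $M_f$ on $(\mathcal{H}, \mathbb{K})$ with $\norm{M_f}_{(\mathcal{H}, \mathbb{K})} \leq c$, so that $c' = c$ works.
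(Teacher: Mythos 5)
Your proposal is correct and follows essentially the same route as the paper: both apply Lemma \ref{lembounded}(i) to obtain the kernel $(c^2-f(z)\overbar{f(w)})K(z,w)$, invoke Theorem \ref{boundedness} to get non-negative definiteness of $(c^2-f(z)\overbar{f(w)})^2\,\mathbb K(z,w)$, and then multiply by the non-negative definite kernel $(c^2-f(z)\overbar{f(w)})^{-1}$ (obtained from the geometric series, exactly as in the proof of Theorem \ref{boundedness}) to drop the exponent back to one before applying Lemma \ref{lembounded}(i) a second time. The only difference is cosmetic: you re-derive the geometric-series expansion explicitly, whereas the paper simply cites it from the proof of Theorem \ref{boundedness}.
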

\begin{proof}
Since the operator $M_f$ is bounded on $(\mathcal{H}, K)$, by Lemma $ \ref{lembounded}$, we find a constant $c>0$ such that 
$\big(c^2-f(z)\overbar{f(w)}\big)K(z,w)$ is non-negative definite on $\Omega\times\Omega$. Then, by Theorem \ref{boundedness}, it follows that 
$\big(c^2-f(z)\overbar{f(w)}\big)^2
\mathbb K(z,w)$ is non-negative 
definite on $\Omega\times\Omega$. 
Also, from the proof of Theorem \ref{boundedness}, we have that $\big(c^2-f(z)\overbar{f(w)}\big)^{-1}$ is non-negative 
definite on $\Omega\times\Omega$ (assuming that $f$ is non-constant). Hence $\big(c-f(z)\overbar{f(w)}\big)\mathbb K(z,w)$, being the product of two non-negative definite kernels, 
is non-negative definite on $\Omega\times\Omega$. An application of Lemma 
\ref{lembounded}, a second time,  completes the proof.  
\end{proof}

A second Corollary provides a sufficient condition for the positive definiteness of the kernel $\mathbb K$.
\begin{corollary}
Let $\Omega\subset\mathbb C^m$ be a bounded domain and $K:\Omega \times \Omega \to \mathbb C$ be a non-negative definite kernel satisfying $K(w,w)>0$, $w\in \Omega$. Suppose that the multiplication operator $M_{z_i}$ on $(\hl, K)$ is bounded  for $i=1,\ldots, m$. Then the kernel $\;\mathbb K$ is positive definite on $\Omega\times \Omega.$

\begin{proof}
By Corollary \ref{cork^2curv}, we already have that $\mathbb K$
is non-negative definite. Moreover, since $M_{z_i}$ on $(\hl, K)$ is bounded  for $i=1,\ldots, m$, it follows from Theorem \ref{thmboundedness} that $M_{z_i}$ is bounded on $(\hl, \mathbb K)$ also. Therefore, 
using \cite[Lemma 3.6]{Curtosalinas}, we see that $\mathbb K$ is  positive definite if $\mathbb K(w,w)$ is invertible for all $w\in \Omega$. 
To verify this, set $$\phi_i(w)=\bar{\partial}_iK(\cdot,w)\otimes K(\cdot,w)-K(\cdot,w)\otimes \bar{\partial}_iK(\cdot,w),\; 1\leq i\leq m.$$
From the proof of Proposition \ref{k^2curv}, we see that $\mathbb K(w,w)=\frac{1}{2}\big( \left\langle \phi_j(w),\phi_i(w)\right\rangle\big)_{i,j=1}^m$. Therefore $\mathbb K(w,w)$ is invertible if the vectors $\phi_1(w),\ldots,\phi_m(w)$
are linearly independent.
Note that for $w=(w_1,\ldots,w_m)$ in $\Omega$ and $j=1,\ldots,m$, we have
$(M_{z_j}-w_j)^*K(\cdot,w)=0$.
Differentiating this equation with respect to $\bar{w}_i$, we obtain 
$$(M_{z_j}-w_j)^*\bar{\partial}_i K(\cdot,w)=\delta_{ij}K(\cdot,w),\,\, 1\leq i,j \leq m.$$ Thus 
\begin{equation}\label{eqnpostivedef}
\big((M_{z_j}-w_j)^*\otimes I\big)\big(\phi_i(w)\big)=\delta_{ij}K(\cdot,w)\otimes K(\cdot,w),\,\, 1\leq i,j \leq m.
\end{equation}

Now assume that $\sum_{i=1}^m c_i\phi_i(w)=0$ for some scalars $c_1,\ldots,c_m$. Then, for $1\leq j\leq m $, we have that
$\sum_{i=1}^m \big((M_{z_j}-w_j)^*\otimes I\big)\big(\phi_i(w)\big)=0$.  
Thus, using \eqref{eqnpostivedef}, we see that
$ c_j K(\cdot,w)\otimes K(\cdot,w)=0$. Since $K(w,w)>0$, we conclude that $c_j=0$. Hence the vectors $\phi_1(w),\ldots,\phi_m(w)$ are linearly independent. This completes the proof.
\end{proof}

\end{corollary}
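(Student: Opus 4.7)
The plan is to combine the tools developed earlier in this section with the Curto--Salinas criterion. Since $\mathbb K$ is non-negative definite by Corollary \ref{cork^2curv} and Theorem \ref{thmboundedness} yields that each $M_{z_i}$ is bounded on $(\mathcal H, \mathbb K)$, Lemma 3.6 of \cite{Curtosalinas} reduces the proof of positive definiteness to showing that the $m\times m$ matrix $\mathbb K(w,w)$ is invertible for every $w\in \Omega$.

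To do this, I would set $\phi_i(w):=\bar{\partial}_iK(\cdot,w)\otimes K(\cdot,w)-K(\cdot,w)\otimes \bar{\partial}_iK(\cdot,w)$, $1\leq i\leq m$, and note that the inner product computation in the proof of Proposition \ref{k^2curv}, specialized to $\alpha=\beta=1$, identifies $\mathbb K(w,w)$ up to the factor $\tfrac12$ with the Gram matrix $\big(\langle \phi_j(w),\phi_i(w)\rangle\big)_{i,j=1}^m$. Hence invertibility of $\mathbb K(w,w)$ is equivalent to linear independence of $\phi_1(w),\ldots,\phi_m(w)$ in $(\mathcal H, K)\otimes (\mathcal H, K)$.

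Linear independence follows by applying $(M_{z_j}-w_j)^*\otimes I$ to each $\phi_i(w)$. Differentiating the identity $(M_{z_j}-w_j)^*K(\cdot,w)=0$ with respect to $\bar w_i$ gives the key formula $(M_{z_j}-w_j)^*\bar{\partial}_iK(\cdot,w)=\delta_{ij}K(\cdot,w)$, so applying the operator collapses $\phi_i(w)$ to $\delta_{ij}K(\cdot,w)\otimes K(\cdot,w)$. A relation $\sum_{i=1}^m c_i\phi_i(w)=0$ then forces $c_j K(\cdot,w)\otimes K(\cdot,w)=0$ for every $j$, and since $K(w,w)>0$ we conclude $c_j=0$.

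The one delicate step in the plan is the differentiation identity for $(M_{z_j}-w_j)^*\bar{\partial}_iK(\cdot,w)$; this requires justifying that the bounded operator $(M_{z_j}-w_j)^*$ commutes with anti-holomorphic differentiation in the parameter $w$, which is standard given the anti-holomorphic dependence of $w\mapsto K(\cdot,w)$ as an $(\mathcal H, K)$-valued function and Proposition \ref{derivativeofK}. Everything else is a straightforward assembly of pieces already in hand.
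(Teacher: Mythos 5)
Your proposal is correct and follows essentially the same route as the paper's own proof: reduction to invertibility of $\mathbb K(w,w)$ via \cite[Lemma 3.6]{Curtosalinas}, identification of $\mathbb K(w,w)$ with half the Gram matrix of the vectors $\phi_i(w)$, and linear independence via the identity $(M_{z_j}-w_j)^*\bar{\partial}_iK(\cdot,w)=\delta_{ij}K(\cdot,w)$. Your remark about justifying the interchange of $(M_{z_j}-w_j)^*$ with $\bar\partial_i$ is a point the paper passes over silently, so flagging it is a mild improvement rather than a deviation.
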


\begin{remark}
Recall that an operator $T$ is said to be a $2-$hyper contraction if
$I-T^*T\geq 0$ and $I-2T^*T+{T^*}^2T^2\geq 0$. If $K:\mathbb D\times \mathbb D\to \mathbb C$ is a non-negative definite kernel, then  it is not hard to verify that the adjoint $M^*_z$ of the multiplication by  the coordinate function $z$ is a $2-$hyper contraction on $(\hl, K)$ if and only if $(1-z\bar{w})^2K$ is  non-negative definite. It follows from Theorem \ref{boundedness} that if $M_z^*$ on $(\hl, K)$
is a contraction, then $M_z^*$ on $(\hl, \mathbb K)$ is a $2-$hyper contraction. 
\end{remark}
\section
{Realization of $\big(\mathcal H, \mathbb K^{(\alpha, \beta)} \big )$}\label{secrealization}
Let $\Omega \subset \mathbb{C}^m$ be a bounded domain and $K:\Omega\times\Omega\to \mathbb C$ be a sesqui-analytic function.
Suppose that the functions $K^{\alpha}$ and $K^{\beta}$ are non-negative definite for some $\alpha,\beta>0$. 
In this section, we give a description of the Hilbert space $\big(\mathcal H, \,\mathbb K^{(\alpha, \beta)}\big)$.
As before, we set 
\begin{equation}\label{eqnphi_iw}
\phi_i(w)=\pa, \, 1\leq i\leq m,~w\in \Omega.
\end{equation}
Let $\mathcal{N}$ be the subspace of 
$(\mathcal{H}, K^\alpha)\otimes (\mathcal{H}, K^\beta)$ which is the closed linear span of the vectors 
$$\big\{\,\phi_i(w):w\in \Omega,\,1\leq i\leq m \, \big\}.$$
From the definition of $\mathcal N$, it is not easy to determine which vectors are in it. 
A useful alternative description of the space $\mathcal N$ is given below. 

Recall that $K^{\alpha}\otimes K^{\beta}$
is the reproducing kernel for the Hilbert space $\hla$, where the kernel $K^{\alpha}\otimes K^{\beta}$ on $(\Omega\times\Omega)\times (\Omega\times \Omega )$ is given by 
$$K^{\alpha}\otimes K^{\beta}(z,\zeta;z^\prime,\zeta')=
K^\alpha(z,z')K^\beta(\zeta,\zeta'),$$  
 $z=(z_1,\ldots,z_m)$, $\zeta=
(\zeta_1,\ldots,\zeta_m)$, $z'=(z_{m+1},\ldots,z_{2m})$, $\zeta'=(\zeta_{m+1},\ldots,\zeta_{2m})$ 
are in $\Omega$.
We realize the Hilbert space 
$(\mathcal H, K^\alpha)\otimes (\mathcal H, K^\beta)$ as a space consisting of holomorphic functions on $\Omega\times \Omega$. 
Let  $\mathcal{A}_0$ and $\mathcal{A}_1$ be the subspaces defined by
$$\mathcal{A}_0=\big\{f\in(\mathcal{H}, K^\alpha)\otimes 
(\mathcal{H}, K^\beta):f_{|\Delta}=0\big\}$$
and $$\mathcal{A}_1=\big\{f\in(\mathcal{H},K^\alpha)
\otimes (\mathcal{H}, K^\beta):f_{|\Delta}=
(\partial_{m+1} f)_{|\Delta}=\cdots=(\partial_{2m} f)_{|\Delta}=0\big\},$$
where $\Delta$ is the diagonal set $\{(z,z)\in \Omega\times \Omega:z\in \Omega\}$, 
$\partial_i f$ is the derivative of $f$ with respect to the $i$th variable,
and $f_{|\Delta}$, $(\partial_i f)_{|\Delta}$ denote the restrictions to the set $\Delta$ of the functions $f$, $\partial_i f$, respectively.
It is easy to see that both $\mathcal A_0$ and $\mathcal A_1$ are closed subspaces of the Hilbert space $(\mathcal{H}, K^\alpha)\otimes 
(\mathcal{H}, K^\beta)$ and $\mathcal A_1$ is a closed subspace of 
$\mathcal A_0.$ 

Now observe that, for $1\leq i \leq m,$ we have 
\begin{align}\label{eqndescrpN}
\begin{split}
&\bar{\partial}_i (K^{\alpha}\otimes K^{\beta})(\cdot,(z^\prime,\zeta^\prime))=\bar{\partial}_iK^\alpha(\cdot,z^\prime)\otimes K^\beta(\cdot,\zeta^\prime),~z^\prime,\zeta^\prime\in \Omega\\
&\bar{\partial}_{m+i}(K^{\alpha}\otimes K^{\beta}) (\cdot,(z^\prime,\zeta^\prime))
= K^{\alpha}(\cdot,z^\prime)\otimes \bar{\partial}_{i} K^\beta(\cdot,\zeta^\prime),~z^\prime,\zeta^\prime\in \Omega.
\end{split}
\end{align}
Hence, taking $z^\prime=\zeta^\prime=w, $ we see that 
\begin{equation}\label{eqnlemspan}
\phi_i(w) =\beta\bar{\partial}_i(K^{\alpha}\otimes K^{\beta})(\cdot,(w,w))-\alpha\bar{\partial}_{m+i}(K^{\alpha}\otimes K^{\beta})(\cdot,(w,w)).
\end{equation}
We now state a useful lemma on the Taylor coefficients of an analytic functions.  The straightforward proof follows from the chain rule  \cite[page
8]{Rudinunitball}, which is  omitted. 

\begin{lemma}\label{lemvanish}
Suppose that $f:\Omega\times\Omega\to\mathbb C$ is a holomorphic 
function satisfying $f_{|_{\Delta}}=0.$ Then
$$(\partial_i f)_{|\Delta}+(\partial_{m+i}f)_{|\Delta}=0, \quad 1 \leq i \leq m. $$
\end{lemma}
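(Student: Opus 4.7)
The plan is very short because this is essentially a one-line chain rule computation. I would introduce the auxiliary holomorphic function $g:\Omega\to\mathbb C$ defined by $g(z)=f(z,z)$, so that the hypothesis $f_{|\Delta}=0$ says precisely $g\equiv 0$ on $\Omega$. Since $g$ is holomorphic in $z$, all of its partial derivatives $\partial g/\partial z_i$ vanish identically on $\Omega$.

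Next I would compute $\partial g/\partial z_i$ by the holomorphic chain rule applied to the composition $z\mapsto (z,z)\mapsto f(z,z)$. Writing the $2m$ coordinates on $\Omega\times\Omega$ as $(\zeta_1,\ldots,\zeta_{2m})$, the map $z\mapsto (z,z)$ sends $z_i$ to $\zeta_i$ and also to $\zeta_{m+i}$ (with all other $\zeta_j$ independent of $z_i$). Hence
\begin{equation*}
\frac{\partial g}{\partial z_i}(z)=(\partial_i f)(z,z)+(\partial_{m+i}f)(z,z), \qquad 1\leq i\leq m.
\end{equation*}
Since the left-hand side is identically zero, restricting the right-hand side to the diagonal (which is what evaluation at $(z,z)$ is) gives the claimed identity $(\partial_i f)_{|\Delta}+(\partial_{m+i}f)_{|\Delta}=0$.

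There is no real obstacle here; the only thing to be careful about is to invoke the holomorphic chain rule rather than the full real chain rule, which is legitimate because $f$ is holomorphic on $\Omega\times\Omega$ and the embedding $z\mapsto(z,z)$ is holomorphic from $\Omega$ into $\Omega\times\Omega$. Thus no antiholomorphic derivatives enter, and the identity is an equality of holomorphic functions on $\Omega$. The reference to \cite[page 8]{Rudinunitball} in the statement is exactly to this chain-rule fact.
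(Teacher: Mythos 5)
Your argument is correct and is exactly the one the paper has in mind: the paper omits the proof, remarking only that it follows from the chain rule of \cite[page 8]{Rudinunitball}, which is precisely the computation you carry out via $g(z)=f(z,z)\equiv 0$. Nothing further is needed.
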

An alternative description of the subspace $\mathcal N$ of $(\mathcal{H}, K^\alpha)\otimes (\mathcal{H}, K^\beta)$ is provided below. 
\begin{proposition}\label{lemmaspan}
$\mathcal{N}=\mathcal{A}_0\ominus\mathcal{A}_1.$
\end{proposition}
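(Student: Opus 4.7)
The plan is to prove the two inclusions $\mathcal{N}\subseteq \mathcal{A}_0\ominus\mathcal{A}_1$ and $\mathcal{A}_0\ominus\mathcal{A}_1\subseteq \mathcal{N}$ separately, with the key computational input being the inner product of a generic vector $f$ with the spanning elements $\phi_i(w)$ of $\mathcal{N}$. Using \eqref{eqnlemspan} and the reproducing property in $(\mathcal H,K^\alpha)\otimes(\mathcal H,K^\beta)$, I expect to obtain the identity
\begin{equation*}
\langle f,\phi_i(w)\rangle \;=\; \beta\,(\partial_i f)(w,w)\;-\;\alpha\,(\partial_{m+i}f)(w,w),\qquad 1\le i\le m,\; w\in\Omega,
\end{equation*}
which will drive both directions.

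For $\mathcal N\subseteq \mathcal A_0\ominus\mathcal A_1$, I would first verify that each $\phi_i(w)$ lies in $\mathcal A_0$, i.e.\ $\phi_i(w)_{|\Delta}=0$. This is a direct calculation: on the diagonal, $\bar\partial_i K^\alpha(z,w)=\alpha K^{\alpha-1}(z,w)\bar\partial_i K(z,w)$ and similarly for $K^\beta$, so the two terms in the definition of $\phi_i(w)$ cancel identically. Next, for $f\in \mathcal A_1$ the identity above combined with Lemma~\ref{lemvanish} (applied to $f\in\mathcal A_0$, which gives $(\partial_i f)_{|\Delta}=-(\partial_{m+i}f)_{|\Delta}$) yields $(\partial_i f)_{|\Delta}=0=(\partial_{m+i}f)_{|\Delta}$, hence $\langle f,\phi_i(w)\rangle=0$. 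Thus each $\phi_i(w)\in\mathcal A_0\cap\mathcal A_1^\perp$, and taking closed spans gives the inclusion.

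For the reverse inclusion I would show the equivalent statement that any $g\in\mathcal A_0\ominus\mathcal A_1$ which is also orthogonal to $\mathcal N$ must vanish. If $g\perp\phi_i(w)$ for all $i$ and $w$, the displayed identity gives $\beta(\partial_i g)(w,w)=\alpha(\partial_{m+i}g)(w,w)$; combined with $(\partial_i g)(w,w)+(\partial_{m+i}g)(w,w)=0$ from Lemma~\ref{lemvanish} (since $g\in\mathcal A_0$), and using $\alpha+\beta>0$, one concludes $(\partial_{m+i}g)_{|\Delta}=0$ for every $i$. Hence $g\in\mathcal A_1$, but since we also assumed $g\perp\mathcal A_1$, this forces $g=0$.

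No step here is conceptually deep; the main technical point, and the one I would be most careful about, is the correct unravelling of \eqref{eqnlemspan} together with the reproducing identities \eqref{eqndescrpN} to produce the basic inner-product formula. Once that is in hand, the two cancellation arguments based on Lemma~\ref{lemvanish} and the strict positivity of $\alpha+\beta$ close the proof cleanly.
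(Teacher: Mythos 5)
Your proposal is correct and follows essentially the same route as the paper: the same inner-product identity $\langle f,\phi_i(w)\rangle=\beta(\partial_i f)(w,w)-\alpha(\partial_{m+i}f)(w,w)$ obtained from \eqref{eqnlemspan} and the reproducing property, combined with Lemma~\ref{lemvanish} and the cancellation on the diagonal. The only difference is organizational — the paper proves $\mathcal A_0\ominus\mathcal N=\mathcal A_1$ directly, while you prove $\mathcal N\subseteq\mathcal A_0\ominus\mathcal A_1$ and then show the residual orthocomplement is trivial — which is a logically equivalent rearrangement of the same ingredients.
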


\begin{proof}
For all $z\in \Omega$, we see that 
$$\phi_i(w)(z,z)=\alpha \beta K^{\alpha+\beta-1}(z,w)\bar \partial_iK(z,w)
-\alpha \beta K^{\alpha+\beta-1}(z,w)\bar \partial_iK(z,w)=0. $$
Hence each $\phi_i(w)$, $w\in \Omega, 1\leq i\leq m$, belongs to $\mathcal A_0$ and consequently, $\mathcal N \subset \mathcal A_0.$ Therefore, to complete the proof of
the proposition, it is enough to show that 
$\mathcal A_0\ominus\mathcal N =\mathcal A_1$.        
        
To verify this, note that $f\in {\mathcal N}^{\perp}$ if and only if $\left\langle f,\phi_i(w)\right\rangle=0$, $1\leq i \leq m$,  $w\in \Omega$. Now, in view of \eqref{eqnlemspan} and Proposition \ref{derivativeofK}, we have that 
\begin{align}\label{eqperp}
\begin{split}
\left\langle f,\phi_i(w)\right\rangle = 
&\left\langle f,\beta\bar{\partial}_i (K^{\alpha}\otimes K^{\beta})(\cdot,(w,w))-\alpha\bar{\partial}_{m+i}(K^{\alpha}\otimes K^{\beta})(\cdot,(w,w))\right\rangle \\
=&\beta({\partial}_{i}f)(w,w)
-\alpha ({\partial}_{m+i}f)(w,w),\,\,1\leq i \leq m,\,\, w\in \Omega.
\end{split}
\end{align}
Thus $f\in {\mathcal N}^{\perp}$ if and only if the function $\beta\,({\partial}_{i}f)_{|\Delta}
-\alpha\,({\partial}_{m+i}f)_{|\Delta}=0$, $1 \leq i \leq m$.  
Combining this with Lemma $\ref{lemvanish},$ we see that any $f\in \mathcal A_0\ominus\mathcal N,$ satisfies 
\begin{align*}
&\beta({\partial}_{i}f)_{|\Delta}
-\alpha ({\partial}_{m+i}f)_{|\Delta}=0,\\
&({\partial}_{i}f)_{|\Delta}+({\partial}_{m+i}f)_{|\Delta}=0, 
\end{align*}
for $1 \leq i \leq m$.
Therefore, we have $({\partial}_{i}f)_{|\Delta}=({\partial}_{m+i}f)_{|\Delta}=0$, $1\leq i \leq m$.
Hence $f$ belongs to $\mathcal{A}_1.$ 

Conversely, let $f\in \mathcal{A}_1$. In particular, $f\in \mathcal{A}_0.$ Hence invoking Lemma $\ref{lemvanish}$ once again, we see that
$$(\partial_{i}f)_{|\Delta}+(\partial_{m+i}f)_{|\Delta}=0, \,\, 1 \leq i \leq m.$$ 
Since $f$ is in $\mathcal A_1$, $({\partial}_{m+i}f)_{|\Delta}=0,$ $1\leq i \leq m$, by definition. Therefore,  $({\partial}_{i}f)_{|\Delta}=({\partial}_{m+i}f)_{|\Delta}=0,~1\leq i \leq m$, which implies
$$\beta({\partial}_{i}f)_{|\Delta}
-\alpha ({\partial}_{m+i}f)_{|\Delta}=0,~1\leq i \leq m.$$
Hence $f\in \mathcal A_0\ominus \mathcal N$, completing the proof.
\end{proof}
We now give a description of the Hilbert space 
$\big(\mathcal H, \,\mathbb K^{(\alpha, \beta)}\big)$.
Define a linear map $\mathcal R_1:\hla \to {\rm Hol}(\Omega,\mathbb{C}^m)$ by setting 
\begin{equation}\label{the map R_1}
\mathcal R_1(f)=
\frac{1}{\sqrt{\alpha\beta(\alpha+\beta)}}\begin{pmatrix}
(\beta\partial_{1}f-\alpha\partial_{m+1}f)_{|\Delta}\\
\vdots\\
(\beta\partial_{m}f-\alpha\partial_{2m}f)_{|\Delta}
\end{pmatrix}
\end{equation}
for $f\in \hla$ and note that 
\begin{equation}\label{eqnthemapR_1}
\mathcal R_1(f)(w)=
\frac{1}{\sqrt{\alpha\beta(\alpha+\beta)}}\begin{pmatrix}
\left\langle f, \phi_1(w)\right\rangle\\
\vdots\\
\left\langle f, \phi_m(w)\right\rangle
\end{pmatrix},~w\in \Omega.
\end{equation}

From Equation \eqref{eqnthemapR_1}, it is easy to see that 
$\ker \mathcal R_1=\mathcal N^\perp.$ We have $\mathcal N =\mathcal A_0\ominus\mathcal{A}_1$, see  Proposition \ref{lemmaspan}. Therefore, 
$\ker \mathcal R_1^\perp = \mathcal A_0\ominus\mathcal{A}_1$ 
and the map ${\mathcal R_1}_{|\mathcal{A}_0\ominus\mathcal{A}_1}\to \ran\mathcal R_1$ is bijective. 
Require this map to be a unitary by defining an appropriate inner product on $\ran {\mathcal R}_1$, that is, 
Set 
\begin{equation}\label{innerR}
\left\langle\mathcal R_1(f),\mathcal R_1(g)\right\rangle
:=\left\langle P_{\mathcal{A}_0\ominus\mathcal{A}_1}f,P_{\mathcal{A}_0\ominus\mathcal{A}_1}g\right\rangle,~f,g\in \hla,
\end{equation}
where $P_{\mathcal{A}_0\ominus\mathcal{A}_1}$ is the orthogonal projection of $\hla$ onto the subspace $\mathcal{A}_0\ominus\mathcal{A}_1$. This choice of the inner product on the range of  $\mathcal R_1$ makes the map $\mathcal R_1$ unitary.

\begin{theorem}\label{realization}
Let $\Omega \subset \mathbb{C}^m$ be a bounded domain and $K:\Omega\times\Omega\to \mathbb C$ be a sesqui-analytic function.
Suppose that the functions $K^{\alpha}$ and $K^{\beta}$ are non-negative definite for some $\alpha,\beta>0$. Let $\mathcal R_1$ be the map defined by $\eqref{the map R_1}$. Then the Hilbert space determined by the non-negative definite kernel $\mathbb K^{(\alpha,\beta)}$ coincides with the space ${\rm ran}~\mathcal R_1$ and the inner product given by \eqref{innerR} on ${\rm ran}~\mathcal R_1$ agrees with the one induced by the kernel ~$\mathbb K^{(\alpha,\beta)}$. 
\end{theorem}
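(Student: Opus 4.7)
The plan is to identify $\mathrm{ran}\,\mathcal{R}_1$, equipped with the inner product \eqref{innerR}, as the reproducing kernel Hilbert space of $\mathbb{K}^{(\alpha,\beta)}$ by verifying the reproducing kernel property on a set whose span is dense. The central ingredient is already available: from the computation in the proof of Proposition \ref{k^2curv}, we have the identity
$$\langle \phi_j(w), \phi_i(z)\rangle = \alpha\beta(\alpha+\beta)\, \mathbb{K}^{(\alpha,\beta)}(z,w)_{ij},\quad z,w\in\Omega,\;1\leq i,j\leq m.$$
Combined with \eqref{eqnthemapR_1}, this yields the key formula
$$\mathcal{R}_1(\phi_j(w))(z) = \sqrt{\alpha\beta(\alpha+\beta)}\,\mathbb{K}^{(\alpha,\beta)}(z,w)\,e_j,$$
so that every column of $\mathbb{K}^{(\alpha,\beta)}(\cdot, w)$ lies in $\mathrm{ran}\,\mathcal{R}_1$ (up to the scalar $\sqrt{\alpha\beta(\alpha+\beta)}$).

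Next, I would verify the reproducing property. Given any $g\in\mathrm{ran}\,\mathcal{R}_1$, we have $g = \mathcal{R}_1(f)$ for a unique $f\in\mathcal{N}=\mathcal{A}_0\ominus\mathcal{A}_1$ (using Proposition \ref{lemmaspan} together with $\ker\mathcal{R}_1=\mathcal{N}^\perp$). Then, exploiting the fact that $\mathcal{R}_1$ is unitary from $\mathcal{N}$ onto $\mathrm{ran}\,\mathcal{R}_1$ by the choice \eqref{innerR}, one computes
$$\bigl\langle g,\, \mathbb{K}^{(\alpha,\beta)}(\cdot,w)e_j \bigr\rangle = \tfrac{1}{\sqrt{\alpha\beta(\alpha+\beta)}}\bigl\langle \mathcal{R}_1(f), \mathcal{R}_1(\phi_j(w))\bigr\rangle = \tfrac{1}{\sqrt{\alpha\beta(\alpha+\beta)}}\bigl\langle f, \phi_j(w)\bigr\rangle = g_j(w),$$
where the last equality is just \eqref{eqnthemapR_1}. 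Extending linearly in $\eta\in\mathbb{C}^m$ gives the full reproducing property $\langle g, \mathbb{K}^{(\alpha,\beta)}(\cdot,w)\eta\rangle = \langle g(w),\eta\rangle_{\mathbb{C}^m}$. Because $\mathrm{ran}\,\mathcal{R}_1$ is already a Hilbert space (as the image of $\mathcal{N}$ under a unitary), and the reproducing kernel is unique, this identifies $\mathrm{ran}\,\mathcal{R}_1$ with $(\mathcal{H},\mathbb{K}^{(\alpha,\beta)})$.

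The main subtlety, which Proposition \ref{lemmaspan} has already cleared, is the handling of $\ker\mathcal{R}_1$. The inner product \eqref{innerR} is \emph{a priori} defined using the projection onto $\mathcal{A}_0\ominus\mathcal{A}_1$, and one must know this equals $\mathcal{N}$ for the above unitarity argument to work; that is precisely Proposition \ref{lemmaspan}. Density of $\{\mathbb{K}^{(\alpha,\beta)}(\cdot,w)\eta : w\in\Omega,\ \eta\in\mathbb{C}^m\}$ in $\mathrm{ran}\,\mathcal{R}_1$ is then automatic, since $\mathcal{N}$ is by definition the closed span of the $\phi_i(w)$'s and $\mathcal{R}_1|_{\mathcal{N}}$ is unitary onto $\mathrm{ran}\,\mathcal{R}_1$. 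No substantial obstacle remains once one has recognized that the Gram computation behind Proposition \ref{k^2curv} directly exhibits $\mathbb{K}^{(\alpha,\beta)}$ as the kernel obtained by polarizing the map $\mathcal{R}_1$.
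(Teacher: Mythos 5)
Your proposal is correct and follows essentially the same route as the paper's proof: both rest on the Gram identity $\langle \phi_j(w),\phi_i(z)\rangle=\alpha\beta(\alpha+\beta)\,\mathbb K^{(\alpha,\beta)}(z,w)_{ij}$ from Proposition \ref{k^2curv}, the resulting formula $\mathcal R_1(\phi_j(w))=\sqrt{\alpha\beta(\alpha+\beta)}\,\mathbb K^{(\alpha,\beta)}(\cdot,w)e_j$, and the verification of the reproducing property via the unitarity of $\mathcal R_1$ on $\mathcal A_0\ominus\mathcal A_1=\mathcal N$. Your explicit remarks on the role of Proposition \ref{lemmaspan} and on the density of the kernel functions only make explicit what the paper leaves implicit.
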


\begin{proof}
Let $\{e_1,\ldots,e_m\}$ be the standard orthonormal basis of $\mathbb C^m$. For $1\leq i,j\leq m,$ from the proof of Proposition $\ref{k^2curv}$, we have 
\begin{align}
\left\langle \phi_j(w),\phi_i(z)\right\rangle&=
\alpha\beta(\alpha+\beta) \ktilab\\
&=\alpha\beta(\alpha+\beta)\left\langle\mathbb K^{(\alpha,\beta)}(z,w)e_j,e_i\right\rangle_{\mathbb C^m},~z,w\in \Omega.
\end{align}
Therefore, from \eqref{eqnthemapR_1}, it follows that for all $w\in \Omega$ and $1\leq j\leq m,$
$$\mathcal R_1(\phi_j(w))=
\sqrt{\alpha\beta(\alpha+\beta)}\mathbb K^{(\alpha,\beta)}(\cdot,w)e_j.$$
Hence, for all $w\in \Omega$ and $\eta\in \mathbb C^m,$ $\mathbb K^{(\alpha,\beta)}(\cdot,w)\eta$ belongs to 
$\ran \mathcal R_1.$ Let $\mathcal R_1(f)$ be an arbitrary element in $\ran \mathcal R_1$ where 
$f\in \mathcal{A}_0\ominus\mathcal{A}_1.$ Then 
\begin{align*}
\left\langle \mathcal R_1(f),\mathbb K^{(\alpha,\beta)}(\cdot,w)e_j\right\rangle &
=\frac{1}{\sqrt{\alpha\beta(\alpha+\beta)}}\left\langle \mathcal R_1(f),
\mathcal R_1(\phi_j(w))\right\rangle\\
&=\frac{1}{\sqrt{\alpha\beta(\alpha+\beta)}}\left\langle f,\phi_j(w)\right\rangle\\
&=\frac{1}{\sqrt{\alpha\beta(\alpha+\beta)}}(\beta\partial_j
f(w,w)-\alpha\partial_{m+j}f(w,w))\\
&=\left\langle \mathcal R_1(f)(w),e_j\right\rangle_{\mathbb C^m},
\end{align*}
where the second equality follows since both $f$ and $\phi_j(w)$ belong to $\mathcal{A}_0\ominus\mathcal{A}_1$. 
This completes the proof.
\end{proof}
We obtain the density of polynomials in $\big(\,\mathcal H, \mathbb K^{(\alpha,\beta)}\big)$ as a consequence of this theorem. 
Let $\boldsymbol z = (z_1, \ldots , z_m)$ and let $\mathbb C[\boldsymbol z]:=\mathbb C[z_1,\ldots ,z_{m}]$ denote the ring of polynomials in $m$-variables. 
The 
following proposition gives a sufficient condition for density of $\mathbb C[\boldsymbol z]\otimes \mathbb C^m$ in the Hilbert space $\big(\,\mathcal H, \mathbb K^{(\alpha,\beta)}\big)$.

\begin{proposition}\label{proppoldense}
Let $\Omega \subset \mathbb{C}^m$ be a bounded domain and $K:\Omega\times\Omega\to \mathbb C$ be a sesqui-analytic function such that the functions $K^{\alpha}$ and $K^{\beta}$ are non-negative definite on $\Omega\times\Omega$ for some $\alpha,\beta>0$. Suppose that both the Hilbert spaces $(\mathcal H, K^\alpha)$ and
$(\mathcal H, K^{\beta})$ contain the polynomial ring $\mathbb C[\boldsymbol z]$ as a dense subset. Then 
the Hilbert space $\big(\mathcal H, \mathbb K^{(\alpha,\beta)}\big)$ contains  the ring $\mathbb C[\boldsymbol z]\otimes \mathbb C^m$ as a dense subset.  
\end{proposition}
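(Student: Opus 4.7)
The plan is to use Theorem \ref{realization} to transport density from the tensor product $\hla$ to $\big(\mathcal{H}, \mathbb{K}^{(\alpha,\beta)}\big)$ through the map $\mathcal{R}_1$. From \eqref{innerR}, the map $\mathcal{R}_1$ satisfies $\|\mathcal{R}_1(f)\|^2 = \|P_{\mathcal{A}_0 \ominus \mathcal{A}_1} f\|^2 \le \|f\|^2$, so it is a contractive linear surjection onto $\big(\mathcal{H}, \mathbb{K}^{(\alpha,\beta)}\big)$. Under the standing hypothesis that $\mathbb{C}[\boldsymbol{z}]$ is dense in each of $(\mathcal{H}, K^\alpha)$ and $(\mathcal{H}, K^\beta)$, the algebraic tensor product $\mathbb{C}[\boldsymbol{z}]\otimes \mathbb{C}[\boldsymbol{\zeta}]$, which coincides with $\mathbb{C}[\boldsymbol{z},\boldsymbol{\zeta}]$, is dense in $\hla$; hence by continuity of $\mathcal{R}_1$, the image $\mathcal{R}_1(\mathbb{C}[\boldsymbol{z},\boldsymbol{\zeta}])$ is dense in $\big(\mathcal{H}, \mathbb{K}^{(\alpha,\beta)}\big)$.

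Next I would observe from \eqref{the map R_1} that each component of $\mathcal{R}_1(f)$ is a combination of partial derivatives of $f$ restricted to the diagonal; when $f \in \mathbb{C}[\boldsymbol{z},\boldsymbol{\zeta}]$, these components are polynomials in $\boldsymbol{z}$. Thus $\mathcal{R}_1(\mathbb{C}[\boldsymbol{z},\boldsymbol{\zeta}])\subset \mathbb{C}[\boldsymbol{z}]\otimes \mathbb{C}^m$. Combined with the density just established, to conclude the proof it remains only to verify the containment $\mathbb{C}[\boldsymbol{z}]\otimes \mathbb{C}^m \subset \big(\mathcal{H}, \mathbb{K}^{(\alpha,\beta)}\big)$. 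For this, given an arbitrary polynomial column vector $q = (q_1,\ldots,q_m)^{\mathrm{tr}}$, I would exhibit the explicit polynomial preimage
\begin{equation*}
F_q(\boldsymbol{z},\boldsymbol{\zeta}) \;:=\; -\sqrt{\tfrac{\alpha\beta}{\alpha+\beta}}\sum_{i=1}^m (\zeta_i - z_i)\, q_i(\boldsymbol{z}),
\end{equation*}
which lies in $\mathbb{C}[\boldsymbol{z},\boldsymbol{\zeta}]\subset \hla$ and vanishes on $\Delta$. A short computation of $\partial_{m+i} F_q|_\Delta = -\sqrt{\alpha\beta/(\alpha+\beta)}\,q_i$, together with Lemma \ref{lemvanish} which forces $\partial_i F_q|_\Delta = +\sqrt{\alpha\beta/(\alpha+\beta)}\,q_i$, then substitution into \eqref{the map R_1}, will yield $\mathcal{R}_1(F_q) = q$. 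Hence $q \in \ran \mathcal{R}_1 = \big(\mathcal{H}, \mathbb{K}^{(\alpha,\beta)}\big)$, completing the proof.

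There is no serious obstacle in the argument; the whole proof is essentially transport of structure through the contractive surjection $\mathcal{R}_1$, once one observes that $\mathcal{R}_1$ sends polynomials to polynomial vectors. The only mildly creative ingredient is the ansatz $F_q$: it is forced to lie in $\mathcal{A}_0$ so that Lemma \ref{lemvanish} controls the $\boldsymbol{z}$-derivatives, and its $\boldsymbol{\zeta}$-derivatives on $\Delta$ are designed to reproduce $q$ up to the normalizing constant appearing in \eqref{the map R_1}.
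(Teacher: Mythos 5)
Your proposal is correct and follows essentially the same route as the paper: transport the density of $\mathbb C[z_1,\ldots,z_{2m}]$ in $(\mathcal H,K^\alpha)\otimes(\mathcal H,K^\beta)$ through the contractive surjection $\mathcal R_1$, note that polynomials map to polynomial vectors, and exhibit the same explicit preimage $\sqrt{\alpha\beta/(\alpha+\beta)}\sum_i(z_i-\zeta_i)q_i$ (your $F_q$ is identical after absorbing the sign). The computation $\mathcal R_1(F_q)=q$ checks out, so there is nothing to add.
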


\begin{proof}
Since $C[\boldsymbol z]$ is dense in both the Hilbert spaces 
$(\hl, K^{\alpha})$ and $(\hl, K^{\beta})$, it follows that 
$C[\boldsymbol z]\otimes C[\boldsymbol z]$, which is 
$\mathbb C[z_1,\ldots,z_{2m}]$,
is contained in the Hilbert space $(\mathcal H, K^\alpha)\otimes(\mathcal H, K^\beta)$ and is dense in it. Since $\mathcal R_1$ maps $(\mathcal H, K^\alpha)\otimes(\mathcal H, K^\beta)$ onto $\big(\mathcal H,  \mathbb K^{(\alpha,\beta)}\big)$,
to complete the proof, it suffices to show that $\mathcal R_1(\mathbb C[z_1,\ldots,z_{2m}])=\mathbb C[\boldsymbol z]\otimes \mathbb C^m$. It is easy to see that $\mathcal R_1(\mathbb C[z_1,\ldots,z_{2m}])\subseteq \mathbb C[\boldsymbol z]\otimes \mathbb C^m$. Conversely, if $\sum_{i=1}^m p_i(z_1,\ldots,z_m)\otimes e_i$ is an arbitrary
element of $\mathbb C[\boldsymbol z]\otimes \mathbb C^m$, then it is easily verified that the function $p(z_1,\ldots,z_{2m}):=\sqrt{\tfrac{\alpha\beta}{\alpha+\beta}}\sum_{i=1}^m (z_i-z_{m+i})p_i(z_1,\ldots,z_m)$ belongs to $\mathbb C[z_1,\ldots,z_{2m}]$
and $\mathcal R_1(p)=\sum_{i=1}^m p_i(z_1,\ldots,z_m)\otimes e_i$ . Therefore
$\mathcal R_1(\mathbb C[z_1,\ldots,z_{2m}])=\mathbb C[\boldsymbol z]\otimes \mathbb C^m$, completing the proof.
%
\end{proof}
\subsection{Description of the Hilbert module $\mathcal S_1$}
In this subsection, we give a description of the Hilbert module $\mathcal S_1$  
in the particular case when $K_1=K^{\alpha}$ and $K_2=K^\beta$ for some sesqui-analytic function $K$ defined on $\Omega\times\Omega$ and a pair of positive real numbers $\alpha, \beta$.

\begin{theorem}\label{module on 2nd box}Let $K:\Omega\times\Omega\to \mathbb C$ be a sesqui-analytic function such that the functions $K^{\alpha}$ and $K^{\beta}$, defined on $\Omega\times\Omega$, are non-negative definite for some $\alpha,\beta>0$. Suppose that the multiplication operators
$M_{z_i}, i=1,2,\ldots,m,$ are bounded on both $(\hl, K^\alpha)$ and $(\hl, K^{\beta})$.
Then the Hilbert module $\mathcal S_1$ is isomorphic to the push-forward module $\iota_\star \big(\hl, \mathbb{K}^{(\alpha,\beta)}\big)$  via the module map ${\mathcal R_1}_{|\mathcal S_1}$.
\end{theorem}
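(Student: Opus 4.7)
The plan is to combine facts already established in this section. By Theorem~\ref{realization}, the map $\mathcal{R}_1$ restricted to $\mathcal{A}_0\ominus\mathcal{A}_1$ is a unitary onto $\big(\mathcal{H},\mathbb{K}^{(\alpha,\beta)}\big)$, and by Proposition~\ref{lemmaspan} we have $\mathcal{A}_0\ominus\mathcal{A}_1=\mathcal{N}=\mathcal{S}_1$. Thus $\mathcal{R}_1|_{\mathcal{S}_1}$ is already a unitary isomorphism of Hilbert spaces, and the only remaining task is to show that it intertwines the two module actions of $\mathbb{C}[z_1,\ldots,z_{2m}]$.

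For $p\in\mathbb{C}[z_1,\ldots,z_{2m}]$ and $f\in\mathcal{S}_1$, the semi-invariant action on $\mathcal{S}_1\subset\hla$ is $\mathbf{m}_p f=P_{\mathcal{S}_1}(pf)$, while on the push-forward module $\iota_\star\big(\mathcal{H},\mathbb{K}^{(\alpha,\beta)}\big)$ it is $\mathbf{m}_p g=(p\circ\iota)\,g$, i.e.\ pointwise multiplication by $p(z,z)$. The hypothesis that each $M_{z_i}$ is bounded on both $(\mathcal{H},K^\alpha)$ and $(\mathcal{H},K^\beta)$ gives boundedness of $M_p$ on $\hla$; the corresponding boundedness on the target side will follow a posteriori from the intertwining identity together with the unitarity of $\mathcal{R}_1|_{\mathcal{S}_1}$.

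The intertwining reduces to the identity $\mathcal{R}_1(pf)=(p\circ\iota)\,\mathcal{R}_1(f)$ for all $f\in\mathcal{S}_1$, because $\mathcal{R}_1$ annihilates $\mathcal{N}^\perp=\mathcal{S}_1^\perp$ and hence $\mathcal{R}_1\big(P_{\mathcal{S}_1}(pf)\big)=\mathcal{R}_1(pf)$. To verify this identity I would simply apply the defining formula \eqref{the map R_1} to $pf$; the decisive point is that $f\in\mathcal{A}_0$ vanishes on $\Delta$, so the Leibniz rule collapses to
$$\big(\partial_i(pf)\big)\big|_\Delta=(\partial_i p)\big|_\Delta\,f\big|_\Delta+p\big|_\Delta\,(\partial_i f)\big|_\Delta=p(z,z)\,(\partial_i f)\big|_\Delta,\qquad 1\leq i\leq 2m.$$
Forming the combinations $\beta\partial_i-\alpha\partial_{m+i}$ for $i=1,\ldots,m$ then yields $\mathcal{R}_1(pf)=p(z,z)\,\mathcal{R}_1(f)=(p\circ\iota)\,\mathcal{R}_1(f)$, as required.

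No genuine obstacle is anticipated; the argument is essentially a one-line computation once the setup is in place. The only points that deserve care are distinguishing the semi-invariant action on $\mathcal{S}_1$ from the pointwise multiplication action on $\big(\mathcal{H},\mathbb{K}^{(\alpha,\beta)}\big)$, and exploiting the vanishing of $f\in\mathcal{A}_0$ on the diagonal to eliminate every term in the Leibniz expansion except the one that survives.
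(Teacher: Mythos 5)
Your proposal is correct and follows essentially the same route as the paper: unitarity of $\mathcal R_1|_{\mathcal S_1}$ from Theorem~\ref{realization}, reduction of $\mathcal R_1 P_{\mathcal S_1}(pf)$ to $\mathcal R_1(pf)$ via $\ker\mathcal R_1=\mathcal S_1^\perp$, and the Leibniz-rule computation in which the extra term dies because $f|_\Delta=0$. No gaps.
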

\begin{proof}
From Theorem \ref{realization}, it follows that the map $\mathcal R_1$ defined in \eqref{the map R_1} is a unitary map from
 $\mathcal S_1$ onto 
 $(\mathcal H, \mathbb K^{(\alpha,\beta)}).$ Now we will show that $\mathcal R_1P_{\mathcal S_1}(ph)=(p\circ \iota)\mathcal R_1h,\; h\in \mathcal S_1, p\in \mathbb C[z_1,\ldots,z_{2m}]$.
Let $h$ be an arbitrary element of $\mathcal S_1.$ Since $\ker \mathcal R_1= \mathcal S_1^\perp$ (see the discussion before Theorem \ref{realization}), it follows that $\mathcal R_1P_{\mathcal S_1}(p h) = \mathcal R_1 (ph),\;p\in \mathbb C[z_1,\ldots,z_{2m}]$.  Hence 
\begin{align*}
\mathcal R_1P_{\mathcal S_1}(p h) 
&=\mathcal R_1(ph)\\
&=\frac{1}{\sqrt{\alpha\beta(\alpha+\beta)}}\sum_{j=1}^m (\beta {\partial}_j(ph)-\alpha {\partial}_{m+j}(ph))_{|\Delta}\otimes e_j\\
&=\frac{1}{\sqrt{\alpha\beta(\alpha+\beta)}}
\sum_{j=1}^m p_{|\Delta}(\beta\partial_jh-\alpha\partial_{m+j}h)_{|\Delta}\otimes e_j+\sum_{j=1}^m h_{|\Delta}(\beta\partial_jp-\alpha\partial_{m+j}p)_{|\Delta}\otimes e_j\\
&=\frac{1}{\sqrt{\alpha\beta(\alpha+\beta)}}\sum_{j=1}^m p_{|\Delta}(\beta\partial_jh-\alpha\partial_{m+j}h)_{|\Delta}\otimes e_j
\;\;(\mbox{since}\; h\in \mathcal S_1)\\
&=(p\circ \iota)\mathcal R_1h,
\end{align*}
completing the proof.
\end{proof}
\begin{notation}
For $1\leq i\leq m$, let $M_i^{(1)}$ and $M_i^{(2)}$ denote the operators of multiplication by the coordinate function $z_i$ on the Hilbert spaces $(\hl, K_1)$ and $(\hl, K_2)$, respectively. If $m=1,$ we let $M^{(1)}$ and $M^{(2)}$ denote the operators $M_1^{(1)}$ and $M_1^{(2)}$, respectively. 

In case $K_1=K^\alpha$ and $K_2=K^\beta$, let $M_i^{(\alpha)},$
$M_i^{(\beta)}$ and $M_i^{(\alpha+\beta)}$ denote the operators of multiplication by the coordinate function $z_i$ on the Hilbert spaces $(\hl, K^\alpha)$, $(\hl, K^\beta)$ and $(\hl, K^{\alpha+\beta})$, respectively. 
If $m=1$, we write $M^{(\alpha)},$
$M^{(\beta)}$ and $M^{(\alpha+\beta)}$
instead of $M_1^{(\alpha)},$
$M_1^{(\beta)}$ and $M_1^{(\alpha+\beta)}$, respectively. 

Finally, let $\mathbb M_i^{(\alpha,\beta)}$ denote the operator of multiplication by the coordinate function $z_i$ on $(\hl, \mathbb K^{(\alpha,\beta)})$. Also let $\mathbb M^{(\alpha,\beta)}$ denote the operator $\mathbb M_1^{(\alpha,\beta)}$ whenever $m=1$. 
\end{notation}

\begin{remark}\label{remmobuleon2ndbox}
It is verified that $({M_{i}^{(\alpha)}}\otimes I)^*(\phi_j(w))=\bar{w}_i \phi_j(w)+\beta\delta_{ij} K^{\alpha}(\cdot,w)\otimes K^{\beta}(\cdot,w)$ and 
$(I \otimes {M_{i}^{(\beta)}})^*(\phi_j(w))=\bar{w}_i \phi_j(w)-\alpha\delta_{ij}K^{\alpha}(\cdot,w)\otimes K^{\beta}(\cdot,w),$ $1\leq i,j\leq m, w\in \Omega$.  Therefore, 
$$P_{\mathcal S_1}({M_{i}^{(\alpha)}}\otimes I)_{|\mathcal S_1}=P_{\mathcal S_1}(I\otimes {M_{i}^{(\beta)}})_{|\mathcal S_1},~i=1,2,\ldots,m.$$
\end{remark}

\begin{corollary}\label{corbddab}
The $m$-tuple of operators
$\big({P_{\mathcal S_1}({M_{1}^{(\alpha)}}\otimes 
I)}_{|\mathcal S_1},\ldots,{P_{\mathcal S_1}({M_{m}^{(\alpha)}}\otimes I)}_{|\mathcal S_1}\big)$
is unitarily equivalent to the $m$-tuple of operators $(\mathbb M_{1}^{(\alpha,\beta)},\ldots,\mathbb M_{m}^{(\alpha,\beta)})$ 
on $\big(\;\mathcal H, \mathbb{K}^{(\alpha,\beta)}\big)$.
In particular, if either the  $m$-tuple of operators $({M_{1}^{(\alpha)}},\ldots ,{M_{m}^{(\alpha)}})$ on $(\hl, K^\alpha)$
or the $m$-tuple of operators $({M_{(1)}^{(\beta)}},\ldots ,{M_{m}^{(\beta)}})$ on $(\hl, K^\beta)$ is bounded, then the $m$-tuple $(\mathbb M_{1}^{(\alpha,\beta)},\ldots ,\mathbb M_{m}^{(\alpha,\beta)})$ is also bounded on $\big(\mathcal H, \mathbb K^{(\alpha,\beta)}\big).$
\end{corollary}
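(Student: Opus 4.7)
The plan is to read off Corollary 3.7 as an immediate consequence of Theorem 3.5 (the module isomorphism via $\mathcal R_1$) applied coordinate by coordinate, together with Remark 3.6. First I would recall that by Theorem 3.5, the map $\mathcal R_1$ restricted to $\mathcal S_1$ is a unitary from $\mathcal S_1$ onto $\big(\mathcal H,\mathbb K^{(\alpha,\beta)}\big)$ which satisfies
\begin{equation*}
\mathcal R_1\, P_{\mathcal S_1}(p\,h) \;=\; (p\circ \iota)\,\mathcal R_1 h, \qquad h\in \mathcal S_1,\ p\in \mathbb C[z_1,\dots,z_{2m}].
\end{equation*}
Specializing to the polynomial $p(z_1,\dots,z_{2m})=z_i$ for $1\le i\le m$, we have $p\circ \iota = z_i$ on $\Omega$, while the multiplication operator $M_p$ on $(\mathcal H,K^\alpha)\otimes(\mathcal H,K^\beta)$ is precisely $M_i^{(\alpha)}\otimes I$. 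Thus the intertwining identity becomes
\begin{equation*}
\mathcal R_1\, P_{\mathcal S_1}\big(M_i^{(\alpha)}\otimes I\big)_{|\mathcal S_1} \;=\; \mathbb M_i^{(\alpha,\beta)}\, \mathcal R_1{}_{|\mathcal S_1},
\end{equation*}
which is exactly the asserted unitary equivalence of $m$-tuples.

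For the second assertion on boundedness, I would argue as follows. If $M_i^{(\alpha)}$ is bounded on $(\mathcal H,K^\alpha)$, then $M_i^{(\alpha)}\otimes I$ is bounded on the tensor product, so its compression $P_{\mathcal S_1}(M_i^{(\alpha)}\otimes I)_{|\mathcal S_1}$ is bounded on $\mathcal S_1$; the unitary equivalence just established then transfers this boundedness to $\mathbb M_i^{(\alpha,\beta)}$ on $\big(\mathcal H,\mathbb K^{(\alpha,\beta)}\big)$. For the symmetric alternative, Remark 3.6 gives
\begin{equation*}
P_{\mathcal S_1}\big(M_i^{(\alpha)}\otimes I\big)_{|\mathcal S_1} \;=\; P_{\mathcal S_1}\big(I\otimes M_i^{(\beta)}\big)_{|\mathcal S_1},
\end{equation*}
so boundedness of $M_i^{(\beta)}$ on $(\mathcal H,K^\beta)$ suffices equally well, and the same unitary equivalence then yields boundedness of $\mathbb M_i^{(\alpha,\beta)}$.

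There is essentially no obstacle here; the work has already been done in Theorem 3.5 and Remark 3.6. The only small point to watch is the bookkeeping that the module action on the push-forward $\iota_\star(\mathcal H,\mathbb K^{(\alpha,\beta)})$, when restricted to the coordinate polynomial $z_i$ with $1\le i\le m$, reduces to ordinary multiplication by $z_i$ on $\big(\mathcal H,\mathbb K^{(\alpha,\beta)}\big)$, which is exactly $\mathbb M_i^{(\alpha,\beta)}$; once this is noted, the corollary follows from Theorem 3.5 coordinatewise.
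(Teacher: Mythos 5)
Your proposal is correct and follows exactly the paper's route: the first assertion is read off from Theorem \ref{module on 2nd box} by specializing the intertwining identity to $p=z_i$, and the second follows from the first together with Remark \ref{remmobuleon2ndbox}. The paper's own proof is just a one-line citation of these two results, so your more detailed write-up is the same argument spelled out.
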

\begin{proof}
The proof of the first statement follows from Theorem \ref{module on 2nd box} and the proof of the second statement follows from the first together with Remark \ref{remmobuleon2ndbox}.
\end{proof}

\subsection{Description of the quotient module ${\mathcal A}_1^\perp$}
In this subsection, we give a description of the quotient module 
$\mathcal A_1^\perp$. Let $(\hl, K^{\alpha+\beta})\widehat{\oplus} (\mathcal H, \mathbb{K}^{(\alpha,\beta)})$ be the Hilbert module, which is the Hilbert space $(\hl, K^{\alpha+\beta}){\oplus} (\mathcal H, \mathbb{K}^{(\alpha,\beta)})$ equipped with the multiplication over the polynomial ring $\mathbb C[z_1,\ldots,z_{2m}]$ 
induced by the $2m$-tuple of operators  
$(T_1, \ldots, T_m, T_{m+1}, \ldots , T_{2m})$ described below. 
First, for any polynomial $p\in \mathbb C[z_1,\ldots,z_{2m}]$, let $p^*(z) : = (p\circ \iota)(z) = p(z,z)$, $z\in \Omega$ and let  
$S_p:(\mathcal H, K^{\alpha + \beta}) \to (\mathcal H, \mathbb{K}^{(\alpha,\beta)})$ be the operator given by 
$$S_p(f_0)=\frac{1}{\sqrt{\alpha\beta(\alpha+\beta)}}
\sum_{j=1}^m(\beta(\partial_j p)^* -\alpha(\partial_{m+j}p)^*)f_0\otimes e_j
,f_0\in (\mathcal{H}, K^{\alpha+\beta}).$$
On the Hilbert space $(\mathcal H, K^{\alpha + \beta}) \oplus (\mathcal H, \mathbb{K}^{(\alpha,\beta)})$, let 
$T_i = \left (  \begin{smallmatrix}
M_{z_i}&0\\
S_{z_i} &M_{z_i}
\end{smallmatrix}\right )$,  and  $T_{m+i} = \left (  \begin{smallmatrix}
M_{z_i}&0\\
S_{z_{m+i}} &M_{z_i}
\end{smallmatrix}\right )$, $1\leq i \leq m$. Now, a straightforward verification shows that the module multiplication induced by  these $2m$-tuple of operators  
is given by the formula: 
 \begin{equation}\label{eqn:2box}
\mathbf m_p(f_0\oplus f_1)=\begin{pmatrix}
M_{p^*}f_0&0\\
S_pf_0&M_{p^*}f_1
\end{pmatrix},\; f_0\oplus f_1\in (\hl, K^{\alpha+\beta})\oplus (\hl, \mathbb K^{(\alpha,\beta)}).
\end{equation}
Clearly, this module multiplication is distinct from the one induced by the $M_p\oplus M_p$, $p\in \mathbb C[z_1, \ldots, z_m]$ on the direct sum $(\hl, K^{\alpha+\beta}){\oplus} (\mathcal H, \mathbb{K}^{(\alpha,\beta)})$.
\begin{theorem} \label{thm:2.2.8}
Let $K:\Omega\times\Omega\to \mathbb C$ be a sesqui-analytic function such that the functions $K^{\alpha}$ and $K^{\beta}$, defined on $\Omega\times\Omega$, are non-negative definite for some $\alpha,\beta>0$. Suppose that the multiplication operators
$M_{z_i}, i=1,2,\ldots,m,$ are bounded on both $(\hl, K^\alpha)$ and $(\hl, K^{\beta})$.
Then the quotient module $\mathcal A_1^\perp$
and the Hilbert module $(\hl, K^{\alpha+\beta})\widehat{\oplus} (\mathcal H, \mathbb{K}^{(\alpha,\beta)})$ are isomorphic.
\end{theorem}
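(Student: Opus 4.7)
The plan is to assemble the two already-known unitary identifications for the summands of $\mathcal A_1^\perp=\mathcal S_0\oplus\mathcal S_1$ and to verify compatibility with the module actions. Define
$$U:\mathcal A_1^\perp\longrightarrow (\hl,K^{\alpha+\beta})\oplus\bigl(\hl,\mathbb K^{(\alpha,\beta)}\bigr),\qquad Uf=f_{|\Delta}\oplus\mathcal R_1 f.$$
The first component is the restriction to $\Delta$, which by the result of Aronszajn \cite{Aro} and \cite{D-M-V} recalled in the introduction identifies $\mathcal S_0$ isometrically with $(\hl,K^{\alpha+\beta})$ and annihilates $\mathcal S_1=\mathcal N\subset\mathcal A_0$; the second component is the map \eqref{the map R_1}, which by Theorem \ref{realization} identifies $\mathcal S_1$ isometrically with $(\hl,\mathbb K^{(\alpha,\beta)})$ and, as noted just before that theorem, satisfies $\ker\mathcal R_1=\mathcal N^\perp\supset\mathcal S_0$. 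Hence $U$ is a well-defined unitary on the orthogonal direct sum.

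The real content is to show that $U$ intertwines the compressed action of $\mathbb C[z_1,\ldots,z_{2m}]$ on $\mathcal A_1^\perp$ with the twisted action \eqref{eqn:2box}. Fix $p\in\mathbb C[z_1,\ldots,z_{2m}]$ and $f=f_S+f_N\in\mathcal S_0\oplus\mathcal S_1$, and set $f_0=f_{|\Delta}=(f_S)_{|\Delta}$ and $f_1=\mathcal R_1 f=\mathcal R_1 f_N$. Decompose $P_{\mathcal A_1^\perp}(pf)=P_{\mathcal S_0}(pf)+P_{\mathcal S_1}(pf)$. For the $\mathcal S_0$ summand, restriction to $\Delta$ intertwines multiplication by $p$ on the tensor product with multiplication by $p^*=p\circ\iota$, so its image under $U$ is $M_{p^*}f_0$, matching the top row of \eqref{eqn:2box}. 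For the $\mathcal S_1$ summand, the identity $\ker\mathcal R_1=\mathcal S_1^\perp$ allows dropping the projection and evaluating $\mathcal R_1(pf)$ directly from \eqref{the map R_1}.

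The key step — and the only one requiring more than bookkeeping — is the Leibniz expansion
$$\bigl(\beta\partial_j(pf)-\alpha\partial_{m+j}(pf)\bigr)_{|\Delta}=\bigl(\beta(\partial_jp)^*-\alpha(\partial_{m+j}p)^*\bigr)f_0+p^*\bigl(\beta\partial_jf-\alpha\partial_{m+j}f\bigr)_{|\Delta},$$
which splits into a piece reassembling to $S_pf_0$ and a piece $p^*\cdot(\beta\partial_jf-\alpha\partial_{m+j}f)_{|\Delta}$. In the latter, the contribution of $f_S$ vanishes because $\mathcal S_0\subset\ker\mathcal R_1$, while the $f_N$ contribution is $\sqrt{\alpha\beta(\alpha+\beta)}\,(f_1)_j$. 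Summing over $j$ with the normalization $\frac{1}{\sqrt{\alpha\beta(\alpha+\beta)}}$ then produces $\mathcal R_1(pf)=S_pf_0+M_{p^*}f_1$, exactly the second row of \eqref{eqn:2box}. I expect the main obstacle to be conceptual rather than technical: one must recognize that the Leibniz cross-term is precisely what forces the off-diagonal operator $S_p$, so $\mathcal A_1^\perp$ does not split as a module into $\mathcal S_0\oplus\mathcal S_1$ but fits into a non-trivial extension with $\mathcal S_1$ as submodule and $\mathcal S_0$ as quotient, with $S_p$ encoding the failure of this sequence to split.
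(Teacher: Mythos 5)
Your proposal is correct and follows essentially the same route as the paper: the same block unitary $\mathcal R_0|_{\mathcal S_0}\oplus\mathcal R_1|_{\mathcal S_1}$ on $\mathcal A_1^\perp=\mathcal S_0\oplus\mathcal S_1$, with the diagonal blocks handled by the restriction map and Theorem \ref{realization}/\ref{module on 2nd box}, and the off-diagonal entry identified as $S_p$. The only difference is that you verify the cross term by a direct forward Leibniz computation of $\mathcal R_1(pf)$ (using that $\mathcal S_0\subset\ker\mathcal R_1$ kills the $f_S$ contribution), whereas the paper computes the corresponding entry of the adjoint by evaluating on the reproducing kernels $\mathbb K^{(\alpha,\beta)}(\cdot,w)e_j$; both computations are valid and yield the same conclusion.
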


\begin{proof} The proof is accomplished by showing that the compression operator $P_{\mathcal{A}_1^\perp}{M_p}_{|\mathcal{A}_1^\perp}$ is unitarily equivalent to the operator $
\left ( \begin{smallmatrix}
M_{p^*}&0\\
S_p& M_{p^*}
\end{smallmatrix}\right )
$ on $(\mathcal{H}, K^{\alpha+\beta})\bigoplus (\mathcal H, \mathbb{K}^{(\alpha,\beta)})$ for an arbitrary polynomial $p$ in $\mathbb C[z_1,\ldots,z_{2m}].$ 

We recall that the map $\mathcal R_0:(\mathcal{H}, K^\alpha)\otimes
(\mathcal H,K^\beta)\to(\mathcal{H}, K^{\alpha+\beta})$ given by 
$\mathcal R_0(f)=f_{|\Delta}$, $f$ in $(\mathcal{H}, K^\alpha)\otimes
(\mathcal H,K^\beta)$ 
defines a unitary map from $\mathcal S_0$ onto 
$(\mathcal H,K^{\alpha+\beta})$, and it intertwines the operators $P_{\mathcal S_0}{M_p}_{|\mathcal S_0}$ on $\mathcal S_0$ and $M_{p^*}$ on 
$(\mathcal{H}, K^{\alpha+\beta})$, that is, $M_{p^*}{\mathcal R_0}_{|\mathcal S_0}= {\mathcal R_0}_{|\mathcal S_0} P_{\mathcal S_0}{M_p}_{|\mathcal S_0}$. Combining this with Theorem \ref{realization}, we 
conclude that the map 
$\mathcal R=\left ( \begin{smallmatrix}
{\mathcal R_0}_{|\mathcal S_0}&0\\
0&{\mathcal R_1}_{|\mathcal S_1}
\end{smallmatrix} \right )$
is unitary from $\mathcal S_0\bigoplus \mathcal S_1$ (which is $\mathcal A_1^\perp$) to $(\mathcal{H}, K^{\alpha+\beta})\bigoplus(\mathcal{H},\mathbb{K}^{(\alpha,\beta)}).$
Since $\mathcal S_0$ is invariant under $M_{p}^*$, it follows that $P_{\mathcal S_1}{M_p^*}_{|\mathcal S_0}=0$. Hence 
$$\mathcal R P_{\mathcal{A}_1^\perp}{M_p^*}_{|\mathcal{A}_1^\perp}\mathcal R^*=
\begin{pmatrix}
\mathcal R_0P_{\mathcal{S}_0}{M_p^*}_{|\mathcal{S}_0}\mathcal R_0^*&
\mathcal R_0 P_{\mathcal{S}_0}{M_p^*}_{|\mathcal{S}_1}\mathcal R_1^*\\
0&\mathcal R_1P_{\mathcal{S}_1}{M_p^*}_{|\mathcal{S}_1}\mathcal R_1^*
\end{pmatrix}
$$
on $\mathcal S_0\bigoplus \mathcal S_1.$
We  have $\mathcal R_0P_{\mathcal{S}_0}{M_p^*}_{|\mathcal{S}_0}\mathcal R_0^*=(M_{p^*})^*$, already, on $(\mathcal{H},K^{\alpha+\beta})$. From Theorem \ref{module on 2nd box}, we see that $\mathcal R_1P_{\mathcal{S}_1}{M_p^*}_{|\mathcal{S}_1}\mathcal R_1^*
=(M_{p^*})^*$ on $(\mathcal H, \mathbb K^{(\alpha,\beta)})$.
To prove this, note that $\mathcal R_0 P_{\mathcal{S}_0}{M_p^*}_{|\mathcal{S}_1}\mathcal R_1^*=S_p^*$.
Recall that $\mathcal R_1^*(\mathbb K^{(\alpha,\beta)}(\cdot,w)e_j)= \phi_j(w)$. Consequently, an  easy computation gives

\begin{align*}
\mathcal R_0P_{\mathcal{S}_0}{M_p^*}_{|\mathcal{S}_1}\mathcal R_1^*
(\mathbb K^{(\alpha,\beta)}(\cdot,w)e_j)
&=\frac{1}{\sqrt{\alpha\beta(\alpha+\beta)}}(\overbar{\beta(\partial_j p)(w,w)
-\alpha(\partial_{m+j} p)(w,w)})K^{\alpha+\beta}(\cdot,w).
\end{align*}
Set $S_p^\sharp=\mathcal R_1P_{\mathcal{S}_1}{M_p}_{|\mathcal{S}_0}\mathcal R_0^*$. Then for $1\leq j\leq m$, and $w\in \Omega$, we get  
$$(S_p^\sharp)^*(\mathbb K^{(\alpha,\beta)}(\cdot,w)e_j)=\frac{1}{\sqrt{\alpha\beta(\alpha+\beta)}}
(\overbar{\beta(\partial_j p)(w,w)
-\alpha(\partial_{m+j} p)(w,w)})K^{\alpha+\beta}(\cdot,w).$$
For $f$ in $(\mathcal{H}, K^{\alpha+\beta})$, we have
\begin{align*}
\langle S_p^\sharp f(z),e_j\rangle &=\langle S_p^\sharp f,\mathbb K^{(\alpha,\beta)}(\cdot,z)e_j\rangle\\
&=\langle f,(S_p^\sharp)^*(\mathbb K^{(\alpha,\beta)}(\cdot,z)e_j)\rangle\\
&=\frac{1}{\sqrt{\alpha\beta(\alpha+\beta)}}({\beta(\partial_jp)(z,z)
-\alpha(\partial_{m+j} p)(z,z)})\langle f,
K^{\alpha+\beta}(\cdot,z)\;\rangle\\
&=\frac{1}{\sqrt{\alpha\beta(\alpha+\beta)}}\big({\beta(\partial_j p)(z,z)
-\alpha(\partial_{m+j} p)(z,z)}\big)f(z).
\end{align*}
Hence $S_p^\sharp=S_p$, completing the proof of the theorem.
\end{proof}

\begin{corollary}\label{quotient module}
Let $\Omega\subset \mathbb C$ be a bounded domain. The operator
$P_{\mathcal{A}_1^\perp}(M^{(\alpha)}\otimes I)_{|\mathcal{A}_1^\perp}$ is unitarily equivalent to the operator 
$\left ( \begin{smallmatrix}
M^{(\alpha+\beta)}&0\\
\delta\, \mathsf{inc} & \mathbb M^{(\alpha,\beta)})
\end{smallmatrix}\right )
$ on $(\mathcal{H}, K^{\alpha+\beta})\bigoplus(\mathcal H, \mathbb{K}^{(\alpha,\beta)})$,  where $\delta=\frac{\beta}{\sqrt{\alpha\beta(\alpha+\beta)}}$ and 
$\mathsf{inc}$ is the inclusion operator from $(\mathcal{H}, K^{\alpha+\beta})$ into $(\mathcal H, \mathbb{K}^{(\alpha,\beta)})$. 
\end{corollary}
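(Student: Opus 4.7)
The plan is to derive this corollary as a direct specialization of Theorem \ref{thm:2.2.8}. Since $\Omega \subset \mathbb C$ we have $m=1$, so the ambient polynomial ring is $\mathbb C[z_1,z_2]$. The operator $M^{(\alpha)} \otimes I$ on $(\mathcal H, K^\alpha) \otimes (\mathcal H, K^\beta)$ is precisely multiplication by the polynomial $p(z_1,z_2) = z_1$. Hence
$$P_{\mathcal A_1^\perp}(M^{(\alpha)} \otimes I)_{|\mathcal A_1^\perp} = P_{\mathcal A_1^\perp}M_p{}_{|\mathcal A_1^\perp},$$
and Theorem \ref{thm:2.2.8} identifies this with the module multiplication $\mathbf m_p$ given by \eqref{eqn:2box} on $(\mathcal H, K^{\alpha+\beta}) \oplus (\mathcal H, \mathbb K^{(\alpha,\beta)})$, via the unitary $\mathcal R$ constructed in the proof of that theorem.

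Next I would compute the relevant ingredients for this particular $p$. Since $p^*(z) = p(z,z) = z$, the operator $M_{p^*}$ is just multiplication by the coordinate function on whichever reproducing kernel Hilbert space it acts. On $(\mathcal H, K^{\alpha+\beta})$ this is $M^{(\alpha+\beta)}$, and on $(\mathcal H, \mathbb K^{(\alpha,\beta)})$ this is $\mathbb M^{(\alpha,\beta)}$. Both diagonal entries of the matrix in \eqref{eqn:2box} are therefore correct. For the off-diagonal entry, note that $\partial_1 p = 1$ and $\partial_2 p = 0$, so
$$S_p(f_0) = \frac{1}{\sqrt{\alpha\beta(\alpha+\beta)}}\bigl(\beta\cdot 1 - \alpha\cdot 0\bigr) f_0 \otimes e_1 = \frac{\beta}{\sqrt{\alpha\beta(\alpha+\beta)}}\, f_0 = \delta\cdot \mathsf{inc}(f_0),$$
after the canonical identification $\mathbb C^m = \mathbb C$ when $m=1$. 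This is precisely the claimed off-diagonal operator $\delta\cdot\mathsf{inc}$.

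Finally, the hypotheses of Theorem \ref{thm:2.2.8} are satisfied here because $M^{(\alpha)}$ on $(\mathcal H, K^\alpha)$ and $M^{(\beta)}$ on $(\mathcal H, K^\beta)$ are bounded by assumption. Substituting the computed data into \eqref{eqn:2box} gives the matrix
$$\begin{pmatrix} M^{(\alpha+\beta)} & 0 \\ \delta\cdot\mathsf{inc} & \mathbb M^{(\alpha,\beta)}\end{pmatrix},$$
and the unitary equivalence follows. There is no genuine obstacle here: the work is entirely done by Theorem \ref{thm:2.2.8}, and the only task is to read off the values of $p^*$ and $S_p$ for the single polynomial $p = z_1$; the only point worth checking explicitly is that $S_{z_1}$ collapses to $\delta$ times the inclusion in the scalar case, which follows from the vanishing of $\partial_2 z_1$.
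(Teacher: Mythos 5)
Your proposal is correct and is exactly the route the paper intends: the corollary is stated immediately after Theorem \ref{thm:2.2.8} with no separate proof, precisely because it is the specialization of that theorem to $m=1$ and $p(z_1,z_2)=z_1$, and your computation of $p^*=z$ and $S_{z_1}=\delta\,\mathsf{inc}$ (using $\partial_2 z_1=0$ and the identification $\mathbb C^1\cong\mathbb C$) is the whole content of that specialization.
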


\section{Generalized Bergman Kernels}\label{secgbk_}
We now discuss an important class of operators introduced by Cowen and Douglas in the very influential paper \cite{CD}. 
The case of $2$ variables was discussed in \cite{CDopen}, while a detailed study in the general case appeared later in \cite{Curtosalinas}. The definition below is taken from \cite{Curtosalinas}. 
Let 
$\boldsymbol T:=(T_1,...,T_m)$ be a $m$-tuple of commuting bounded linear operators on a separable Hilbert space 
$\mathcal H.$ Let $D_{\boldsymbol T}:\mathcal H\to \mathcal H\oplus\cdots\oplus \mathcal H$
be the operator defined by $D_{\boldsymbol T}(x)=(T_1x,...,T_mx), ~x\in \mathcal H.$ 
\begin{definition}[Cowen-Douglas class operator]
Let $\Omega\subset \mathbb C^m$ be a bounded domain. The operator $\boldsymbol T$ is said to be in the Cowen-Douglas class $B_n(\Omega)$ if $\boldsymbol T$ satisfies the following requirements:
\begin{enumerate}
\item[\rm(i)] 
\rm dim $\ker D_{\boldsymbol T-w} = n,~~w\in\Omega$
\item[\rm(ii)]
$\ran D_{\boldsymbol T-w}$ is closed for all $w\in\Omega$
\item[\rm(iii)]
$\overbar \bigvee \big\{\ker D_{\boldsymbol T-w}: w\in \Omega \big\}=\mathcal H.$ 
\end{enumerate} 
\end{definition}

If $\boldsymbol T\in B_n(\Omega)$, then  for each $w\in \Omega$, there exist functions $\gamma_1,\ldots,\gamma_n$ holomorphic in a neighbourhood $\Omega_0\subseteq \Omega$ containing $w$ such that $\ker D_{\boldsymbol T -w^\prime}=\bigvee\{\gamma_1(w^\prime),\ldots,\gamma_n(w^\prime)\}$ for all $w^\prime \in \Omega_0$ (cf. \cite{CDopen}). Consequently, every $\boldsymbol T\in B_n(\Omega)$ corresponds to a rank $n$
holomorphic hermitian vector bundle $E_{\boldsymbol T}$ defined by 
$$E_{\boldsymbol T}=\{(w,x)\in \Omega\times \mathcal H:x\in \ker D_{\boldsymbol T-w}\}$$ 
and $\pi(w,x)=w$, $(w,x)\in E_{\boldsymbol T}$.

For a bounded domain $\Omega$ in $\mathbb C^m$, let $\Omega^*=\{z:\bar{z}\in \Omega\}.$
It is known that if $T$ is an operator in $B_n(\Omega^*)$,
then  for each $w\in \Omega$, $T$ is unitarily equivalent to the adjoint of the multiplication tuple $(M_{z_1},\ldots,M_{z_m})$ on some 
reproducing kernel Hilbert space $(\mathcal H, K)\subseteq {\rm Hol}(\Omega_0, \mathbb C^n)$  for some  open subset  $\Omega_0\subseteq\Omega$ containing $w$. Here the kernel $K$ can be described explicitly as follows. 
Let $\Gamma=\{\gamma_1,\ldots,\gamma_n\}$ be a holomorphic frame of  the vector bundle $E_{\boldsymbol T}$ on a neighbourhood 
$\Omega_0^*\subseteq \Omega^*$ containing $\bar w$. Define 
$K_{\Gamma}: \Omega_0\times\Omega_0\to \mathcal M_n(\mathbb C)$ by
$K_{\Gamma}(z,w)=\big(\left\langle \gamma_j(\bar w),\gamma_i(\bar z)\right\rangle\big)_{i,j=1}^n$, $z,w\in \Omega_0$.  Setting $K=K_\Gamma$, one may verify that the operator $\boldsymbol T$ is unitarily equivalent to the adjoint of the $m$-tuple of  multiplication
operators $(M_{z_1},\ldots, M_{z_m})$ on the Hilbert space $(\hl, K)$.

If $T\in B_1(\Omega^*)$, the curvature matrix $\mathcal K_T(\bar{w})$ at a fixed but arbitrary point $\bar{w}\in \Omega^* $ is defined by
$$\mathcal K_T(\bar{w})=\big(\partial_i\bar{\partial}_j \log \|\gamma(\bar{w})\|^2\big)_{i,j=1}^m,$$
where $\gamma$ is a holomorphic frame of $E_{T}$ defined on some open subset $\Omega_0^*\subseteq \Omega^*$ containing $\bar{w}$. If $T$
is realized as the adjoint of the multiplication tuple $(M_{z_1},\ldots,M_{z_m})$ on some reproducing kernel Hilbert space $(\hl, K)\subseteq \rm{Hol}(\Omega_0)$, where $w\in \Omega_0$,  the curvature $\mathcal K_T(\bar{w})$  is then equal to $$\big(\partial_i\bar{\partial}_j \log K(w,w)\big)_{i,j=1}^m.$$ 

The study of operators in the Cowen-Douglass class using the properties of the kernel functions was initiated by Curto and Salinas in 
\cite{Curtosalinas}. The following definition is taken from \cite{Sal}.

\begin{definition}[Sharp kernel and generalized Bergman kernel]
A positive definite kernel $K:\Omega\times \Omega\to \mathcal{M}_k(\mathbb C)$ is said to be  sharp if 
\begin{itemize}
\item[\rm(i)]the multiplication operator $M_{z_i}$ is bounded on $(\mathcal H, K)$ for $i=1,\ldots,m, $
\item[\rm(ii)]$\ker D_{(\boldsymbol{M}_z-w)^*}=\ran K(\cdot,w)$,$~w\in\Omega,$
\end{itemize}
where $\boldsymbol{M}_z$ denotes the $m$-tuple $(M_{z_1},M_{z_2},\ldots,M_{z_m})$
on $(\mathcal H, K).$ Moreover, if $ ~\ran D_{( \boldsymbol{M}_z-w)^*}$ is closed for all $w\in \Omega$, then $K$ is said to be a generalized Bergman kernel. 
\end{definition}
We start with the following lemma (cf. \cite[page 285]{equivofquotient}) which provides a sufficient condition for the sharpness of a non-negative definite kernel $K$.
\begin{lemma}\label{lemsharpK}
Let $\Omega \subset \mathbb{C}^m$ be a bounded domain and $K:\Omega\times \Omega\to \mathcal M_k(\mathbb C)$
be a non-negative definite kernel. Assume that the multiplication operator $M_{z_i}$ on $(\hl, K)$
is bounded  for $1\leq i\leq m$. If the vector valued polynomial ring $\mathbb C[z_1,\ldots,z_m]\otimes \mathbb C^k $ is contained in $(\hl, K)$ as a dense subset, then $K$ is a sharp kernel.
\end{lemma}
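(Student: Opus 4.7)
The plan is to verify the two conditions defining a sharp kernel. Condition (i) is built into the hypothesis. For condition (ii), I first need to upgrade non-negative definiteness to the positive definiteness required by the definition. Since $\mathbb C[\boldsymbol z]\otimes \mathbb C^k\subset (\hl,K)$, every constant vector-valued polynomial $1\otimes \eta$ lies in $(\hl,K)$. If $K(w,w)\eta=0$ at some $w\in\Omega$, then $\|K(\cdot,w)\eta\|^2=\langle K(w,w)\eta,\eta\rangle=0$, so $\|\eta\|^2=\langle (1\otimes \eta)(w),\eta\rangle=\langle 1\otimes \eta,\,K(\cdot,w)\eta\rangle=0$. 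Thus $K(w,w)$ is invertible for every $w$, and the Curto--Salinas result \cite[Lemma 3.6]{Curtosalinas} quoted earlier in the text then gives positive definiteness of $K$.

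The inclusion $\ran K(\cdot,w)\subseteq \ker D_{(\boldsymbol M_z-w)^*}$ is immediate from the reproducing property: for any $f\in(\hl,K)$ and $\eta\in\mathbb C^k$,
\[
\langle f,(M_{z_i}-w_i)^*K(\cdot,w)\eta\rangle=\langle (z_i-w_i)f(w),\eta\rangle=0.
\]

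The reverse inclusion is the main content. Using the reproducing property, $(\ran K(\cdot,w))^\perp=\{g\in(\hl,K):g(w)=0\}$, so I would fix $f\in \ker D_{(\boldsymbol M_z-w)^*}$ and show $\langle f,g\rangle=0$ whenever $g(w)=0$. For a polynomial $p\in\mathbb C[\boldsymbol z]\otimes\mathbb C^k$ with $p(w)=0$, Taylor expansion at $w$ writes $p=\sum_{i=1}^m (z_i-w_i)q_i$ with $q_i\in \mathbb C[\boldsymbol z]\otimes \mathbb C^k$, whence
\[
\langle f,p\rangle=\sum_{i=1}^m\langle (M_{z_i}-w_i)^*f,q_i\rangle=0.
\]
For an arbitrary $g$ with $g(w)=0$ I would approximate $g$ by polynomials $p_n\to g$ (by the density hypothesis), note that $p_n(w)\to g(w)=0$ in $\mathbb C^k$ by continuity of point evaluation, and replace $p_n$ with $\tilde p_n:=p_n-1\otimes p_n(w)$. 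Since the constant-embedding $\eta\mapsto 1\otimes \eta$ is automatically bounded from $\mathbb C^k$ into $(\hl,K)$, the $\tilde p_n$ are polynomials vanishing at $w$ that still converge to $g$, and passing to the limit in $\langle f,\tilde p_n\rangle=0$ yields $\langle f,g\rangle=0$. Hence $f\in \ran K(\cdot,w)$, completing condition (ii).

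The main technical subtlety is this last density step: polynomial density in $(\hl,K)$ must be transferred to density of polynomials vanishing at $w$ within the codimension-$k$ hyperplane $\{g:g(w)=0\}$. The essential ingredients are the finite-dimensionality of $\mathbb C^k$ (which makes the constant embedding bounded) and continuity of point evaluation at $w$; everything else is a routine adjoint or Taylor computation.
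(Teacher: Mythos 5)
Your proof is correct and complete. The paper itself does not prove this lemma --- it only cites \cite[page 285]{equivofquotient} --- so there is no internal argument to compare against, but what you give is the standard argument: invertibility of $K(w,w)$ from the constant functions plus \cite[Lemma 3.6]{Curtosalinas} for positive definiteness, the reproducing-property computation for $\ran K(\cdot,w)\subseteq\ker D_{(\boldsymbol M_z-w)^*}$, and the Taylor-factorization-plus-density argument for the reverse inclusion. The only point worth making explicit is that passing from $f\perp\{g:g(w)=0\}$ to $f\in\ran K(\cdot,w)$ uses that $\ran K(\cdot,w)$ is finite dimensional (at most $k$-dimensional), hence closed, so that $\bigl(\{g:g(w)=0\}\bigr)^\perp=\overline{\ran K(\cdot,w)}=\ran K(\cdot,w)$; with that remark added, nothing is missing.
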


\begin{corollary}
Let $\Omega \subset \mathbb{C}^m$ be a bounded domain and $K:\Omega\times\Omega\to \mathbb C$ be a sesqui-analytic function such that the functions $K^{\alpha}$ and $K^{\beta}$ are non-negative definite on $\Omega\times\Omega$ for some $\alpha,\beta>0$.
Suppose that either the  $m$-tuple of operators $({M_{1}^{(\alpha)}},\ldots ,{M_{m}^{(\alpha)}})$ on $(\hl, K^\alpha)$
or the $m$-tuple of operators $({M_{1}^{(\beta)}},\ldots ,{M_{m}^{(\beta)}})$ on $(\hl, K^\beta)$ is bounded. If both the Hilbert spaces $(\mathcal H, K^\alpha)$ and $(\mathcal H, K^{\beta})$ contain the polynomial ring $\mathbb C[z_1,\ldots,z_m]$ as a dense subset, then the kernel $\mathbb K^{(\alpha,\beta)}$ is sharp. 
\end{corollary}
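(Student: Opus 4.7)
The plan is to verify the three hypotheses of Lemma \ref{lemsharpK} for the kernel $\mathbb K^{(\alpha,\beta)}$ (which takes values in $\mathcal M_m(\mathbb C)$, so here $k=m$) and then quote the lemma.

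First, I would note that $\mathbb K^{(\alpha,\beta)}$ is non-negative definite: this is exactly Proposition \ref{k^2curv}. Since the lemma applies to non-negative definite kernels, this is all that is needed at the outset.

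Next, I would verify the boundedness of the coordinate multiplication operators on the Hilbert space $(\mathcal H, \mathbb K^{(\alpha,\beta)})$. Corollary \ref{corbddab} states precisely this: if either the tuple $(M_1^{(\alpha)}, \dots, M_m^{(\alpha)})$ on $(\mathcal H, K^\alpha)$ or the tuple $(M_1^{(\beta)}, \dots, M_m^{(\beta)})$ on $(\mathcal H, K^\beta)$ is bounded, then the multiplication tuple $(\mathbb M_1^{(\alpha,\beta)}, \dots, \mathbb M_m^{(\alpha,\beta)})$ on $(\mathcal H, \mathbb K^{(\alpha,\beta)})$ is bounded. Thus the hypothesis of the corollary we wish to prove feeds directly into Corollary \ref{corbddab}.

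Third, I would invoke Proposition \ref{proppoldense}: assuming $\mathbb C[\boldsymbol z]$ is dense in both $(\mathcal H, K^\alpha)$ and $(\mathcal H, K^\beta)$, the vector valued polynomial ring $\mathbb C[\boldsymbol z]\otimes \mathbb C^m$ is dense in $(\mathcal H, \mathbb K^{(\alpha,\beta)})$. This matches the density requirement of Lemma \ref{lemsharpK} with $k=m$.

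Having verified (a) non-negative definiteness, (b) boundedness of the multiplication tuple, and (c) density of the vector valued polynomial ring, an application of Lemma \ref{lemsharpK} yields that $\mathbb K^{(\alpha,\beta)}$ is a sharp kernel. There is no real obstacle here: the proof is a direct assembly of results already established earlier in the section, the only thing to be careful about is matching $k=m$ in the statement of Lemma \ref{lemsharpK} with the $m\times m$ matrix valued kernel $\mathbb K^{(\alpha,\beta)}$.
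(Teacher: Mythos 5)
Your proposal is correct and follows essentially the same route as the paper: boundedness of the multiplication tuple via Corollary \ref{corbddab}, density of $\mathbb C[\boldsymbol z]\otimes\mathbb C^m$ via Proposition \ref{proppoldense}, and then Lemma \ref{lemsharpK}. The only cosmetic difference is that you explicitly record the non-negative definiteness of $\mathbb K^{(\alpha,\beta)}$ from Proposition \ref{k^2curv}, which the paper leaves implicit.
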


\begin{proof}
By Corollary \ref{corbddab}, we have that the $m$-tuple of operators $(\mathbb M_{1}^{(\alpha,\beta)},\ldots ,\mathbb M_{m}^{(\alpha,\beta)})$ is bounded on $\big(\mathcal H, \mathbb K^{(\alpha,\beta)}\big)$. If both the Hilbert spaces $(\mathcal H, K^\alpha)$ and $(\mathcal H, K^{\beta})$ contain the polynomial ring $\mathbb C[z_1,\ldots,z_m]$ as a dense subset, then by Proposition \ref{proppoldense}, we see that the ring 
$\mathbb C[z_1,\ldots,z_m]\otimes \mathbb C^m$ is contained in $(\hl, \mathbb K^{(\alpha, \beta)})$ and is dense in it. An application of Lemma \ref{lemsharpK} now completes the proof. 
\end{proof}

Some of the results in this paper generalize, among other things,  one of the main results of \cite{Sal}, which  is reproduced below.
\begin{theorem}[Salinas, {\cite[Theorem 2.6]{Sal}}]{\label{thmsalinas}}
Let $\Omega \subset \mathbb{C}^m$ be a bounded domain. If $K_1,K_2:\Omega\times\Omega\to \mathbb C$ are two  sharp kernels
(resp. generalized Bergman kernels), then $K_1\otimes K_2$ and $K_1K_2$ are also sharp kernels (resp. generalized Bergman kernels).
\end{theorem}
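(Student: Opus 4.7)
The proof splits naturally into treating $K_1\otimes K_2$ on $\Omega\times\Omega$ and $K_1K_2$ on $\Omega$; I dispatch the tensor-product case first by a direct kernel computation, then deduce the pointwise-product case from it via Aronszajn's identification of $(\hl,K_1K_2)$ with the semi-invariant subspace $\mathcal S_0=\mathcal A_0^\perp\subset(\hl,K_1)\otimes(\hl,K_2)$ that is the $k=0$ instance of Proposition~\ref{prop:jetq}.

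For $K_1\otimes K_2$, boundedness of $M_{z_1},\ldots,M_{z_{2m}}$ on $(\hl,K_1)\otimes(\hl,K_2)$ is immediate because the tuple decomposes as $(M_{z_i}\otimes I,\,I\otimes M_{z_i})$. Using the tensor identity $\ker(A\otimes I)=\ker A\otimes \hl_2$ together with sharpness of each factor, one has
\[
\bigcap_{i=1}^m\ker(M_{z_i}\otimes I-w_i)^*=\ran K_1(\cdot,w)\otimes \hl_2,\qquad \bigcap_{i=1}^m\ker(I\otimes M_{z_i}-w'_i)^*=\hl_1\otimes \ran K_2(\cdot,w'),
\]
whose intersection, by the standard tensor identity for closed subspaces, equals $\ran K_1(\cdot,w)\otimes \ran K_2(\cdot,w')=\ran(K_1\otimes K_2)(\cdot,(w,w'))$, giving sharpness. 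For the generalized Bergman conclusion I would argue via Koszul complexes: the Koszul complex of the combined $2m$-tuple is the tensor product of the two single-factor Koszul complexes, so a Künneth-type argument transfers Fredholmness (and in particular closed range of the first differential) from the factors to the tensor complex.

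For $K_1K_2$, Proposition~\ref{prop:jetq} identifies multiplication by $z_i$ on $(\hl,K_1K_2)$ with $P_{\mathcal S_0}(M_{z_i}\otimes I)|_{\mathcal S_0}$; because $M_{z_i}\otimes I-I\otimes M_{z_i}$ is multiplication by $z_i-\zeta_i$, whose range lies in $\mathcal A_0$, the compressions of $M_{z_i}\otimes I$ and $I\otimes M_{z_i}$ to $\mathcal S_0$ coincide, and boundedness on $(\hl,K_1K_2)$ is inherited from the tensor-product case. For sharpness, suppose $\tilde f\in\mathcal S_0$ is annihilated by $P_{\mathcal S_0}(M_{z_i}\otimes I-w_i)^*$ for every $i=1,\ldots,m$; equivalently, $\tilde f\perp(z_i-w_i)g$ for all $g\in\mathcal S_0$. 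Since $\mathcal A_0$ is a $\mathbb C[z_1,\ldots,z_{2m}]$-submodule, $(z_i-w_i)\mathcal A_0\subset\mathcal A_0$, and combining with $\tilde f\perp\mathcal A_0$ yields $\tilde f\perp(z_i-w_i)h$ for every $h\in\hl_1\otimes\hl_2$. The same reasoning applied to $\zeta_i-w_i$ shows $\tilde f\in\ker D_{(\boldsymbol M-(w,w))^*}$ in the full tensor product, and the sharpness of $K_1\otimes K_2$ just established forces $\tilde f$ to be a scalar multiple of $K_1(\cdot,w)\otimes K_2(\cdot,w)$, whose diagonal restriction is a scalar multiple of $(K_1K_2)(\cdot,w)$.

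The main obstacle is the closed-range condition for $D_{(\boldsymbol M-w)^*}$ on $(\hl,K_1K_2)$: compression of an operator with closed range to a semi-invariant subspace need not retain closed range. My plan is to exploit the Fredholmness of the full Koszul complex on the tensor product at $(w,w)$, together with the identification of compressions of $M_{z_i}\otimes I$ and $I\otimes M_{z_i}$ on $\mathcal S_0$, to show that the Koszul cohomologies of the compressed $m$-tuple on $\mathcal S_0\cong(\hl,K_1K_2)$ are finite-dimensional; semi-Fredholm theory then delivers closed range. Comparing Koszul complexes across the module quotient $0\to\mathcal A_0\to\hl_1\otimes\hl_2\to\mathcal S_0\to 0$ and controlling the cohomological contribution of $\mathcal A_0$ is where I expect the genuine technical work to reside.
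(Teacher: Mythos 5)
First, a point of orientation: the paper does not prove this theorem — it is quoted from Salinas and then used as an input (for instance inside the proof of Lemma \ref{lem Gleason}); what the paper actually proves are the generalizations, Theorems \ref{thmsharp} and \ref{gbkjet}, whose $k=0$ case recovers the $K_1K_2$ half of the statement. Measured against that machinery, your sharpness arguments are essentially correct and run parallel to the paper's: the two tensor identities you invoke are exactly Lemmas \ref{ker of tensor} and \ref{intersection of tns}, and your observation that a vector of $\mathcal S_0$ annihilated by the compressed tuple is annihilated by the full $2m$-tuple (because $(z_i-w_i)\mathcal A_0\subset\mathcal A_0$ and multiplication by $z_i-\zeta_i$ maps into $\mathcal A_0$) is a clean $k=0$ instance of Lemma \ref{lemkerneloftns}. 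Two small omissions: sharpness is defined only for positive definite kernels, so you must check that $K_1\otimes K_2$ and $K_1K_2$ are positive definite (Schur's theorem together with $K_i(w,w)>0$), and you should say why $(\hl,K_1K_2)\cong\mathcal S_0$ carries the compressed module action (this is the $k=0$ case of Proposition \ref{prop:jetq}, as you note).

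The genuine gap is the closed-range (generalized Bergman) half, for both $K_1\otimes K_2$ and $K_1K_2$, and your proposed Koszul/K\"unneth route starts from more than you are given. A generalized Bergman kernel is only assumed to have $\ran D_{(\boldsymbol M_z-w)^*}$ closed — closed range at a single stage of the Koszul complex — not Taylor--Fredholmness of the whole complex, so there is no Fredholmness in the factors for a K\"unneth argument to transfer; and even granting it, finite-dimensionality of the Koszul cohomology of the compression to $\mathcal S_0$ requires precisely the control over the contribution of $\mathcal A_0$ in $0\to\mathcal A_0\to\hl_1\otimes\hl_2\to\mathcal S_0\to 0$ that you defer, so the plan as stated does not close. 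The elementary substitute, which is how the paper proves Theorem \ref{gbkjet}, is the Gleason-problem characterization: by Lemma \ref{lemGleason1}, closed range of $D_{(\boldsymbol M_z-w)^*}$ is equivalent to $\mathcal V_w=\big\{\sum_i(z_i-w_i)g_i\big\}$, and by the $k=0$ case of Lemma \ref{lem Gleason} any $f$ vanishing at $(w,w)$ can be written as $\sum_j(z_j-w_j)f_j+\sum_j(z_j-\zeta_j)g_j$; projecting onto $\mathcal S_0$ kills the second sum and exhibits $f$ in the range of the compressed $D^{(\boldsymbol X-w)}$, which is exactly the closed-range statement for $K_1K_2$. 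Note, however, that Lemma \ref{lem Gleason} itself invokes the tensor-product half of Salinas's theorem, so to make your proof self-contained you would still need a direct argument that $K_1\otimes K_2$ has the Gleason property at $(w,w')$ — e.g.\ by slicing ($f(z,\zeta)=\big(f(z,\zeta)-f(w,\zeta)\big)+f(w,\zeta)$ and solving in each factor) — rather than the Fredholm-theoretic route.
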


For two scalar valued non-negative definite kernels $K_1$ and $K_2$, defined on $\Omega\times \Omega$, the jet construction (Theorem \ref{thmjetcons}) gives rise to a family of non-negative kernels 
$J_k(K_1,K_2)_{|\rm res \, \Delta}$, $k\geq 0$, where
$$J_k(K_1,K_2)_{|\rm res \, \Delta}(z,w):=\big(K_1(z,w)\partial^{\boldsymbol i}\bar{\partial}^{\boldsymbol j} K_2(z,w)\big)_{|\boldsymbol i|,|\boldsymbol j|=0}^k,\;\;z,w\in \Omega.$$
In the particular case when $k=0$, it coincides with the point-wise product $K_1K_2$. In this section, we generalize Theorem \ref{thmsalinas} for all kernels  of the form $J_k(K_1,K_2)_{|\rm res \, \Delta}$. First, we discuss two important corollaries of the jet construction which will be used later in this paper.

\emph{
For $1\leq i\leq m$, let $J_kM_i$ denote the operator of multiplication by the 
$i$th coordinate function $z_i$ on the Hilbert space $\big(\hl,J_k(K_1,K_2)_{|\rm res \, \Delta}\big)$. In case $m=1$, we write $J_kM$ instead of $J_kM_1$.
}

Taking $p(z,\zeta)$ to be the $i$th coordinate function $z_i$ in Proposition \ref{prop:jetq}, we obtain the following corollary. 

\begin{corollary}\label{coroperatoronJetk}
Let $K_1,K_2:\Omega\times\Omega\to \mathbb C$ be two non-negative definite kernels. 
Then the  $m$-tuple of operators $\big(P_{\mathcal A_k^{\perp}}
{(M_1^{(1)}\otimes I)}_
{|{\mathcal A_k^\perp}},\ldots,P_{\mathcal A_k^{\perp}}
{(M^{(1)}_m\otimes I)}_{|{\mathcal A_k^\perp}}\big)$ 
is unitarily equivalent to the $m$-tuple $(J_kM_{1},\ldots,J_kM_m)$ on the Hilbert space $\big(\hl,J_k(K_1,K_2)_{|\rm res \, \Delta}\big)$. 
\end{corollary}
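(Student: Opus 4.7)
The plan is to derive this as an immediate specialization of Proposition \ref{prop:jetq}. For each $i\in\{1,\ldots,m\}$, the operator $M_i^{(1)}\otimes I$ on $(\mathcal H,K_1)\otimes(\mathcal H,K_2)$ is precisely the multiplication operator $M_p$ associated with the polynomial $p(z,\zeta)=z_i$ in $\mathbb C[z_1,\ldots,z_{2m}]$, where we view the last $m$ coordinates as the variables $\zeta_1,\ldots,\zeta_m$. Hence Proposition \ref{prop:jetq} already gives us that $P_{\mathcal A_k^\perp}(M_i^{(1)}\otimes I)_{|\mathcal A_k^\perp}$ is unitarily equivalent to the operator $\mathcal T_{z_i}$ acting on $\ran RJ_k$; the only remaining task is to recognize $\mathcal T_{z_i}$ as multiplication by $z_i$ on the reproducing kernel Hilbert space $\big(\mathcal H,J_k(K_1,K_2)_{|\rm res\,\Delta}\big)$.

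To carry this out, I would substitute $p(z,\zeta)=z_i$ into the explicit formula for $\mathcal T_p$. Since $p$ is independent of the $\zeta$-variables, $\big((\tfrac{\partial}{\partial\zeta})^{\boldsymbol q}p\big)_{|\Delta}=0$ whenever $\boldsymbol q\neq 0$, and equals $z_i$ when $\boldsymbol q=0$. Thus only the $\boldsymbol q=\mathbf 0$ summand survives in the inner sum, and
\[
\mathcal T_{z_i}(RJ_kf)=\sum_{|\boldsymbol l|\leq k} z_i\,\Big(\big(\tfrac{\partial}{\partial\zeta}\big)^{\boldsymbol l}f(z,\zeta)\Big)_{|\Delta}\otimes e_{\boldsymbol l}=z_i\cdot(RJ_kf).
\]
Invoking Theorem \ref{thmjetcons}, the space $\ran RJ_k$ coincides isometrically with $\big(\mathcal H,J_k(K_1,K_2)_{|\rm res\,\Delta}\big)$, and under this identification the displayed equation says exactly that $\mathcal T_{z_i}$ is the operator $J_kM_i$ of multiplication by the $i$th coordinate function.

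Finally, the unitary implementing the equivalence is $RJ_k$ restricted to $\mathcal A_k^\perp$, and since this single unitary simultaneously intertwines each pair $\big(P_{\mathcal A_k^\perp}(M_i^{(1)}\otimes I)_{|\mathcal A_k^\perp},\,J_kM_i\big)$ for $i=1,\ldots,m$, the unitary equivalence holds at the level of commuting tuples, not just individual operators. There is no genuine obstacle here; the only mildly delicate point is being careful that the variable labelling in the definition of $\mathcal T_p$ matches the convention under which $M_i^{(1)}\otimes I$ corresponds to the polynomial $z_i$ (rather than $\zeta_i$), so that the derivatives in the formula kill all but the $\boldsymbol q=\mathbf 0$ term.
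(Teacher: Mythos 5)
Your proposal is correct and follows exactly the paper's route: the paper obtains this corollary by taking $p(z,\zeta)=z_i$ in Proposition \ref{prop:jetq}, and your computation that only the $\boldsymbol q=\mathbf 0$ term survives, so that $\mathcal T_{z_i}$ is multiplication by $z_i$ on $\ran RJ_k\cong\big(\mathcal H,J_k(K_1,K_2)_{|\rm res\,\Delta}\big)$, is precisely the (omitted) verification. Your added remark that the single unitary $RJ_k|_{\mathcal A_k^\perp}$ intertwines all $m$ pairs simultaneously is a worthwhile explicit point.
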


%

Combining this with Corollary $\ref{quotient module}$ we obtain
the following result. 
\begin{corollary}\label{quotient module1}
Let $\Omega \subset \mathbb{C}$ be a bounded domain and $K:\Omega\times\Omega\to \mathbb C$ be a sesqui-analytic function such that the functions $K^{\alpha}$ and $K^{\beta}$ are non-negative definite on $\Omega\times\Omega$ for some $\alpha,\beta>0$. The following operators are unitarily equivalent:
\begin{itemize}
\item[\rm(i)] the operator 
$P_{\mathcal{A}_1^\perp}(M^{(\alpha)}\otimes I)_{|\mathcal{A}_1^\perp}$
\item[\rm(ii)]the multiplication operator $J_1M$ on $\big(\mathcal H, J_1(K^\alpha, K^\beta)_{|\rm res \, \Delta}\big)$
\item[\rm (iii)]the operator $\begin{pmatrix}
M^{(\alpha+\beta)}&0\\
\delta \;\mathsf{inc} & \mathbb M^{(\alpha,\beta)}
\end{pmatrix}
$ on $(\mathcal{H}, K^{\alpha+\beta})\bigoplus(\mathcal H, \mathbb K^{(\alpha,\beta)})$  where $\delta=\frac{\beta}{\sqrt{\alpha\beta(\alpha+\beta)}}$ and $\mathsf{inc}$ is the inclusion operator from $(\mathcal{H}, K^{\alpha+\beta})$ into $(\mathcal H, \mathbb K^{(\alpha,\beta)}).$
\end{itemize}
\end{corollary}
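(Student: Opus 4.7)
The proof proposal is essentially to observe that both nontrivial equivalences have already been established earlier and only need to be chained together by transitivity.

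The plan is to verify each of the two pairwise equivalences separately. For the equivalence of (i) and (ii), specialize Corollary \ref{coroperatoronJetk} to the one-variable case ($m=1$) with the specific choice $K_1 = K^\alpha$ and $K_2 = K^\beta$. The conclusion of that corollary is precisely that $P_{\mathcal A_1^\perp}(M^{(\alpha)}\otimes I)_{|\mathcal A_1^\perp}$ is unitarily equivalent to $J_1 M$ acting on $\bigl(\mathcal H, J_1(K^\alpha, K^\beta)_{|\rm res\,\Delta}\bigr)$, since $M^{(1)}$ in that corollary plays the role of $M^{(\alpha)}$ when the first kernel is taken to be $K^\alpha$. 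This handles (i) $\Leftrightarrow$ (ii) immediately, with no new computation.

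For the equivalence of (i) and (iii), we simply invoke Corollary \ref{quotient module}, whose statement is exactly the assertion that $P_{\mathcal A_1^\perp}(M^{(\alpha)}\otimes I)_{|\mathcal A_1^\perp}$ is unitarily equivalent to the $2\times 2$ block operator with diagonal entries $M^{(\alpha+\beta)}$ and $\mathbb M^{(\alpha,\beta)}$ and off-diagonal entry $\delta\,\mathsf{inc}$, where $\delta = \beta/\sqrt{\alpha\beta(\alpha+\beta)}$. Since the hypotheses on $K$, $\alpha$, $\beta$, and $\Omega \subset \mathbb C$ in the present corollary are identical to those of Corollary \ref{quotient module}, there is nothing further to check.

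Combining the two unitary equivalences by transitivity yields the simultaneous equivalence of (i), (ii), and (iii). There is no main obstacle here: the corollary is a bookkeeping consequence of Corollary \ref{coroperatoronJetk} (which identifies the jet-construction realization of the compression) and Corollary \ref{quotient module} (which gives the block-matrix realization using the $\mathcal S_0 \oplus \mathcal S_1$ decomposition of $\mathcal A_1^\perp$). The only point worth flagging is that one should state explicitly the specialization $K_1 = K^\alpha$, $K_2 = K^\beta$, $m=1$ in the appeal to Corollary \ref{coroperatoronJetk}, so that the reader sees that the $J_1 M$ appearing in (ii) is indeed built from the kernel $J_1(K^\alpha, K^\beta)_{|\rm res\,\Delta}$.
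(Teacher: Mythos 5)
Your proposal is correct and is exactly the paper's argument: the authors introduce the corollary with "Combining this with Corollary \ref{quotient module} we obtain the following result," where "this" is Corollary \ref{coroperatoronJetk} specialized to $m=1$, $K_1=K^\alpha$, $K_2=K^\beta$, so the equivalences (i)$\,\cong\,$(ii) and (i)$\,\cong\,$(iii) are obtained from those two corollaries and chained by transitivity just as you describe.
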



%


We need the following lemmas for the generalization of Theorem \ref{thmsalinas}.


\begin{lemma}\label{ker of tensor}
Let $\hl_1$ and $\hl_2$ be two Hilbert spaces and $T$ be a bounded linear operator on $\hl_1$. 
Then $$\ker (T\otimes I_{\hl_2})=\ker T\otimes \mathcal H_2.$$
\end{lemma}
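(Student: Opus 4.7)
The plan is to prove the two inclusions $\ker T\otimes \mathcal H_2 \subseteq \ker(T\otimes I_{\mathcal H_2})$ and the reverse inclusion separately, interpreting $\ker T\otimes \mathcal H_2$ as the closed subspace of $\mathcal H_1\otimes \mathcal H_2$ generated by elementary tensors $x\otimes y$ with $x\in \ker T$, $y\in \mathcal H_2$.

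The inclusion $\ker T\otimes \mathcal H_2 \subseteq \ker(T\otimes I_{\mathcal H_2})$ is straightforward: on a generating elementary tensor one computes $(T\otimes I_{\mathcal H_2})(x\otimes y) = Tx\otimes y = 0$ whenever $x\in \ker T$, and this extends by linearity and by the continuity of $T\otimes I_{\mathcal H_2}$ to the whole closed subspace $\ker T\otimes \mathcal H_2$.

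For the reverse inclusion, I would fix an orthonormal basis $\{e_\alpha\}_{\alpha \in \Lambda}$ of $\mathcal H_2$ and use the standard identification of $\mathcal H_1\otimes \mathcal H_2$ with the $\ell^2$-direct sum $\bigoplus_{\alpha \in \Lambda} \mathcal H_1$: each $z\in \mathcal H_1\otimes \mathcal H_2$ admits a unique expansion $z=\sum_{\alpha} x_\alpha\otimes e_\alpha$ with $x_\alpha \in \mathcal H_1$ and $\sum_\alpha \|x_\alpha\|^2 = \|z\|^2<\infty$. Applying $T\otimes I_{\mathcal H_2}$ term by term, which is justified by the boundedness of $T\otimes I_{\mathcal H_2}$ together with the orthogonality of the summands $(Tx_\alpha)\otimes e_\alpha$, gives $(T\otimes I_{\mathcal H_2})z = \sum_\alpha (Tx_\alpha)\otimes e_\alpha$. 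By uniqueness of the expansion, $z\in \ker(T\otimes I_{\mathcal H_2})$ forces $Tx_\alpha = 0$ for every $\alpha\in \Lambda$, so each $x_\alpha$ lies in $\ker T$ and consequently $z \in \ker T\otimes \mathcal H_2$.

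The only delicate point is making the term-by-term application of $T\otimes I_{\mathcal H_2}$ to the (possibly infinite) sum rigorous; this is really the only nontrivial step and follows immediately from $\|T\otimes I_{\mathcal H_2}\|=\|T\|<\infty$ and the Pythagorean identity $\bigl\|\sum_\alpha (Tx_\alpha)\otimes e_\alpha\bigr\|^2 = \sum_\alpha \|Tx_\alpha\|^2$. Everything else is formal, so I do not anticipate any genuine obstacle.
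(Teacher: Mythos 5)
Your proof is correct and follows essentially the same route as the paper: the easy inclusion on elementary tensors, then fixing an orthonormal basis $\{e_\alpha\}$ of $\mathcal H_2$, expanding $z=\sum_\alpha x_\alpha\otimes e_\alpha$, and using uniqueness of the expansion to conclude $Tx_\alpha=0$ for all $\alpha$. You are in fact slightly more careful than the paper in justifying the term-by-term application of $T\otimes I_{\mathcal H_2}$ via boundedness and the Pythagorean identity.
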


\begin{proof} 
It is easily seen that  $\ker T\otimes \mathcal 
{H}_2\subset \ker (T\otimes I_{\mathcal H_2}).$
To establish the opposite inclusion, let $x$ be an arbitrary element in $\ker (T\otimes I_{\mathcal H_2}).$ 
Fix an orthonormal basis $\{f_i\}$  of $\mathcal {H}_2.$ Note that $x$ is of the form $\sum v_i\otimes f_i$ for some $v_i$'s in $\hl_1.$ 
Since $x\in \ker (T\otimes I_{\mathcal H_2})$, we have $\sum Tv_i\otimes f_i=0.$ Moreover, since $\{f_i\}$
is an orthonormal basis of $\hl_2$, it follows that $Tv_i=0$ for all $i$. 
Hence $x$ belongs to 
$\ker (T)\otimes \mathcal {H}_2$, completing the proof of the lemma.   
\end{proof}  

\begin{lemma}\label{intersection of tns}
Let $\hl_1$ and $\hl_2$ be two Hilbert spaces. If  $B_1,\ldots,B_m$ are closed subspaces of $\mathcal H_1$, then $$\bigcap_{l=1}^m (B_l\otimes \mathcal H_2)=\Big(\bigcap_{l=1}^m B_l\Big)\otimes\mathcal H_2.$$
\end{lemma}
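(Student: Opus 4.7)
The plan is to reduce the statement to Lemma \ref{ker of tensor} by writing each tensor product $B_l\otimes\mathcal H_2$ as the kernel of the tensor of a projection with the identity, and then replaying the orthonormal-basis coordinate argument used in the proof of that lemma, uniformly over $l=1,\ldots,m$.

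The inclusion $\bigl(\bigcap_l B_l\bigr)\otimes\mathcal H_2\subseteq \bigcap_l(B_l\otimes \mathcal H_2)$ is immediate: any elementary tensor $v\otimes h$ with $v\in \bigcap_l B_l$ sits in each $B_l\otimes\mathcal H_2$, and these tensor products are closed subspaces, so the closed linear span on the left is contained in each factor on the right, hence in their intersection.

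For the reverse inclusion, let $P_l:\mathcal H_1\to\mathcal H_1$ denote the orthogonal projection onto $B_l^\perp$, so that $\ker P_l=B_l$. By Lemma \ref{ker of tensor} applied to $T=P_l$,
\[
B_l\otimes \mathcal H_2 \;=\;\ker P_l\otimes\mathcal H_2\;=\;\ker(P_l\otimes I_{\mathcal H_2}),\qquad l=1,\ldots,m.
\]
Now fix an arbitrary element $x\in\bigcap_{l=1}^m(B_l\otimes\mathcal H_2)$ and an orthonormal basis $\{f_i\}$ of $\mathcal H_2$, and write $x=\sum_i v_i\otimes f_i$ with $v_i\in\mathcal H_1$. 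For each $l$, the condition $(P_l\otimes I_{\mathcal H_2})x=0$ gives $\sum_i (P_l v_i)\otimes f_i=0$, and independence of the $f_i$ forces $P_l v_i=0$ for every $i$, i.e., $v_i\in B_l$. Since this holds for every $l$, we conclude $v_i\in \bigcap_{l=1}^m B_l$ for every $i$, and hence $x\in \bigl(\bigcap_{l=1}^m B_l\bigr)\otimes\mathcal H_2$.

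There is no real obstacle here; the only thing to be careful about is being explicit that the identification $B_l\otimes\mathcal H_2=\ker(P_l\otimes I)$ already encodes the closed-span meaning of $B_l\otimes\mathcal H_2$, so that the coordinate expansion in the basis $\{f_i\}$ is legitimate and the argument of Lemma \ref{ker of tensor} applies verbatim, simultaneously for all $l$.
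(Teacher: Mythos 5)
Your proof is correct and follows essentially the same route as the paper's: both arguments expand an element of the intersection in an orthonormal basis $\{f_i\}$ of $\mathcal H_2$ and show that each coordinate $v_i$ must lie in every $B_l$. The only cosmetic difference is in how that key claim is verified — you realize $B_l\otimes\mathcal H_2$ as $\ker(P_l\otimes I)$ for the projection $P_l$ onto $B_l^\perp$ and replay the argument of Lemma~\ref{ker of tensor}, whereas the paper argues directly from the uniqueness of the coefficient representation using an orthonormal basis of $B_l$; both are sound.
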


\begin{proof}We only prove the non-trivial inclusion, namely, $\cap_{l=1}^m \left(B_l\otimes \mathcal H_2\right)
\subset \left(\cap_{l=1}^m B_l\right)\otimes\mathcal H_2.$ 

Let $\{f_j\}_j$ be an orthonormal basis of $\mathcal H_2$ and  $x$ be an arbitrary element in $\mathcal H_1\otimes\mathcal H_2$. Recall that $x$ can be written uniquely as $\sum x_j\otimes f_j$, $x_j\in \mathcal H_1$.

Claim: If $x$ belongs to $ B_l\otimes \mathcal H_2,$ then $x_j$ belongs to $B_l$ for all $j.$ 

To prove the claim, 
assume that $\{e_i\}_i$ is an orthonormal basis of $B_l.$ Since 
$\{e_i\otimes f_j\}_{i,j}$ is an orthonormal basis of $B_l\otimes
\mathcal H_2$ and $x$ can be written as $\sum x_{ij}e_i\otimes f_j=
\sum_{j}(\sum_i x_{ij}e_i)\otimes f_j.$ Then, the uniqueness of 
the representation $x=\sum x_j\otimes f_j,$ ensures that $x_j=\sum_i x_{ij}e_i.$
In particular, $x_j$ belongs to $B_l$ for all $j.$ Thus the claim is 
verified. 

Now let $y$ be any element in $\cap_{l=1}^m 
\left(B_l\otimes \mathcal H_2\right).$ 
Let $\sum y_j\otimes f_j$ be the unique representation of $y$ 
in $\mathcal H_1\otimes \mathcal H_2.$ Then from the claim, 
it follows that $y_j\in \cap_{l=1}^m B_l.$  Consequently, $y\in (\cap_{l=1}^m B_l)\otimes \hl_2$. 
This completes the proof.
\end{proof}
The proof of the following lemma is straightforward and therefore it is omitted.  
\begin{lemma}\label{lemkerofunitary}
Let $\mathcal H_1$ and $\mathcal H_2$ be two Hilbert spaces. Let $A:\mathcal H_1\to \mathcal H_1 $ be a bounded
linear operator and $B:\mathcal H_1\to \mathcal H_2$ be a unitary operator. Then 
$$\ker BAB^*= B(\ker A).$$
\end{lemma}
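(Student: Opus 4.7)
The plan is to verify the claimed set equality by showing both inclusions, exploiting only two basic properties of the unitary $B$: that $B^{\ast}B = I_{\mathcal H_1}$, $BB^{\ast} = I_{\mathcal H_2}$, and that $B$ is, in particular, injective.

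First I would handle the easy inclusion $B(\ker A) \subseteq \ker BAB^{\ast}$. Given $x \in \ker A$, set $y = Bx \in \mathcal H_2$ and compute
\[
BAB^{\ast} y \;=\; BAB^{\ast}Bx \;=\; BA x \;=\; 0,
\]
using $B^{\ast}B = I_{\mathcal H_1}$ and $Ax = 0$.

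For the reverse inclusion $\ker BAB^{\ast} \subseteq B(\ker A)$, I would take $y \in \mathcal H_2$ with $BAB^{\ast}y = 0$. Since $B$ is unitary, it is injective, so $AB^{\ast}y = 0$, i.e., $B^{\ast}y \in \ker A$. Applying $B$ and using $BB^{\ast} = I_{\mathcal H_2}$, we conclude $y = B(B^{\ast}y) \in B(\ker A)$.

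There is no real obstacle here; the statement is a standard bookkeeping identity about how unitaries transport kernels under conjugation, and the only subtlety to flag is that $B$ and $B^{\ast}$ act between different Hilbert spaces so the identities $B^{\ast}B = I_{\mathcal H_1}$ and $BB^{\ast} = I_{\mathcal H_2}$ must be used on the correct side of each expression.
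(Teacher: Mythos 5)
Your proof is correct, and since the paper omits the proof of this lemma as "straightforward," your two-inclusion argument using $B^{\ast}B = I_{\mathcal H_1}$, $BB^{\ast} = I_{\mathcal H_2}$, and the injectivity of $B$ is precisely the standard argument intended. Nothing to add.
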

The lemma given below is a generalization of \cite[Lemma 1.22 (i)]{CD} to commuting tuples. Recall that for a commuting $m$-tuple $\boldsymbol T=(T_1,\ldots,T_m)$, the operator $\boldsymbol T^{\boldsymbol i}$ is defined by 
$T_1^{i_1}\cdots T_m^{i_m}$, where 
$\boldsymbol i=(i_1,\ldots,i_m)\in \z^m$.

\begin{lemma}\label{lemactiononder}
If $K:\Omega\times\Omega\to \mathbb C$ is a positive definite kernel such that 
the $m$-tuple of multiplication operators ${\boldsymbol M}_z=(M_{z_1},\ldots,M_{z_m})$ on $(\mathcal H, K)$ is bounded, 
then for $w\in \Omega$ and $\boldsymbol i=(i_1,\ldots,i_m),
\boldsymbol j=(j_1,\ldots,j_m)$ in $\mathbb Z_+^m$,
\begin{itemize}
\item[\rm (i)] $(\boldsymbol{M}_z^*-\bar{w})^{\boldsymbol i}\bar{\partial}^{\boldsymbol j} K(\cdot,w)=0 $ 
if $|\boldsymbol i|> |\boldsymbol j|$,
\item[\rm (ii)] $(\mathbf{M}_z^*-\bar{w})^{\boldsymbol i}\bar{\partial}^{\boldsymbol j} K(\cdot,w)
={\boldsymbol  j}! \delta_{\boldsymbol i\boldsymbol j}  K(\cdot,w)$ if $|\boldsymbol i|=|\boldsymbol j|.$
\end{itemize}

\end{lemma}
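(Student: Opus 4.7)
The plan is to derive everything from the fundamental reproducing-kernel identity
$$(M_{z_k}^* - \bar{w}_k)\,K(\cdot,w) = 0, \qquad 1\leq k \leq m,\ w \in \Omega,$$
which holds because $M_{z_k}^*\,K(\cdot,w)=\bar{w}_k\,K(\cdot,w)$. Differentiating this vector-valued identity in the anti-holomorphic variable produces a recursion that ``lowers'' the order of the anti-holomorphic derivative each time a factor $M_{z_k}^*-\bar{w}_k$ acts.

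Concretely, I would apply $\bar{\partial}^{\boldsymbol j}$ to both sides. Since $M_{z_k}^*$ is bounded and independent of $w$, it commutes with the derivatives $\bar{\partial}^{\boldsymbol j}$; and since $\bar{w}_k$ has only two non-vanishing anti-holomorphic derivatives (itself and $\bar{\partial}_k\bar{w}_k=1$), Leibniz's formula collapses to the key lowering identity
$$(M_{z_k}^* - \bar{w}_k)\,\bar{\partial}^{\boldsymbol j} K(\cdot,w) = j_k\,\bar{\partial}^{\boldsymbol j - e_k}K(\cdot,w),$$
understood as $0$ when $j_k = 0$, where $e_k$ denotes the $k$th standard multi-index. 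This is really the whole engine of the proof.

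With this identity in hand, I would prove (i) and (ii) simultaneously by induction on $|\boldsymbol i|$. The base case $|\boldsymbol i|=0$ is immediate: statement (i) is vacuous, and statement (ii) reduces to $K(\cdot,w)=\boldsymbol 0!\, K(\cdot,w)$. For the inductive step, given $\boldsymbol i$ with $|\boldsymbol i|\geq 1$, pick any $k$ with $i_k\geq 1$, write $\boldsymbol i = \boldsymbol i' + e_k$, and use commutativity of $M_{z_1}^*,\ldots,M_{z_m}^*$ to factor
$$(\boldsymbol{M}_z^*-\bar{w})^{\boldsymbol i}\,\bar{\partial}^{\boldsymbol j}K(\cdot,w) = (\boldsymbol{M}_z^*-\bar{w})^{\boldsymbol i'}\bigl[(M_{z_k}^*-\bar{w}_k)\,\bar{\partial}^{\boldsymbol j}K(\cdot,w)\bigr].$$
If $j_k=0$, the inner bracket vanishes by the lowering identity; this is consistent with (i) whenever $|\boldsymbol i|>|\boldsymbol j|$, and with (ii) because $i_k\geq 1>0=j_k$ forces $\boldsymbol i\neq\boldsymbol j$. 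If $j_k\geq 1$, the inner bracket equals $j_k\,\bar{\partial}^{\boldsymbol j-e_k}K(\cdot,w)$, and the induction hypothesis applied to $\boldsymbol i'$ and $\boldsymbol j-e_k$ settles both statements; in (ii) the factorial bookkeeping works out via $j_k(\boldsymbol j-e_k)! = \boldsymbol j!$ together with the obvious equivalence $\boldsymbol i' = \boldsymbol j - e_k \iff \boldsymbol i=\boldsymbol j$.

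The argument is mechanical once the lowering identity is in place, and there is no substantial obstacle. The most delicate point is making sure the case split $j_k=0$ vs.\ $j_k\geq 1$ is handled consistently with the Kronecker-delta form of (ii), in particular that $j_k=0$ with $i_k\geq 1$ correctly forces $\boldsymbol i\neq\boldsymbol j$. Positive definiteness of $K$ is used only to guarantee that $K(\cdot,w)$ together with all its anti-holomorphic derivatives belong to $(\mathcal H, K)$ (via Proposition \ref{derivativeofK}), so that the identities above make literal sense as vector equations.
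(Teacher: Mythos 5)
Your proof is correct. The engine is the same as in the paper --- differentiating the eigenvector identity $(M_{z_k}^*-\bar w_k)K(\cdot,w)=0$ in $\bar w$ --- but the organization is genuinely different and cleaner. You isolate the single first-order lowering identity $(M_{z_k}^*-\bar w_k)\bar\partial^{\boldsymbol j}K(\cdot,w)=j_k\,\bar\partial^{\boldsymbol j-e_k}K(\cdot,w)$ and then run one induction on $|\boldsymbol i|$ that handles (i) and (ii) simultaneously, with the Kronecker delta and the factorial $j_k(\boldsymbol j-e_k)!=\boldsymbol j!$ falling out of the recursion. The paper instead first proves the coordinate-wise higher-order vanishing claim $(M_{z_l}^*-\bar w_l)^{i_l}\bar\partial_l^{j_l}K(\cdot,w)=0$ for $i_l>j_l$ by an induction on $j_l$ (differentiating the already-iterated identity and reapplying a factor of $M_{z_l}^*-\bar w_l$), deduces (i) and the off-diagonal case of (ii) from the pigeonhole observation that $|\boldsymbol i|\ge|\boldsymbol j|$ with $\boldsymbol i\ne\boldsymbol j$ forces some $i_l>j_l$, and then runs a separate induction on $\boldsymbol i$ for the diagonal case $\boldsymbol i=\boldsymbol j$. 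Your version buys a shorter, unified argument with less case analysis; the paper's version makes the stronger single-coordinate vanishing statement explicit, which is what it reuses in spirit elsewhere (e.g.\ in Proposition \ref{prop joint kernel}). Both hinge on the fact that the bounded operator $M_{z_k}^*$ commutes with $\bar\partial$, which you correctly flag, and on Proposition \ref{derivativeofK} to ensure the vectors lie in $(\mathcal H,K)$.
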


\begin{proof}
First, we claim that if $ {i}_l> {j}_l$ for some $1\leq l\leq m$, then
$(M_{z_l}^*-\bar{w}_l)^{{i}_l}\bar {\partial}_l^{{j}_l}K(\cdot,w)=0.$
The claim is verified by induction on ${j}_l$. The case ${j}_l=0$ holds trivially since $(M_{z_l}^*-\bar{w}_l)K(\cdot,w)=0$. Now assume that the claim is valid for ${j}_l=p.$
We have to show that it is true for ${j}_l=p+1$ also. 
Suppose ${i}_l > p+1 .$ Then $ {i}_l-1 > p .$ Hence, by the induction
hypothesis, $(M_{z_l}^*-\bar{w}_l)^{i_l-1}\bar {\partial}_l^p K(\cdot,w)=0.$ 
Differentiating this with respect to 
$\bar {w}_l$, we see that 
$$({i}_l-1)(M_{z_l}^*-\bar{w}_l)^{{i}_l-2}(-1)\bar {\partial}_l^p K(\cdot,w)
+ (M_{z_l}^*-\bar{w}_l)^{{i}_l-1}\bar {\partial}_l^{p+1} K(\cdot,w)=0 .$$
Applying $(M_{z_l}^*-\bar{w}_l)$ to both sides of the equation above, we obtain 
$$( i_l-1)(M_{z_l}^*-\bar{w}_l)^{i_l-1}(-1)\bar {\partial}_l^p K(\cdot,w)
+ (M_{z_l}^*-\bar{w}_l)^{i_l}\bar {\partial}_l^{p+1} K(\cdot,w)=0 .$$
Using the induction hypothesis once again, we conclude that 
$(M_{z_l}^*-\bar{w}_l)^{i_l}\bar {\partial}_l^{p+1} K(\cdot,w)=0 .$
Hence the  claim is verified. 

Now, to prove the 
first part of the lemma, assume that $|{\boldsymbol  i}| > |{\boldsymbol  j}|.$
Then there exists a $l$ such that $i_l > j_l.$
Hence from the claim, we have $(M_{z_l}^*-\bar{w}_l)^{i_l}\bar 
{\partial}_l^{j_l} K(\cdot,w)=0.$
Differentiating with respect to all other variables except $\bar{w}_l$,
we get $(M_{z_l}^*-\bar{w}_l)^{i_l}\bar 
{\partial}^{\boldsymbol  j} K(\cdot,w)=0. $ Applying the operator $(\boldsymbol{M}_z^*-\bar{w})^{{\boldsymbol  i}- i_l e_l}$, where $e_l$ is the $l$th standard unit vector of $\mathbb C^m$, we see that $(\boldsymbol{M}_z^*-\bar{w})^{{\boldsymbol  i}}\bar{\partial}^{\boldsymbol  j} K(\cdot,w)=0$, completing the proof of the first part. 

For the second part,
assume that $|{\boldsymbol  i}|=|{\boldsymbol  j}|$ and ${\boldsymbol  i}\neq {\boldsymbol  j}.$ 
Then there is atleast one $l$ such that $ i_l>  j_l.$ 
Hence by the argument used in the last paragraph, we conclude that 
$(\boldsymbol{M}_z^*-\bar{w})^{{\boldsymbol  i}}\bar{\partial}^{\boldsymbol  j} K(\cdot,w)=0.$ 
Finally, if ${\boldsymbol  i}={\boldsymbol  j},$ we use induction on ${\boldsymbol  i}$ to proof the lemma. There is nothing to prove if
${\boldsymbol  i}=0$. For the proof by induction, now, assume that
$(\boldsymbol{M}_z^*-\bar{w})^{{\boldsymbol  i}}\bar{\partial}^{\boldsymbol  i} K(\cdot,w)={{\boldsymbol  i}}!K(\cdot,w)$
for some ${\boldsymbol  i}\in \mathbb Z_+^m .$ To complete the induction step,  we have to prove that
$(\boldsymbol{M}_z^*-\bar{w})^{{\boldsymbol  i}+e_l}\bar{\partial}^{{\boldsymbol  i}+e_l}K(\cdot,w)
=({\boldsymbol  i}+e_l)!K(\cdot,w).$ By the first part of the lemma, we have 
$(\boldsymbol{M}_z^*-\bar{w})^{{\boldsymbol  i}+e_l}\bar{\partial}^{\boldsymbol  i} K(\cdot,w)=0.$ 
Differentiating with respect to $\bar{w}_l,$ we get that
$$(\boldsymbol{M}_z^*-\bar{w})^{{\boldsymbol  i}+e_l}\bar{\partial}^{{\boldsymbol  i}+e_l}
K(\cdot,w)-(i_{l}+1)(\boldsymbol{M}_z^*-\bar{w})^{\boldsymbol  i} \bar{\partial}^{\boldsymbol  i} K(\cdot,w)=0.$$   
Hence, by the induction hypothesis, 
$(\boldsymbol{M}_z^*-\bar{w})^{{\boldsymbol  i}+e_l}\bar{\partial}^{{\boldsymbol  i}+e_l}K(\cdot,w)
=({\boldsymbol  i}+e_l)!K(\cdot,w).$ This completes the proof.
\end{proof}

\begin{corollary}\label{cortaylor}
Let $K:\Omega\times\Omega\to \mathbb C$ be a positive definite kernel.  Suppose that the $m$-tuple of multiplication operators ${\boldsymbol M}_z$ on $(\mathcal H, K)$ is bounded. Then, for all $w\in \Omega$,
the set $\big \{\;\bar{\partial}^{\boldsymbol i} K(\cdot,w):\boldsymbol{i}\in \mathbb Z^m_+ \;\big \}$ is linearly independent. Consequently, the matrix $\big(\partial^{\boldsymbol i}\bar{\partial}^{\boldsymbol j}K(w,w)\big)_{\boldsymbol i,\boldsymbol j\in \Lambda}$ is positive definite
for any finite subset $\Lambda$ of $\mathbb Z^m_+ $. 
\end{corollary}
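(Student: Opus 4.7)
The plan is to deduce both statements directly from Lemma \ref{lemactiononder} by a standard highest-order argument.

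For the first assertion, I would assume a finite linear combination
$$\sum_{\boldsymbol i \in F} c_{\boldsymbol i}\,\bar{\partial}^{\boldsymbol i} K(\cdot, w) = 0,$$
where $F \subset \mathbb Z_+^m$ is finite. Let $n = \max\{|\boldsymbol i| : \boldsymbol i \in F,\; c_{\boldsymbol i} \neq 0\}$. For any multi-index $\boldsymbol j$ with $|\boldsymbol j| = n$, apply $(\boldsymbol M_z^* - \bar w)^{\boldsymbol j}$ to both sides of the relation. By part (i) of Lemma \ref{lemactiononder}, all terms with $|\boldsymbol i| < n$ are killed; by part (ii), all terms with $|\boldsymbol i| = n$ and $\boldsymbol i \neq \boldsymbol j$ also vanish, while the surviving term $\boldsymbol i = \boldsymbol j$ yields $c_{\boldsymbol j}\,\boldsymbol j!\,K(\cdot, w)$. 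Since $K$ is positive definite, $K(\cdot, w) \neq 0$, and we conclude $c_{\boldsymbol j} = 0$ for every $\boldsymbol j$ of length $n$, contradicting the choice of $n$ unless $F$ is empty. Hence the family $\{\bar{\partial}^{\boldsymbol i} K(\cdot, w) : \boldsymbol i \in \mathbb Z_+^m\}$ is linearly independent.

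For the second assertion, fix a finite subset $\Lambda \subset \mathbb Z_+^m$ and consider the finite family $\{\bar{\partial}^{\boldsymbol j} K(\cdot, w) : \boldsymbol j \in \Lambda\}$ in $(\mathcal H, K)$. Using Proposition \ref{derivativeofK} with $\eta = 1$ and the observation that $\langle f, \bar\partial^{\boldsymbol j} K(\cdot, w)\rangle = (\partial^{\boldsymbol j} f)(w)$ applied to $f = \bar\partial^{\boldsymbol i} K(\cdot, w)$, one sees that the Gram matrix of this family is exactly $\big(\partial^{\boldsymbol i} \bar\partial^{\boldsymbol j} K(w,w)\big)_{\boldsymbol i, \boldsymbol j \in \Lambda}$. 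Since the vectors in the family are linearly independent by the first part, their Gram matrix is positive definite.

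The main technical point is already bundled into Lemma \ref{lemactiononder}, so no real obstacle remains; the only point requiring a little care is the direction of the conjugation between $\partial^{\boldsymbol i}\bar\partial^{\boldsymbol j}$ and the inner product $\langle \bar\partial^{\boldsymbol j} K(\cdot, w), \bar\partial^{\boldsymbol i} K(\cdot, w)\rangle$, which must be arranged so that the resulting Gram matrix is the one claimed.
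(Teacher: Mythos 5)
Your argument is correct and matches the paper's proof in substance: the paper also isolates the top-order terms by applying $(\boldsymbol M_z^*-\bar w)^{\boldsymbol q}$ with $|\boldsymbol q|$ maximal and invoking Lemma \ref{lemactiononder} (phrased as an induction on the order rather than a maximal-order contradiction), and then identifies $\big(\partial^{\boldsymbol i}\bar{\partial}^{\boldsymbol j}K(w,w)\big)$ as the Gram matrix of the linearly independent family via Proposition \ref{derivativeofK}. No gaps.
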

\begin{proof}
Let $w$ be an arbitrary point in $\Omega$. It is enough to show that the set $\big \{\;\bar{\partial}^{\boldsymbol i} K(\cdot,w):\boldsymbol{i}\in \mathbb Z^m_+, |\boldsymbol{i}|\leq k  \;\big\}$ is linearly independent for each non-negative integer $k$. Since $K$ is positive definite, there is nothing to prove if $k=0$. To complete the proof by induction on $k$, assume that the set $\big \{\;\bar{\partial}^{\boldsymbol i} K(\cdot,w):\boldsymbol{i}\in \mathbb Z^m_+, |\boldsymbol{i}|\leq k \;\big \}$
is linearly independent for some non-negative integer $k$. Suppose that 
$\sum_{|\boldsymbol{i}|\leq k+1}a_{\boldsymbol{i}}\bar{\partial}^{\boldsymbol i} K(\cdot,w)=0$ 
for some $a_{\boldsymbol{i}}$'s in $\mathbb C$.
Then 
$(\boldsymbol M^*_z-\bar w)^{\boldsymbol q}(\sum_{|\boldsymbol{i}|\leq k+1}a_{\boldsymbol{i}}\bar{\partial}^{\boldsymbol i} K(\cdot,w))=0$, for
all $\boldsymbol q\in \mathbb Z^m_+$ with $|\boldsymbol q|\leq k+1$. If $|\boldsymbol q|=k+1$, by Lemma \ref{lemactiononder}, we have that $a_{\boldsymbol q}\;q! K(\cdot,w)=0$. Consequently, $a_{\boldsymbol q}=0$
for all $\boldsymbol q\in \mathbb Z^m_+$ with 
$|\boldsymbol q|=k+1$. Hence, by the induction hypothesis, we conclude that $a_{\boldsymbol i}=0$
for all $\boldsymbol i\in \mathbb Z^m_+$, $|\boldsymbol i|\leq k+1$ and the set $\big \{\;\bar{\partial}^{\boldsymbol i} K(\cdot,w):\boldsymbol{i}\in \mathbb Z^m_+, |\boldsymbol{i}|\leq k+1 \;\big\}$ is linearly independent, completing the proof of the first part of the corollary. 

If $\Lambda$ is a finite subset of 
$\mathbb Z^m_+$, then it follows form 
the linear independence of the vectors $\big\{\bar{\partial}^{\boldsymbol i} K(\cdot,w):\boldsymbol i\in \Lambda\big\}$ that the matrix 
$\big(\left\langle\bar{\partial}^{\boldsymbol j}K(\cdot,w), \bar{\partial}^{\boldsymbol i}K(\cdot,w)\right\rangle \big)_{\boldsymbol i,\boldsymbol j\in \Lambda}$ is positive definite.
Now the proof is complete since $\left\langle\bar{\partial}^{\boldsymbol j}K(\cdot,w), \bar{\partial}^{\boldsymbol i}K(\cdot,w)\right\rangle=\partial^{\boldsymbol i}\bar{\partial}^{\boldsymbol j}K(w,w)$ (see Proposition \ref{derivativeofK}). 
\end{proof}

The following proposition is also a generalization to the multi-variate setting of \cite[Lemma 1.22 (ii)]{CD}( see also \cite{CDopen}).

\begin{proposition}\label{prop joint kernel}
If $K:\Omega\times \Omega\to \mathbb C$ is a sharp kernel, then for every $ w\in \Omega $
$$ \bigcap_{|\boldsymbol j|=k+1}\ker\;(\boldsymbol{M}_z^*-\bar {w})^{\boldsymbol j}
=\bigvee \big \{\bar{\partial}^{{\boldsymbol  j}} K(\cdot,w):|{\boldsymbol  j}|\leq k \big \}.$$
\end{proposition}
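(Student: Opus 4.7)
The plan is to prove the two containments separately. The inclusion $\bigvee\{\bar{\partial}^{\boldsymbol j}K(\cdot,w) : |\boldsymbol j| \leq k\} \subseteq \bigcap_{|\boldsymbol j| = k+1}\ker(\boldsymbol M_z^*-\bar{w})^{\boldsymbol j}$ is immediate from Lemma \ref{lemactiononder}(i), since $(\boldsymbol M_z^* - \bar{w})^{\boldsymbol i}\bar{\partial}^{\boldsymbol j}K(\cdot,w) = 0$ whenever $|\boldsymbol i| > |\boldsymbol j|$. The substance of the proposition is thus the reverse inclusion, which I would establish by induction on $k$.

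For the base case $k = 0$, the intersection $\bigcap_{l=1}^m \ker(M_{z_l}^*-\bar{w}_l)$ equals $\ker D_{(\boldsymbol M_z - w)^*}$, and by the sharpness hypothesis this coincides with $\operatorname{ran} K(\cdot,w) = \bigvee\{K(\cdot,w)\}$, which matches the right-hand side.

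For the inductive step, assume the identity holds at level $k - 1$ and take $f$ in the intersection at level $k$. For each $l \in \{1,\dots,m\}$, the vector $(M_{z_l}^*-\bar{w}_l)f$ is annihilated by every $(\boldsymbol M_z^*-\bar{w})^{\boldsymbol i}$ with $|\boldsymbol i| = k$, so the induction hypothesis yields an expansion $(M_{z_l}^*-\bar{w}_l)f = \sum_{|\boldsymbol j| \leq k-1} a^{(l)}_{\boldsymbol j}\bar{\partial}^{\boldsymbol j}K(\cdot,w)$ for some scalars $a^{(l)}_{\boldsymbol j}$. The goal is then to exhibit a single $g \in \bigvee\{\bar{\partial}^{\boldsymbol j}K(\cdot,w) : |\boldsymbol j| \leq k\}$ whose image under each $(M_{z_l}^*-\bar{w}_l)$ matches that of $f$; once this is done, $f - g$ lies in $\bigcap_{l} \ker(M_{z_l}^*-\bar{w}_l)$, hence by the base case in $\bigvee\{K(\cdot,w)\}$, placing $f$ itself in the target span.

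To construct $g$, I would use the identity $(M_{z_l}^*-\bar{w}_l)\bar{\partial}^{\boldsymbol j}K(\cdot,w) = j_l\bar{\partial}^{\boldsymbol j - e_l}K(\cdot,w)$, obtained by differentiating $(M_{z_l}^* - \bar{w}_l)K(\cdot,w) = 0$ in $\bar{w}$, and then match coefficients: setting $g = \sum_{1 \leq |\boldsymbol j| \leq k} c_{\boldsymbol j}\bar{\partial}^{\boldsymbol j}K(\cdot,w)$ with $c_{\boldsymbol j} := a^{(l)}_{\boldsymbol j - e_l}/j_l$ for any $l$ with $j_l \geq 1$ delivers the required match, provided this definition is independent of the choice of $l$. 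The main obstacle is precisely this consistency check; I would dispatch it by computing $(M_{z_l}^*-\bar{w}_l)(M_{z_r}^*-\bar{w}_r)f$ in two ways and reading off the required relation $a^{(l)}_{\boldsymbol s + e_r}(s_r+1) = a^{(r)}_{\boldsymbol s + e_l}(s_l+1)$ from the commutativity of the tuple $\boldsymbol M_z^*$ together with the linear independence of $\{\bar{\partial}^{\boldsymbol s}K(\cdot,w) : |\boldsymbol s| \leq k-2\}$ supplied by Corollary \ref{cortaylor}.
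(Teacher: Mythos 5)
Your argument is correct, but it runs the induction in the opposite direction from the paper. The paper's proof is ``top--down'': given $f$ in the intersection at level $k+1$, it applies the full operators $(\boldsymbol M_z^*-\bar w)^{\boldsymbol j}$ with $|\boldsymbol j|=k+1$, uses sharpness to write $(\boldsymbol M_z^*-\bar w)^{\boldsymbol j}f=c_{\boldsymbol j}K(\cdot,w)$, and then subtracts $\sum_{|\boldsymbol q|=k+1}\tfrac{c_{\boldsymbol q}}{\boldsymbol q!}\bar\partial^{\boldsymbol q}K(\cdot,w)$ so that Lemma \ref{lemactiononder}(ii) drops the remainder into the intersection at level $k$; no compatibility question ever arises because the top-order coefficients are read off directly. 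You instead work ``bottom--up'': you apply the single first-order operators $(M_{z_l}^*-\bar w_l)$, invoke the induction hypothesis on their images, and then reconstruct a primitive $g$, which forces you to verify an integrability condition on the coefficients $a^{(l)}_{\boldsymbol j}$. Your consistency relation $a^{(l)}_{\boldsymbol s+e_r}(s_r+1)=a^{(r)}_{\boldsymbol s+e_l}(s_l+1)$ is exactly what commutativity plus the linear independence of $\{\bar\partial^{\boldsymbol s}K(\cdot,w)\}$ (Corollary \ref{cortaylor}) yields, so the check goes through. The price of your route is two extra ingredients the paper does not need here: the exact identity $(M_{z_l}^*-\bar w_l)\bar\partial^{\boldsymbol j}K(\cdot,w)=j_l\,\bar\partial^{\boldsymbol j-e_l}K(\cdot,w)$ (not literally contained in Lemma \ref{lemactiononder}, which only treats $|\boldsymbol i|>|\boldsymbol j|$ and $|\boldsymbol i|=|\boldsymbol j|$, though it follows by the same differentiation argument used in its proof) and Corollary \ref{cortaylor}; both are available for a sharp kernel, so nothing is circular. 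What your version buys is a constructive description of how $f$ determines its expansion coefficients one derivative at a time, at the cost of the coefficient-matching bookkeeping that the paper's choice of test operators avoids.
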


\begin{proof}
The inclusion $\bigvee\{\bar{\partial}^{{\boldsymbol  j}} K(\cdot,w):|{\boldsymbol  j}|\leq k \} \subseteq \bigcap_{|\boldsymbol j|=k+1}\ker\;(\boldsymbol{M}_z^*-\bar {w})^{\boldsymbol j}$ follows from part (i) of Lemma \ref{lemactiononder}.
We use induction on $k$ for the opposite inclusion.
From the definition of  sharp kernel, this inclusion is evident if $k=0$. 
Assume that  
$$\bigcap_{|\boldsymbol j|=k+1}\ker\;(\boldsymbol{M}_z^*-\bar {w})^{\boldsymbol j} \subseteq \bigvee\big \{\bar{\partial}^{{\boldsymbol  j}} K(\cdot,w):|{\boldsymbol  j}|\leq k \big \}$$  for some non-negative integer $k$. To complete the proof by induction, we show that the inclusion remains valid  for $k+1$ as well. Let $f$ be an arbitrary element of $\bigcap_{|\boldsymbol  i|=k+2}\ker (\boldsymbol{M}_z^*-\bar {w})^{\boldsymbol  i}.$ 
Fix a $\boldsymbol  j\in \mathbb Z_+^m$ with 
$|{\boldsymbol  j}|=k+1$. Then it follows that $(\boldsymbol{M}_z^*-\bar {w})^{\boldsymbol  j} f$ belongs to $\cap_{l=1}^m\ker (M_{z_l}^*-\bar {w}_l).$ 
Since $K$ is sharp, we see that 
$(\boldsymbol{M}_z^*-\bar {w})^{{\boldsymbol  j}} f= c_{{\boldsymbol  j}} K(\cdot,w)$ for some constant $c_{{\boldsymbol  j}}$ depending on $w$.
Therefore
\begin{align*}
(\boldsymbol{M}_z^*-\bar {w})^{\boldsymbol j }\Big(f-\sum_{|{\boldsymbol  q}|=k+1}
\frac{c_{{\boldsymbol  q}}}{{\boldsymbol  q}!}\bar {\partial}^{{\boldsymbol  q}}K(\cdot,w)\Big) 
= & c_{\boldsymbol j } K(\cdot,w)-\sum_{|{\boldsymbol  q}|=k+1} \frac{c_{{\boldsymbol  q}}}{{\boldsymbol  q}!}
(\boldsymbol{M}_z^*-\bar {w})^{\boldsymbol j }{\bar \partial}^{{\boldsymbol q}}K(\cdot,w)\\
= &  c_{\boldsymbol j} K(\cdot,w)-\sum_{|{\boldsymbol  q}|=k+1}c_{{\boldsymbol  q}}\delta_{\boldsymbol j  {\boldsymbol q}}\tfrac{\boldsymbol j !}{{\boldsymbol q} !} K(\cdot,w)\\
= & 0,
\end{align*}
where the last equality follows from Lemma \ref{lemactiononder}.
Hence the element $f-\sum_{|{\boldsymbol  q}|=k+1}
\frac{c_{\boldsymbol q}}{{\boldsymbol  q}!}{\bar \partial}^{{\boldsymbol  q}}K(\cdot,w)$ belongs to 
$\bigcap_{|\boldsymbol j |=k+1}\ker (\boldsymbol{M}_z^*-\bar {w})^{\boldsymbol  j}.$
Thus by the induction hypothesis, 
$f-\sum_{|{\boldsymbol q}|=k+1}\frac{c_{{\boldsymbol q}}}{{\boldsymbol q}!}
{\bar \partial}^{{\boldsymbol  q}}K(\cdot,w)=\sum_{|{\boldsymbol  j}|\leq k}
d_{{\boldsymbol  j}} \bar{\partial}^{\boldsymbol  j} K(\cdot,w).$ 
Hence $f$ belongs to $\bigvee\{\bar{\partial}^{{\boldsymbol  j}} K(\cdot,w):|{\boldsymbol  j}|\leq k+1 \}.$
This completes the proof. 
\end{proof}

For a $m$-tuple of bounded operators $\mathbf T=(T_1,\ldots,T_m)$ on a Hilbert space
$\mathcal H,$ we define an operator 
$D^ {\mathbf T}:\mathcal H\bigoplus\cdots\bigoplus\mathcal H\to \mathcal H $
by $$ D^{\boldsymbol T}(x_1,\ldots,x_m)=\sum_{i=1}^m T_ix_i,\;\;x_1,\ldots,x_m \in\mathcal H.$$  
A routine verification shows that $(D_{\boldsymbol T})^*=D^{{\boldsymbol T}^*}$. 
The following lemma is undoubtedly well known, however, we provide a proof for the sake of completeness.
\begin{lemma}\label{lemGleason1}
Let $K:\Omega\times\Omega\to \mathbb C $ be a positive definite kernel
such that the $m$-tuple of multiplication operators ${\boldsymbol M}_z$ on $(\mathcal H, K)$ is bounded. Let $w=(w_1,\ldots,w_m)$ be a fixed but arbitrary point in $\Omega$ and  let $\mathcal V_w$ be the subspace given by 
$\{f\in (\hl, K): f(w)=0\}$. Then $K$ is a 
generalized Bergman kernel if and only 
if for every $w\in \Omega$,
\begin{equation}\label{eqnGleason1}
\mathcal V_w=
\Big\{\textstyle\sum_{i=1}^m (z_i-w_i)g_i:g_i\in (\mathcal H, K)\Big\}.
\end{equation}
\end{lemma}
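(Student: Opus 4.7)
The plan is to translate the stated identity into an orthogonal-complement statement, and then read off both directions from the closed-range theorem together with the reproducing property.

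First I would observe two basic identifications. By the reproducing property, $f\in\mathcal V_w$ iff $\langle f,K(\cdot,w)\rangle=0$, so $\mathcal V_w=\{K(\cdot,w)\}^\perp$, and in particular $\mathcal V_w^\perp=\bigvee\{K(\cdot,w)\}=\ran K(\cdot,w)$. On the other hand, using the notation introduced just before the lemma, the right-hand side of \eqref{eqnGleason1} is exactly $\ran D^{\boldsymbol M_z-w}$, where $(\boldsymbol M_z-w)$ denotes the $m$-tuple $(M_{z_1}-w_1,\ldots,M_{z_m}-w_m)$. Since $(D^{\boldsymbol M_z-w})^*=D_{(\boldsymbol M_z-w)^*}$, one has $\overline{\ran D^{\boldsymbol M_z-w}}=\bigl(\ker D_{(\boldsymbol M_z-w)^*}\bigr)^\perp$.

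For the forward direction, assume $K$ is a generalized Bergman kernel. Sharpness gives $\ker D_{(\boldsymbol M_z-w)^*}=\ran K(\cdot,w)$, so taking orthogonal complements,
$$\bigl(\ker D_{(\boldsymbol M_z-w)^*}\bigr)^\perp=\bigl(\ran K(\cdot,w)\bigr)^\perp=\mathcal V_w.$$
The closed range hypothesis says $\ran D_{(\boldsymbol M_z-w)^*}$ is closed; by the closed range theorem this forces $\ran D^{\boldsymbol M_z-w}$ to be closed as well, hence equal to its closure $\bigl(\ker D_{(\boldsymbol M_z-w)^*}\bigr)^\perp=\mathcal V_w$. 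This is \eqref{eqnGleason1}.

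Conversely, assume \eqref{eqnGleason1} holds at every $w\in\Omega$. Then $\ran D^{\boldsymbol M_z-w}=\mathcal V_w$, and $\mathcal V_w$ is closed since point evaluation at $w$ is continuous. Thus $\ran D^{\boldsymbol M_z-w}$ is closed, and the closed range theorem gives that $\ran D_{(\boldsymbol M_z-w)^*}$ is closed, which is one of the two defining conditions. Taking orthogonal complements of $\mathcal V_w=\ran D^{\boldsymbol M_z-w}$ yields
$$\ran K(\cdot,w)=\mathcal V_w^\perp=\bigl(\ran D^{\boldsymbol M_z-w}\bigr)^\perp=\ker D_{(\boldsymbol M_z-w)^*},$$
which is sharpness. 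Hence $K$ is a generalized Bergman kernel, completing the equivalence.

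I do not expect any serious obstacle; the only subtle point is the invocation of the closed range theorem to convert closedness of $\ran D_{(\boldsymbol M_z-w)^*}$ into closedness of $\ran D^{\boldsymbol M_z-w}$ (and conversely), which is exactly the bridge needed between the algebraic Gleason-type statement in \eqref{eqnGleason1} and the operator-theoretic definition of a generalized Bergman kernel.
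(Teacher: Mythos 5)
Your proof is correct and follows essentially the same route as the paper: both arguments identify the right-hand side of \eqref{eqnGleason1} with $\ran D^{\boldsymbol M_z-w}$, use the identity $(D^{\boldsymbol M_z-w})^*=D_{(\boldsymbol M_z-w)^*}$ together with the closed range theorem, and compare $\bigl(\ker D_{(\boldsymbol M_z-w)^*}\bigr)^\perp$ with $\{cK(\cdot,w)\}^\perp=\mathcal V_w$. The paper packages this as a chain of inclusions in which equality must be forced at every step, whereas you argue the two implications separately, but the content is identical.
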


\begin{proof}
First, observe that the right-hand 
side of $\eqref{eqnGleason1}$ 
is equal to $\ran D^{ {\boldsymbol M}_z-w}.$ Hence it suffices to show that  $K$ is a generalized Bergman kernel if and only if 
$\mathcal V_w = \ran D^{{\boldsymbol M}_z-w}.$ 
In any case, we have the following inclusions 
\begin{align}\label{eqnGleason2}
\ran D^{{\boldsymbol M}_z-w}=\ran (D_{({\boldsymbol M}_z-w)^*})^*\subseteq \overbar{\ran (D_{({\boldsymbol M}_z-w)^*})^*}
&={\ker D_{({\boldsymbol M}_z-w)^*}}^\perp\\
&\subseteq \{cK(\cdot,w):c\in \mathbb C\}^\perp\nonumber\\
&=\mathcal V_w\nonumber.
\end{align}
Hence it  follows that $\mathcal V_w = \ran D^{\boldsymbol {M}_z-w}$
if and only if equality is forced everywhere in these inclusions, that is, $\ran (D_{(\boldsymbol {M}_z-w)^*})^* = \overbar{\ran (D_{(\boldsymbol {M}_z-w)^*})^*}$
and  ${\ker D_{(\boldsymbol {M}_z-w)^*}}^\perp = \{cK(\cdot,w):c\in \mathbb C\}^\perp .$
Now $\ran (D_{(\boldsymbol {M}_z-w)*})^* = \overbar{\ran (D_{(\boldsymbol {M}_z-w)^*})^*}$ 
if and only if $\ran (D_{(\boldsymbol {M_z}-w)^*})^*$ is closed.  
Recall that, if $\mathcal H_1,\mathcal H_2$ are two Hilbert spaces, and an operator 
$T:\mathcal H_1\to \mathcal H_2$ has closed range, then $T^*$
also has closed range. Therefore, $\ran (D_{(\boldsymbol {M}_z-w)^*})^*$ is closed if and only if $\ran D_{(\boldsymbol {M}_z-w)^*}$ is 
closed.   Finally, note that
${\ker D_{(\boldsymbol {M}_z-w)^*}}^\perp = \{cK(\cdot,w):c\in \mathbb C\}^\perp $ holds
if and only if ${\ker D_{(\boldsymbol {M}_z-w)^*}} = \{cK(\cdot,w):c\in \mathbb C\}.$
This completes the proof. 
\end{proof}
\begin{notation}
Recall that for $1\leq i\leq m$, $M_i^{(1)}, M_i^{(2)}, J_kM_i$ denote the operators of multiplication by the coordinate function $z_i$
on the Hilbert spaces $(\hl, K_1), (\hl, K_2)$
and $(\hl, J_k(K_1,K_2)_{|\rm res \, \Delta})$, respectively. Set
$\boldsymbol M^{(1)}=(M_1^{(1)},\ldots,M_m^{(1)})$,
$\boldsymbol M^{(2)}=(M_1^{(2)},\ldots,M_m^{(2)})$ and
${\boldsymbol J}_k\boldsymbol {M}=(J_k M_1,\ldots,J_k M_m)$. Also, for the sake of brevity, let
$\hl_1$ and $\hl_2$ be the Hilbert spaces $(\hl, K_1)$ and $(\hl, K_2)$, respectively for the rest of this section.
\end{notation}

The following lemma is the main tool  to prove that the kernel $J_k(K_1, K_2)_{|\rm res \, \Delta}$ is sharp whenever $K_1$ and $K_2$ are sharp.


\begin{lemma}\label{lemkerneloftns}
If $K_1, K_2:\Omega\times\Omega\to \mathbb C $ are two sharp kernels, then for all $w=(w_1,\ldots,w_m)\in \Omega$,
\begin{align*}
\bigcap_{p=1}^m\ker\Big(\big((M_p^{(1)}-{w}_p)^*\otimes I\big)
_{|\mathcal A_k^\perp}\Big)&=
\bigcap_{|\boldsymbol i|=1}\ker {\big({\boldsymbol M^{(1)}}- w\big)^*}^{\boldsymbol i}\otimes\bigcap_{|\boldsymbol i|=k+1}
\ker {\big({\boldsymbol M^{(2)}}- w\big)^*}^{\boldsymbol i}
\\&=\bigvee\big\{K_1(\cdot,w)\otimes\bar{\partial}^{\boldsymbol i}K_2(\cdot,w):
 |\boldsymbol i|\leq k\big\}.
\end{align*}
\end{lemma}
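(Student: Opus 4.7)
The plan is to establish the two displayed equalities in turn. For the second equality (middle $=$ RHS), I invoke the hypotheses on each factor separately: sharpness of $K_1$ gives $\bigcap_{|\boldsymbol i|=1}\ker(\boldsymbol M^{(1)}-w)^{*\boldsymbol i}=\bigcap_{p=1}^{m}\ker(M_p^{(1)}-w_p)^{*}=\mathbb C\,K_1(\cdot,w)$, while Proposition \ref{prop joint kernel} applied to the sharp kernel $K_2$ gives $\bigcap_{|\boldsymbol i|=k+1}\ker(\boldsymbol M^{(2)}-w)^{*\boldsymbol i}=\bigvee\{\bar\partial^{\boldsymbol j}K_2(\cdot,w):|\boldsymbol j|\leq k\}$. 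The tensor product of these two subspaces coincides verbatim with the right-hand side.

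For the first equality, I first simplify the LHS. Since $\mathcal A_k$ is a submodule of $\hl_1\otimes\hl_2$ over $\mathbb C[z_1,\ldots,z_{2m}]$, the subspace $\mathcal A_k^\perp$ is invariant under each $(M_p^{(1)})^{*}\otimes I$, so the kernel of the restricted operator equals $\mathcal A_k^\perp\cap\ker((M_p^{(1)}-w_p)^{*}\otimes I)$. Lemma \ref{ker of tensor} rewrites the latter as $\mathcal A_k^\perp\cap\bigl(\ker(M_p^{(1)}-w_p)^{*}\otimes\hl_2\bigr)$, and Lemma \ref{intersection of tns}, combined with sharpness of $K_1$, identifies the intersection over $p$ as $\mathcal A_k^\perp\cap\bigl(\mathbb C\,K_1(\cdot,w)\otimes\hl_2\bigr)$. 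The inclusion of the RHS into this set is then manifest, because each vector $K_1(\cdot,w)\otimes\bar\partial^{\boldsymbol i}K_2(\cdot,w)$ is one of the reproducing-kernel spanning vectors of $\mathcal A_k^\perp$ and visibly sits in $\mathbb C\,K_1(\cdot,w)\otimes\hl_2$.

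The main content, and the anticipated obstacle, is the reverse inclusion. My plan is to test the condition $K_1(\cdot,w)\otimes g\in\mathcal A_k^\perp$ against the concrete family of vectors
\[
h_{\boldsymbol j,F,G}(z,\zeta):=(\zeta-z)^{\boldsymbol j}F(z)G(\zeta),\qquad |\boldsymbol j|=k+1,\ F\in\hl_1,\ G\in\hl_2.
\]
A Leibniz-rule calculation shows $h_{\boldsymbol j,F,G}\in\mathcal A_k$: for $|\boldsymbol i|\leq k$, every term of $\partial_\zeta^{\boldsymbol i}h_{\boldsymbol j,F,G}$ carries a factor $\partial_\zeta^{\boldsymbol q}(\zeta-z)^{\boldsymbol j}$ with $\boldsymbol q\leq\boldsymbol i$, which vanishes on the diagonal unless $\boldsymbol q=\boldsymbol j$; but $|\boldsymbol q|\leq k<k+1=|\boldsymbol j|$ rules this out. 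Sharpness also guarantees the boundedness of coordinate multiplications, so after binomial expansion of $(\zeta-z)^{\boldsymbol j}$ the element $h_{\boldsymbol j,F,G}$ genuinely lies in $\hl_1\otimes\hl_2$. Expanding the binomial and using the reproducing-kernel identity $(\boldsymbol M^{(1)})^{*\boldsymbol q}K_1(\cdot,w)=\bar w^{\boldsymbol q}K_1(\cdot,w)$, I expect the inner product to telescope to
\[
\langle K_1(\cdot,w)\otimes g,\;h_{\boldsymbol j,F,G}\rangle=\overline{F(w)}\,\langle(\boldsymbol M^{(2)}-w)^{*\boldsymbol j}g,\;G\rangle.
\]
Taking $F=K_1(\cdot,w)$ (so $F(w)=K_1(w,w)>0$, since $K_1$ is sharp and hence positive definite) and letting $G$ range over $\hl_2$ forces $(\boldsymbol M^{(2)}-w)^{*\boldsymbol j}g=0$ for every $|\boldsymbol j|=k+1$. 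Proposition \ref{prop joint kernel} applied to the sharp kernel $K_2$ then places $g$ in $\bigvee\{\bar\partial^{\boldsymbol j}K_2(\cdot,w):|\boldsymbol j|\leq k\}$, finishing the proof.

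The principal hurdle is the bookkeeping in this last paragraph: checking cleanly that the test vectors $h_{\boldsymbol j,F,G}$ belong both to $\mathcal A_k$ and to $\hl_1\otimes\hl_2$, and that the binomial expansion together with the joint-eigenvector property of $K_1(\cdot,w)$ for $(\boldsymbol M^{(1)})^{*}$ produces the telescoping identity above; once this is in hand, Proposition \ref{prop joint kernel} converts the resulting operator-theoretic conclusion into the desired span statement.
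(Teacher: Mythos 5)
Your proposal is correct and follows essentially the same route as the paper: the second equality via Proposition \ref{prop joint kernel} applied to each factor, the reduction of the left-hand side via Lemmas \ref{ker of tensor} and \ref{intersection of tns} to $\big(\mathbb C\,K_1(\cdot,w)\otimes\hl_2\big)\cap\mathcal A_k^\perp$, and the reverse inclusion by pairing $K_1(\cdot,w)\otimes g$ against test vectors of the form $(\zeta-z)^{\boldsymbol j}h$ lying in $\mathcal A_k$ to force $(\boldsymbol M^{(2)}-w)^{*\boldsymbol j}g=0$. The only (immaterial) difference is that you use elementary tensors $F\otimes G$ with $F=K_1(\cdot,w)$, whereas the paper uses general $h$ and then specializes to $h=K_1(\cdot,w)\otimes K_2(\cdot,u)$.
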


\begin{proof}
Since $K_1$ and $K_2$ are sharp kernels,
by Proposition $ \ref{prop joint kernel}, $
it follows that 
\begin{equation}\label{eqn ker of tns 2}
\bigcap_{|\boldsymbol i|=1}\ker {({\boldsymbol M^{(1)}}-w)^*}^{\boldsymbol i}
\otimes\bigcap_{|\boldsymbol i|=k+1}\ker {({\boldsymbol M^{(2)}}-w)^*}^{\boldsymbol i}= 
\bigvee\{K_1(\cdot,w)\otimes \bar{\partial}^{\boldsymbol  j} K_2(\cdot,w):|\boldsymbol j|\leq k\}.
\end{equation}

Therefore, if we can show that 
\begin{equation}\label{eqn ker of tns 1}
\bigcap_{p=1}^m\ker\Big(\big((M_p^{(1)}-{w}_p)^*\otimes I\big)
_{|{\mathcal A_k}^\perp}\Big)=
\bigcap_{|\boldsymbol i|=1}\ker {({\boldsymbol M^{(1)}}-w)^*}^{\boldsymbol i}\otimes\bigcap_{|\boldsymbol i|=k+1}
\ker {(\boldsymbol M^{(2)}- w)^*}^{\boldsymbol i},
\end{equation}
then we will be done. To prove this, first note that
\begin{align*}
\bigcap_{p=1}^m\ker\Big(\big((M_p^{(1)}-{w}_p)^*\otimes I\big)
_{|\mathcal A_k^\perp}\Big)
&=\Big(\bigcap_{p=1}^m\ker \big((M_p^{(1)}-{w}_p)^*\otimes I\big)\Big) 
\bigcap\mathcal A_k^\perp\\
&=\Big(\bigcap_{p=1}^m\big(\ker(M_p^{(1)}-{w}_p)^*\otimes 
\mathcal H_2\big)\Big)\bigcap\mathcal A_k^\perp
\\
&= \Big(\Big(\bigcap_{p=1}^m \ker(M_p^{(1)}-{w}_p)^*\Big)
\otimes {\mathcal H}_2\Big)\bigcap\mathcal A_k^\perp\\
&=\Big(\ker D_{(\boldsymbol M^{(1)}-w)^*}\otimes \mathcal H_2\Big)\bigcap\mathcal A_k^\perp.
\end{align*}
Here the second equality follows from Lemma \ref{ker of tensor} and the third equality follows from Lemma \ref{intersection of tns}.
In view of the above computation, to verify \eqref{eqn ker of tns 1}, it is enough to show that 
\begin{equation}\label{eqn prop ker of tns}
\Big(\ker D_{(\boldsymbol M^{(1)}-w)^*}\otimes {\mathcal H}_2\Big)\bigcap\mathcal A_k^\perp
=\bigcap_{|\boldsymbol i|=1}\ker {({\boldsymbol M^{(1)}}- w)^*}^{\boldsymbol i}\otimes\bigcap_{|\boldsymbol i|=k+1}
\ker {(\boldsymbol M^{(2)}- w)^*}^{\boldsymbol i}.
\end{equation}
Since $K_1$ is a sharp kernel, $\ker D_{(\boldsymbol M^{(1)}-w)^*}$ is spanned by the vector $K_1(\cdot,w).$ 
It is also easy to see that the vector $K_1(\cdot,w)\otimes \bar{\partial}^{\boldsymbol j} K_2(\cdot,w)$ belongs to $\mathcal A_k^\perp$ and hence, it is in 
$\Big(\ker D_{(\boldsymbol M^{(1)}-w)^*}\otimes {\mathcal H}_2\Big)\bigcap\mathcal A_k^\perp$
for all $\boldsymbol j $ in $\mathbb Z_+^m$ with $|\boldsymbol j|\leq k$. Therefore, by \eqref{eqn ker of tns 2}, we have the inclusion 
\begin{equation}\label{onesideinequality}
\bigcap_{|\boldsymbol i|=1}\ker {({\boldsymbol M^{(1)}}- w)^*}^{\boldsymbol i}\otimes\bigcap_{|\boldsymbol i|=k+1}
\ker {(\boldsymbol M^{(2)}- w)^*}^{\boldsymbol i} \subseteq   \Big(\ker D_{(\boldsymbol M^{(1)}-w)^*}\otimes {\mathcal H}_2\Big)\bigcap\mathcal A_k^\perp.
\end{equation}
Now to prove the opposite inclusion, note that an arbitrary vector of 
$\big(\ker D_{(\boldsymbol M^{(1)}-w)^*}\otimes \hl_2\big)
\bigcap\mathcal A_k^\perp$ can be taken to be of the form $K_1(\cdot,w)\otimes g,$ where $g\in \hl_2$ is such that $K_1(\cdot,w)\otimes g\in \mathcal A_k^\perp$.  
We claim that such a vector $g$ must be in $\bigcap_{|\boldsymbol i|=k+1}
\ker {(\boldsymbol M^{(2)}-w)^*}^{\boldsymbol i}.$

As before, we realize the vectors of $\mathcal H_1\otimes \mathcal H_2$ as functions in $z=(z_1,\ldots,z_m), \zeta=(\zeta_1,\ldots,\zeta_m)$ in $\Omega$.
Fix any $\boldsymbol i \in \mathbb Z_+^m$ with $|\boldsymbol i|=k+1.$ Then 
$(\zeta-z)^{\boldsymbol i}=(\zeta_{q_1}-z_{q_1})(\zeta_{q_2}-z_{q_2})\cdots (\zeta_{q_{k+1}}-z_{q_{k+1}})$ 
for some $1\leq q_1,q_2,\ldots,q_{k+1}\leq m.$
Since $M_i^{(1)}$ and $M_i^{(2)}$ are bounded for $1\leq i\leq m ,$ for any 
$h\in \mathcal{H}_1\otimes \mathcal{H}_2$, 
we see that the function $(\zeta-z)^{\boldsymbol i} h$ belongs to $\mathcal H_1\otimes \mathcal H_2.$
Then
\begin{align*}
&\left\langle K_1(\cdot,w)\otimes g,(\zeta_{q_1}-z_{q_1})(\zeta_{q_2}-z_{q_2})\cdots (\zeta_{q_{k+1}}-z_{q_{k+1}})h\right\rangle\\
&\quad\quad\quad\quad\quad\quad\quad\quad=\left\langle M_{(\zeta_{q_1}-z_{q_1})}^* (K_1(\cdot,w)\otimes g), 
(\zeta_{q_2}-z_{q_2})\cdots (\zeta_{q_{k+1}}-z_{q_{k+1}})h\right\rangle\\
&\quad\quad\quad\quad\quad\quad\quad\quad=\left\langle (I\otimes {M^{(2)}_{q_1}}^*-{M^{(1)}_{q_1}}^*\otimes I)K_1(\cdot,w)\otimes g,
(\zeta_{q_2}-z_{q_2})\cdots (\zeta_{q_{k+1}}-z_{q_{k+1}})h\right\rangle\\
&\quad\quad\quad\quad\quad\quad\quad\quad=\left\langle K_1(\cdot,w)\otimes {M^{(2)}_{q_1}}^*g -\bar{w}_{q_1}K_1(\cdot,w)\otimes g,
(\zeta_{q_2}-z_{q_2})\cdots (\zeta_{q_{k+1}}-z_{q_{k+1}})h\right\rangle\\
&\quad\quad\quad\quad\quad\quad\quad\quad=\left\langle K_1(\cdot,w)\otimes ({M^{(2)}_{q_1}}-w_{q_1})^*g,
(\zeta_{q_2}-z_{q_2})\cdots (\zeta_{q_{k+1}}-z_{q_{k+1}})h\right \rangle.
\end{align*}
Repeating this process, we get 
$$\left\langle K_1(\cdot,w)\otimes g,(\zeta-z)^{\boldsymbol i} h\right\rangle=\left
\langle K_1(\cdot,w)\otimes {(\boldsymbol M^{(2)}-w)^*}^{\boldsymbol i} g, h\right\rangle. $$
Since $|\boldsymbol i|=k+1$, it follows that the element $(\zeta-z)^{\boldsymbol i} h$ 
belongs to $\mathcal A_k$. Furthermore, since $K_1(\cdot,w)\otimes g \in \mathcal A_k^\perp,$ from the above equality, 
we have 
$$\left\langle K_1(\cdot,w)\otimes {(\boldsymbol M^{(2)}-w)^*}^{\boldsymbol i} g, h\right\rangle=0$$
for any $h\in \mathcal H_1\otimes \mathcal H_2.$ 
Taking $h=K_1(\cdot,w)\otimes K_2(\cdot,u)$, $u\in \Omega$, we get
$K_1(w,w)\big({(\boldsymbol M^{(2)}- {w})^*}^{\boldsymbol i} g\big)(u)=0$ 
for all $u \in\Omega.$ Since $K_1(w,w)>0,$ it 
follows that
${(\boldsymbol M^{(2)}- {w})^*}^{\boldsymbol i} g=0.$  Since this is 
true for all ${\boldsymbol i} \in \mathbb Z_+^m $ with $ |{\boldsymbol i}|= k+1,$ it follows that 
$g\in \bigcap_{|{\boldsymbol i}|=k+1}\ker {(\boldsymbol M^{(2)}-w)^*}^{\boldsymbol i}.$
Hence $ K_1(\cdot,w)\otimes g $ belongs to $$\bigcap_{|\boldsymbol i|=1}\ker {(\boldsymbol M^{(1)}-w)^*}^{\boldsymbol i}\otimes\bigcap_{|\boldsymbol i|=k+1}
\ker {(\boldsymbol M^{(2)}- w)^*}^{\boldsymbol i},$$ 
proving the opposite inclusion of   $\eqref{onesideinequality}.$ This completes the proof of equality in \eqref{eqn ker of tns 1}.
\end{proof}

\begin{theorem}\label{thmsharp}
Let $\Omega \subset \mathbb C^m$ be a bounded domain.  If $K_1,K_2:\Omega\times \Omega\to
\mathbb C$ are two sharp kernels, then so is the kernel 
$J_k(K_1,K_2)_{|\rm res \, \Delta}$, $k\geq 0$.
\end{theorem}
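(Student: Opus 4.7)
The plan is to transport the sharpness property from the tensor product $(\mathcal H, K_1)\otimes (\mathcal H, K_2)$, where Lemma~\ref{lemkerneloftns} is available, to the Hilbert space $(\mathcal H, J_k(K_1,K_2)_{|\mathrm{res}\,\Delta})$ via the canonical unitary $RJ_k\colon \mathcal A_k^\perp \to (\mathcal H, J_k(K_1,K_2)_{|\mathrm{res}\,\Delta})$ supplied by Theorem~\ref{thmjetcons}.

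First I would dispatch positive definiteness of the kernel and boundedness of the multiplication tuple. Non-negative definiteness of $J_k(K_1,K_2)_{|\mathrm{res}\,\Delta}$ is immediate from Theorem~\ref{thmjetcons}. For each $w\in\Omega$, the diagonal value $J_k(K_1,K_2)_{|\mathrm{res}\,\Delta}(w,w)=K_1(w,w)\big(\partial^{\boldsymbol i}\bar\partial^{\boldsymbol j}K_2(w,w)\big)_{|\boldsymbol i|,|\boldsymbol j|\le k}$ is a positive definite matrix by Corollary~\ref{cortaylor} applied to the sharp (hence positive definite) kernel $K_2$, combined with $K_1(w,w)>0$. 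Boundedness of each $J_kM_i$ follows from Corollary~\ref{coroperatoronJetk}, since $J_kM_i$ is unitarily equivalent to a compression of the bounded operator $M_i^{(1)}\otimes I$. Positive definiteness of the kernel as a function of two variables is then supplied by \cite[Lemma~3.6]{Curtosalinas}, recalled in the paper.

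The core of the argument is the sharpness identity $\ker D_{({\boldsymbol J}_k\boldsymbol M -w)^*}=\mathrm{ran}\,J_k(K_1,K_2)_{|\mathrm{res}\,\Delta}(\cdot,w)$. A direct computation from the definitions of $J_k$ and the restriction map $R$ gives, for every $w\in\Omega$ and every $\boldsymbol j\in\mathbb Z_+^m$ with $|\boldsymbol j|\le k$,
\[
RJ_k\big(K_1(\cdot,w)\otimes \bar\partial^{\boldsymbol j}K_2(\cdot,w)\big)(z)=\sum_{|\boldsymbol i|\le k} K_1(z,w)\,\partial^{\boldsymbol i}\bar\partial^{\boldsymbol j}K_2(z,w)\otimes e_{\boldsymbol i}=J_k(K_1,K_2)_{|\mathrm{res}\,\Delta}(z,w)\,e_{\boldsymbol j}.
\]
Hence $RJ_k$ sends the spanning set from Lemma~\ref{lemkerneloftns} bijectively onto a spanning set of $\mathrm{ran}\,J_k(K_1,K_2)_{|\mathrm{res}\,\Delta}(\cdot,w)$. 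Because $\mathcal A_k$ is a submodule of $(\mathcal H, K_1)\otimes (\mathcal H, K_2)$, its orthogonal complement $\mathcal A_k^\perp$ is invariant under $(M_p^{(1)})^*\otimes I$, so the restriction of the adjoint coincides with the adjoint of the compression. Combining Corollary~\ref{coroperatoronJetk} with Lemma~\ref{lemkerofunitary} therefore yields
\[
\ker D_{({\boldsymbol J}_k\boldsymbol M-w)^*}=RJ_k\Big(\bigcap_{p=1}^m \ker\big(((M_p^{(1)}-w_p)^*\otimes I)_{|\mathcal A_k^\perp}\big)\Big),
\]
and Lemma~\ref{lemkerneloftns} identifies the right-hand side with $\mathrm{ran}\,J_k(K_1,K_2)_{|\mathrm{res}\,\Delta}(\cdot,w)$, which closes the proof.

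The genuinely hard input is Lemma~\ref{lemkerneloftns}, which has already been established. The remaining obstacle is essentially bookkeeping: one must line up several Hilbert-space identifications, namely the unitary $RJ_k$, the equality of compression-adjoint and restriction-adjoint on $\mathcal A_k^\perp$, and the column-by-column description of $J_k(K_1,K_2)_{|\mathrm{res}\,\Delta}(\cdot,w)e_{\boldsymbol j}$. Once these are in place the theorem follows essentially by inspection.
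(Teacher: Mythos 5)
Your proposal is correct and follows essentially the same route as the paper's proof: positive definiteness via Corollary~\ref{cortaylor} and \cite[Lemma~3.6]{Curtosalinas}, boundedness via Corollary~\ref{coroperatoronJetk}, and the sharpness identity by transporting Lemma~\ref{lemkerneloftns} through the unitary $RJ_k$ using Lemma~\ref{lemkerofunitary} and the identity $RJ_k(K_1(\cdot,w)\otimes\bar\partial^{\boldsymbol j}K_2(\cdot,w))=J_k(K_1,K_2)_{|\rm res\,\Delta}(\cdot,w)e_{\boldsymbol j}$. Your explicit remark that invariance of $\mathcal A_k^\perp$ under $(M_p^{(1)})^*\otimes I$ lets one identify the compression with the restriction is a detail the paper leaves implicit, but the argument is the same.
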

\begin{proof}
Since the tuple $\boldsymbol M^{(1)}$ is bounded, by Corollary \ref{coroperatoronJetk}, it follows that the tuple $\boldsymbol {J}_k \boldsymbol M$ is also bounded. Now we will show that the kernel $J_k(K_1,K_2)_{|\rm res\, \Delta}$ is positive definite on $\Omega\times \Omega$. 
Since $K_2$ is positive definite, by 
Corollary \ref{cortaylor}, we obtain that the matrix $\big(\partial^{\boldsymbol i}\bar{\partial}^{\boldsymbol j} K_2(w,w)\big)_{|\boldsymbol i|,|\boldsymbol j|=0}^k$ is positive definite for $w\in \Omega$. Moreover, since $K_1$ is also positive definite, we conclude that
$J_k(K_1,K_2)_{|\rm res \, \Delta}(w,w)$ is positive definite for $w\in \Omega$. Hence, by \cite[Lemma 3.6]{Curtosalinas}, we conclude that the kernel $J_k(K_1,K_2)_{|\rm res\, \Delta}$ is positive definite.

To complete the proof, we need to show that 
$$\ker D_{(\boldsymbol J_k \boldsymbol {M}-w)^*}=\ran J_k(K_1,K_2)_{|\rm res\, \Delta}(\cdot,w),\;w\in\Omega.$$
Note that, by the definition of $R$ and $J_k$ (see the discussion before Theorem \ref{thmjetcons}), we have  
\begin{equation}\label{eqnsharpthm}
RJ_k(K_1(\cdot,w)\otimes \bar{\partial}^{\boldsymbol i} K_2(\cdot,w))
=J_k(K_1,K_2)_{|\rm res\, \Delta}(\cdot,w)e_{\boldsymbol i},~\boldsymbol i\in \mathbb Z_+^m,~
|\boldsymbol i|\leq k.
\end{equation}
In the computation below, the third equality follows from 
Lemma \ref{lemkerofunitary}, the injectivity of the map ${RJ_k}_{|{\mathcal A}_k^\perp}$ implies the fourth equality, the fifth equality follows from Lemma \ref{lemkerneloftns} and finally the last equality follows from \eqref{eqnsharpthm}:
\begin{align*}
\ker D_{(\boldsymbol {J}_k\boldsymbol M-w)^*}= &\bigcap_{p=1}^m \ker (J_kM_p-w_p)^*\\
= & \bigcap_{p=1}^m \ker\Big( (RJ_k) P_{\mathcal A_k^{\perp}}
{\big((M_p^{(1)}-{w_p})^*\otimes I\big)}_{|{\mathcal A_k^\perp}} (RJ_k)^*\Big)\\
= & \bigcap_{p=1}^m (RJ_k) \Big(\ker\big( P_{\mathcal A_k^{\perp}}
{\big((M_p^{(1)}-{w_p})^*\otimes I\big)}_{|{\mathcal A_k^\perp}}\big)\Big)\\
=& (RJ_k)\Big(\bigcap_{p=1}^m  \ker\big( P_{\mathcal A_k^{\perp}}
{\big((M_p^{(1)}-{w_p})^*\otimes I\big)}_{|{\mathcal A_k^\perp}}\big)\Big)\\
= & (RJ_k)\Big( {\bigvee}\big \{\;K_1(\cdot,w)\otimes \bar{\partial}^{\boldsymbol i}
K_2(\cdot,w):|\boldsymbol j|\leq k\big\}\;\Big)\\
= & \ran J_k(K_1,K_2)_{|\rm res \, \Delta}(\cdot,w). 
\end{align*}
This completes the proof.
\end{proof}
The lemma given below is the main tool to prove Theorem \ref{gbkjet}.
\begin{lemma}\label{lem Gleason}
Let $K_1,K_2:\Omega\times \Omega\to
\mathbb C$ be two generalized Bergman kernels, and let $w=(w_1,\ldots,w_m)$ be an arbitrary point in $\Omega.$ 
Suppose that $f$ is a 
function in $\hl_1\otimes \hl_2$ satisfying 
$\big(\big(\frac{\partial}{\partial \zeta}\big)^{\boldsymbol i}f(z,\zeta)\big)_{|z=\zeta=w}=0$ for all 
$\boldsymbol i\in  \mathbb Z_+^m,~|\boldsymbol i|\leq k.$ Then 
$$f(z,\zeta)=\sum_{j=1}^m (z_j-w_j)f_j(z,\zeta)+\sum_{|\boldsymbol q|=k+1}
(z-\zeta)^{\boldsymbol q} f^\sharp_{\boldsymbol q}(z,\zeta)$$
for some functions $f_j, f^\sharp_{\boldsymbol q}$ in $\hl_1\otimes \hl_2, j=1,\ldots,m,~
\boldsymbol q \in \mathbb Z_+^m ,|\boldsymbol q|=k+1.$ 
\end{lemma}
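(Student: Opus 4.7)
The plan is to induct on $k$, reducing everything to the Gleason property for the tensor product kernel $K_1\otimes K_2$. Since $K_1$ and $K_2$ are generalized Bergman kernels, Salinas's Theorem \ref{thmsalinas} ensures that $K_1\otimes K_2$ is a generalized Bergman kernel on $\Omega\times\Omega$, so Lemma \ref{lemGleason1} applies: any $g\in\mathcal H_1\otimes\mathcal H_2$ with $g(w,w)=0$ admits a decomposition $g=\sum_{j=1}^m(z_j-w_j)u_j+\sum_{j=1}^m(\zeta_j-w_j)v_j$ with $u_j,v_j\in\mathcal H_1\otimes\mathcal H_2$. Rewriting $\zeta_j-w_j=(z_j-w_j)-(z_j-\zeta_j)$ absorbs the second sum into the first plus a linear combination of the $(z_j-\zeta_j)=(z-\zeta)^{e_j}$, which is precisely the case $k=0$ of the lemma.

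For the inductive step, I would invoke the case $k-1$ on $f$ (which satisfies the weaker vanishing a fortiori) to obtain $f=\sum_j(z_j-w_j)f_j+\sum_{|\boldsymbol q|=k}(z-\zeta)^{\boldsymbol q}f_{\boldsymbol q}^\sharp$. Applying $\partial_\zeta^{\boldsymbol i}$ for each $|\boldsymbol i|=k$ and evaluating at $z=\zeta=w$ kills the first sum (the factor $z_j-w_j$ vanishes there); the Leibniz expansion of the second sum collapses because $\partial_\zeta^{\boldsymbol a}[(z-\zeta)^{\boldsymbol q}]|_{z=\zeta=w}$ vanishes unless $\boldsymbol a=\boldsymbol q$, leaving only $(-1)^k\boldsymbol i!\,f_{\boldsymbol i}^\sharp(w,w)$. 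The vanishing hypothesis on $f$ therefore forces $f_{\boldsymbol q}^\sharp(w,w)=0$ for every $|\boldsymbol q|=k$.

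Now I would apply the base case to each $f_{\boldsymbol q}^\sharp$ to get $f_{\boldsymbol q}^\sharp=\sum_l(z_l-w_l)a_{\boldsymbol q,l}-\sum_l(z_l-\zeta_l)b_{\boldsymbol q,l}$, and multiplying by $(z-\zeta)^{\boldsymbol q}$ converts the second piece into $-\sum_l(z-\zeta)^{\boldsymbol q+e_l}b_{\boldsymbol q,l}$, with each new multi-index satisfying $|\boldsymbol q+e_l|=k+1$. Substituting back into the expression for $f$ and regrouping (the $(z_l-w_l)$-contributions absorb into new $\tilde f_l$'s, and the $(z-\zeta)^{\boldsymbol p}$ terms for $|\boldsymbol p|=k+1$ are collected by summing over the various decompositions $\boldsymbol p=\boldsymbol q+e_l$) yields the desired representation.

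The main obstacle I foresee is purely combinatorial bookkeeping: tracking how each $(z-\zeta)^{\boldsymbol p}$ with $|\boldsymbol p|=k+1$ is produced as $\boldsymbol q+e_l$ for several pairs $(\boldsymbol q,l)$, and verifying that all intermediate products $(z-\zeta)^{\boldsymbol q}a_{\boldsymbol q,l}$ and $(z-\zeta)^{\boldsymbol q+e_l}b_{\boldsymbol q,l}$ lie in $\mathcal H_1\otimes\mathcal H_2$. The latter is automatic because each coordinate multiplier is bounded under the generalized Bergman kernel hypothesis, whence multiplication by any polynomial in $z,\zeta$ preserves the tensor product Hilbert space. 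No further analytic input beyond Salinas's theorem appears necessary.
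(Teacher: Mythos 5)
Your proof is correct and follows essentially the same route as the paper's: Salinas's theorem plus Lemma \ref{lemGleason1} for the base case, then induction in which the extra order-$k$ vanishing forces $f^\sharp_{\boldsymbol q}(w,w)=0$ via the identity $\big(\partial_\zeta^{\boldsymbol a}(z-\zeta)^{\boldsymbol q}\big)_{|z=\zeta=w}=\delta_{\boldsymbol a\boldsymbol q}(-1)^{|\boldsymbol a|}\boldsymbol a!$, after which the base case is applied to each $f^\sharp_{\boldsymbol q}$ and the terms are regrouped. The combinatorial bookkeeping and the boundedness of the coordinate multipliers that you flag are exactly the (routine) points the paper also handles.
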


\begin{proof}
Since $K_1$ and $K_2$ are generalized Bergman kernels, by
Theorem $\ref{thmsalinas} $, we have that $K_1\otimes K_2$ 
is also a generalized Bergman kernel.
Therefore,  if $f$ is a function in $\hl_1\otimes\hl_2$ 
vanishing at $(w,w)$,  then using  Lemma $\ref{lemGleason1},$ we find functions $f_1,\ldots,f_m,$ and $g_1,\ldots,g_m $ in
$\hl_1\otimes\hl_2$ such that  
\begin{align*}
f(z,\zeta) &= \sum_{j=1}^m (z_j-w_j)f_j+
\sum_{j=1}^m(\zeta_j-w_j)g_j.
\end{align*}
Equivalently, we have 
\begin{align*}
f(z,\zeta) &= \sum_{j=1}^m (z_j-w_j)(f_j+g_j)+
\sum_{j=1}^m(z_j-\zeta_j)(-g_j).
\end{align*}


Thus the statement of the lemma is verified  for $k=0.$
To complete the proof by induction on $k$, 
assume that the statement is valid for some non-negative integer $k$. 
Let $f$ be a function in $\hl_1\otimes \hl_2$ such that 
$\big(\big(\frac{\partial}{\partial \zeta}\big)^{\boldsymbol i}f(z,\zeta)\big)_{|z=\zeta=w}=0$ for all $\boldsymbol i\in  \mathbb Z_+^m,~|\boldsymbol i|\leq {k+1}.$ 
By induction hypothesis, we can write
\begin{equation}\label{eqnGleason}
f(z,\zeta)=\sum_{j=1}^m (z_j-w_j) f_j(z, \zeta) +\sum_{|\boldsymbol q|=k+1}(z-\zeta
)^{\boldsymbol q} f^\sharp_{\boldsymbol q}(z, \zeta)
\end{equation}
for some $ f_j, f^\sharp_{\boldsymbol q} \in \hl_1\otimes\hl_2$, $j=1,\ldots,m,~\boldsymbol q \in \mathbb Z_+^m ,~ |\boldsymbol q|=k+1$.
Fix a $\boldsymbol i\in \mathbb Z_+^m $ with $|\boldsymbol i|= k+1.$
Applying $\big(\tfrac{\partial}{\partial \zeta
}\big)^{\boldsymbol i}$ to both sides of \eqref{eqnGleason}, we see that
\begin{align*}
\big(\tfrac{\partial}{\partial \zeta}\big)^{\boldsymbol i}f(z,\zeta)&=\sum_{j=1}^m (z_j-w_j)\big(\tfrac{\partial}{\partial \zeta
}\big)^{\boldsymbol i} f_j(z,\zeta)+\sum_{|\boldsymbol q|=k+1}\big(\tfrac{\partial}{\partial \zeta}\big)^{\boldsymbol i}\big((z-\zeta
)^{\boldsymbol q} f^\sharp_{\boldsymbol q}(z,\zeta)\big)\\
&=\sum_{j=1}^m (z_j-w_j)\big(\tfrac{\partial}{\partial \zeta
}\big)^{\boldsymbol i} f_j(z,\zeta)+\sum_{|\boldsymbol q|=k+1} \sum_{\boldsymbol p\leq \boldsymbol i}\tbinom{\boldsymbol i}{\boldsymbol p}\big(\tfrac{\partial}{\partial \zeta}\big)^{\boldsymbol p}(z-\zeta
)^{\boldsymbol q}\big(\tfrac{\partial}{\partial \zeta}\big)^{\boldsymbol {i}-\boldsymbol p} f^\sharp_{\boldsymbol q}(z,\zeta).
\end{align*}
Putting $z=\zeta=w$, we obtain 
$$\big(\big(\tfrac{\partial}{\partial \zeta}\big)^{\boldsymbol i}f(z,\zeta)\big)_{|z=\zeta=w}=(-1)^{|\boldsymbol i|}\boldsymbol i!\; f^\sharp_{\boldsymbol i}(w,w), $$
where we have used the simple identity:  $\Big(\big(\tfrac{\partial}{\partial \zeta}\big)^{\boldsymbol p}(z-\zeta
)^{\boldsymbol q}\Big)_{|z=\zeta=w}=\delta_{\boldsymbol p\boldsymbol q}(-1)^{|\boldsymbol p|} p!$.

Since $\big(\big(\tfrac{\partial}{\partial \zeta}\big)^{\boldsymbol i}f(z,\zeta)\big)_{|z=\zeta=w}=0$, we conclude that $f^\sharp_{\boldsymbol i}(w,w)=0.$ Since the statement of the lemma  has been shown to be valid for  $k=0$,  it follows that  
\begin{equation}\label{eqnGleason4}
f^\sharp_{\boldsymbol i}(z,\zeta)
=\sum_{j=1}^m (z_j-w_j)\big (f^\sharp_{\boldsymbol i}\big )_j(z,\zeta)
+\sum_{j=1}^m (z_j-\zeta_j)\big (f^\sharp_{\boldsymbol i}\big )^\sharp_j(z,\zeta)
\end{equation}
for some $\big (f^\sharp_{\boldsymbol i}\big )_j,\, \big (f^\sharp_{\boldsymbol i}\big )^\sharp_j \in \hl_1\otimes \hl_2,~ j=1,\ldots,m.$
Since \eqref{eqnGleason4} is valid for any $\boldsymbol i\in \mathbb Z_+^m,~
|\boldsymbol i|=k+1$, replacing the $f^\sharp_{\boldsymbol q}$'s in
\eqref{eqnGleason} by  $\sum_{j=1}^m (z_j-w_j)\big (f^\sharp_{\boldsymbol q}\big )_j(z,\zeta)
+\sum_{j=1}^m (z_j-\zeta_j)\big (f^\sharp_{\boldsymbol q}\big )^\sharp_j(z,\zeta)$, 
we obtain the desired conclusion after some straightforward algebraic manipulation.
\end{proof}

\begin{theorem}\label{gbkjet}
Let $\Omega\subset \mathbb C^m$ be a bounded domain. 
If $K_1,K_2:\Omega\times \Omega\to
\mathbb C$ are generalized Bergman kernels, then so is the kernel
$J_k(K_1,K_2)_{|\rm res\, \Delta}$, $k\geq 0$.
\end{theorem}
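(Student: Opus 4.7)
The plan is to build on Theorem~\ref{thmsharp} and verify only the closed-range part of the generalized Bergman property, by combining a matrix-valued analogue of Lemma~\ref{lemGleason1} with the Gleason-type decomposition supplied by Lemma~\ref{lem Gleason}. Since generalized Bergman kernels are in particular sharp, Theorem~\ref{thmsharp} already gives that $J_k(K_1,K_2)_{|\rm res \, \Delta}$ is sharp; what remains is to show that $\mathrm{ran}\,D_{(\boldsymbol J_k\boldsymbol M-w)^*}$ is closed for every $w\in\Omega$. I would first note that the argument behind Lemma~\ref{lemGleason1} goes through verbatim for matrix-valued sharp kernels (it uses only sharpness, the reproducing property $\langle f,K(\cdot,w)\eta\rangle=\langle f(w),\eta\rangle$, and the general Hilbert-space fact that $T$ has closed range iff $T^*$ does), so closed range is equivalent to the Gleason-type identity
$$\mathcal V_w^{\,\rm jet}:=\big\{g\in \big(\hl,J_k(K_1,K_2)_{|\rm res \, \Delta}\big):g(w)=0\big\}=\mathrm{ran}\,D^{(\boldsymbol J_k\boldsymbol M-w)}.$$

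To verify this identity, I would fix $g\in\mathcal V_w^{\,\rm jet}$ and lift it through the unitary ${RJ_k}_{|\mathcal A_k^\perp}$ to the unique $f\in\mathcal A_k^\perp$ with $g=RJ_k f$. The vanishing $g(w)=0$ translates to $\big(\big(\tfrac{\partial}{\partial\zeta}\big)^{\boldsymbol i}f\big)(w,w)=0$ for every $\boldsymbol i\in \mathbb Z_+^m$ with $|\boldsymbol i|\leq k$, which is exactly the hypothesis of Lemma~\ref{lem Gleason}. Applying that lemma yields
$$f(z,\zeta)=\sum_{j=1}^m(z_j-w_j)f_j(z,\zeta)+\sum_{|\boldsymbol q|=k+1}(z-\zeta)^{\boldsymbol q}f^\sharp_{\boldsymbol q}(z,\zeta),$$
with $f_j,f^\sharp_{\boldsymbol q}\in \hl_1\otimes \hl_2$. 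A short Leibniz computation confirms that $(z-\zeta)^{\boldsymbol q}f^\sharp_{\boldsymbol q}\in\mathcal A_k=\ker RJ_k$ for each $|\boldsymbol q|=k+1$ (since $\big(\tfrac{\partial}{\partial\zeta}\big)^{\boldsymbol p}(z-\zeta)^{\boldsymbol q}$ vanishes on $\Delta$ whenever $|\boldsymbol p|\leq k<|\boldsymbol q|$), so the entire second sum is annihilated by $RJ_k$. Combining this with the intertwining of Corollary~\ref{coroperatoronJetk} and the submodule property of $\mathcal A_k$ should produce
$$g=RJ_k f=\sum_{j=1}^m(J_kM_j-w_j)\,RJ_k f_j,$$
exhibiting $g\in\mathrm{ran}\,D^{(\boldsymbol J_k\boldsymbol M-w)}$, which is the desired Gleason identity.

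The delicate step I anticipate is the identity $RJ_k\big((z_j-w_j)f_j\big)=(J_kM_j-w_j)RJ_k f_j$ for $f_j\in\hl_1\otimes\hl_2$, because Corollary~\ref{coroperatoronJetk} only asserts the intertwining after compression to $\mathcal A_k^\perp$, whereas the $f_j$ produced by Lemma~\ref{lem Gleason} need not lie in $\mathcal A_k^\perp$. I would dispose of this by writing $f_j=f_j^\perp+f_j^\flat$ with $f_j^\perp\in\mathcal A_k^\perp$ and $f_j^\flat\in\mathcal A_k$; because $\mathcal A_k$ is a submodule of $\hl_1\otimes\hl_2$, the summand $(z_j-w_j)f_j^\flat$ stays in $\mathcal A_k=\ker RJ_k$, and Corollary~\ref{coroperatoronJetk} then applies cleanly to $f_j^\perp$. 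Beyond this bookkeeping I do not expect any further analytic obstruction, since the substantive content is already packaged inside Lemma~\ref{lem Gleason}, whose proof in turn depends crucially on $K_1\otimes K_2$ being a generalized Bergman kernel via Theorem~\ref{thmsalinas}.
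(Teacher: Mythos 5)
Your proposal is correct and follows essentially the same route as the paper: after invoking Theorem~\ref{thmsharp}, the paper also reduces the closed-range claim to showing that every $f$ orthogonal to $\ker D_{(\boldsymbol X-w)^*}$ (equivalently, with vanishing jets at $(w,w)$) lies in $\operatorname{ran} D^{(\boldsymbol X-w)}$, applies Lemma~\ref{lem Gleason}, discards the $(z-\zeta)^{\boldsymbol q}f^\sharp_{\boldsymbol q}$ terms as elements of $\mathcal A_k$, and replaces each $f_j$ by $P_{\mathcal A_k^\perp}f_j$ using the submodule property of $\mathcal A_k$ --- exactly the bookkeeping you describe, only carried out on $\mathcal A_k^\perp$ with the compressed tuple rather than transported through $RJ_k$ to the jet space. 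The one cosmetic difference is that the paper avoids your matrix-valued analogue of Lemma~\ref{lemGleason1} by arguing directly from $\overline{\operatorname{ran} T^*}=(\ker T)^\perp$, which is all that is needed.
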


\begin{proof}
%

By Theorem $\ref{thmsharp}$, we will be done if we can show that $\ran D_{({\boldsymbol J}_k\boldsymbol {M}-w)^*}$ is closed for every $w\in\Omega.$ Fix a point $w=(w_1,\ldots,w_m)$ 
in $\Omega$. Let $\boldsymbol X :=\big(P_{\mathcal A_k^{\perp}}
{(M_1^{(1)}\otimes I)}_
{|{\mathcal A_k^\perp}},\ldots,P_{\mathcal A_k^{\perp}}
{(M_m^{(1)}\otimes I)}_{|{\mathcal A_k^\perp}}\big).$
By Corollary $\ref{coroperatoronJetk},$
we see that $\ran D_{(\boldsymbol {\boldsymbol J}_k\boldsymbol {M}-w)^*}$ is closed if and only if 
$\ran D_{(\boldsymbol X-w)^*}$ is closed.
Moreover, since $(D_{(\boldsymbol X-w)^*})^*=D^{(\boldsymbol X-w)},$ we conclude that $\ran D_{(\boldsymbol X-w)^*}$ is closed if and only if $\ran D^{(\boldsymbol X-w)}$ is closed.
Note that $X$ satisfies the following equality:
$${\ker D_{{(\boldsymbol X-w)}^*}}^\perp=\overbar {\ran (D_{{(\boldsymbol X-w)}^*})^*} 
=\overbar {\ran D^{(\boldsymbol X-w)}}.$$
Therefore, to prove $\ran D^{(\boldsymbol X-w)}$ is closed, it is enough to show that 
$
{\ker D_{{(\boldsymbol X-w)}^*}}^\perp
\subseteq \ran D^{\boldsymbol X-w}.
$
To prove this, note that 
$$D^{(\boldsymbol X-w)}(g_1\oplus\cdots\oplus g _m)=P_{\mathcal A_k^\perp}\big(\sum_{i=1}^m (z_i-w_i)g_i\big),\; g_i\in \mathcal A_k^\perp,i=1,\ldots,m.$$
Thus 
\begin{equation}\label{eq range of D_T^*}
\ran D^{(\boldsymbol X-w)}=
\Big\{P_{\mathcal A_k^\perp}\big(\sum_{i=1}^m (z_i-w_i)g_i: g_1,\ldots, g_m\in \mathcal A_k^\perp\Big \}.
\end{equation}
Now, let $f$ be an arbitrary element of ${\ker D_{(\boldsymbol X-w)^*}}^\perp.$ 
Then, by Lemma $\ref{lemkerneloftns}$ and 
Proposition $\ref{derivativeofK},$ we have 
$\big(\big(\tfrac{\partial}{\partial \zeta}\big)^{\boldsymbol i}f(z,\zeta)\big)_{|z=\zeta=w}=0$ 
for all $\boldsymbol i\in \mathbb Z_+^m$, $|\boldsymbol i|\leq k.$ 
By Lemma $\ref{lem Gleason}$,  
$$f(z,\zeta)=\sum_{j=1}^m (z_j-w_j)f_j(z,\zeta)+\sum_{|\boldsymbol q|=k+1}
(z-\zeta)^{\boldsymbol q} {f}_{\boldsymbol q}^\sharp(z,\zeta)$$
for some functions $f_j, {f}_{\boldsymbol q}^\sharp$ in $\hl_1\otimes  \hl_2,
j=1,\ldots,m$ and $\boldsymbol q \in \mathbb Z_+^m, |\boldsymbol q|=k+1.$ 
Note that the element
$\sum_{|\boldsymbol q|=k+1}(z-\zeta)^{\boldsymbol q} {f}_{\boldsymbol q}^\sharp$ belongs to 
$\mathcal A_k.$
Hence $f=P_{{\mathcal A}_k^{\perp}}(f)=P_{\mathcal A_k^{\perp}}
\big(\textstyle \sum_{j=1}^m (z_j-w_j)f_j\big)$.
Furthermore, since the subspace $\mathcal A_k$ is invariant under $(M_{j}^{(1)}-w_j)$, $j=1,\ldots,m$, we see that  
\begin{align*}
f=P_{\mathcal A_k^{\perp}}
\big(\textstyle \sum_{j=1}^m (z_j-w_j)f_j\big)&=
P_{\mathcal A_k^\perp}\Big(\textstyle \sum_{j=1}^m (z_j-w_j)\big
(P_{\mathcal A_k^\perp}f_j+P_{\mathcal A_k}f_j\big)\Big)\\
&=P_{\mathcal A_k^\perp}\big(\textstyle \sum_{j=1}^m (z_j-w_j)
(P_{\mathcal A_k^\perp}f_j)\big).
\end{align*}
Therefore, from $\eqref{eq range of D_T^*},$ 
we conclude that $f\in \ran D^{(\boldsymbol X-w)}$. 
This completes the proof. 
\end{proof}

\subsection{The class $\mathcal FB_2(\Omega)$}\label{secflag}
In this subsection, first we will use Theorem \ref{gbkjet} to prove that, if $\Omega\subset \mathbb C$, and $K^{\alpha}$, $K^\beta$, defined on $\Omega\times\Omega$, are generalized Bergman kernels, then so is the kernel $\mathbb K^{(\alpha,\beta)}$. The following proposition, which is interesting on its own right, is an essential tool in proving this theorem. The notation below is chosen to be close to that of \cite{Flagstructure}.

\begin{proposition}\label{propB_2}
Let $\Omega\subset \mathbb C$ be a bounded domain. Let $T$ be a bounded linear operator of the form $ \begin{bmatrix}
T_0 & S\\
0 &  T_1   
\end{bmatrix} $ on $H_0\bigoplus H_1$.
Suppose that $T$ belongs to $B_2(\Omega)$ and $T_0$ belongs to 
$B_1(\Omega)$. Then $T_1$ belongs to $ B_1(\Omega)$.
\end{proposition}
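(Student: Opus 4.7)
The plan is to verify directly the three defining conditions of $B_1(\Omega)$ for $T_1$. The engine driving everything is the observation that for any operator $A\in B_n(\Omega)$, the operator $A-w$ is in fact \emph{surjective} for every $w\in\Omega$, not merely of closed range. First I would establish this auxiliary fact and then apply it to $T_0$ and $T$.

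To prove the surjectivity it suffices, by the closed range hypothesis together with $\ran(A-w_0)=\ker(A^{*}-\bar w_0)^{\perp}$, to show $\ker(A^{*}-\bar w_0)=\{0\}$ for every $w_0\in\Omega$. If $v$ lies in this kernel and $\gamma$ is any local holomorphic section of the bundle $w\mapsto\ker(A-w)$, then $A\gamma(w)=w\gamma(w)$ gives $\langle A\gamma(w),v\rangle=w\langle\gamma(w),v\rangle$, while $A^{*}v=\bar w_0 v$ gives $\langle A\gamma(w),v\rangle=w_0\langle\gamma(w),v\rangle$. Comparing forces $(w-w_0)\langle\gamma(w),v\rangle\equiv 0$, hence $\langle\gamma(w),v\rangle\equiv 0$ on the domain of $\gamma$. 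Since every vector of every $\ker(A-w')$ arises as such a section value (from a local holomorphic frame), and since the closed linear span of $\bigcup_{w'\in\Omega}\ker(A-w')$ equals the whole space by condition~(iii), we conclude $v=0$.

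Armed with this, both $T_0-w$ and $T-w$ are surjective for every $w\in\Omega$, and I exploit the block-triangular form of $T$. Let $P_2\colon H_0\oplus H_1\to H_1$ be the projection onto the second factor. A pair $(x,y)$ lies in $\ker(T-w)$ precisely when $(T_1-w)y=0$ and $(T_0-w)x=-Sy$; surjectivity of $T_0-w$ implies every $y\in\ker(T_1-w)$ admits such a lift, so $P_2\bigl(\ker(T-w)\bigr)=\ker(T_1-w)$, while $\ker P_2|_{\ker(T-w)}$ is the one-dimensional subspace $\ker(T_0-w)\oplus\{0\}$. Rank--nullity then yields $\dim\ker(T_1-w)=\dim\ker(T-w)-\dim\ker(T_0-w)=2-1=1$, i.e.\ condition~(i). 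Reading the second coordinate of $\ran(T-w)=H_0\oplus H_1$ shows $T_1-w$ is itself surjective, giving (ii). Finally, setting $V:=\overline{\bigvee}\{\ker(T_1-w):w\in\Omega\}$, the inclusion $P_2\ker(T-w)\subseteq V$ puts each $\ker(T-w)$ inside the closed subspace $H_0\oplus V$, so the density condition for $T$ forces $H_0\oplus H_1\subseteq H_0\oplus V$, whence $V=H_1$, which is (iii). The main obstacle in this plan is the surjectivity lemma; once it is in hand, the rest is a routine exercise in rank--nullity.
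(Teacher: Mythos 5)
Your proposal is correct and follows essentially the same route as the paper: exploit surjectivity of $T-w$ and $T_0-w$, count the kernel dimension of $T_1-w$ via the block-triangular form, and deduce the spanning condition from that of $T$. The only differences are cosmetic --- you supply a proof of the standard Cowen--Douglas fact that $A-w$ is onto for $A\in B_n(\Omega)$ (which the paper simply invokes), and you replace the paper's explicit exhibition of three linearly independent kernel vectors by a rank--nullity count for $P_2|_{\ker(T-w)}$.
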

\begin{proof}
First, note that,
for $w\in \Omega$,
\begin{equation}\label{eqnB_1D}
(T-w)(x\oplus y) = ((T_0-w)x+Sy) \oplus (T_1-w)y.
\end{equation}
Since $T\in B_2(\mathbb D)$, $T-w$ is onto. Hence, from the above equality, it follows  that $(T_1-w)$ is onto.
 
Now we claim that $\dim \ker (T_1-w) =1$ for all $ w \in \Omega.$
From $\eqref{eqnB_1D}$, we see that $(x\oplus y)$ belongs to 
$\ker (T-w)$ if and only if $(T_0-w)x+Sy=0$ and $y\in \ker(T_1-w).$ 
Therefore, if $\dim \ker (T_1-w)$ is $0$, it must follow that 
$\ker (T-w)=\ker (T_0-w)$, which is a contradiction.
Hence $\dim \ker (T_1-w)$ is atleast $1$. Now assume that
$\dim \ker (T_1-w)> 1.$ Let $v_1(w)$ and $v_2(w)$ be two linearly
independent vectors in $\ker (T_1-w).$ Since $(T_0-w)$ is onto,
there exist $u_1(w), u_2(w)\in H_0$ such that 
$(T_0-w)u_i(w)+Sv_i(w)=0$, $i=1,2$. Hence the vectors 
$(u_1(w)\oplus v_1(w)), (u_2(w)\oplus v_2(w))$ belong to $\ker (T-w).$
Also, since $\dim \ker (T_0-w)=1,$ there exists $\gamma(w)\in  H_0,$
such that $(\gamma(w)\oplus 0)$ belongs to $\ker(T-w).$
It is easy to verify that the vectors $\{(u_1(w)\oplus v_1(w)), 
(u_2(w)\oplus v_2(w)), (\gamma(w)\oplus 0)\}$ are linearly independent.  
This is a contradiction since $\dim \ker (T-w)=2.$ Therefore $\dim \ker (T_1-w) \leq 1.$ In consequence, $\dim \ker (T_1-w)=1.$ 

Finally, to show that $\overbar \bigvee_{w\in \Omega}\ker(T_1-w)= H_1$,
let $y$ be an arbitrary vector in $H_1$ which is orthogonal to 
$\overbar \bigvee_{w\in \Omega}\ker(T_1-w)$. Then it follows that 
$(0\oplus y) $ is orthogonal to $\ker (T-w), w\in \Omega$.
Consequently, $y=0$. This completes the proof.  
\end{proof}
 
%
\begin{theorem}\label{cork^2curvgbk}
Let $\Omega \subset \mathbb{C}$ be a bounded domain and $K:\Omega\times\Omega\to \mathbb C$ be a sesqui-analytic function such that the functions $K^{\alpha}$ and $K^{\beta}$ are positive definite on $\Omega\times\Omega$ for some $\alpha,\beta>0$. Suppose that the operators ${M^{(\alpha)}}^*$ on $(\hl, K^\alpha)$ and ${M^{(\beta)}}^*$ on $(\hl, K^\beta)$ belong to $B_1(\Omega^*)$. Then the operator $~{\mathbb M^{(\alpha,\beta)}}^*$ on $(\mathcal H, \mathbb K^{(\alpha,\beta)})$ 
belongs to $B_1(\Omega^*)$. Equivalently, 
if $K^\alpha$ and $K^\beta$ are generalized Bergman kernels, then so is the kernel $\;\mathbb K^{(\alpha,\beta)}$. 
\end{theorem}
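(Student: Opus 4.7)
The plan is to reduce the theorem, via the jet construction and the structural information in Corollary \ref{quotient module1}, to the general fact recorded in Proposition \ref{propB_2}. More precisely, I would package $\mathbb K^{(\alpha,\beta)}$ as the ``lower'' block of an explicit $2\times 2$ block operator that lives in the Cowen--Douglas class $B_2(\Omega^*)$, and then peel off the known $B_1$ factor coming from the point-wise product.

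First, since $K^\alpha$ and $K^\beta$ are by hypothesis generalized Bergman kernels, Theorem \ref{gbkjet} applied with $k=1$ shows that the kernel $J_1(K^\alpha,K^\beta)_{|\mathrm{res}\,\Delta}$ is also a generalized Bergman kernel. In the present one-variable setting $m=1$, so this kernel takes values in $2\times 2$ matrices (as $d=\binom{1+1}{1}=2$), and consequently the adjoint of the multiplication operator $J_1 M$ on $\bigl(\mathcal H,J_1(K^\alpha,K^\beta)_{|\mathrm{res}\,\Delta}\bigr)$ belongs to $B_2(\Omega^*)$.

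Next, I would invoke Corollary \ref{quotient module1}, which exhibits $J_1 M$ as unitarily equivalent to the lower-triangular operator
\[
\begin{pmatrix} M^{(\alpha+\beta)} & 0\\[2pt] \delta\,\mathsf{inc} & \mathbb M^{(\alpha,\beta)}\end{pmatrix}
\quad\text{on }\;(\mathcal H,K^{\alpha+\beta})\oplus(\mathcal H,\mathbb K^{(\alpha,\beta)}).
\]
Taking adjoints, the operator
\[
T:=\begin{pmatrix} {M^{(\alpha+\beta)}}^* & \delta\,\mathsf{inc}^* \\[2pt] 0 & {\mathbb M^{(\alpha,\beta)}}^*\end{pmatrix}
\]
is unitarily equivalent to $(J_1 M)^*$, hence lies in $B_2(\Omega^*)$. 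This places $T$ in exactly the triangular form of Proposition \ref{propB_2}, with $T_0={M^{(\alpha+\beta)}}^*$ and $T_1={\mathbb M^{(\alpha,\beta)}}^*$.

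To apply Proposition \ref{propB_2} I still need $T_0\in B_1(\Omega^*)$, and here Salinas' Theorem \ref{thmsalinas} does the work: the pointwise product of the two generalized Bergman kernels $K^\alpha$ and $K^\beta$ is again a generalized Bergman kernel, so $K^{\alpha+\beta}=K^\alpha K^\beta$ is one, and consequently ${M^{(\alpha+\beta)}}^*\in B_1(\Omega^*)$. Proposition \ref{propB_2} then yields ${\mathbb M^{(\alpha,\beta)}}^*\in B_1(\Omega^*)$, which is equivalent to $\mathbb K^{(\alpha,\beta)}$ being a generalized Bergman kernel. I do not anticipate a serious obstacle once the pieces are assembled: the one point to verify carefully is that the inclusion operator $\mathsf{inc}:(\mathcal H,K^{\alpha+\beta})\to(\mathcal H,\mathbb K^{(\alpha,\beta)})$ is bounded (so that $T$ is genuinely a bounded block operator on the direct sum); this boundedness is precisely what Corollary \ref{quotient module1} implicitly asserts, having been built into the unitary equivalence with $J_1 M$, and ultimately it traces back to Theorem \ref{boundedness} / Corollary \ref{thmboundedness}.
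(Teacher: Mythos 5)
Your proposal is correct and follows essentially the same route as the paper's own proof: Theorem \ref{gbkjet} with $k=1$ places $(J_1M)^*$ in $B_2(\Omega^*)$, Corollary \ref{quotient module1} gives the triangular block form, Theorem \ref{thmsalinas} supplies ${M^{(\alpha+\beta)}}^*\in B_1(\Omega^*)$, and Proposition \ref{propB_2} extracts ${\mathbb M^{(\alpha,\beta)}}^*\in B_1(\Omega^*)$. Your added caution about the boundedness of the inclusion operator is already subsumed in the unitary equivalence asserted by Corollary \ref{quotient module1}, exactly as you suspect.
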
 
\begin{proof}
Since the operators ${M^{(\alpha)}}^*$  and ${M^{(\beta)}}^*$ belong to $B_1(\Omega^*)$, it follows from Theorem \ref{gbkjet} that the kernel $J_1(K^{\alpha},K^\beta)_{|\rm res \, \Delta}$ is a generalized Bergman kernel. Therefore, from corollary \ref{quotient module1}, we deduce that the operator $\left(\begin{smallmatrix}
{M^{(\alpha+\beta)}}^*&\eta \;\mathsf{inc}^*\\
0 & {\mathbb M^{(\alpha,\beta)}}^*
\end{smallmatrix}\right)
$ belongs to $B_2(\Omega^*)$, 
where $\eta=\frac{\beta}{\sqrt{\alpha\beta(\alpha+\beta)}}$ and 
$\mathsf{inc}$ is the inclusion operator from $(\mathcal{H}, K^{\alpha+\beta})$ into 
$(\mathcal H, \mathbb K^{(\alpha,\beta)})$.
Also, by Theorem \ref{thmsalinas}, the operator ${M^{(\alpha+\beta)}}^*$ on $(\mathcal H, K^{\alpha+\beta})$
belongs to $B_1(\Omega^*)$.
Proposition $\ref{propB_2},$ therefore shows that the operator $~{\mathbb M^{(\alpha,\beta)}}^*$ on $(\mathcal H, \mathbb K^{(\alpha,\beta)})$
belongs to $B_1(\Omega^*).$  
\end{proof}
A smaller class of operators $\mathcal FB_n(\Omega)$ from $B_n(\Omega)$, $n\geq 2$, was introduced in \cite{Flagstructure}.  A set of tractable complete unitary invariants and concrete  models were given for operators in this class.  We give below examples of a large class of  operators in $\mathcal FB_2(\Omega)$. In case $\Omega$ is the unit disc $\mathbb D$, these examples include the homogeneous operators of rank $2$ in $B_2(\mathbb D)$ which are known to be in  $\mathcal FB_2(\mathbb D)$.
\begin{definition}
An operator $T$ on $H_0\bigoplus H_1$ is said to be in $\mathcal FB_2(\Omega)$ 
if it is of the form 
$ \begin{bmatrix}
T_0 & S\\
0 &  T_1   
\end{bmatrix}$, 
where $T_0,T_1\in B_1(\Omega)$ and $S$ is a non-zero operator satisfying
$T_0S=ST_1.$ 
\end{definition}

\begin{theorem}\label{ThmFB_2)}
Let $\Omega \subset \mathbb{C}$ be a bounded domain and $K:\Omega\times\Omega\to \mathbb C$ be a sesqui-analytic function such that the functions $K^{\alpha}$ and $K^{\beta}$ are positive definite on $\Omega\times\Omega$ for some $\alpha,\beta>0$. Suppose that the operators ${M^{(\alpha)}}^*$ on $(\hl, K^\alpha)$ and ${M^{(\beta)}}^*$ on $(\hl, K^\beta)$ belong to $B_1(\Omega^*)$. Then the operator $ (J_1M)^*$ on $(\mathcal H, J_1(K^{\alpha}, K^{\beta})_{|\rm res\, \Delta})$
belongs to $\mathcal F B_2(\Omega^*).$      \end{theorem}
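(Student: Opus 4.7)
The plan is to read off $(J_1M)^*$ from Corollary \ref{quotient module1} as a $2\times 2$ upper triangular operator matrix, and then check the three items in the definition of $\mathcal{FB}_2(\Omega^*)$: that both diagonal entries lie in $B_1(\Omega^*)$, that the upper-right entry is non-zero, and that it intertwines the diagonal entries.

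First, by Corollary \ref{quotient module1}, the operator $J_1M$ on $(\mathcal H, J_1(K^\alpha,K^\beta)_{|\mathrm{res}\,\Delta})$ is unitarily equivalent to
\[
\begin{pmatrix} M^{(\alpha+\beta)} & 0 \\ \delta\,\mathsf{inc} & \mathbb M^{(\alpha,\beta)} \end{pmatrix}\quad\text{on}\quad (\mathcal H, K^{\alpha+\beta})\oplus(\mathcal H,\mathbb K^{(\alpha,\beta)}),
\]
with $\delta=\tfrac{\beta}{\sqrt{\alpha\beta(\alpha+\beta)}}$ and $\mathsf{inc}$ the inclusion. Taking adjoints, $(J_1M)^*$ is unitarily equivalent to
\[
\begin{pmatrix} (M^{(\alpha+\beta)})^* & \delta\,\mathsf{inc}^* \\ 0 & (\mathbb M^{(\alpha,\beta)})^* \end{pmatrix},
\]
which already has the upper triangular shape required by $\mathcal{FB}_2(\Omega^*)$. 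So I only have to identify $T_0:=(M^{(\alpha+\beta)})^*$, $T_1:=(\mathbb M^{(\alpha,\beta)})^*$, $S:=\delta\,\mathsf{inc}^*$ and verify the three properties.

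For the diagonal entries, I would apply the known results already assembled. Since ${M^{(\alpha)}}^*$ and ${M^{(\beta)}}^*$ belong to $B_1(\Omega^*)$, the kernels $K^\alpha$ and $K^\beta$ are generalized Bergman kernels. Salinas's theorem (Theorem \ref{thmsalinas}) applied to the pointwise product $K^{\alpha+\beta}=K^\alpha K^\beta$ gives that $K^{\alpha+\beta}$ is itself a generalized Bergman kernel, so $T_0\in B_1(\Omega^*)$. For $T_1$, Theorem \ref{cork^2curvgbk} directly yields that $\mathbb K^{(\alpha,\beta)}$ is a generalized Bergman kernel, hence $T_1\in B_1(\Omega^*)$.

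For the intertwining relation, observe that $\mathsf{inc}\colon(\mathcal H,K^{\alpha+\beta})\to(\mathcal H,\mathbb K^{(\alpha,\beta)})$ sends $f$ to $f$ viewed in the larger reproducing kernel Hilbert space, so $\mathsf{inc}\,M^{(\alpha+\beta)}f=zf=\mathbb M^{(\alpha,\beta)}\,\mathsf{inc}\,f$. Taking adjoints gives $T_0\,\mathsf{inc}^*=\mathsf{inc}^*T_1$, and multiplying by $\delta$ yields $T_0S=ST_1$. Non-triviality of $S$ reduces to non-triviality of $\mathsf{inc}$, which is clear since $(\mathcal H,K^{\alpha+\beta})$ contains, for any fixed $w_0\in\Omega$, the non-zero vector $K^{\alpha+\beta}(\cdot,w_0)$. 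There is no serious obstacle here: all the real work is already done in Corollary \ref{quotient module1} and Theorem \ref{cork^2curvgbk}; what remains is only assembly and the routine verification of the intertwining and non-vanishing conditions.
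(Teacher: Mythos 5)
Your proposal is correct and follows essentially the same route as the paper: read off the upper triangular form from Corollary \ref{quotient module1}, get the diagonal entries in $B_1(\Omega^*)$ from Theorem \ref{thmsalinas} and Theorem \ref{cork^2curvgbk}, and observe that $\mathsf{inc}^*$ intertwines them. The only (harmless) difference is that you verify $S\neq 0$ explicitly and omit the paper's opening remark that $(J_1M)^*\in B_2(\Omega^*)$ by Theorem \ref{gbkjet}, which is not needed for the definition of $\mathcal FB_2(\Omega^*)$ as stated.
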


\begin{proof}
By Theorem $ \ref{gbkjet}$,  
the operator $(J_1M)^*$ on $(\mathcal H, J_1(K^{\alpha}, K^{\beta})_{|\rm res\, \Delta})$ belongs to 
$B_2(\Omega^*)$, and by Corollary $\ref{quotient module1},$ it is unitarily equivalent to 
 $\left(\begin{smallmatrix}
{M^{(\alpha+\beta)}}^*&\eta \;\mathsf{inc}^*\\
0 & {\mathbb M^{(\alpha,\beta)}}^*
\end{smallmatrix}\right)
$ on $(\mathcal{H}, K^{\alpha+\beta})\bigoplus(\mathcal 
H, \mathbb K^{(\alpha,\beta)})$.  
By Theorem $\ref{thmsalinas}$, the operator ${M^{(\alpha+\beta)}}^*$ on $(\mathcal H, K^{\alpha+\beta})$ belongs to $B_1(\Omega^*)$ and by Theorem $\ref{cork^2curvgbk}$, the operator ${\mathbb M^{(\alpha,\beta)}}^*$ on $(\mathcal H, \mathbb K^{(\alpha,\beta)})$ belongs to $B_1(\Omega^*)$. The adjoint of the inclusion operator $\mathsf{inc}$ clearly intertwines ${M^{(\alpha + \beta)}}^*$ and ${\mathbb M^{(\alpha,\beta)}}^*$.   
Therefore the operator $(J_1M)^*$ on $(\mathcal H, J_1(K^{\alpha}, K^{\beta})_{|\rm res\, \Delta})$
belongs to $\mathcal FB_2(\Omega^*).$
\end{proof}
Let $\Omega \subset \mathbb{C}$ be a bounded domain and $K:\Omega\times\Omega\to \mathbb C$ be a sesqui-analytic function such that the functions $K^{\alpha_1}, K^{\alpha_2},K^{\beta_1}$ and $K^{\beta_2}$ are positive definite on $\Omega\times\Omega$ for some $\alpha_i, \beta_i >0$, $i=1,2$. Suppose that the operators ${M^{(\alpha_i)}}^*$ on $(\hl, K^{\alpha_i})$ and ${M^{(\beta_i)}}^*$ on $(\hl, K^{\beta_i})$, $i=1,2$, belong to $B_1(\Omega^*)$. Let $\mathcal A_1(\alpha_i,\beta_i)$ be the subspace $\mathcal A_1$ of the Hilbert space $(\hl ,K^{\alpha_i})\otimes (\hl, K^{\beta_i})$ for $i=1,2$. Then we have the following corollary.
\begin{corollary}
The operators $\big(M^{(\alpha_1)}\otimes I\big)^*_{|\mathcal A_1(\alpha_1,\beta_1)^\perp}$ and $\big(M^{(\alpha_2)}\otimes I\big)^*_{|\mathcal A_1(\alpha_2,\beta_2)^\perp}$ are unitarily 
equivalent if and only if $\alpha_1=\alpha_2$ and $\beta_1=\beta_2$.
\end{corollary}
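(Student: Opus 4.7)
The $(\Leftarrow)$ direction is trivial. For the forward direction, the plan is to pass via the $\mathcal{FB}_2(\Omega^*)$-model: by Corollary~\ref{quotient module1} and Theorem~\ref{ThmFB_2)}, the operator $\big(M^{(\alpha_i)}\otimes I\big)^*_{|\mathcal A_1(\alpha_i,\beta_i)^\perp}$ is unitarily equivalent to
$$T^{(i)}:=\begin{pmatrix}{M^{(\alpha_i+\beta_i)}}^* & \delta_i\,\mathsf{inc}^* \\ 0 & {\mathbb M^{(\alpha_i,\beta_i)}}^*\end{pmatrix},\qquad \delta_i=\frac{\beta_i}{\sqrt{\alpha_i\beta_i(\alpha_i+\beta_i)}},$$
on $(\mathcal H,K^{\alpha_i+\beta_i})\oplus(\mathcal H,\mathbb K^{(\alpha_i,\beta_i)})$. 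Assume $T^{(1)}$ and $T^{(2)}$ are unitarily equivalent; the plan is then to extract two unitary invariants of this $\mathcal{FB}_2$-model that together determine the pair $(\alpha_i,\beta_i)$.

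The first invariant is the curvature of the two diagonal blocks. A direct computation using the formula $\mathcal K_T(\bar w)=\partial\bar{\partial}\log K(w,w)$ for the adjoint of multiplication on $(\mathcal H, K)$ gives
\begin{align*}
\mathcal K_{T_0^{(i)}}(\bar w)&=(\alpha_i+\beta_i)\,\partial\bar{\partial}\log K(w,w),\\
\mathcal K_{T_1^{(i)}}(\bar w)&=(\alpha_i+\beta_i)\,\partial\bar{\partial}\log K(w,w)+\partial\bar{\partial}\log\bigl(\partial\bar{\partial}\log K(w,w)\bigr).
\end{align*}
Under the non-degeneracy condition $\partial\bar{\partial}\log(\partial\bar{\partial}\log K)\not\equiv 0$, these two block-curvatures differ as functions on $\Omega$. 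By the flag-rigidity for $\mathcal{FB}_2$-operators with distinct block-curvatures (\cite{Flagstructure}), the sub-bundle associated with $T_0^{(i)}$ is canonical inside the eigen-bundle of $T^{(i)}$, so any unitary intertwining $T^{(1)}$ and $T^{(2)}$ respects the flag and induces a unitary equivalence $T_0^{(1)}\cong T_0^{(2)}$ in $B_1(\Omega^*)$. The Cowen--Douglas classification then forces $\alpha_1+\beta_1=\alpha_2+\beta_2=:\gamma$.

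With $\gamma$ common, the two operators live on the same Hilbert space with identical diagonal blocks, and the intertwining unitary factors as $U=U_0\oplus U_1$. Each $U_j$ is a unitary intertwining the corresponding $B_1$-operator with itself; since such a unitary sends each reproducing kernel element $K_j(\cdot,w)$ to a scalar multiple $\lambda_j(w)K_j(\cdot,w)$, and unitarity together with inner-product preservation forces $\overline{\lambda_j(v)}\lambda_j(w)=1$ for all $v,w$, one obtains $U_0=c_0 I$ and $U_1=c_1 I$ with $|c_0|=|c_1|=1$. The residual off-diagonal identity $U_0\delta_1\mathsf{inc}^*=\delta_2\mathsf{inc}^*U_1$ then reduces to $c_0\delta_1=c_1\delta_2$; taking moduli (and using $\delta_i>0$) gives $\delta_1=\delta_2$.

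Combining $\alpha_1+\beta_1=\alpha_2+\beta_2$ with $\delta_1=\delta_2$ --- equivalently $\beta_1/(\alpha_1(\alpha_1+\beta_1))=\beta_2/(\alpha_2(\alpha_2+\beta_2))$, hence $\beta_1/\alpha_1=\beta_2/\alpha_2$ --- forces $\alpha_1=\alpha_2$ and $\beta_1=\beta_2$. The hardest step is the flag-rigidity invoked in the second paragraph: it rests on the $\mathcal{FB}_2$-classification of \cite{Flagstructure} and on the non-degeneracy $\partial\bar{\partial}\log(\partial\bar{\partial}\log K)\not\equiv 0$, which holds for the classical kernels of interest but would have to be supplemented by a separate argument in any genuinely degenerate case.
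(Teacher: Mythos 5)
Your reduction to the $\mathcal FB_2(\Omega^*)$ model via Corollary \ref{quotient module1} and Theorem \ref{ThmFB_2)}, and the two invariants you extract (the curvature of a diagonal block, and the coupling constant $\delta_i=\eta_i$), are exactly what the paper uses, and your final algebra $\{\alpha_1+\beta_1=\alpha_2+\beta_2,\ \delta_1=\delta_2\}\Rightarrow\{\alpha_1=\alpha_2,\ \beta_1=\beta_2\}$ is correct. The one real defect is the non-degeneracy hypothesis $\partial\bar\partial\log\big(\partial\bar\partial\log K\big)\not\equiv 0$ that you insert to justify flag rigidity: as written, your argument proves the corollary only under this extra assumption and you explicitly defer the ``degenerate'' case, whereas the statement carries no such hypothesis. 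The assumption is unnecessary. The rigidity of the flag --- and with it the completeness of the pair of invariants $\big(\mathcal K_{T_0},\ \|S(t_1)\|^2/\|t_1\|^2\big)$ --- is \cite[Theorem 2.10]{Flagstructure} and holds for \emph{every} operator in $\mathcal FB_2(\Omega)$; it is a consequence of the defining conditions $S\neq 0$ and $T_0S=ST_1$, not of the two block curvatures being distinct. The paper simply invokes that theorem: equality of the curvatures of the $(1,1)$ blocks forces $\alpha_1+\beta_1=\alpha_2+\beta_2$, and evaluating the second invariant on the section $t_i(w)=\mathbb K^{(\alpha_i,\beta_i)}(\cdot,w)$, using $(\mathsf{inc})_i^*\,\mathbb K^{(\alpha_i,\beta_i)}(\cdot,w)=K^{\alpha_i+\beta_i}(\cdot,w)$, yields $\eta_1=\eta_2$ directly --- no diagonal factorization $U=c_0I\oplus c_1I$ of the intertwining unitary is needed (that factorization is itself only available after rigidity is known). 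Drop the caveat, cite the rigidity theorem in its full strength, and your proof coincides with the paper's.
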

\begin{proof}
If $\alpha_1=\alpha_2$ and $\beta_1=\beta_2$, then 
there is nothing to prove. For the converse, assume that the operators $\big(M^{(\alpha_1)}\otimes I\big)^*_{|\mathcal A_1(\alpha_1,\beta_1)^\perp}$ and $\big(M^{(\alpha_2)}\otimes I\big)^*_{|\mathcal A_1(\alpha_2,\beta_2)^\perp}$ are unitarily equivalent. Then, by Corollary $\ref{quotient module}$, we see that the operators $\left(\begin{smallmatrix}
{M^{(\alpha_1+\beta_1)}}^*&\eta_1 \;(\mathsf{inc})_1^*\\
0 & {\mathbb M^{(\alpha_1,\beta_1)}}^*
\end{smallmatrix}\right)
$ on $(\mathcal{H}, K^{\alpha_1+\beta_1})\bigoplus(\mathcal 
H, \mathbb K^{(\alpha_1,\beta_1)})$  
and $\left(\begin{smallmatrix}
{M^{(\alpha_2+\beta_2)}}^*&\eta_2 \;(\mathsf{inc})_2^*\\
0 & {\mathbb M^{(\alpha_2,\beta_2)}}^*
\end{smallmatrix}\right)
$ on $(\mathcal{H}, K^{\alpha_2+\beta_2})\bigoplus(\mathcal 
H, \mathbb K^{(\alpha_2,\beta_2)})$  
are unitarily equivalent, where
 $\eta_i=\frac{\beta_i}{\sqrt{\alpha_i\beta_i(\alpha_i+\beta_i)}}$ and 
$(\mathsf{inc})_i$ is the inclusion operator from $(\mathcal{H}, K^{\alpha_i+\beta_i})$ into 
$(\mathcal H, \mathbb K^{(\alpha_i,\beta_i)})$, $i=1,2$. Since  ${M^{(\alpha_i)}}^*$ on $(\hl, K^{\alpha_i})$ and ${M^{(\beta_i)}}^*$ on $(\hl, K^{\beta_i})$, $i=1,2$, belong to $B_1(\Omega^*)$, by Theorem \ref{ThmFB_2)}, we conclude that  the operator
$\left(\begin{smallmatrix}
{M^{(\alpha_i+\beta_i)}}^*&\eta_i \;(\mathsf{inc})_i^*\\
0 & {\mathbb M^{(\alpha_i,\beta_i)}}^*
\end{smallmatrix}\right)
$ 
belongs to $\mathcal FB_2(\Omega^*)$ for $i=1,2$. Therefore,
by \cite[Theorem 2.10]{Flagstructure}, we obtain that 
\begin{equation}\label{eqnFB_22}
\mathcal K_{{M^{(\alpha_1+\beta_1)}}^*}=\mathcal K_{{M^{(\alpha_2+\beta_2)}}^*} \;\;\;\;\;\mbox{and\;}\;\;\;
\frac{\eta_1 \;\|(\mathsf{inc})_1^*(t_1)\|^2}{\|t_1\|^2}=\frac{\eta_2 \;\|(\mathsf{inc})_2^*(t_2)\|^2}{\|t_2\|^2},
\end{equation}
where $\mathcal K_{{M^{(\alpha_i+\beta_i)}}^*}$, $i=1,2$, is the curvature of the operator ${M^{(\alpha_i+\beta_i)}}^*$, and $t_1$ and $t_2$ are two non-vanishing holomorphic sections of the vector bundles 
$E_{{\mathbb M^{(\alpha_1,\beta_1)}}^*}$ and $E_{{\mathbb M^{(\alpha_2,\beta_2)}}^*}$, respectively. 
Note that, for $i=1,2$, $t_i(w)=\mathbb K^{(\alpha_i,\beta_i)}(\cdot,w)$ is a holomorphic non-vanishing section of the vector bundle $E_{{\mathbb M^{(\alpha_i,\beta_i)}}^*}$, and also $(\mathsf{inc})_i^*(\mathbb K^{(\alpha_i,\beta_i)}(\cdot,w))=K^{\alpha_i+\beta_i}(\cdot,w)$, $w\in \Omega$. Therefore 
the second equality in \eqref{eqnFB_22}
implies that $$\frac{\eta_1K^{\alpha_1+\beta_1}(w,w)}{K^{\alpha_1+\beta_1}(w,w)\partial \bar{\partial}\log K(w,w)}=\frac{\eta_2K^{\alpha_2+\beta_2}(w,w)}{K^{\alpha_2+\beta_2}(w,w)\partial \bar{\partial}\log K(w,w)},\;\;w\in \Omega,$$ or equivalently $\eta_1=\eta_2$.
Furthermore, it is easy to see that $\mathcal K_{{M^{(\alpha_1+\beta_1)}}^*}=\mathcal K_{{M^{(\alpha_2+\beta_2)}}^*}$ if and only if $\alpha_1+\beta_1=\alpha_2+\beta_2$. Hence, from
\eqref{eqnFB_22}, we see that
\begin{equation}\label{eqnFB_21}
\alpha_1+\beta_1=\alpha_2+\beta_2\;\;\;\;\mbox{and}\;\;\;\;\eta_1=\eta_2.
\end{equation}
Then a simple calculation shows that \eqref{eqnFB_21} 
is equivalent to $\alpha_1=\alpha_2$ and $\beta_1=\beta_2$, completing the proof.  
\end{proof}

\section{The generalized Wallach set}
Let $\Omega$ be a bounded domain in $\mathbb C^m$. Recall that the Bergman space $A^2(\Omega)$ is the  Hilbert space of all square integrable analytic functions defined on $\Omega$.  The inner product of $A^2(\Omega)$ is given by the formula 
$$\left \langle f, g\right \rangle : = \int_{\Omega} f(z)\overbar{g(z)}~ \rm dV(z),~f,g\in A^2(\Omega),$$
where ${\rm dV(z)}$ is the area measure on $\mathbb C^m$. 
The evaluation linear functional $f\mapsto f(w)$ is bounded on $A^2(\Omega)$ for all $w\in \Omega$. Consequently, the Bergman space is a reproducing kernel Hilbert space. The reproducing kernel of the Bergman space $A^2(\Omega)$ is called the Bergman kernel of $\Omega$ and is denoted by $B_{\Omega}$.


If $\Omega\subset \mathbb C^m$ is a bounded symmetric domain, then the ordinary Wallach set $\mathcal W_{\Omega}$ is defined as $\{t >  0: B_\Omega^t \mbox{\rm \: is non-negative definite}\}$. Here $B_{\Omega}^t$, $t> 0$, makes sense since every bounded symmetric domain $\Omega$ is simply connected and the Bergman kernel on it is non-vanishing. If $\Omega$ is the Euclidean unit ball $\mathbb B_m$, then the Bergman kernel  is given by 
\begin{equation}\label{eqnballun}
B_{\mathbb B_m}(z,w)=(1-\langle  z,w \rangle)^{-(m+1)},\;\; z,w\in B_{\mathbb B_m},
\end{equation}
and the Wallach set 
$\mathcal W_{\mathbb B_m}=\{t\in \mathbb R: t > 0\}$. But, in general, there are examples of bounded symmetric domains, like the open unit ball in the space of all $m\times n$ matrices, $m,n>1$, with respect to the operator norm, where the Wallach set is a proper subset of $\{t\in \mathbb R: t > 0\}$. An explicit description of the Wallach set $\mathcal W_{\Omega}$ for a  bounded symmetric domain $\Omega$ is given in \cite{Wallachset}.

Replacing the Bergman kernel in the definition of the Wallach set by an arbitrary scalar valued non-negative definite kernel $K$, we define the ordinary Wallach set $\mathcal W(K)$  to be the set 
$$\{t > 0: K^t \mbox{\rm \: is non-negative definite}\}.$$
Here we have assumed that there exists a continuous branch of logarithm of $K$ on $\Omega\times \Omega$ and therefore $K^t$, $t>0$, makes sense. 
Clearly, every natural number belongs to the Wallach set $\mathcal W(K)$.
In \cite{infinitelydivisiblemetric}, it is 
shown that $K^t$ is non-negative definite for all $t> 0$ if and only if $\big(\partial_i\bar{\partial}_j \log K(z,w)\big)_{i,j=1}^m$ is non-negative definite. Therefore it follows from the discussion in the previous paragraph that there are
non-negative definite kernels $K$ on $\Omega\times \Omega$ for which  $\big(\partial_i\bar{\partial}_j \log K(z,w)\big)_{i,j=1}^m$ need not define a non-negative definite kernel on $\Omega\times \Omega$. 
However, it follows from Proposition \ref{k^2curv} that $K^{t_1+t_2}\big(\partial_i\bar{\partial}_j \log K(z,w)\big)_{i,j=1}^m$ is a non-negative kernel on $\Omega\times \Omega$ as soon as $t_1$ and $t_2$ are in the Wallach set $\mathcal W(K)$. Therefore it is natural to introduce the generalized Wallach set for any scalar valued kernel $K$ defined on $\Omega\times \Omega$ as follows:
\begin{equation} 
G\mathcal W(K):=\big \{t \in \mathbb R :\; K^{t-2} \mathbb K \mbox{\rm\: is non-negative definite}\big \},
\end{equation} 
where, as before, we have assumed that $K^t$ is well defined for all $t\in \mathbb R$. Clearly, we have the following inclusion 
$$\big\{t_1+t_2\,:\, t_1,t_2\in \mathcal W(K)\big\} \subseteq G\mathcal W(K).$$


\subsection
{Generalized Wallach set for the Bergman kernel of the 
Euclidean unit ball in $\mathbb C^m$}
In this section, we compute the generalized Wallach set for the Bergman kernel of the Euclidean unit ball in $\mathbb C^m$. 
In the case of the unit disc $\D$, the Bergman kernel $B_{\D}(z,w)=(1-z\bar{w})^{-2}$ and $ \partial\bar{\partial} \log B_{\D}(z,w)=2(1-z\bar{w})^{-2}$, $z,w\in \D.$ Therefore $t$ is in $G\mathcal W(B_{\D})$ if and only if $(1-z\bar{w})^{-(2t+2)}$ is non-negative definite on $\D\times\D$. Consequently, $G\mathcal W(B_{\D})=\{t\in \mathbb R:t\geq -1 \}.$
For the case of the Bergman kernel $B_{\mathbb B_m}$ of the Euclidean unit ball $\mathbb B_m$, $m\geq 2$, we have shown that $G\mathcal W(B_{\mathbb B_m})=\{t\in \mathbb R:t\geq 0 \}.$ The proof is obtained by putting together a number of lemmas which are of independent interest. 

Before computing the generalized Wallach set $G\mathcal W (B_{\mathbb B_m})$ for the Bergman kernel of the Euclidean ball $\mathbb B_m$, we point out that the result is already included in  \cite[Theorem 3.7]{Misra-Upmeier}, see  also \cite{JP,  HLZ}.  The justification for our detailed proofs in this particular case is that  it is direct and elementary in nature. 

As before, we write $K\succeq 0$ to denote that $K$ is a non-negative definite kernel. 
For two non-negative definite kernels $K_1, K_2:\Omega\times\Omega\to \mathcal M_k(\mathbb C)$, we write $K_1\preceq K_2$  if $K_2-K_1$ is a non-negative definite kernel on $\Omega\times\Omega$. Analogously, we write $K_1\succeq K_2$ if $K_1-K_2$ is non-negative definite.

\begin{lemma}\label{finitenorm}
Let $\Omega$ be a bounded domain in $\mathbb C^m$, and $\lambda_0>0$ be an arbitrary constant. 
Let $\left\{K_{\lambda}\right\}_{\lambda\geq \lambda_0}$ be a family of  non-negative definite kernels, defined on $\Omega\times\Omega$, taking values in $\mathcal M_k(\mathbb C)$ such that 
\begin{enumerate}
\item [\rm{(i)}] if $\lambda\geq {\lambda}^\prime \geq \lambda_0$, then $K_{{\lambda}^\prime} \preceq K_{\lambda}$,
\item [\rm{(ii)}] for $z,w\in \Omega$, $K_{\lambda}(z,w)$ converges to $ K_{\lambda_0}(z,w)$ entrywise as $\lambda\to \lambda_0$.
\end{enumerate}
Any $f:\Omega\to \mathbb C^k$ which is holomorphic and is in $(\mathcal H, K_{\lambda})$ for all $\lambda>\lambda_0$ belongs to 
$(\mathcal H, K_{\lambda_0})$ if and only if $\sup_{\lambda>\lambda_0}\|f\|_{(\mathcal H, K_{\lambda})}<\infty.$
\end{lemma}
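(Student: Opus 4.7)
The plan is to reduce the statement to the well-known characterization of norm boundedness in a reproducing kernel Hilbert space via non-negative definiteness: for a non-negative definite matrix-valued kernel $L:\Omega\times\Omega\to\mathcal M_k(\mathbb C)$ and a holomorphic $g:\Omega\to\mathbb C^k$, the function $g$ lies in $(\mathcal H,L)$ with $\|g\|_{(\mathcal H,L)}\leq c$ if and only if the kernel $(z,w)\mapsto c^2 L(z,w)-g(z)g(w)^*$ is non-negative definite on $\Omega\times\Omega$. Coupled with Aronszajn's inclusion theorem (kernel domination is equivalent to contractive inclusion of the associated reproducing kernel Hilbert spaces), this handles both implications.

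For the ``only if'' direction, hypothesis (i) gives $K_{\lambda_0}\preceq K_\lambda$ for every $\lambda\geq\lambda_0$. Aronszajn's theorem then produces the contractive inclusion $(\mathcal H,K_{\lambda_0})\hookrightarrow (\mathcal H,K_\lambda)$, so $\|f\|_{(\mathcal H,K_\lambda)}\leq \|f\|_{(\mathcal H,K_{\lambda_0})}$ for every $f\in (\mathcal H,K_{\lambda_0})$. Consequently $\sup_{\lambda>\lambda_0}\|f\|_{(\mathcal H,K_\lambda)}\leq \|f\|_{(\mathcal H,K_{\lambda_0})}<\infty$.

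For the ``if'' direction, set $M:=\sup_{\lambda>\lambda_0}\|f\|_{(\mathcal H,K_\lambda)}$, which is finite by assumption. For each $\lambda>\lambda_0$, the characterization recalled above shows that the matrix-valued kernel $M^2 K_\lambda(z,w)-f(z)f(w)^*$ is non-negative definite. Testing this against an arbitrary finite collection $w_1,\ldots,w_n\in\Omega$ and $v_1,\ldots,v_n\in\mathbb C^k$ yields the scalar inequality
$$\sum_{i,j=1}^{n}\bigl\langle\bigl(M^2 K_\lambda(w_i,w_j)-f(w_i)f(w_j)^*\bigr)v_j,v_i\bigr\rangle\geq 0$$
for each such $\lambda$. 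Since only finitely many entries of $K_\lambda$ enter the sum, hypothesis (ii) permits me to let $\lambda\downarrow\lambda_0$ and deduce the same inequality with $K_{\lambda_0}$ replacing $K_\lambda$. Hence $M^2 K_{\lambda_0}(z,w)-f(z)f(w)^*$ is non-negative definite on $\Omega\times\Omega$, and applying the characterization in the reverse direction gives $f\in(\mathcal H,K_{\lambda_0})$ with $\|f\|_{(\mathcal H,K_{\lambda_0})}\leq M$.

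There is no substantial obstacle: the argument is a clean packaging of two standard facts about reproducing kernels (Aronszajn's inclusion theorem and the boundedness characterization, the latter being the natural matrix-valued analogue of what is already invoked in Lemma \ref{lembounded}) with a routine interchange of a limit with a \emph{finite} sum. In particular, no uniform convergence, tightness, or weak-compactness hypothesis on the family $\{K_\lambda\}$ is required, because the non-negative definiteness of a matrix-valued kernel is tested one finite set of points at a time.
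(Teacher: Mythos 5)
Your proof is correct and follows essentially the same route as the paper's: the forward direction via the contractive inclusion induced by kernel domination, and the converse via the characterization $f\in(\mathcal H,L)$, $\|f\|\leq c$ iff $c^2L(z,w)-f(z)f(w)^*\succeq 0$ (the paper cites Theorems 6.25 and 6.23 of Paulsen--Raghupathi for these two facts), followed by a pointwise limit in $\lambda$ justified by hypothesis (ii). Your explicit remark that non-negative definiteness is tested one finite set of points at a time simply makes precise the limit step the paper leaves implicit.
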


\begin{proof}
Recall that if $K$ and $K^\prime$ are two non-negative definite kernels satisfying $K \preceq K^\prime$, then $(\mathcal H, K)\subseteq (\mathcal H, K^\prime)$ and $\|h\|_{(\mathcal H, K^\prime)}\leq \|h\|_{(\mathcal H, K)}$ for $h\in (\mathcal H, K)$ (see \cite[Theorem 6.25]{PaulsenRaghupati}). Therefore, by the hypothesis, we have that 
\begin{equation}\label{eqngwslem}
(\mathcal H, K_{\lambda^\prime})\subseteq (\mathcal H, K_{\lambda})\; \;\;\text{and}\;\;\; \|h\|_{(\mathcal H, K_{\lambda})}\leq \|h\|_{(\mathcal H, K_{\lambda^\prime})},
\end{equation}
whenever $\lambda\geq {\lambda}^\prime \geq \lambda_0$ and $h\in (\mathcal H, K_{\lambda^\prime})$.

Now assume that $f\in (\mathcal H, K_{\lambda_0}).$ Then, clearly $\|f\|_{(\mathcal H, K_{\lambda})}\leq \|f\|_{(\mathcal H, K_{\lambda_0})}$ for all $\lambda>\lambda_0.$ Consequently, $\sup_{\lambda>\lambda_0}\|f\|_{(\mathcal H, K_{\lambda})}\leq \|f\|_{(\mathcal H, K_{\lambda_0})}<\infty$.
For the converse, assume that $\sup_{\lambda>\lambda_0}\|f\|_{(\mathcal H, K_{\lambda})}<\infty.$ Then, from \eqref{eqngwslem}, it follows that $\lim_{\lambda\to\lambda_0}\|f\|_{(\mathcal H, K_{\lambda})}$ exists and is equal to $\sup_{\lambda>\lambda_0}\|f\|_{(\mathcal H, K_{\lambda})}.$ Since $f\in (\mathcal H, K_{\lambda})$ for all $\lambda>\lambda_0$, by \cite[Theorem 6.23]{PaulsenRaghupati}, we have that
$$f(z)f(w)^*\preceq \|f\|^2_{(\mathcal H,K_{\lambda})}K_{\lambda}(z,w).$$
Taking limit as $\lambda\to \lambda_0$ and using part $\rm(ii)$
of the hypothesis,  we obtain
$$f(z)f(w)^*\preceq \sup_{\lambda>\lambda_0}\|f\|^2_{(\mathcal H, K_{\lambda})}K_{\lambda_0}(z,w).$$
Hence, using \cite[Theorem 6.23]{PaulsenRaghupati} once again, we conclude that $f\in (\mathcal H, K_{\lambda_0}).$
\end{proof}

%

If $m\geq 2$, then from \eqref{eqnballun},  we have 
\begin{align}\label{gWSeq2}
\begin{split}
&\Big(\big(B_{\mathbb B_m}^{t}\partial_i\bar{\partial}_j\log B_{\mathbb B_m}\big)(z,w)\Big)_{i,j=1}^m\\
&\quad\quad\quad\quad=
\frac{m+1}{(1-\left\langle z, w\right \rangle)^{t (m+1)+2}} \begin{pmatrix}
1-\sum_{j\neq 1}z_j\bar{w}_j&z_2\bar{w}_1&\cdots& z_m\bar{w}_1\\
z_1\bar{w}_2&1-\sum_{j\neq 2}z_j\bar{w}_j&\cdots& z_m\bar{w}_2\\
\vdots & \vdots& \vdots & \vdots\\
z_1\bar{w}_m &z_2\bar{w}_m & \cdots & 1-\sum_{j\neq m}z_j\bar{w}_j
\end{pmatrix}.
\end{split}
\end{align}
For $m\geq 2$, $\lambda\in \mathbb R$ and $z,w\in \mathbb B_m$, set 
\begin{equation}\label{gWSeq1}
{\mathbb K}_{\lambda}(z,w):=\frac {1}{(1-\langle z,w \rangle )^{\lambda}}
\begin{pmatrix}
1-\sum_{j\neq 1}z_j\bar{w}_j & z_2\bar{w}_1&\cdots& z_m\bar{w}_1\\
z_1\bar{w}_2&1-\sum_{j\neq 2}z_j\bar{w}_j&\cdots& z_m\bar{w}_2\\
\vdots & \vdots& \vdots & \vdots\\
z_1\bar{w}_m &z_2\bar{w}_m & \cdots & 1-\sum_{j\neq m}z_j\bar{w}_j\\
\end{pmatrix}.
\end{equation}
In view \eqref{gWSeq2} and \eqref{gWSeq1}, for $\lambda>2$, we have
$$
{\mathbb K}_{\lambda}= \frac{2}{t(m+1)} \Big(\;({B_{\mathbb B_m}^{\frac{t}{2}}})^2\partial_i\bar{\partial}_j \log {B_{\mathbb B_m}^{\frac{t}{2}}}\; \Big)_{i,j=1}^m,
$$
 where $t=\frac{\lambda-2}{m+1}>0$.
Since  $B_{\mathbb B_m}^{t/2}$ is positive definite on $\mathbb B_m\times \mathbb B_m$ for $t>0$, it follows from Corollary \ref{cork^2curv} that $\mathbb K_{\lambda}$ is non-negative definite on $\mathbb B_m\times \mathbb B_m$ for $\lambda>2$. Since $\mathbb K_\lambda(z,w)\to \mathbb K_2(z,w)$, $z,w\in \mathbb B_m$, entrywise as $\lambda\to 2$, we conclude that $\mathbb K_{2}$ is also non-negative definite on $\mathbb B_m\times \mathbb B_m$. 


Let $\{e_1,\ldots,e_m\}$ be the standard basis of $\mathbb C^m$. The lemma given below finds the norm of the vector $z_2\otimes e_1$ in $(\mathcal H, \mathbb K_{\lambda})$ when  $\lambda>2$. 
 
\begin{lemma}\label{normz_2}
For each $\lambda>2$, the vector 
$z_2\otimes e_1$ belongs to $(\mathcal H, \mathbb K_{\lambda})$   and 
$\|z_2\otimes e_1\|_{(\mathcal H, \mathbb K_{\lambda})}=\sqrt{\frac {\lambda-1}{\lambda(\lambda-2)}}.$
\end{lemma}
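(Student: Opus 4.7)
The plan is to apply the derivative form of the reproducing property, Proposition \ref{derivativeofK}, at the origin $w=0$. Writing $D(z,w)=1-\langle z,w\rangle$, the $(i,k)$-entry of $\mathbb K_\lambda$ equals $[\delta_{ik}D+z_k\bar w_i]D^{-\lambda}$. Differentiating in $\bar w_j$ and evaluating at $w=0$ (where $D=1$ and $z_k\bar w_i=0$) will yield $(\lambda-1)\delta_{ik}z_j+\delta_{ij}z_k$, giving
\begin{equation*}
\bar\partial_j\mathbb K_\lambda(\cdot,w)e_k\big|_{w=0}=(\lambda-1)\,z_j\otimes e_k+z_k\otimes e_j, \qquad 1\le j,k\le m.
\end{equation*}
Proposition \ref{derivativeofK}(i) then guarantees that each such vector lies in $(\mathcal H,\mathbb K_\lambda)$.

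Specialising to $(j,k)=(2,1)$ and $(1,2)$, I obtain the vectors
\begin{equation*}
g:=(\lambda-1)\,z_2\otimes e_1+z_1\otimes e_2, \qquad h:=z_2\otimes e_1+(\lambda-1)\,z_1\otimes e_2,
\end{equation*}
both of which lie in $(\mathcal H,\mathbb K_\lambda)$. The coefficient matrix $\bigl(\begin{smallmatrix}\lambda-1&1\\1&\lambda-1\end{smallmatrix}\bigr)$ has determinant $\lambda(\lambda-2)$, which is nonzero precisely because $\lambda>2$. Inverting this linear system gives
\begin{equation*}
z_2\otimes e_1=\tfrac{\lambda-1}{\lambda(\lambda-2)}\,g-\tfrac{1}{\lambda(\lambda-2)}\,h,
\end{equation*}
which exhibits $z_2\otimes e_1$ as an element of $(\mathcal H,\mathbb K_\lambda)$ and settles the first assertion of the lemma.

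For the norm I will apply Proposition \ref{derivativeofK}(ii) with $f:=z_2\otimes e_1$. Since $\partial_2 f\equiv e_1$ and $\partial_1 f\equiv 0$, the derivative reproducing formula yields $\langle f,g\rangle=\langle(\partial_2 f)(0),e_1\rangle=1$ and $\langle f,h\rangle=\langle(\partial_1 f)(0),e_2\rangle=0$. Taking the inner product of the displayed expression for $z_2\otimes e_1$ with $f$ and substituting,
\begin{equation*}
\|z_2\otimes e_1\|^2=\tfrac{\lambda-1}{\lambda(\lambda-2)}\langle f,g\rangle-\tfrac{1}{\lambda(\lambda-2)}\langle f,h\rangle=\tfrac{\lambda-1}{\lambda(\lambda-2)},
\end{equation*}
as claimed. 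The only place requiring care is the initial derivative computation of $\bar\partial_j\mathbb K_\lambda(\cdot,0)e_k$; everything afterward reduces to a $2\times 2$ linear algebra exercise and one application of the reproducing formula, so no limiting or approximation argument will be needed.
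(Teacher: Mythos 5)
Your proposal is correct and follows essentially the same route as the paper: both compute $\bar\partial_j\mathbb K_\lambda(\cdot,0)e_k=(\lambda-1)z_j\otimes e_k+z_k\otimes e_j$, solve the resulting $2\times2$ system (the paper writes the single combination $(\lambda-1)\bar\partial_2\mathbb K_\lambda(\cdot,0)e_1-\bar\partial_1\mathbb K_\lambda(\cdot,0)e_2=(\lambda^2-2\lambda)\,z_2\otimes e_1$, which is your inversion rescaled), and then invoke Proposition \ref{derivativeofK} for the norm. Your evaluation of the norm via $\langle f,g\rangle=1$, $\langle f,h\rangle=0$ using the reproducing formula on the polynomial $f=z_2\otimes e_1$ is a slightly cleaner bookkeeping than the paper's expansion of $\|(\lambda^2-2\lambda)z_2\otimes e_1\|^2$ through the Gram matrix $\partial_i\bar\partial_j\mathbb K_\lambda(0,0)=(\lambda-1)\delta_{ij}I_m+E_{ji}$, but it is the same argument.
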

\begin{proof}
By a straight forward computation, we obtain
$$\bar\partial_1 \mathbb K_{\lambda}(\cdot,0)e_2=z_2\otimes e_1+(\lambda-1)z_1\otimes e_2$$
and 
$$\bar\partial_2\mathbb K_{\lambda}(\cdot,0)e_1=(\lambda-1)z_2\otimes e_1+z_1\otimes e_2.$$
Thus we have
\begin{equation}\label{eqnnormofz_2}
(\lambda-1)\bar\partial_2 \mathbb K_{\lambda}(\cdot,0)e_1-\bar\partial_1 \mathbb K_{\lambda}(\cdot,0)e_2=(\lambda^2-2\lambda)z_2\otimes e_1.
\end{equation}

By Proposition \ref{derivativeofK}, the vectors $\bar\partial_2 \mathbb K_{\lambda}(\cdot,0)e_1$ and $\bar\partial_1 \mathbb K_{\lambda}(\cdot,0)e_2$ belong to $(\mathcal H, \mathbb K_{\lambda})$. Since $\lambda>2$, from \eqref{eqnnormofz_2}, it follows that the vector $z_2\otimes e_1$ belongs to $(\mathcal H, \mathbb K_{\lambda})$. Now, taking norm in both sides of 
\eqref{eqnnormofz_2} and using Proposition \ref{derivativeofK} a second time, we obtain
\begin{align}\label{eqnnormofz_2'}
\begin{split}
&(\lambda^2-2\lambda)^2\|z_2\otimes e_1\|^2\\
& \quad\quad=
(\lambda-1)^2\langle\partial_2\bar\partial_2 \mathbb K_{\lambda}(0,0)e_1,e_1\rangle -(\lambda-1)\langle\partial_1\bar\partial_2\mathbb K_{\lambda}(0,0)e_1,e_2\rangle\\
&\quad\quad\quad\quad\quad-(\lambda-1)\langle\bar\partial_1\partial_2 \mathbb K_{\lambda}(0,0)e_2,e_1\rangle
+\langle\partial_1\bar\partial_1 \mathbb K_{\lambda}(0,0)e_2,e_2\rangle
\end{split}
\end{align}
By a routine computation, we obtain 
$$\partial_i\bar \partial_j \mathbb K_{\lambda}(0,0)=(\lambda-1)\delta_{ij} I_m+E_{ji},$$
where $\delta_{ij}$ is the Kronecker delta function, $I_m$ is the identity matrix of order $m$, and $E_{ji}$ is the matrix whose $(j,i)$th entry is 1 and all other entries are 0.
Hence, from \eqref{eqnnormofz_2'}, we see that 
\begin{align*}
&(\lambda^2-2\lambda)^2||z_2\otimes e_1||^2\\&
\quad\quad \quad=(\lambda-1)^2(\lambda-1)-2(\lambda-1)+(\lambda-1)\\&\quad\quad\quad=(\lambda-1)(\lambda^2-2\lambda).
\end{align*}
Hence $||z_2\otimes e_1||=\sqrt{\frac {\lambda-1}{\lambda(\lambda-2)}}$, completing the proof of the lemma.
\end{proof}
%
%

\begin{lemma}\label{M_2 is not bdd}
The multiplication operator by the coordinate function $z_2$  on $(\mathcal H,\mathbb K_2)$ is not bounded.
\end{lemma}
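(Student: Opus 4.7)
The plan is to exhibit a constant function in $(\mathcal H, \mathbb K_2)$ whose image under multiplication by $z_2$ fails to lie in the space. From formula \eqref{gWSeq1} one sees that $\mathbb K_2(z,0) = I_m$, so the constant function $e_1$ equals $\mathbb K_2(\cdot,0)e_1$ and therefore belongs to $(\mathcal H, \mathbb K_2)$. If $M_{z_2}$ were bounded, we would have $z_2 \otimes e_1 = M_{z_2} e_1 \in (\mathcal H, \mathbb K_2)$. Hence it suffices to prove that $z_2 \otimes e_1 \notin (\mathcal H, \mathbb K_2)$, and for this I intend to apply Lemma \ref{finitenorm} to the family $\{\mathbb K_\lambda\}_{\lambda \geq 2}$ with $\lambda_0 = 2$.

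To apply the lemma, I must verify its two hypotheses. Writing $\mathbb K_\lambda(z,w) = (1-\langle z,w\rangle)^{-(\lambda-2)}\mathbb K_2(z,w)$, we obtain, for $\lambda \geq \lambda' \geq 2$,
$$\mathbb K_\lambda(z,w) - \mathbb K_{\lambda'}(z,w) = \Bigl[(1-\langle z,w\rangle)^{-(\lambda-\lambda')} - 1\Bigr] \,\mathbb K_{\lambda'}(z,w).$$
The scalar factor in brackets expands in a power series in $\langle z,w\rangle$ with non-negative coefficients (since $\lambda - \lambda' \geq 0$), hence is non-negative definite on $\mathbb B_m \times \mathbb B_m$; multiplying it by the non-negative definite matrix kernel $\mathbb K_{\lambda'}$ (already known to be non-negative definite from the discussion following \eqref{gWSeq1}) yields a non-negative definite kernel. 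This verifies (i). Condition (ii), the entrywise convergence $\mathbb K_\lambda(z,w) \to \mathbb K_2(z,w)$ as $\lambda \to 2$, is immediate from the explicit formula.

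With the lemma in hand, $z_2 \otimes e_1$ lies in $(\mathcal H, \mathbb K_2)$ if and only if $\sup_{\lambda > 2}\|z_2 \otimes e_1\|_{(\mathcal H, \mathbb K_\lambda)} < \infty$. But Lemma \ref{normz_2} gives the exact value $\|z_2 \otimes e_1\|_{(\mathcal H, \mathbb K_\lambda)} = \sqrt{(\lambda-1)/(\lambda(\lambda-2))}$, which tends to $+\infty$ as $\lambda \to 2^+$. Therefore $z_2 \otimes e_1 \notin (\mathcal H, \mathbb K_2)$, and combining with the first paragraph we conclude that $M_{z_2}$ cannot be bounded on $(\mathcal H, \mathbb K_2)$. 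The only step requiring any real care is the ordering check $\mathbb K_{\lambda'} \preceq \mathbb K_\lambda$ under (i); everything else is a direct quotation of the two preceding lemmas together with the observation $\mathbb K_2(\cdot,0)e_1 = e_1$.
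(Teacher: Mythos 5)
Your proof is correct and follows essentially the same route as the paper: reduce to showing $z_2\otimes e_1\notin(\mathcal H,\mathbb K_2)$, verify the monotonicity and convergence hypotheses of Lemma \ref{finitenorm} for the family $\{\mathbb K_\lambda\}_{\lambda\ge 2}$ via the factorization $\mathbb K_\lambda-\mathbb K_{\lambda'}=\bigl((1-\langle z,w\rangle)^{-(\lambda-\lambda')}-1\bigr)\mathbb K_{\lambda'}$, and then invoke the norm computation of Lemma \ref{normz_2}, which blows up as $\lambda\to 2^+$. The only difference is that you spell out the non-negative-definiteness of the scalar factor via its power series, which the paper leaves as an easy observation.
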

\begin{proof}
Since $\mathbb K_2(\cdot,0)e_1=e_1$, we have that the constant function $ e_1$ is in $(\mathcal H, \mathbb K_2).$ Hence, to prove that $M_{z_2}$ is not bounded on $(\mathcal H, \mathbb K_2)$, it suffices to show that the vector  $z_2\otimes e_1$ does not belong to $(\mathcal H, \mathbb K_2).$ 

Consider the family of non-negative definite kernels $\left\{\mathbb K_{\lambda}\right\}_{\lambda \geq 2}$. Observe that for 
$\lambda\geq \lambda'\geq 2,$
\begin{equation}\label{eqnnormofz_2"}
\mathbb K_{\lambda}(z,w)-\mathbb K_{\lambda'}(z,w)=\left((1-\langle z,w\rangle)^{-(\lambda-\lambda')}-1\right)\mathbb K_{\lambda'}(z,w).
\end{equation}
It is easy to see that if $\lambda\geq\lambda'$, then 
$(1-\langle z,w\rangle)^{-(\lambda-\lambda')}-1 \succeq 0.$
Thus the right hand side of \eqref{eqnnormofz_2"}, being a product of a scalar valued non-negative definite kernel with a matrix valued non-negative definite kernel, is non-negative definite. 
Consequently, $K_{\lambda^\prime}\preceq K_{\lambda}$. Also  
since $\mathbb K_{\lambda}(z,w)\to \mathbb K_{2}(z,w)$ entry-wise as $\lambda\to 2$, by Lemma $\ref{finitenorm}$,
it follows that $z_2\otimes e_1\in (\mathcal H, \mathbb K_{2})$ if and only if  $\sup_{\lambda>2}\|z_2\otimes e_1\|_{(\mathcal H, \mathbb K_{\lambda})}<\infty.$ 
By lemma $\ref{normz_2},$  we have $\|z_2\otimes e_1\|_{(\mathcal H, \mathbb K_{\lambda})}=\sqrt{\frac {\lambda-1}{\lambda(\lambda-2)}}.$ Thus $\sup_{\lambda>2}||z_2\otimes e_1||_{(\hl,\mathbb K_{\lambda})}=\infty.$ Hence the vector $z_2\otimes e_1$ does not belong to $(\hl, \mathbb K_2)$ and the operator $M_{z_2}$ on $(\mathcal H, \mathbb K_{\lambda})$ is not bounded. 
\end{proof}

The following theorem describes the generalized Wallach set for the Bergman kernel of the Euclidean unit ball in $\mathbb C^m$, $m\geq 2$.

\begin{theorem}\label{thmGWall}
If $m\geq 2$, then 
$G\mathcal{W}(B_{\mathbb B_m})=\{t\in \mathbb R: t\geq 0\}$.
\end{theorem}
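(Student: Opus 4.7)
The plan is first to translate the condition $t\in G\mathcal{W}(B_{\mathbb B_m})$ into one about the matrix-valued kernels $\mathbb K_\lambda$ of \eqref{gWSeq1}. A direct computation from \eqref{gWSeq2} shows that $B_{\mathbb B_m}^{t-2}\mathbb K_{B_{\mathbb B_m}} = (m+1)\,\mathbb K_{\lambda}$ with $\lambda = (m+1)t + 2$, so $t\in G\mathcal{W}(B_{\mathbb B_m})$ if and only if $\mathbb K_\lambda$ is non-negative definite on $\mathbb B_m \times \mathbb B_m$; under this change of variable $t\geq 0$ corresponds precisely to $\lambda\geq 2$. The direction $\lambda \geq 2 \Rightarrow \mathbb K_\lambda \succeq 0$ has already been established in the paragraph following \eqref{gWSeq1}, by combining Corollary \ref{cork^2curv} (applied to $B_{\mathbb B_m}^{t/2}$ for $t>0$) with an entrywise limit as $\lambda\downarrow 2$.

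For the reverse direction I would argue by contradiction: assume $\mathbb K_\lambda$ is non-negative definite for some $\lambda<2$. Proposition \ref{derivativeofK} then guarantees that each vector $\bar{\partial}_i\mathbb K_\lambda(\cdot,0)e_j$ lies in $(\mathcal H, \mathbb K_\lambda)$, with inner products prescribed by the Taylor coefficients $\partial_i\bar{\partial}_j\mathbb K_\lambda(0,0)$. The needed formula
$$\partial_i\bar{\partial}_j\mathbb K_\lambda(0,0)=(\lambda-1)\delta_{ij}I_m + E_{ji},$$
with $E_{ji}$ the matrix unit having $1$ in position $(j,i)$, has already been derived inside the proof of Lemma \ref{normz_2}. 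I would then test non-negative definiteness against the anti-symmetric combination
$$v:=\bar{\partial}_1\mathbb K_\lambda(\cdot,0)e_2-\bar{\partial}_2\mathbb K_\lambda(\cdot,0)e_1\in (\mathcal H, \mathbb K_\lambda),$$
and expand $\|v\|^2$ into its four bilinear pieces. Each piece evaluates to either $\lambda-1$ (from a diagonal Taylor coefficient) or $\pm 1$ (from the off-diagonal $E_{ji}$ term), and summing yields $\|v\|^2_{(\mathcal H, \mathbb K_\lambda)} = 2(\lambda-2)$. Non-negativity of norms then forces $\lambda\geq 2$, contradicting $\lambda<2$. Hence $G\mathcal{W}(B_{\mathbb B_m}) = \{t\in\mathbb R : t\geq 0\}$.

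The main subtlety is the choice of test vector. The simpler combination used in Lemma \ref{normz_2} gave $\|z_2\otimes e_1\|^2 = (\lambda-1)/[\lambda(\lambda-2)]$, which is still non-negative on the interval $\lambda\in(0,1)$ and therefore does not by itself rule out those $\lambda$. Viewing the block matrix $\big(\partial_i\bar{\partial}_j\mathbb K_\lambda(0,0)\big)_{i,j}$ as an operator on $\mathbb C^m\otimes \mathbb C^m$, the tensor $e_1\otimes e_2-e_2\otimes e_1$ is an eigenvector with eigenvalue $\lambda-2$; this is precisely the anti-symmetric direction that produces the negativity uniformly for $\lambda<2$, and the hypothesis $m\geq 2$ is essential for this tensor to be non-trivial.
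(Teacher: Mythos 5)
Your proposal is correct, and the first half (the reduction to $\mathbb K_\lambda$ and the non-negativity for $\lambda\ge 2$) coincides with the paper. For the hard direction ($\mathbb K_\lambda\succeq 0\Rightarrow\lambda\ge 2$) you take a genuinely different and more elementary route. The paper argues by contradiction through operator boundedness: it first proves (Lemmas \ref{finitenorm}, \ref{normz_2}, \ref{M_2 is not bdd}) that $M_{z_2}$ is \emph{unbounded} on $(\mathcal H,\mathbb K_2)$ via a monotone-limit argument on the family $\{\mathbb K_\lambda\}_{\lambda\ge 2}$, and then observes that the factorization $\mathbb K_2(z,w)=(1-\langle z,w\rangle)^{-(2-\lambda)}\mathbb K_\lambda(z,w)$ together with Lemma \ref{lembounded} would force $M_{z_2}$ to be bounded on $(\mathcal H,\mathbb K_2)$ if some $\mathbb K_\lambda$ with $\lambda<2$ were non-negative definite. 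You instead test positivity directly: under the hypothesis $\mathbb K_\lambda\succeq 0$, Proposition \ref{derivativeofK} puts $v=\bar\partial_1\mathbb K_\lambda(\cdot,0)e_2-\bar\partial_2\mathbb K_\lambda(\cdot,0)e_1$ in $(\mathcal H,\mathbb K_\lambda)$ with $\|v\|^2=2(\lambda-2)$ computed from $\partial_i\bar\partial_j\mathbb K_\lambda(0,0)=(\lambda-1)\delta_{ij}I_m+E_{ji}$ (I checked the four bilinear terms: $(\lambda-1)-1-1+(\lambda-1)$), which immediately forces $\lambda\ge 2$. Your structural remark is also right: the block matrix $\big(\partial_i\bar\partial_j\mathbb K_\lambda(0,0)\big)$ acts on $\mathbb C^m\otimes\mathbb C^m$ as $(\lambda-1)I+\Sigma$ with $\Sigma$ the flip, so its eigenvalues are $\lambda$ on the symmetric part and $\lambda-2$ on the antisymmetric part, which exists only when $m\ge 2$; this explains both the threshold and the hypothesis on $m$, and also why the symmetric-type test vector of Lemma \ref{normz_2} cannot exclude $\lambda\in[0,1]$. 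What your shorter argument does not yield, and the paper's does, is the additional fact that the boundary kernel $\mathbb K_2$ carries an unbounded multiplication operator $M_{z_2}$, which is of independent interest.
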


\begin{proof}
In view of \eqref{gWSeq2} and \eqref{gWSeq1}, we see that $t\in G\mathcal{W}(B_{\mathbb B_m})$ if and only if 
$\mathbb K_{t(m+1)+2}$ is non-negative definite on $\mathbb B_m\times \mathbb B_m$. Hence we will be done if we can show that $\mathbb K_{\lambda}$ is non-negative if and only if $\lambda\geq 2$. 

From the discussion preceding Lemma \ref{normz_2}, we have that $\mathbb K_{\lambda}$ is non-negative definite on $\mathbb B_m\times\mathbb B_m$ for $\lambda\geq 2$. 

To prove the converse, assume that $\mathbb K_{\lambda}$ is non-negative definite
for some $\lambda < 2$. Note that $\mathbb K_{2}$ can be written as the product 
\begin{equation}\label{eqnwallach}
\mathbb K_{2}(z,w)=(1-\langle z,w\rangle)^{-(2-\lambda)}\mathbb K_{\lambda}(z,w),\;z,w\in \mathbb B_m.
\end{equation} 
Also, the multiplication operator $M_{z_2}$ on  $(\mathcal H, (1-\langle z,w\rangle)^{-(2-\lambda)})$ is bounded. Hence, by Lemma \ref{lembounded}, there exists a constant $c>0$ such that $(c^2-z_2\bar{w}_2)(1-\langle z,w\rangle)^{-(2-\lambda)}$ is non-negative definite. Consequently, the product $(c^2-z_2\bar{w}_2)(1-\langle z,w\rangle)^{-(2-\lambda)}\mathbb K_{\lambda}$, which is $(c^2-z_2\bar{w}_2)\mathbb K_2$, is non-negative. Hence, again by Lemma \ref{lembounded}, it  follows that the operator $M_{z_2}$ is  bounded on $(\mathcal H, \mathbb K_2)$. This is a contradiction to the Lemma $\ref{M_2 is not bdd}$. Hence our assumption that $\mathbb K_{\lambda}$ is non-negative for some $\lambda <2$, is not valid. This completes the proof.
\end{proof}
\section{Quasi-invariant kernels}
%
In this section, we show that if $K$ a is quasi-invariant kernel with respect to some $J$, then  $K^{t-2}\mathbb K$ is also a quasi-invariant kernel with respect to  $\mathbb J:=J(\varphi,z)^{t} D\varphi(z)^{\rm tr}$, $\varphi\in \rm Aut(\Omega),~ z\in \Omega$, whenever $t$ is in the generalized Wallach set $G\mathcal W(K)$.
The lemma given below, which will be used in the proof of the Proposition \ref{proptransformation}, 
follows from applying the chain rule 
\cite[page 8]{Rudinunitball} twice.
 
\begin{lemma}\label{lemmatransformationrule}
Let $\phi=(\phi_1,\ldots,\phi_m):\Omega\to \mathbb C^m$ be a holomorphic map and $g:\ran \phi\to \mathbb C$ be a real analytic function. If $h=g\circ \phi$, then 
$$\Big (\;\big(\partial_i\bar{\partial}_j h\;\big)(z)\Big)_{i,j=1}^m 
= ({D\phi(z)})^{\rm tr} \Big (\;\big(\partial_i\bar{\partial}_j g\;\big)(\varphi(z))\Big)_{i,j=1}^m \overbar{({D\phi(z)})},
$$
where $(D\phi)(z)^{\rm tr}$ is the transpose of the derivative of $\phi$ at $z$.
\end{lemma}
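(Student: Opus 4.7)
The plan is to apply the complex chain rule twice, exploiting the fact that $\phi$ is holomorphic so each component $\phi_k$ has no dependence on $\bar z$. Writing $g$ in the real-analytic variables $(w,\bar w)$ so that $h(z)=g(\phi(z),\overline{\phi(z)})$, the first step is to compute
\begin{equation*}
\bar\partial_j h(z)=\sum_{k=1}^m (\bar\partial_k g)(\phi(z))\,\overline{\partial_j \phi_k(z)},
\end{equation*}
where the would-be terms involving $\bar\partial_j \phi_k(z)$ all vanish by the holomorphicity of $\phi_k$, and $\bar\partial_j \overline{\phi_k(z)}=\overline{\partial_j \phi_k(z)}$.

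The second step is to apply $\partial_i$ to the expression above. The factor $\overline{\partial_j\phi_k(z)}$ is antiholomorphic in $z$ and is therefore annihilated by $\partial_i$, while the chain rule applied to $(\bar\partial_k g)(\phi(z))$ produces only the holomorphic contribution, so that
\begin{equation*}
\partial_i\bar\partial_j h(z)=\sum_{k,l=1}^m \partial_i\phi_l(z)\,(\partial_l\bar\partial_k g)(\phi(z))\,\overline{\partial_j\phi_k(z)}.
\end{equation*}
Reading the right-hand side as a triple matrix product with the convention $(D\phi(z))_{l,i}=\partial_i\phi_l(z)$, this is precisely the $(i,j)$ entry of $(D\phi(z))^{\rm tr}\big((\partial_l\bar\partial_k g)(\phi(z))\big)_{l,k=1}^m\,\overline{D\phi(z)}$, which establishes the claim. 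There is no substantive obstacle here: the only care required is the bookkeeping of indices and the matrix convention for $D\phi$; both applications of the chain rule are routine.
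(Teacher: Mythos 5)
Your proof is correct and is exactly the argument the paper has in mind: the paper omits the details, stating only that the lemma "follows from applying the chain rule twice," and your two applications of the Wirtinger chain rule (using holomorphy of $\phi$ to kill the cross terms) together with the index bookkeeping for $(D\phi)^{\rm tr}$ and $\overline{D\phi}$ supply precisely those details.
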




\begin{proposition}\label{proptransformation}
Let $\Omega\subset \mathbb C^m$ be a bounded domain. Let $K:\Omega\times\Omega\to \mathbb C$ be a non-negative definite kernel
and $J:\rm Aut(\Omega)\times \Omega\to \mathbb C\setminus\{0\}$ be a function such that $J(\varphi,\cdot)$ is holomorphic for each $\varphi$ in $\rm Aut(\Omega)$. Suppose that $K$ is quasi-invariant with respect to $J$.  Then the kernel $ K^{t-2}\mathbb{K}$
is also quasi-invariant with respect to
$\mathbb J$ whenever $t\in G\mathcal W_\Omega(K)$, where $ \mathbb J(\varphi,z)=J(\varphi,z)^{t} D\varphi(z)^{\rm tr}$, $\varphi\in \rm Aut(\Omega),~ z\in \Omega.$ 
\end{proposition}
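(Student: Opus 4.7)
The plan is to take logarithms of the quasi-invariance relation for $K$ and differentiate. Writing $M(z,w) = \big(\partial_i \bar{\partial}_j \log K(z,w)\big)_{i,j=1}^m$, the kernel $K^{t-2}\mathbb{K}$ is exactly $K^t\,M$, which is non-negative definite by the assumption $t \in G\mathcal{W}(K)$. The target identity is
$$\mathbb{J}(\varphi,z)\, K^t(\varphi(z),\varphi(w))\, M(\varphi(z),\varphi(w))\, \mathbb{J}(\varphi,w)^{*} \;=\; K^t(z,w)\, M(z,w),$$
for $\varphi\in\mathrm{Aut}(\Omega)$ and $z,w\in\Omega$.

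The first step is to raise the quasi-invariance identity $J(\varphi,z)\, K(\varphi(z),\varphi(w))\, \overline{J(\varphi,w)} = K(z,w)$ to the $t$-th power (working on a domain where continuous branches of $\log K$ and $\log J(\varphi,\cdot)$ are available, as assumed throughout the paper) to obtain the scalar identity
$$K^t(z,w) \;=\; J(\varphi,z)^t\, K^t(\varphi(z),\varphi(w))\, \overline{J(\varphi,w)}^{\,t}.$$
The second step is to take logarithms of the same relation,
$$\log K(z,w) \;=\; \log J(\varphi,z) + \log K(\varphi(z),\varphi(w)) + \overline{\log J(\varphi,w)},$$
and apply $\partial_i\bar{\partial}_j$. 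The first summand is holomorphic in $z$ alone and is killed by $\bar{\partial}_j$; the third depends only on $\bar w$ and is killed by $\partial_i$. Applying the chain rule in each variable separately (a two-variable version of Lemma~\ref{lemmatransformationrule}) yields
$$M_{ij}(z,w) \;=\; \sum_{k,l} \partial_i\varphi_k(z)\cdot M_{kl}(\varphi(z),\varphi(w))\cdot \overline{\partial_j\varphi_l(w)},$$
or in matrix form
$$M(z,w) \;=\; D\varphi(z)^{\mathrm{tr}}\, M(\varphi(z),\varphi(w))\, \overline{D\varphi(w)}.$$

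Finally, multiply the two identities. Since $J$ is scalar-valued, its powers commute with the matrix factors, so the right-hand side regroups as
$$\big(J(\varphi,z)^t D\varphi(z)^{\mathrm{tr}}\big)\cdot K^t(\varphi(z),\varphi(w)) M(\varphi(z),\varphi(w))\cdot \big(\overline{D\varphi(w)}\,\overline{J(\varphi,w)}^{\,t}\big),$$
which is precisely $\mathbb{J}(\varphi,z)\,(K^{t-2}\mathbb{K})(\varphi(z),\varphi(w))\,\mathbb{J}(\varphi,w)^{*}$, using $\mathbb{J}(\varphi,w)^{*}=\overline{D\varphi(w)}\,\overline{J(\varphi,w)}^{\,t}$. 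The one point requiring care is the branch issue for $\log K$ and the $t$-th powers; granted the paper's standing hypothesis that a continuous branch of $\log K$ exists on $\Omega\times\Omega$ (and that $J(\varphi,\cdot)$ is non-vanishing), every step is legitimate. I expect the only mild obstacle to be bookkeeping the transposes and conjugates in the matrix chain rule correctly; the rest is straightforward algebra combined with the scalar quasi-invariance of $K$.
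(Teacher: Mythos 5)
Your argument is correct and its core is the same as the paper's (take logarithms of the quasi-invariance relation, apply $\partial_i\bar\partial_j$ to kill the $J$-terms, invoke the chain rule, and multiply back by the $t$-th power of the scalar identity), but the execution differs in one meaningful way. The paper carries out every step \emph{on the diagonal}: it writes $\log K(z,z)=\log|J(\varphi,z)|^2+\log K(\varphi(z),\varphi(z))$, differentiates, multiplies by $K(z,z)^t$, and only at the very end polarizes the resulting real-analytic identity to get the sesqui-analytic identity on $\Omega\times\Omega$. This sidesteps entirely the two issues you flag: on the diagonal all quantities ($K(z,z)$, $|J(\varphi,z)|^2$) are positive reals, so $t$-th powers and logarithms are unambiguous, and the chain-rule lemma (Lemma~\ref{lemmatransformationrule}) is only needed in its stated one-variable real-analytic form. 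Your route works directly off the diagonal, which is legitimate given the paper's standing assumption that a continuous branch of $\log K$ exists on $\Omega\times\Omega$, but it obliges you to (a) justify $\big(J\,K\circ\varphi\,\overline{J}\big)^t=J^t (K\circ\varphi)^t\,\overline{J}^{\,t}$ with consistent branches (the multiplicative constant $e^{2\pi i t k}$ is pinned down by checking the identity at a diagonal point where everything is positive, and connectedness), and (b) supply a two-variable sesqui-holomorphic version of the chain-rule lemma, which is itself most painlessly obtained by polarizing the diagonal version. So your proof is sound, but the paper's diagonal-then-polarize strategy is the cleaner way to dispose of exactly the ``bookkeeping'' you identify as the remaining obstacle; if you keep your off-diagonal route, you should spell out point (a) rather than leave it as a remark.
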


\begin{proof}
Since $K$ is quasi-invariant with respect to $J$, we have
$$\log K(z,z) = \log |J(\varphi,z)|^2 + \log K(\varphi(z),\varphi(z)),\;\varphi\in {\rm Aut}(\Omega),~ z\in \Omega.$$
Also, $J(\varphi,\cdot)$ is a non-vanishing holomorphic function on $\Omega,$ therefore 
${\partial_i}{\bar{\partial_j}} \log |J(\varphi,z)|^2=0$. Hence
\begin{equation}\label{eqntransrule1}
{\partial_i}{\bar{\partial}_j}\log K(z,z)=
 {\partial_i}{\bar{\partial}_j}\log K(\varphi(z),\varphi(z)),\;\varphi\in {\rm Aut}(\Omega),~ z\in \Omega.
\end{equation}  

Any biholomorphic automorphism $\varphi$ of  $\Omega$ is of the form $(\varphi_1,\ldots,\varphi_m)$, where $\varphi_i:\Omega\to \mathbb C$ is holomorphic, $i=1,\ldots,m$. 
By setting  $g(z)=\log K(z,z),~z\in \Omega$, and using Lemma \ref{lemmatransformationrule},  we obtain
\begin{align*}
 \big({\partial_i}{\bar{\partial}_j} \log K( \varphi(z),\varphi(z))\big)_{i,j=1}^m
=   D\varphi(z)^{\rm tr}\big(\big({\partial_l}{\bar{\partial}_p}
\log K \big)(\varphi(z), \varphi(z))\big)_{l,p=1}^m \overbar{D\varphi (z)}.
\end{align*}

Combining this with \eqref{eqntransrule1}, we obtain
\begin{equation} \label{transrulecurv}
\big({\partial_i}{\bar{\partial}_j}\log K(z,z)\big)_{i,j=1}^m=
D\varphi(z)^{\rm tr}\big(\big({\partial_l}{\bar{\partial}_p}
\log K \big)(\varphi(z), \varphi(z))\big)_{l,p=1}^m \overbar{D\varphi (z)}.
\end{equation}

Multiplying $K(z,z)^t$ both sides and using the quasi-invariance  of $K$, a second time, we obtain
\begin{align*}
& \big( \;K(z,z)^t {\partial_i}{\bar{\partial}_j}\log K (z,z)\;\big)_{i,j=1}^m\\
&\quad\quad\quad=J(\varphi, z)^t D\varphi(z)^{\rm tr} K(\varphi(z),\varphi(z))^t 
\big(\big({\partial_l}{\bar{\partial}_p}
\log K \big)(\varphi(z), \varphi(z)\;\big)_{l,p=1}^m
\overbar{J(\varphi, z)^t D\varphi(z)}.
\end{align*}
Equivalently, we have
\begin{equation}
K^{t-2}(z,z)\mathbb K(z,z)= \mathbb J (\varphi, z) K^{t-2}(\varphi(z),\varphi(z))\mathbb K(\varphi(z),\varphi(z)) \mathbb J(\varphi, z)^*,
\end{equation}
where $\mathbb J(\varphi, z)=J(\varphi, z)^t D\varphi(z)^{\rm tr}$, $\varphi\in {\rm Aut}(\Omega),~ z\in \Omega$.
Therefore, polarizing both sides of the above equation, we have the desired conclusion.
\end{proof}

%

\begin{remark}
The function $J$ in the definition of quasi-invariant kernel is said to be a projective cocycle if it is a Borel map satisfying
\begin{equation}
J(\varphi\psi, z)=m(\varphi,\psi)J(\psi,z)J(\varphi, \psi z), 
~\varphi, \psi\in {\rm Aut}(\Omega),z\in \Omega,
\end{equation}
where $m:{\rm Aut}(\Omega)\times {\rm Aut}(\Omega)\to \mathbb T$ is a multiplier, that is, $m$ is Borel and satisfies the following properties:
\begin{itemize}
\item[\rm (i)]$m(e,\varphi)=m(\varphi,e)=1$, where $\varphi \in {\rm Aut}(\Omega)$ and $e$ is the identity in ${\rm Aut}(\Omega)$
\item[\rm (ii)]$m(\varphi_1,\varphi_2)m(\varphi_1\varphi_2,\varphi_3)=m(\varphi_1,\varphi_2\varphi_3)m(\varphi_2,\varphi_3)$, $\varphi_1,\varphi_2, \varphi_3\in {\rm Aut}(\Omega)$.
\end{itemize}
$J$ is said to be a cocycle if it is a projective cocycle with $m(\varphi,\psi)=1$ for all $\varphi,\psi$
in ${\rm Aut}(\Omega)$. 

If  $J:\rm Aut(\Omega)\times \Omega\to \mathbb C\setminus\{0\}$ in the Proposition \ref{proptransformation} is a cocycle, then it is not hard to verify that the function $\mathbb J$ is a projective co-cycle. Moreover, if $t$ is a positive integer, then $\mathbb J$ is also a cocycle.
\end{remark}

For the preceding to be useful, one must exhibit  non-negative definite kernels which are quasi-invariant.
%
%
It is known that the Bergman kernel $B_\Omega$ of any  bounded domain $\Omega$ is quasi-invariant with respect to $J$, where $J(\varphi,z)=\det D \varphi (z)$, $\varphi\in \rm{Aut}(\Omega), z\in \Omega$ . 

\begin{lemma}\label{lemBergmtrans}{\rm(\cite[Proposition 1.4.12]{Krantz})}
Let $\Omega\subset \mathbb C^m$ be a bounded domain and $\varphi:\Omega\to \Omega$ be a biholomorphic map. Then 
$$B_{\Omega}(z,w) = \det D \varphi(z) B_{\Omega}(\varphi(z),\varphi(w))\overbar {\det D \varphi (w)},~z, w \in \Omega.$$ 
\end{lemma}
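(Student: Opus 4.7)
The plan is to realize the transformation formula as a consequence of the unitarity of a weighted composition operator on $A^2(\Omega)$ together with the reproducing property of $B_\Omega$.

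First, for the biholomorphism $\varphi:\Omega\to\Omega$, I would introduce the operator $U_\varphi:A^2(\Omega)\to A^2(\Omega)$ given by $(U_\varphi f)(z)=\det D\varphi(z)\,f(\varphi(z))$ and show it is unitary. Unitarity is a routine change of variables: the real Jacobian of the holomorphic map $\varphi$ equals $|\det D\varphi|^2$, so substituting $u=\varphi(z)$ gives
\begin{equation*}
\|U_\varphi f\|^2 = \int_\Omega |f(\varphi(z))|^2\,|\det D\varphi(z)|^2\,dV(z) = \int_\Omega |f(u)|^2\,dV(u) = \|f\|^2.
\end{equation*}
Running the same computation with $\varphi^{-1}$ in place of $\varphi$, together with the chain-rule identity $\det D\varphi(\varphi^{-1}(u))\cdot\det D\varphi^{-1}(u)=1$, shows that $U_{\varphi^{-1}}=U_\varphi^{-1}$, and hence $U_\varphi$ is unitary.

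Next, fix $w\in\Omega$ and I would identify $U_\varphi^* B_\Omega(\cdot,w)$. For any $g\in A^2(\Omega)$, using unitarity of $U_\varphi$ and the reproducing property of $B_\Omega$,
\begin{equation*}
\langle g,\,U_\varphi^* B_\Omega(\cdot,w)\rangle = \langle U_\varphi g,\,B_\Omega(\cdot,w)\rangle = (U_\varphi g)(w) = \det D\varphi(w)\,g(\varphi(w)) = \langle g,\,\overline{\det D\varphi(w)}\,B_\Omega(\cdot,\varphi(w))\rangle,
\end{equation*}
where the scalar $\det D\varphi(w)$ is pulled into the second slot of the inner product as its complex conjugate. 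Since $g$ is arbitrary, $U_\varphi^* B_\Omega(\cdot,w) = \overline{\det D\varphi(w)}\,B_\Omega(\cdot,\varphi(w))$.

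Finally, applying $U_\varphi$ to both sides and evaluating at $z\in\Omega$ gives
\begin{equation*}
B_\Omega(z,w) = \bigl(U_\varphi U_\varphi^* B_\Omega(\cdot,w)\bigr)(z) = \det D\varphi(z)\,\overline{\det D\varphi(w)}\,B_\Omega(\varphi(z),\varphi(w)),
\end{equation*}
which is the claimed identity. There is no genuine obstacle; the only care required is tracking conjugates correctly when pulling scalars in and out of the inner product, and correctly applying the chain rule for the Jacobian when verifying unitarity of $U_\varphi$.
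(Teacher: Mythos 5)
Your argument is correct: the weighted composition operator $U_\varphi f=\det D\varphi\,(f\circ\varphi)$ is unitary on $A^2(\Omega)$ by the change-of-variables formula, and combining this with the reproducing property yields the transformation rule exactly as you write. The paper itself gives no proof and simply cites Krantz, whose argument is essentially the same (the right-hand side is shown to reproduce $A^2(\Omega)$ and uniqueness of the Bergman kernel is invoked), so your proposal matches the intended route.
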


The following proposition follows from combining  Proposition \ref{proptransformation}
and Lemma \ref{lemBergmtrans}, and therefore the proof is omitted.
\begin{proposition}\label{bergqua}
Let $\Omega$ be a bounded domain $\mathbb C^m$. If $t$ is in $G\mathcal W (B_{\Omega})$, then the kernel 
$$\boldsymbol B_\Omega^{(t)}(z,w):= \big(\; B_{\Omega}^t(z,w)\partial_i\bar{\partial}_j\log B_{\Omega}(z,w)\; \big)_{i,j=1}$$ 
is quasi-invariant with respect to 
$(\det D\varphi (z))^t D\varphi (z)^{\rm tr},~\varphi\in \rm Aut(\Omega),~z\in \Omega$.
\end{proposition}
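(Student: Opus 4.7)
The plan is to apply Proposition \ref{proptransformation} directly, with $K = B_\Omega$ and $J(\varphi, z) = \det D\varphi(z)$. I would first verify the hypotheses of that proposition. The function $J$ is holomorphic in the second variable for each fixed $\varphi \in {\rm Aut}(\Omega)$, and it is non-vanishing because $\varphi$ is a biholomorphism, so $D\varphi(z)$ is invertible at every point. Lemma \ref{lemBergmtrans} then supplies exactly the quasi-invariance of the Bergman kernel $B_\Omega$ with respect to this $J$.

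Next, I would identify the kernel $K^{t-2}\mathbb{K}$ appearing in the conclusion of Proposition \ref{proptransformation} with $\boldsymbol{B}_\Omega^{(t)}$. Recalling that $\mathbb{K}(z,w) = K^2(z,w)\big(\partial_i\bar{\partial}_j \log K(z,w)\big)_{i,j=1}^m$ by definition, a trivial exponent computation gives
\begin{align*}
K^{t-2}(z,w)\,\mathbb{K}(z,w)
&= B_\Omega^{t-2}(z,w) \cdot B_\Omega^{2}(z,w)\big(\partial_i\bar{\partial}_j \log B_\Omega(z,w)\big)_{i,j=1}^m \\
&= B_\Omega^{t}(z,w)\big(\partial_i\bar{\partial}_j \log B_\Omega(z,w)\big)_{i,j=1}^m \\
&= \boldsymbol{B}_\Omega^{(t)}(z,w).
\end{align*}

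Finally, under the standing hypothesis $t \in G\mathcal{W}(B_\Omega)$, the conclusion of Proposition \ref{proptransformation} asserts that $K^{t-2}\mathbb{K}$ is quasi-invariant with respect to $\mathbb{J}(\varphi, z) = J(\varphi, z)^t\, D\varphi(z)^{\rm tr}$. Substituting $J(\varphi, z) = \det D\varphi(z)$ immediately yields
$$\mathbb{J}(\varphi, z) = \big(\det D\varphi(z)\big)^{t}\, D\varphi(z)^{\rm tr},\quad \varphi \in {\rm Aut}(\Omega),~ z \in \Omega,$$
which is precisely the transformation rule claimed in the proposition.

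No substantive obstacle arises, as the proof is essentially a one-line specialization; the only minor point requiring care is that $B_\Omega^{t}$ be interpreted consistently for real $t$, and this is already implicit in the very definition of the generalized Wallach set $G\mathcal{W}(B_\Omega)$.
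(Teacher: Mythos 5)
Your proof is correct and is exactly the argument the paper intends: the paper explicitly states that the proposition "follows from combining Proposition \ref{proptransformation} and Lemma \ref{lemBergmtrans}" and omits the details, which you have simply written out. The verification that $J(\varphi,z)=\det D\varphi(z)$ is non-vanishing and holomorphic, and the identification $B_\Omega^{t-2}\mathbb K=\boldsymbol B_\Omega^{(t)}$, are both handled correctly.
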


 For a fixed but arbitrary $\varphi \in {\rm Aut}(\Omega)$, let $U_{\varphi}$ be the linear map on ${\rm Hol}(\Omega,\mathbb C^k)$ defined by
 \begin{equation}\label{eqnuphi}
 U_{\varphi}(f)=J\big(\varphi^{-1}, \cdot \big) f  \circ \varphi^{-1},\;\;f\in {\rm Hol}(\Omega,\mathbb C^k).
 \end{equation}
The following proposition is a basic tool in defining unitary representations of the automorphism group ${\rm Aut}(\Omega)$. The straightforward proof for the case of unit disc $\D$ appears in \cite{MisraKoranyihomog}. The proof for the general domain $\Omega$ follows in exactly the same way. 

\begin{proposition}
The linear map $U_{\varphi}$ is unitary on $(\hl, K)$ 
for all $\varphi$ in ${\rm Aut}(\Omega)$ if and only if the kernel $K$ is quasi-invariant with respect to $J$.
\end{proposition}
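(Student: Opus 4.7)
The strategy is to test the unitarity of $U_\varphi$ on the reproducing kernel vectors $\{K(\cdot,w)\eta : w\in\Omega,\,\eta\in\mathbb C^k\}$, whose linear span is dense in $(\hl,K)$. Combining the reproducing property from the definition of $(\hl,K)$ with the formula $(U_\varphi f)(z)=J(\varphi^{-1},z)f(\varphi^{-1}(z))$, a short computation yields, for every $f\in(\hl,K)$,
$$\langle U_\varphi f,\,K(\cdot,w)\eta\rangle_{(\hl,K)}=\langle J(\varphi^{-1},w)f(\varphi^{-1}(w)),\eta\rangle_{\mathbb C^k}=\langle f,\,K(\cdot,\varphi^{-1}(w))J(\varphi^{-1},w)^*\eta\rangle_{(\hl,K)},$$
so that $U_\varphi^*(K(\cdot,w)\eta)=K(\cdot,\varphi^{-1}(w))J(\varphi^{-1},w)^*\eta$. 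All subsequent steps are driven by this identification.

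For the forward direction, suppose $U_\varphi$ is unitary for every $\varphi\in{\rm Aut}(\Omega)$, so that $U_\varphi U_\varphi^*=I$. Applying the definition of $U_\varphi$ to the kernel vector produced above gives
$$\big(U_\varphi U_\varphi^*(K(\cdot,w)\eta)\big)(z)=J(\varphi^{-1},z)\,K(\varphi^{-1}(z),\varphi^{-1}(w))\,J(\varphi^{-1},w)^*\eta,$$
and equating this with $K(z,w)\eta$ for all $z,w\in\Omega$ and $\eta\in\mathbb C^k$, then setting $\psi=\varphi^{-1}$ (which ranges over all of ${\rm Aut}(\Omega)$), recovers exactly the quasi-invariance relation $J(\psi,z)K(\psi(z),\psi(w))J(\psi,w)^*=K(z,w)$.

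For the converse, assume $K$ is quasi-invariant with respect to $J$. The identity above, applied with $\psi=\varphi^{-1}$ and the substitution $w\mapsto\varphi(w)$, rearranges to $J(\varphi^{-1},z)K(\varphi^{-1}(z),w)=K(z,\varphi(w))\,(J(\varphi^{-1},\varphi(w))^*)^{-1}$, and hence
$$U_\varphi(K(\cdot,w)\eta)=K(\cdot,\varphi(w))\,(J(\varphi^{-1},\varphi(w))^*)^{-1}\eta.$$
Thus $U_\varphi$ carries each kernel vector to another kernel vector. A second invocation of quasi-invariance, now at the pair $(\varphi(w'),\varphi(w))$, collapses the expression $\langle U_\varphi K(\cdot,w)\eta,\,U_\varphi K(\cdot,w')\eta'\rangle$ to $\langle K(\cdot,w)\eta,\,K(\cdot,w')\eta'\rangle$, so $U_\varphi$ is isometric on the dense span of kernel vectors. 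Continuity of point evaluation on $(\hl,K)$ identifies this Hilbert-space extension with the original pointwise $U_\varphi$. Surjectivity is visible directly from the displayed formula, since $\varphi$ is a bijection of $\Omega$ and $J$ is $GL_k(\mathbb C)$-valued. The only real obstacle is careful bookkeeping of adjoints and of the placement of $\varphi$ versus $\varphi^{-1}$ across the successive uses of quasi-invariance; conceptually the proof is entirely formal once the adjoint action on kernel vectors is in hand.
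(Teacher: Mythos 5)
Your argument is correct, and it is the standard one: the paper itself omits the proof (deferring to Kor\'anyi--Misra for the disc case), and the expected argument is precisely the computation of the action of $U_\varphi$ and $U_\varphi^*$ on the reproducing kernel vectors, which you carry out accurately in both directions, including the density/extension and dense-range points needed to upgrade the isometry on kernel vectors to a unitary.
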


Let $Q: \Omega \to \mathcal M_k(\mathbb C)$ be a real analytic function such that $Q(w)$ is  positive definite for $w\in \Omega$.  Let $\mathcal H$ be the Hilbert space of $\mathbb C^k$ valued holomorphic functions on $\Omega$ which are square integrable with respect to $Q(w) dV(w)$, that is, 
$$
\hl=\big\{f\in \rm Hol(\Omega, \mathbb C^k):\|f\|^2:= \int_\Omega \langle Q(w)f(w), f(w) \rangle_{\mathbb C^k} dV(w) <\infty\big\},
$$
where $dV$ is the normalized volume measure on $\mathbb C^m$. Assume  that the constant functions are in $\mathcal H$. 
The operator $U_\varphi$, defined in \eqref{eqnuphi} is unitary if and only if 
\begin{eqnarray*}
\|U_\varphi f\|^2 &=& \int_{\Omega} \langle Q(w) (U_\varphi f)(w), (U_\varphi f)(w) \rangle dV(w)\\   
&=& \int_{\Omega} \langle \overline{J(\varphi^{-1},w)}^{\rm \,tr} Q(w) J(\varphi^{-1},w) f(\varphi^{-1}(w)), f(\varphi^{-1}(w))  \rangle dV(w)\\
&=&\int_{\Omega} \langle Q(w) f(w), f(w) \rangle dV(w),
\end{eqnarray*}
that is, if and only if $Q$ transforms according to the rule
\begin{equation}\label{Q}
\overline{J(\varphi^{-1},w)}^{\rm \,tr} Q(w) J(\varphi^{-1},w) = Q(\varphi^{-1}(w)) | {\det} (D\varphi^{-1})(w)|^2.
\end{equation}
Set $J(\varphi^{-1},w)= \det (D\varphi^{-1}(w))^t D\varphi^{-1}(w)^{\rm tr}$ and $Q^{(t)}(w):={B_\Omega(w,w)^{1-t}} \mathcal K(w,w)^{-1}$, where $\mathcal K(z,w) := \big (\partial_i \bar{\partial}_j \log B_\Omega(z,w) \big )_{i,j=1}^m$, $t > 0$. Then $Q^{(t)}$ transforms according to the rule \eqref{Q} since $\mathcal K$ transforms  according  to \eqref{transrulecurv} and $B_\Omega$ transfomrs as in Lemma  \ref{lemBergmtrans}. If for some $t> 0$, the Hilbert space $L_{\rm hol}^2(\Omega, Q^{(t)}\,dV)$ determined by the measure is nontrivial, then the corresponding reproducing kernel is of the form $B_\Omega^t(z,w) \mathcal K(z,w)$. 

Let $\Omega$ be a bounded symmetric domain in $\mathbb C^m$.  Note that if  $K:\Omega\times\Omega\to \mathcal M_k(\mathbb C)$ is a quasi-invariant kernel with respect to some $J$ and the commuting tuple ${\boldsymbol M}_z=(M_{z_1},\ldots,M_{z_m})$ on $(\hl, K)$ is bounded, then the commuting tuple 
$\boldsymbol M_{\varphi}:=(M_{\varphi_1},\ldots, M_{\varphi_m})$ is unitarily equivalent to $\boldsymbol M_z$ via the unitary map $U_{\varphi}$, where $\varphi=(\varphi_1,\ldots,\varphi_m)$ is in $~{\rm Aut}(\Omega)$.
If $t$ is in $G\mathcal W(B_\Omega)$ and the operator of multiplication $M_{z_i}$ by the coordinate function $z_i$ is bounded on the Hilbert space $(\mathcal H, B_\Omega^{t/2} )$, then it follows from Corollary \ref{thmboundedness} that  the operator $M_{z_i}$ on the Hilbert space $\big (\mathcal H, \boldsymbol B_\Omega^{(t)})$ is bounded as well.  Therefore, in the language of \cite{Misra-Sastry}, we conclude that the multiplication tuple $\boldsymbol {M}_z$ on $(\hl, \boldsymbol B_\Omega^{(t)})$ is homogeneous with respect to the group ${\rm Aut}(\Omega)$. In particular, if $\Omega$ is the Euclidean unit ball in $\mathbb C^m$, and $t$ is any positive real number, then the multiplication tuple $\boldsymbol{M}_z$  on $(\hl, B_{\mathbb B_m}^{t/2})$ is bounded.  Also, from Theorem \ref{thmGWall}, it follows that $\boldsymbol B_{\mathbb B_m}^{(t)}$ is non-negative definite.  Consequently, the commuting $m$ - tuple of operators $\boldsymbol{M}_z$ must be homogeneous with respect to the group ${\rm Aut}(\mathbb B_m)$.


\begin{thebibliography}{10}





\bibitem{Aro}
N.~Aronszajn, \emph{Theory of reproducing kernels}, Trans. Amer. Math. Soc.
  \textbf{68} (1950), 337--404. 
%


%

\bibitem{constantchar}
B.~Bagchi and G.~Misra, \emph{Constant characteristic functions and homogeneous
  operators}, J. Operator Theory \textbf{37} (1997),51--65.
  
 
  
  \bibitem{homoshift}
  \bysame, \emph{The homogeneous shifts},
    J. Funct. Anal. \textbf{204}(2003),  293--319.
     
    


\bibitem{infinitelydivisiblemetric}
S.~Biswas, D.~K. Keshari, and G.~Misra, \emph{Infinitely divisible metrics and
  curvature inequalities for operators in the {C}owen-{D}ouglas class}, J.
  Lond. Math. Soc. (2) \textbf{88} (2013), 941--956. 

 

\bibitem{onhomocontraction}
\bysame, \emph{On homogeneous contractions and unitary representations of
  {${\rm SU}(1,1)$}}, J. Operator Theory \textbf{30} (1993), 109--122.




\bibitem{CD}
M.~J. Cowen and R.~G. Douglas, \emph{Complex geometry and operator theory},
  Acta Math. \textbf{141} (1978), no.~3-4, 187--261. 

\bibitem{CDopen}
\bysame, \emph{Operators possessing an open set of eigenvalues}, Functions,
  series, operators, {V}ol. {I}, {II} ({B}udapest, 1980), Colloq. Math. Soc.
  J\'anos Bolyai, vol.~35, North-Holland, Amsterdam, (1983), 323--341.
  



\bibitem{Curtosalinas}
R.~E. Curto and N.~Salinas, \emph{Generalized bergman kernels and the
  cowen-douglas theory}, American Journal of Mathematics \textbf{106} (1984), 447--488.

\bibitem{equivofquotient}
R.~G. Douglas and G.~Misra, \emph{Equivalence of quotient {H}ilbert modules},
  Proc. Indian Acad. Sci. Math. Sci. \textbf{113} (2003), 281--291.


\bibitem{D-M-V}
R.~G. Douglas, G.~Misra, and C.~Varughese, \emph{On quotient modules---the case
  of arbitrary multiplicity}, J. Funct. Anal. \textbf{174} (2000), 364--398. 

\bibitem{Douglasmodule}
R.~G. Douglas and V.~I. Paulsen, \emph{Hilbert modules over function algebras}, Longman Sc \& Tech, 1989.

\bibitem{Wallachset}
J.~Faraut and A.~Kor\'anyi, \emph{Function spaces and reproducing kernels on
  bounded symmetric domains}, J. Funct. Anal. \textbf{88} (1990), 64--89. 

\bibitem{Ferguson-Rochberg}
S.~H. Ferguson and R.~Rochberg, \emph{Higher order {H}ilbert-{S}chmidt {H}ankel forms and tensors of analytic kernels}, Math. Scand. \textbf{96} (2005),  117--146.

\bibitem{SGhara}
S. ~Ghara,  {\em  Decomposition of the tensor product of Hilbert modules via the jet construction and weakly homogeneous operators}, PhD thesis, Indian Institute of  Science, 2018.


\bibitem{HLZ}
S. ~Hwang, Y. ~Liu, and G. ~Zhang, \emph{Hilbert spaces of tensor-valued holomorphic functions on  the  unit ball of  $\mathbb C^n$}, 
Pac. J. Math. \textbf{214} (2004), 303  - 322. 

\bibitem{Flagstructure}
K.~Ji, C.~Jiang, D.~K. Keshari, and G.~Misra, \emph{Rigidity of the flag
  structure for a class of {C}owen-{D}ouglas operators}, J. Funct. Anal.
  \textbf{272} (2017), 2899--2932. 


\bibitem{MisraKoranyihomog}
A.~Kor\'anyi and G.~Misra, \emph{Homogeneous operators on {H}ilbert spaces of
  holomorphic functions}, J. Funct. Anal. \textbf{254} (2008), 2419--2436. 

\bibitem{Krantz}
S.~G. Krantz, \emph{Function theory of several complex variables}, second ed.,
  The Wadsworth \& Brooks/Cole Mathematics Series, Wadsworth \& Brooks/Cole
  Advanced Books \& Software, Pacific Grove, CA, 1992. 

\bibitem{JP}
J. ~Peetre, \emph{Reproducing formulae for holomorphic tensor fields}, 
Boll. Un. Mat. Ital. B \textbf{(7)  2}  (1988),  345–359. 

\bibitem{curvandbackwardshift}
G.~Misra, \emph{Curvature and the backward shift operators}, Proc. Amer. Math.
  Soc. \textbf{91} (1984), 105--107. 

\bibitem{GMCI}
\bysame, \emph{Curvature inequalities and extremal properties of bundle
  shifts}, J. Operator Theory \textbf{11} (1984),  305--317. 


\bibitem{Misra-Sastry}
G.~Misra and N.~S.~N. Sastry, \emph{Homogeneous tuples of operators and
  representations of some classical groups}, J. Operator Theory \textbf{24}
  (1990), 23--32.


\bibitem{Misra-Upmeier}
G.~Misra and H. Upmeier, \emph{Homogeneous vector bundles and intertwining operators for symmetric domains}, Adv. Math. \textbf{303} (2016), 1077--1121.

\bibitem{PaulsenRaghupati}
V.~I. Paulsen and M.~Raghupathi, \emph{An introduction to the theory of
  reproducing kernel {H}ilbert spaces}, Cambridge Studies in Advanced
  Mathematics \textbf{152}, Cambridge University Press, Cambridge, (2016).
  

\bibitem{Rudinunitball}
W.~Rudin, \emph{Function theory in the unit ball of {${\bf C}^{n}$}},
  Grundlehren der Mathematischen Wissenschaften [Fundamental Principles of
  Mathematical Science] \textbf{241}, Springer-Verlag, New York-Berlin (1980).

\bibitem{Sal}
N.~Salinas, \emph{Products of kernel functions and module tensor products},
  Topics in operator theory, Oper. Theory Adv. Appl. \textbf{32}, Birkh\"auser,   Basel (1988), 219--241. 






\end{thebibliography}
\end{document}